\theoremstyle{definition}
\newtheorem{definition}{Definition}[section]
\newtheorem{conjecture}[definition]{Conjecture}
\newtheorem{example}[definition]{Example}
\newtheorem{problem}[definition]{Problem}
\theoremstyle{plain}
\newtheorem{corollary}[definition]{Corollary}
\newtheorem{lemma}[definition]{Lemma}
\newtheorem{proposition}[definition]{Proposition}
\newtheorem{theorem}[definition]{Theorem}
\numberwithin{equation}{section}
\tikzset{
	vertex/.style={
		circle,
		minimum size=1.8mm,
		fill,
		inner sep=0,
		outer sep=0,
	},
	edge/.style={
		line width=.25mm,
	}
}
\tikzset{
	x=10mm,
	y=10mm,
	treenode/.style={
		circle,
		minimum size=1mm,
		inner sep=0,
		fill=gray,
	},
	edgelabel/.style={
		node font=\small,
		inner sep =.5mm,
	},
	columnlabel/.style={
		anchor=north,
	}
}
\newcommand*\defterm{\emph}
\DeclareMathOperator{\dom}{Dom}
\DeclareMathOperator{\im}{Im}
\newcommand*\girth[1]{\mathrm{girth}(#1)}
\newcommand*\knitdegree[1]{\mathrm{kd}(#1)}
\newcommand*\cliquenumber[2][]{\omega\parens[#1]{#2}}
\newcommand*\commgraph[1]{\mathcal{G}(#1)}
\newcommand*\centre[1]{Z(#1)}
\newcommand*\idemp[1]{E(#1)}
\DeclarePairedDelimiter{\abs}{\lvert}{\rvert}
\DeclarePairedDelimiter{\parens}{\lparen}{\rparen}
\DeclarePairedDelimiter{\bracks}{\lbrack}{\rbrack}
\DeclarePairedDelimiter{\braces}{\{}{\}}
\DeclarePairedDelimiter{\set}{\{}{\}}
\DeclarePairedDelimiterX{\gset}[2]{\{}{\}}{\,#1:#2\,}
\newcommand\gsetsplit[3][]{\mathopen#1\{\,#2:#3\,\mathclose#1\}}
\newcommand*\Xn{\set{1,\ldots,n}}
\newcommand*\X[1]{\set{1,\ldots,#1}}
\newcommand*{\ptr}[1]{\mathcal{P}(#1)}
\newcommand*{\tr}[1]{\mathcal{T}(#1)}
\newcommand*{\Tr}[1]{\mathcal{T}_{#1}}
\newcommand*{\psym}[1]{\mathcal{I}(#1)}
\newcommand*{\sym}[1]{\mathcal{S}(#1)}
\newcommand*\id[1]{\mathrm{id}_{#1}}
\newcommand*\setclass[2]{\mathcal{C}(#1,#2)}
\newcommand*\maxnull[1]{(#1)\xi}
\newcommand*\alphanull[1]{(#1)\alpha}
\newcommand*\commidemp[2]{\Gamma^{#1}_{#2}}
\newcommand*\nulltr[3]{N^{#1}_{#2,\ldots,#3}}
\newcommand*\new{\infty}
\newcommand*\newtr[1]{#1_{\new}}
\newcommand*{\nullptr}[2]{\Omega^{#1}_{#2}}
\newcommand*{\sizeddelimiter}[2]{\bBigg@{#1}#2}
\begin{document}

\title[Characterizing the largest commutative semigroups of certain types]{Characterizing the largest commutative (full and partial) transformation semigroups of certain types}

\author{Tânia Paulista}
\address[T. Paulista]{%
Center for Mathematics and Applications (NOVA Math) \& Department of Mathematics\\
NOVA School of Science and Technology\\
NOVA University of Lisbon\\
2829--516 Caparica\\
Portugal
}
\email{%
tpl.paulista@gmail.com
}
\thanks{This work is funded by national funds through the FCT -- Fundação para a Ciência e a Tecnologia, I.P., under the scope of the projects UID/297/2025 and UID/PRR/297/2025 (Center for Mathematics and Applications - NOVA Math). The author is also funded by national funds through the FCT -- Fundação para a Ciência e a Tecnologia, I.P., under the scope of the studentship 2021.07002.BD}

\thanks{The author is thankful to her supervisors António Malheiro and Alan J. Cain for all the support, encouragement and fruitful discussions; and also for reading a draft of this paper}

\subjclass[2020]{Primary 20M14, 20M20; Secondary  05C05, 05C25}

\begin{abstract}
	Let $X$ be a finite set. Let $\tr{X}$ be the transformation semigroup on $X$ and let $\ptr{X}$ be the partial transformation semigroup on $X$. This paper is a contribution to the problem of characterizing the largest commutative subsemigroups of $\tr{X}$ (respectively, $\ptr{X}$). In the process of looking for these semigroups, we also characterize the largest commutative subsemigroups of idempotents of $\tr{X}$ (respectively, $\ptr{X}$); as well as the largest commutative subsemigroups of $\tr{X}$ (respectively, $\ptr{X}$) that contain a unique idempotent. We also provide an alternative way to determine the largest commutative nilpotent subsemigroups of $\tr{X}$ (which were previously characterized by Cain, Malheiro and the present author); and we describe the largest commutative nilpotent subsemigroups of $\ptr{X}$.
	
	These results allow us to make conclusions regarding the clique numbers of the commuting graphs of $\tr{X}$ and of $\ptr{X}$. We also determine their girths and knit degrees.
	

\end{abstract}

\maketitle

\section{Introduction}

This paper revolves around the problem of determining, for a finite set $X$, the maximum size of a commutative subsemigroup of the transformation semigroup $\tr{X}$ on $X$ (respectively, partial transformation semigroup $\ptr{X}$ on $X$), as well as the maximum-order commutative subsemigroups of $\tr{X}$ (respectively, $\ptr{X}$). This problem motivates the characterization of the maximum-order commutative subsemigroups of idempotents of $\tr{X}$ (respectively, $\ptr{X}$) and the maximum-order commutative subsemigroups of $\tr{X}$ (respectively, $\ptr{X}$) that contain exactly one idempotent.

The problem of describing, for a given group/semigroup, the maximum-order subgroups/subsemigroups that satisfy certain properties has been studied by several authors, for various subgroups/semigroups, and with variations in the selected properties. For instance, in 1989, Burns and Goldsmith \cite{Symmetric_group} characterized the maximum-order abelian subgroups of the symmetric group and, in 1999, Vdovin \cite{Alternating_group} characterized the maximum-order abelian subgroups of the alternating group.

There has also been considerate work with the transformation semigroup $\tr{X}$, for a finite set $X$. In 1976, Biggs, Rankin and Reis \cite{Non_commutative_nilpotent_semigroups} showed that the maximum size of a nil subsemigroup of $\tr{X}$ is $\parens{\abs{X}-1}!$. In 2004, Holzer and König studied maximum-order subsemigroups of $\tr{X}$ that are $2$-generated: they proved that, when $\abs{X}\geqslant 7$ is prime, a maximum-order $2$-generated subsemigroup of $\tr{X}$ could be found in a particular `nice' class of subsemigroups of $\tr{X}$. In 2008, Gray and Mitchell \cite{Largest_subsemigroups_transformation} obtained the size of the largest left zero/right zero/completely simple/inverse subsemigroups of $\tr{X}$. In 2023, Cameron et al. \cite{Null_semigroups} described, for a finite set $X$, the null subsemigroups of $\tr{X}$ of maximum size. Furthermore, for $\abs{X}\geqslant 4$, they determined the maximum size of a $3$-nilpotent subsemigroup of $\tr{X}$. More recently, Cain, Malheiro and the present author \cite{Commutative_nilpotent_transformation_semigroups_paper} discovered that the commutative nilpotent subsemigroups of $\tr{X}$ of maximum size are precisely the maximum-order null subsemigroups of $\tr{X}$ characterized by Cameron et al.

Other authors chose to investigate this type of problem in the symmetric inverse semigroup $\psym{X}$ on $X$ (where $X$ is a finite set). In 2007, André, Fernandes and Mitchell \cite{2-generated_submonoids_I_X} obtained, for $\abs{X}\geqslant 3$, the maximum size of a $2$-generated subsemigroup of $\psym{X}$. In 2015, Araújo, Bentz and Konieczny \cite{Commuting_graph_I_X} proved that there is exactly one maximum-order commutative inverse subsemigroup of $\psym{X}$ --- the semigroup of idempotents of $\psym{X}$. They also determined the maximum size of a commutative nilpotent subsemigroup of $\psym{X}$ and demonstrated that, with a few exceptions, the semigroups that achieve that size are all null semigroups. Moreover, they proved that, when $\abs{X}\leqslant 9$, there is just one commutative subsemigroup of $\psym{X}$ of maximum size --- which is the unique commutative \emph{inverse} subsemigroup of $\psym{X}$ of maximum size --- and, when $\abs{X}\geqslant 10$, they proved that the commutative subsemigroups of $\psym{X}$ of maximum size can be obtained by adding the identity transformation to the commutative nilpotent subsemigroups of $\psym{X}$ of maximum size.

The remainder of this section is dedicated to explaining the structure of the paper. In Section~\ref{Preliminaries} we have some basic definitions concerning simple graphs, simple digraphs, commuting graphs, full and partial transformation semigroups, alphabets and words.

In Section~\ref{sec: largest comm smg idemp T(X)} we describe, for a finite set $X$, the maximum-order commutative subsemigroups of idempotents of $\tr{X}$ and the unique maximum-order commutative subsemigroup of idempotents of $\ptr{X}$.

In Section~\ref{sec: largest comm smg 1 idemp T(X)} we describe the maximum-order commutative subsemigroups of $\tr{X}$ with a unique idempotent. In order to prove this result, we use an improved and more complex version of a new combinatorial technique (introduced by Cain, Malheiro and the present author \cite{Commutative_nilpotent_transformation_semigroups_paper}) that involves representing transformation semigroups as rooted labelled trees. We will see that the result proved by Cain, Malheiro and the present author \cite{Commutative_nilpotent_transformation_semigroups_paper} concerning maximum-order commutative nilpotent subsemigroups of $\tr{X}$ is a corollary of the result concerning maximum-order commutative subsemigroups of $\tr{X}$ with a unique idempotent. Additionally, we describe the maximum-order commutative subsemigroups of $\ptr{X}$ with a unique idempotent and, as a corollary, we characterize the maximum-order nilpotent commutative subsemigroups of $\ptr{X}$.

In Section~\ref{sec: largest comm smg T(X)} we focus on the largest commutative subsemigroups of $\tr{X}$ and of $\ptr{X}$. We prove that, when $\abs{X}\leqslant 6$, the largest commutative subsemigroups of $\tr{X}$ are, with a minor exception, precisely the largest commutative subsemigroups of idempotents of $\tr{X}$ (described in Section~\ref{sec: largest comm smg idemp T(X)}); and, when $\abs{X}\geqslant 7$, we give a lower bound for the maximum size of a commutative subsemigroup of $\tr{X}$. (The largest commutative subsemigroups of $\tr{X}$ with a unique idempotent, described in Section~\ref{sec: largest comm smg 1 idemp T(X)}, are involved in the determination of such a lower bound.) Furthermore, we show that, when $\abs{X}\leqslant 5$, the unique largest commutative subsemigroup of $\ptr{X}$ is the unique largest commutative subsemigroup of idempotents of $\ptr{X}$ (described in Section~\ref{sec: largest comm smg idemp T(X)}); and, when $\abs{X}\geqslant 6$, we provide a lower bound and an upper bound for the size of a largest commutative subsemigroup of $\ptr{X}$. (The largest commutative subsemigroups of $\ptr{X}$ with a unique idempotent, described in Section~\ref{sec: largest comm smg 1 idemp T(X)}, are involved in the determination of such a lower bound.)

In Section~\ref{sec: properties comm graph T(X)} we consider the commuting graphs of $\tr{X}$ and of $\ptr{X}$. We use the results from Section~\ref{sec: largest comm smg T(X)} concerning the largest commutative subsemigroups of $\tr{X}$ and of $\ptr{X}$ to study the clique numbers of the commuting graphs of $\tr{X}$ and of $\ptr{X}$. We also obtain the knit degree of $\tr{X}$ and of $\ptr{X}$, as well as the girth of their commuting graphs.

Finally, in Section~\ref{Open problems}, we discuss the open problem of characterizing, when $\abs{X}\geqslant 6$ (respectively, $\abs{X}\geqslant 5$), the largest commutative subsemigroups of $\tr{X}$ (respectively, $\ptr{X}$).

This paper is based on Chapters 3 and 4 of the author's Ph.D. thesis \cite{My_thesis}.

\section{Preliminaries} \label{Preliminaries}

For general background on graphs see, for example, \cite{Graphs_Wilson}. For general background on semigroups we use \cite{Nine_chapters_Cain}.

\subsection{Simple graphs}

A \defterm{simple graph} $G=(V,E)$ consists of a non-empty set $V$ --- whose elements are called \defterm{vertices} --- and a set $E$ --- whose elements are called \defterm{edges} --- formed by $2$-subsets of $V$. Throughout this subsection we will assume that $G=(V,E)$ is a simple graph.

Let $x$ and $y$ be vertices of $G$. If $\set{x,y}\in E$, then we say that the vertices $x$ and $y$ are \defterm{adjacent}. If $\set{x,z}\notin E$ for all $z\in V$ (that is, if $x$ is not adjacent to any other vertex), then we say that $x$ is an \defterm{isolated vertex}.


A simple graph $H=\parens{V',E'}$ is a \defterm{subgraph} of $G$ if $V'\subseteq V$ and $E'\subseteq E$. Note that, since $H$ is a simple graph, the elements of $E'$ are $2$-subsets of $V'$.

Given $V'\subseteq V$, the \defterm{subgraph induced by $V'$} is the subgraph of $G$ whose set of vertices is $V'$ and where two vertices are adjacent if and only if they are adjacent in $G$ (that is, the set of edges of the induced subgraph is $\braces{\braces{x,y}\in E: x,y\in V'}$).

A \defterm{complete graph} is a simple graph where all distinct vertices are adjacent to each other.


A \defterm{null graph} is a simple graph with no edges and where all vertices are isolated vertices.


A \defterm{path} in $G$ from a vertex $x$ to a vertex $y$ is a sequence of pairwise distinct vertices (except, possibly, $x$ and $y$) $x=x_1,x_2,\ldots,x_n=y$ such that $\braces{x_1,x_2}, \braces{x_2,x_3},\ldots, \braces{x_{n-1},x_n}$ are pairwise distinct edges of $G$. The \defterm{length} of the path is the number of edges of the path; thus, the length of our example path is $n-1$. If $x=y$ then we call the path a \defterm{cycle}. Whenever we want to mention a path, we will write that $x=x_1-x_2-\cdots-x_n=y$ is a path (instead of writing that $x=x_1,x_2,\ldots,x_n=y$ is a path).

Let $K\subseteq V$. We say that $K$ is a \defterm{clique} in $G$ if $\braces{x,y}\in E$ for all $x,y\in K$, that is, if the subgraph of $G$ induced by $K$ is complete. The \defterm{clique number} of $G$, denoted $\cliquenumber{G}$, is the size of a largest clique in $G$, that is, $\cliquenumber{G}=\max\left\{|K|: K \text{ is a clique in } G\right\}$.

If the graph $G$ contains cycles, then the \defterm{girth} of $G$, denoted $\girth{G}$, is the length of a shortest cycle in $G$. If $G$ contains no cycles, then $\girth{G}=\infty$.

\subsection{Simple digraphs}

A \defterm{simple digraph} $G=\parens{V,A}$ consists of a non-empty set $V$ and a subset $A$ of $\parens{V\times V}\setminus\gset{\parens{x,x}}{x\in V}$. The elements of $V$ are called \defterm{vertices} and the elements of $A$ are called \defterm{arcs}. Throughout this subsection we will assume that $G=\parens{V,A}$ is a simple digraph.

Let $x$ and $y$ be vertices of $G$. If $\parens{x,y}\in A$ (that is, if $\parens{x,y}$ is an arc), then we say that there is an arc from $x$ to $y$.

The \defterm{outdegree} (respectively, \defterm{indegree}) of a vertex $x$ is the number of arcs in $A$ of the form $\parens{x,y}$ (respectively, $\parens{y,x}$).

A \defterm{directed path} in $G$ from a vertex $x$ to a vertex $y$ is a sequence of pairwise distinct vertices (except, possibly, $x$ and $y$) $x=x_1,x_2,\ldots,x_n=y$ such that $\parens{x_i,x_{i+1}}\in A$ for all $i\in\X{n-1}$.

In Section~\ref{sec: largest comm smg 1 idemp T(X)} we will use directed rooted trees as a tool to prove some results. A \defterm{directed rooted tree} is a simple digraph with a distinguished element --- called the \defterm{root} of the tree --- such that there is a unique directed path from the root to each vertex. We observe that the indegree of the root is equal to $0$. A vertex of outdegree $0$ is called a \defterm{leaf} of the tree.

\subsection{Commuting graphs}
\label{sec: (extended) commgraph}

Recall that the \defterm{center} of a semigroup $S$ is the set
\begin{displaymath}
	\centre{S}= \gset{x\in S}{xy=yx \text{ for all } y\in S},
\end{displaymath}
whose elements are called the \defterm{central} elements of $S$.

The \defterm{commuting graph} of a finite non-commutative semigroup $S$, denoted $\commgraph{S}$, is the simple graph whose set of vertices is $S\setminus \centre{S}$ and where two distinct vertices $x,y\in S\setminus \centre{S}$ are adjacent if and only if $xy=yx$.

We note that the semigroup must be non-commutative because otherwise we would obtain an empty vertex set. 

This definition of commuting graph is used in several other papers: see \cite{Commuting_graph_T_X, Commuting_graph_I_X, Commuting_graph_symmetric_alternating_groups}, for example. Other authors define commuting graphs of semigroups in a slightly different way: in this alternative definition the vertices of the commuting graph are all the elements of the semigroup (instead of just the non-central ones). This definition is used, for example, in \cite{Graphs_arise_as_commuting_graphs_groups, Cameron_commuting_graphs_notes, Commuting_graphs_groups_split}.

The next lemma, whose proof is straightforward, shows the relationship between the largest commutative subsemigroups of a semigroup and the largest cliques in its commuting graph.

\begin{lemma}\label{preli: largest cliques, commutative subsemigroups}
	Let $S$ be a finite non-commutative semigroup and let $\centre{S}\subseteq T\subseteq S$. Then $T$ is a commutative subsemigroup of $S$ of maximum size if and only if $T\setminus\centre{S}$ is a clique in $\commgraph{S}$ of maximum size. In this case we have $\cliquenumber{\commgraph{S}}=\abs{T}-\abs{\centre{S}}$.
\end{lemma}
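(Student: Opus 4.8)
The plan is to prove the biconditional by relating commutativity of a subset $T$ containing the center to the clique property of $T \setminus \centre{S}$ in the commuting graph, and then to extract the clique-number formula as a consequence. The key observation driving everything is that two distinct non-central elements $x, y$ are adjacent in $\commgraph{S}$ precisely when $xy = yx$, while central elements commute with everything by definition. So the hard content is really just checking that ``$T$ is commutative'' and ``$T \setminus \centre{S}$ is a clique'' encode the same information once $\centre{S} \subseteq T$.

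First I would establish the core equivalence: for a subset $T$ with $\centre{S} \subseteq T \subseteq S$, the set $T$ is a commutative subsemigroup of $S$ if and only if $T \setminus \centre{S}$ is a clique in $\commgraph{S}$. For the forward direction, suppose $T$ is commutative. Then any two distinct elements of $T \setminus \centre{S}$ commute (since they lie in $T$), so by definition they are adjacent in $\commgraph{S}$; hence $T \setminus \centre{S}$ is a clique. For the reverse direction, suppose $T \setminus \centre{S}$ is a clique. I would verify that any two elements $x, y \in T$ commute by splitting into cases according to whether each lies in $\centre{S}$ or in $T \setminus \centre{S}$: if either is central, then $xy = yx$ holds automatically; if both are non-central, then adjacency in the clique gives $xy = yx$ (the case $x = y$ being trivial). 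This shows $T$ is commutative. Closure under multiplication is not an issue to prove separately here, since the statement identifies commutative \emph{subsemigroups} on both sides — I would note that the subset $T$ being a subsemigroup is an ambient hypothesis on the candidate sets being compared, and commutativity is the only extra condition being characterized.

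Next I would handle the ``maximum size'' refinement. Since the map $T \mapsto T \setminus \centre{S}$ is a bijection between subsets containing $\centre{S}$ and subsets of $S \setminus \centre{S}$, with inverse $K \mapsto K \cup \centre{S}$, and since it changes cardinality by the fixed constant $\abs{\centre{S}}$, it is clear that $T$ has maximum size among commutative subsemigroups containing $\centre{S}$ if and only if $T \setminus \centre{S}$ has maximum size among cliques. The one point requiring a remark is that maximizing over commutative subsemigroups \emph{containing} $\centre{S}$ is the same as maximizing over \emph{all} commutative subsemigroups: any commutative subsemigroup can be enlarged by adjoining $\centre{S}$ without destroying commutativity (central elements commute with everything), so a maximum-size commutative subsemigroup necessarily contains $\centre{S}$. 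This justifies restricting attention to subsets $T$ with $\centre{S} \subseteq T$.

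Finally, the clique-number formula follows immediately: if $T$ is a commutative subsemigroup of maximum size, then $T \setminus \centre{S}$ is a maximum clique, so $\cliquenumber{\commgraph{S}} = \abs{T \setminus \centre{S}} = \abs{T} - \abs{\centre{S}}$, the last equality using $\centre{S} \subseteq T$. I do not expect any serious obstacle here; the proof is genuinely straightforward, as the statement itself advertises. The only subtlety worth stating explicitly is the reconciliation between ``maximum among subsemigroups containing the center'' and ``maximum among all commutative subsemigroups,'' which I would make sure to spell out so that the biconditional as phrased (with the hypothesis $\centre{S} \subseteq T$) cleanly yields the clique number of the whole graph.
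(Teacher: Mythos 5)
The paper itself offers no proof of this lemma (it is dismissed as ``straightforward''), so your proposal stands on its own — and it contains a genuine gap, located precisely at the one point where the lemma is not trivial. Your ``core equivalence'' ($T$ is a commutative subsemigroup iff $T\setminus\centre{S}$ is a clique) is false as stated, and your justification for skipping closure under multiplication rests on a misreading of the statement: the only hypothesis on $T$ is $\centre{S}\subseteq T\subseteq S$, so ``subsemigroup'' is \emph{not} an ambient assumption on the candidate sets — it is part of the left-hand side of the biconditional, and in the direction (maximum clique $\Rightarrow$ maximum commutative subsemigroup) it must be proved. It can fail for non-maximum cliques: in $\tr{X}$ with $\abs{X}\geqslant 3$, let $\alpha$ be a $3$-cycle; then $\set{\alpha}$ is a clique, but $\set{\alpha}\cup\centre{\tr{X}}=\set{\alpha,\id{X}}$ is not a subsemigroup, since $\alpha^2$ is missing. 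The same defect undermines your maximality transfer: the map $T\mapsto T\setminus\centre{S}$ sends commutative subsemigroups containing $\centre{S}$ \emph{into} the family of cliques but not \emph{onto} it, so your bijection argument only yields ``maximum over the image of this map,'' not ``maximum over all cliques,'' which is what $\cliquenumber{\commgraph{S}}$ requires.

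The missing idea that repairs both facets is that a set of pairwise commuting elements generates a commutative subsemigroup. Given any clique $K$, the subsemigroup $\langle K\cup\centre{S}\rangle$ is commutative and has size at least $\abs{K}+\abs{\centre{S}}$; this shows every clique is dominated by a commutative subsemigroup containing the center, which makes the maximality transfer valid in both directions (and also makes rigorous your remark that maximum commutative subsemigroups contain $\centre{S}$ — note that $U\cup\centre{S}$ itself need not be closed, so there too one must pass to $\langle U\cup\centre{S}\rangle$). For closure in the backward direction: if $T\setminus\centre{S}$ is a clique of maximum size, then $T$ pairwise commutes, so $\langle T\rangle$ is a commutative subsemigroup whose non-central part $\langle T\rangle\setminus\centre{S}$ is again a clique containing $T\setminus\centre{S}$; maximality and finiteness force $\langle T\rangle\setminus\centre{S}=T\setminus\centre{S}$, hence $\langle T\rangle\subseteq\parens{T\setminus\centre{S}}\cup\centre{S}=T$, so $T=\langle T\rangle$ is closed. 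With these insertions your outline becomes a correct proof; without them, the backward implication and the identification of $\cliquenumber{\commgraph{S}}$ are unproved.
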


The two concepts that follow were first introduced by Araújo, Kinyon and Konieczny \cite{Commuting_graph_T_X}, specifically for commuting graphs of semigroups. They had an important role in settling a conjecture posed by Schein (see \cite{Schein_conjecture}) related to $r$-semisimple bands.

Let $S$ be a non-commutative semigroup. A \defterm{left path} in $\commgraph{S}$ is a path $x_1,\ldots,x_n$ in $\commgraph{S}$ such that $x_1\neq x_n$ and $x_1x_i=x_nx_i$ for all $i\in\braces{1,\ldots,n}$. If $\commgraph{S}$ contains left paths, then the \defterm{knit degree} of $S$, denoted $\knitdegree{S}$, is the length of a shortest left path in $\commgraph{S}$.

\subsection{(Full and partial) transformation semigroups}

Let $X$ be a set. The \defterm{transformation semigroup} on $X$, denoted $\tr{X}$, is the semigroup formed by all the (full) transformations on $X$ (that is, all the functions whose domain is $X$ and whose image is contained in $X$) and whose multiplication is the composition of functions. The \defterm{partial transformation semigroup} on $X$, denoted $\ptr{X}$, is the semigroup formed by all the partial transformations on $X$ (that is, all the functions whose domain and image are both contained in $X$) and whose multiplication is the composition of functions. The \defterm{symmetric inverse semigroup} on $X$, denoted $\psym{X}$, is the semigroup of partial injective transformations on $X$ (that is, all the injective functions whose domain and image are both contained in $X$) and whose multiplication is the composition of functions. The \defterm{symmetric group} on $X$, denoted $\sym{X}$, is the group of bijections on $X$ (that is, all the bijective functions whose domain and image are both equal to $X$) and whose multiplication is the composition of functions.

The \defterm{rank} of a transformation $\alpha\in\tr{X}$ is the size of $\im\alpha$.

In the course of the paper we are going to denote by $\id{Y}$, where $Y\subseteq X$, the restriction of $\id{X}$ (the identity transformation on $X$) to the set $Y$; that is, $\id{Y}=\id{X}|_Y$. 

For the remainder of the paper, $X$ will denote a finite set.

In the upcoming sections we will frequently need to use a commutative subsemigroup of $\tr{X}$ to construct a commutative subsemigroup of $\tr{Y}$, for some $Y\subsetneq X$. The next result, which will be used frequently, shows us how we can do that.

\begin{lemma}\label{T(X): lemma induction}
	Let $S$ be a subsemigroup of $\tr{X}$. Let $Y$ be a non-empty subset of $X$ such that $\alpha|_Y\in\tr{Y}$ for all $\alpha\in S$. Let $S'=\gset{\alpha|_Y}{\alpha\in S}$. Then
	\begin{enumerate}
		\item $S'$ is a subsemigroup of $\tr{Y}$.
		\item If $S$ is commutative, then $S'$ is commutative.
		\item If $\alpha\in S$ is an idempotent, then $\alpha|_Y$ is an idempotent.
		\item If $S$ contains a unique idempotent, then $S'$ contains a unique idempotent.
	\end{enumerate}
\end{lemma}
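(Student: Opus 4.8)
The plan is to verify each of the four claims directly from the definitions, since the restriction map $\alpha\mapsto\alpha|_Y$ is the natural structure-preserving tool here. The hypothesis that $\alpha|_Y\in\tr{Y}$ for every $\alpha\in S$ guarantees that $Y$ is ``closed'' under each transformation in $S$, i.e. $(Y)\alpha\subseteq Y$, so that each restriction $\alpha|_Y$ is genuinely a full transformation of $Y$ and $S'$ is a well-defined subset of $\tr{Y}$. The key observation underlying everything is that restriction to an invariant subset is a semigroup homomorphism: for $\alpha,\beta\in S$ one has $(\alpha\beta)|_Y=(\alpha|_Y)(\beta|_Y)$. I would establish this identity first, checking it pointwise on an arbitrary $y\in Y$: applying $\alpha|_Y$ sends $y$ to $(y)\alpha\in Y$ (using invariance), and then $\beta|_Y$ is defined there and sends it to $(y)\alpha\beta$, which is exactly $(y)(\alpha\beta)|_Y$.

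With this homomorphism identity in hand, the first three parts follow quickly. For part (1), closure of $S'$ under composition is immediate: $(\alpha|_Y)(\beta|_Y)=(\alpha\beta)|_Y\in S'$ since $\alpha\beta\in S$; associativity is inherited from $\tr{Y}$. For part (2), commutativity transfers along the homomorphism: given $\alpha,\beta\in S$ with $\alpha\beta=\beta\alpha$, restricting both sides gives $(\alpha|_Y)(\beta|_Y)=(\alpha\beta)|_Y=(\beta\alpha)|_Y=(\beta|_Y)(\alpha|_Y)$. For part (3), if $\alpha=\alpha^2$ then $\alpha|_Y=(\alpha^2)|_Y=(\alpha|_Y)^2$, so $\alpha|_Y$ is idempotent.

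Part (4) is the step I expect to be the main obstacle, because uniqueness is not automatically preserved by a homomorphism: a non-idempotent of $S$ could in principle restrict to an idempotent of $S'$, and distinct idempotents of $S$ could collapse to the same restriction. The plan is to argue as follows. Existence of an idempotent in $S'$ is clear from part (3) once we know $S$ has one. For uniqueness, suppose $S$ has a unique idempotent $\varepsilon$, and let $\gamma|_Y$ be any idempotent of $S'$ with $\gamma\in S$. The idea is that $S$, being a finite semigroup, contains an idempotent power of $\gamma$: there is some $k$ with $\gamma^k$ idempotent, and since $\varepsilon$ is the only idempotent of $S$ we get $\gamma^k=\varepsilon$. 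Restricting, $(\gamma|_Y)^k=\gamma^k|_Y=\varepsilon|_Y$. But $\gamma|_Y$ is already idempotent, so all its positive powers equal $\gamma|_Y$ itself; hence $\gamma|_Y=\varepsilon|_Y$. Thus every idempotent of $S'$ coincides with $\varepsilon|_Y$, giving uniqueness.

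The only subtlety to pin down is the finiteness argument that some power of $\gamma$ is idempotent. Since $X$ is finite, $\tr{X}$ is finite, so $S$ is a finite semigroup and the subsemigroup $\langle\gamma\rangle$ generated by $\gamma$ is finite; a standard fact about finite monogenic semigroups then provides a positive integer $k$ with $\gamma^k$ idempotent, and this $\gamma^k$ lies in $S$. I would cite this directly from the semigroup-theory background \cite{Nine_chapters_Cain} rather than reprove it. With that in place, part (4) closes and the lemma is complete.
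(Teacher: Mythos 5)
Your proposal is correct and follows essentially the same route as the paper's proof: establish the restriction identity $(\alpha\beta)|_Y=(\alpha|_Y)(\beta|_Y)$ first, derive parts (1)--(3) immediately from it, and for part (4) use the fact that some power $\gamma^k$ of any $\gamma\in S$ with $\gamma|_Y$ idempotent must equal the unique idempotent of $S$, then restrict and use idempotency of $\gamma|_Y$ to conclude $\gamma|_Y=(\gamma|_Y)^k=\gamma^k|_Y$. The paper's argument for part (4) is word-for-word the same finiteness argument you describe, so there is no gap and nothing to adjust.
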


\begin{proof}
	We begin by noticing that, since $\alpha|_Y\in\tr{Y}$ for all $\alpha\in S$, then we have that $\alpha|_Y\beta|_Y=\parens{\alpha\beta}|_Y$ for all $\alpha,\beta\in S$.
	
	\medskip
	
	\textbf{Part 1.} It is clear that $S'\subseteq\tr{Y}$. Moreover, we have $\alpha|_Y\beta|_Y=\parens{\alpha\beta}|_Y\in S'$ for all $\alpha,\beta\in S$. Thus $S'$ is a subsemigroup of $\tr{X}$.
	
	\medskip
	
	\textbf{Part 2.} Suppose that $S$ is commutative. Let $\alpha,\beta\in S$. We have that $\alpha\beta=\beta\alpha$ and, consequently, we have that $\alpha|_Y\beta|_Y=\parens{\alpha\beta}|_Y=\parens{\beta\alpha}|_Y=\beta|_Y\alpha|_Y$.
	
	\medskip
	
	\textbf{Part 3.} Suppose that there exists $\alpha\in S$ such that $\alpha^2=\alpha$. Then $\alpha|_Y\alpha|_Y=\parens{\alpha\alpha}|_Y=\alpha|_Y$ and, consequently, $\alpha|_{Y }$ is an idempotent.
	
	\medskip
	
	\textbf{Part 4.} Suppose that $S$ contains a unique idempotent. Let $e\in S$ be that idempotent. It follows from part 3 that $e|_Y$ is an idempotent. We want to show that $e|_Y$ is the unique idempotent of $S'$. Let $\alpha\in S$ be such that $\alpha|_Y$ is an idempotent. We are going to see that $\alpha|_Y=e|_Y$. We know that there exists $n\in\mathbb{N}$ such that $\alpha^n$ is an idempotent, which implies that $\alpha^n=e$ (because $e$ is the unique idempotent of $S$). Since $\alpha|_Y\alpha|_Y=\alpha|_Y$, then we have $\alpha|_Y=\parens{\alpha|_Y}^n=\parens{\alpha^n}|_Y=e|_Y$. Thus $e|_Y$ is the unique idempotent of $S'$.
\end{proof}

	In the course of the paper we will often prove results in $\ptr{X}$ by using the corresponding results in $\tr{Y}$, where $Y$ is going to be a convenient set. The idea is to choose a maximum-order commutative subsemigroup of $\ptr{X}$ of a certain type and then construct a commutative subsemigroup of $\tr{Y}$ of the same type and of the same size. The results concerning $\tr{Y}$ allow us to determine an upper bound for the size of the subsemigroups of $\ptr{X}$ and, later, they help us characterize the maximum-order commutative subsemigroup of $\ptr{X}$ we chose. With this in mind, we start explaining how we can obtain a full transformation semigroup from a partial transformation semigroup.
	
	Let $\new$ be a new symbol not in $X$ and let $\newtr{X}=X\cup\set{\new}$. For each $\beta\in\ptr{X}$ we define a full transformation in $\tr{\newtr{X}}$, which we denote by $\newtr{\beta}$, the following way: for all $x\in \newtr{X}=X\cup\set{\new}$
	\begin{displaymath}
		x\newtr{\beta}=\begin{cases}
			x\beta &\text{if } x\in\dom\beta, \\
			\new &\text{if } x\in \newtr{X}\setminus\dom\beta.
		\end{cases}
	\end{displaymath}
	We observe that, in particular, we have $\new\newtr{\beta}=\new$. Moreover, for each subsemigroup $S$ of $\ptr{X}$ we define $\newtr{S}=\gset{\newtr{\beta}\in\tr{\newtr{X}}}{\beta\in S}$.

\begin{proposition}\label{P(X): proposition P(X) --> T(Y)}
	Let $S$ be a subsemigroup of $\ptr{X}$. Then $\newtr{S}$ is a subsemigroup of $\tr{\newtr{X}}$ isomorphic to $S$.
\end{proposition}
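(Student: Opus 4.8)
The plan is to show that the assignment $\phi\colon S \to \tr{\newtr{X}}$ given by $\beta \mapsto \newtr{\beta}$ is an injective semigroup homomorphism. Its image is, by construction, exactly $\newtr{S}$; and since an injective homomorphism carries its domain isomorphically onto its image, this will establish at once that $\newtr{S}$ is a subsemigroup of $\tr{\newtr{X}}$ and that it is isomorphic to $S$.

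First I would verify the homomorphism property $\newtr{\alpha\beta}=\newtr{\alpha}\,\newtr{\beta}$ for all $\alpha,\beta\in S$. Both sides are full transformations of $\newtr{X}$, so it suffices to check that they agree at each point $x\in\newtr{X}$. The point $x=\new$ is immediate, since $\new\newtr{\alpha}=\new$ and so both sides send $\new$ to $\new$. For $x\in X$ I would split into cases according to whether $x\in\dom\alpha$ and, when it is, whether $x\alpha\in\dom\beta$, using the basic fact that $x\in\dom(\alpha\beta)$ precisely when $x\in\dom\alpha$ and $x\alpha\in\dom\beta$. In the single case where $x\in\dom\alpha$ and $x\alpha\in\dom\beta$, both sides equal $x(\alpha\beta)$; in every other case the composite $x\newtr{\alpha}\newtr{\beta}$ reaches $\new$ exactly when $x\notin\dom(\alpha\beta)$, so both sides equal $\new$. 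This case analysis is the main (and essentially the only) computational step, and the point to get right is matching the ``sink'' behaviour of $\new$ under $\newtr{\alpha}$ and $\newtr{\beta}$ with the domain of the composite partial transformation $\alpha\beta$.

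Next I would check injectivity. Because $\beta\in\ptr{X}$, we have $x\beta\in X$ whenever $x\in\dom\beta$, so $x\newtr{\beta}=\new$ holds if and only if $x\notin\dom\beta$. Hence from $\newtr{\beta}$ one recovers both the domain $\dom\beta=\gset{x\in X}{x\newtr{\beta}\neq\new}$ and the values $x\beta=x\newtr{\beta}$ for each $x$ in that domain; therefore $\newtr{\alpha}=\newtr{\beta}$ forces $\alpha=\beta$.

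Finally, $\phi$ is surjective onto $\newtr{S}$ by the definition of $\newtr{S}$, so $\phi$ restricts to an isomorphism $S\to\newtr{S}$, and $\newtr{S}$ is a subsemigroup of $\tr{\newtr{X}}$ isomorphic to $S$. I expect the only delicate part to be the bookkeeping in the case analysis for the homomorphism property; the injectivity and surjectivity are routine once the ``undefined $\mapsto\new$'' encoding is understood.
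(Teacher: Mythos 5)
Your proposal is correct and follows essentially the same route as the paper's proof: establish $\newtr{\alpha}\newtr{\beta}=\newtr{(\alpha\beta)}$ by a pointwise case analysis keyed to $\dom(\alpha\beta)$, deduce that $\beta\mapsto\newtr{\beta}$ is a homomorphism with image $\newtr{S}$, and prove injectivity by recovering $\dom\beta$ as the set of points not sent to $\new$. The only difference is cosmetic (you organize the cases by membership in $\dom\alpha$ and $\dom\beta$ rather than by membership in $\dom(\alpha\beta)$), so no further comparison is needed.
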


\begin{proof}
	First, we are going to establish that $\newtr{\beta}\newtr{\gamma}=\newtr{\parens{\beta\gamma}}$ for all $\beta,\gamma\in S$. Let $\beta,\gamma\in S$. We have three possible cases.
	
	\smallskip
	
	\textit{Case 1:} Assume that $x\in\dom\beta\gamma$. Then $x\in\dom\beta$ and $x\beta\in\dom\gamma$, which implies that 
	\begin{align*}
		x\newtr{\beta}\newtr{\gamma} &=\parens{x\newtr{\beta}}\newtr{\gamma}\\
		&=\parens{x\beta}\newtr{\gamma}  &\bracks{\text{because } x\in\dom\beta}\\
		&=\parens{x\beta}\gamma &\bracks{\text{because } x\beta\in\dom\gamma}\\
		&=x\beta\gamma\\
		&=x\newtr{\parens{\beta\gamma}}. &\bracks{\text{because } x\in\dom\beta\gamma}
	\end{align*}
	
	\smallskip
	
	\textit{Case 2:} Assume that $x\in \newtr{X}\setminus\dom\beta\gamma$ and $x\in\dom\beta$. Then $x\beta\in \newtr{X}\setminus\dom\gamma$. We have
	\begin{align*}
		x\newtr{\beta}\newtr{\gamma} &=\parens{x\newtr{\beta}}\newtr{\gamma}\\
		&=\parens{x\beta}\newtr{\gamma}  &\bracks{\text{because } x\in\dom\beta}\\
		&=\new &\bracks{\text{because } x\beta\in \newtr{X}\setminus\dom\gamma}\\
		&=x\newtr{\parens{\beta\gamma}}. &\bracks{\text{because } x\in \newtr{X}\setminus\dom\beta\gamma}
	\end{align*}
	
	\smallskip
	
	\textit{Case 3:} Assume that $x\in \newtr{X}\setminus\dom\beta\gamma$ and $x\in \newtr{X}\setminus\dom\beta$. Hence
	\begin{align*}
		x\newtr{\beta}\newtr{\gamma} &=\parens{x\newtr{\beta}}\newtr{\gamma}\\
		&=\new\newtr{\gamma}  &\bracks{\text{because } x\in \newtr{X}\setminus\dom\beta}\\
		&=\new &\bracks{\text{because } \new\newtr{\gamma}=\new}\\
		&=x\newtr{\parens{\beta\gamma}}. &\bracks{\text{because } x\in \newtr{X}\setminus\dom\beta\gamma}
	\end{align*}
	
	\smallskip
	
	This concludes the proof that $\newtr{\beta}\newtr{\gamma}=\newtr{\parens{\beta\gamma}}$ for all $\beta,\gamma\in S$.
	
	Let $\Psi: S \to \tr{\newtr{X}}$ be the map defined by $\beta\Psi=\newtr{\beta}$ for all $\beta\in S$. It is clear that $\parens{\beta\gamma}\Psi= \newtr{\parens{\beta\gamma}} =\newtr{\beta}\newtr{\gamma} =\parens{\beta\Psi}\parens{\gamma\Psi}$ for all $\beta,\gamma\in S$, which implies that $\Psi$ is a homomorphism. Hence $\newtr{S}=S\Psi$ is a subsemigroup of $\tr{\newtr{X}}$.
	
	Now we are going to check that $\Psi$ is also injective. Let $\beta,\gamma\in S$ be such that $\beta\Psi=\gamma\Psi$. Then $\newtr{\beta}=\newtr{\gamma}$ and, consequently, for all $x\in \newtr{X}$ we have
	\begin{displaymath}
		x\in \dom\beta \iff x\newtr{\beta}\neq\new \iff x\newtr{\gamma}\neq\new \iff x\in \dom\gamma.
	\end{displaymath}
	Hence $\dom\beta=\dom\gamma$. Furthermore, for all $x\in \dom\beta=\dom\gamma$ we have $x\beta=x\newtr{\beta}=x\newtr{\gamma}=x\gamma$. Thus $\beta=\gamma$ and $\Psi$ is injective.
	
	We have that $\Psi$ is an injective homomorphism. Thus $S$ is isomorphic to $S\Psi=\newtr{S}$.
\end{proof}

\subsection{Alphabets and words}

An \defterm{alphabet} $X$ is a non-empty set whose elements are called \defterm{letters}. A \defterm{word} over $X$ is a finite sequence of letters of $X$; that is, a word is a sequence of the form $x_1x_2\cdots x_n$, where for all $i\in\Xn$ we have that $x_i\in X$ is a letter. The \defterm{length} of a word corresponds to the number of letters of that word (and so the length of the word $x_1x_2\cdots x_n$ is $n$). The word with no letters, which has length $0$, is called the \defterm{empty word} and it is usually denoted by $\varepsilon$. When we have two words $x_1x_2\cdots x_n$ and $y_1y_2\cdots y_m$, we can use the operation of \defterm{concatenation}, which is associative, to form a new word $x_1x_2\cdots x_ny_1y_2\cdots y_m$. The set of all words over $X$ (including the empty word $\varepsilon$) is $X^*$, which, when equipped with the operation of concatenation, forms a monoid (and $\varepsilon$ is its identity). A \defterm{prefix} of a word $w\in X^*$ is another word $u\in X^*$ such that $w=uv$ for some word $v\in X^*$. This means that the prefix of a word $x_1x_2\cdots x_n$ is either $\varepsilon$ or a word of the form $x_1x_2\cdots x_m$ for some $m\in\Xn$.


 \section{The largest commutative (full and partial) transformation semigroups of idempotents}\label{sec: largest comm smg idemp T(X)}
 
 Recall that $X$ denotes a finite set. The aim of this section is to establish that the maximum size of a commutative subsemigroup of idempotents of $\tr{X}$ (respectively, $\ptr{X}$) is $2^{\abs{X}-1}$ (respectively, $2^{\abs{X}}$). We also characterize the commutative subsemigroups of idempotents of $\tr{X}$ of maximum size, and we prove that there is exactly one commutative subsemigroup of idempotents of $\ptr{X}$ of maximum size --- namely $\idemp{\psym{X}}$, the set of idempotents of $\psym{X}$.

 We begin by characterizing the largest commutative subsemigroups of idempotents of $\tr{X}$. With this goal in mind, for each $x\in X$ we define
 \begin{equation}\label{T(X): Gamma^X_i definition}
 	\commidemp{X}{x}=\gset{\alpha\in \tr{X}}{x\alpha=x \text{ and } y\alpha\in\set{x,y} \text{ for all } y\in X\setminus\set{x}}.
 \end{equation}
 
 \begin{proposition}\label{T(X): Gamma^X_i comm smg idemp}
 	For each $x\in X$, we have that $\commidemp{X}{x}$ is a commutative subsemigroup of idempotents of $\tr{X}$ of size $2^{\abs{X}-1}$.
 \end{proposition}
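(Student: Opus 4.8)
The plan is to set up a bijection between $\commidemp{X}{x}$ and the collection of all subsets of $X\setminus\set{x}$ under which composition corresponds to union; this single observation yields all four assertions at once. Concretely, the definition forces each $\alpha\in\commidemp{X}{x}$ to fix $x$ and to satisfy $y\alpha\in\set{x,y}$ for every $y\neq x$, so $\alpha$ is completely determined by the set $A_\alpha=\gset{y\in X\setminus\set{x}}{y\alpha=x}$ of points it collapses onto $x$. Conversely, any subset $A\subseteq X\setminus\set{x}$ defines a transformation $\alpha_A\in\commidemp{X}{x}$ by $x\alpha_A=x$, by $y\alpha_A=x$ for $y\in A$, and by $y\alpha_A=y$ for $y\in X\setminus\parens{A\cup\set{x}}$. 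First I would verify that $A\mapsto\alpha_A$ is a bijection; since $X\setminus\set{x}$ has $\abs{X}-1$ elements, this immediately gives $\abs{\commidemp{X}{x}}=2^{\abs{X}-1}$.

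The key step is to compute products, showing $\alpha_A\alpha_B=\alpha_{A\cup B}$ for all $A,B\subseteq X\setminus\set{x}$ by evaluating both sides at each point of $X$. At $x$ both sides fix $x$. For $y\neq x$, a point $y$ is sent to $x$ by $\alpha_A\alpha_B$ exactly when either $y\in A$ (so $y\alpha_A=x$ and then $x\alpha_B=x$) or $y\notin A$ and $y\in B$ (so $y\alpha_A=y$ and then $y\alpha_B=x$); that is, precisely when $y\in A\cup B$, and otherwise $y$ is fixed. This is exactly the action of $\alpha_{A\cup B}$, proving the product formula.

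With the product formula in hand, the remaining claims follow without further computation. Closure under composition holds because $A\cup B$ is again a subset of $X\setminus\set{x}$, so $\commidemp{X}{x}$ is a subsemigroup of $\tr{X}$. Commutativity follows from commutativity of union, $\alpha_A\alpha_B=\alpha_{A\cup B}=\alpha_{B\cup A}=\alpha_B\alpha_A$, and idempotency follows from idempotency of union, $\alpha_A\alpha_A=\alpha_{A\cup A}=\alpha_A$, so every element is an idempotent. In effect the map $A\mapsto\alpha_A$ is an isomorphism onto the semilattice of subsets of $X\setminus\set{x}$ ordered by union.

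I expect the only real work to be the verification of the product formula $\alpha_A\alpha_B=\alpha_{A\cup B}$, a short case analysis on whether a point lies in $A$, in $B\setminus A$, or outside $A\cup B$; everything else is formal once this is established. The argument is robust enough that one could instead prove closure, commutativity, idempotency, and cardinality directly from the defining conditions $x\alpha=x$ and $y\alpha\in\set{x,y}$, but routing everything through the union structure confines the case analysis to a single place.
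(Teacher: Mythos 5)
Your proof is correct, and it takes a genuinely different route from the paper. The paper verifies each of the four assertions separately and directly: it counts the semigroup by noting there is one choice for the image of $x$ and two choices for the image of each other point, proves closure by a direct computation, proves commutativity by a four-case analysis on the values $y\alpha,y\beta\in\set{x,y}$, and checks idempotency pointwise. You instead encode each $\alpha\in\commidemp{X}{x}$ by the set $A_\alpha$ of points it collapses onto $x$ and establish the single identity $\alpha_A\alpha_B=\alpha_{A\cup B}$, from which closure, commutativity ($A\cup B=B\cup A$), idempotency ($A\cup A=A$), and the cardinality $2^{\abs{X}-1}$ all follow formally. The underlying case analysis in your product formula is essentially the same computation as the paper's commutativity cases, but your organization concentrates it in one place and yields strictly more information: it exhibits $\commidemp{X}{x}$ as a semigroup isomorphic to the power set of $X\setminus\set{x}$ under union, i.e.\ it identifies the isomorphism type as a semilattice, which the paper's elementwise verification does not make explicit. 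The paper's approach is slightly more elementary and self-contained; yours is more structural and would make related facts (e.g.\ that $\commidemp{X}{x}$ is a lattice of idempotents, or comparisons with $\idemp{\psym{X}}$ in Corollary~\ref{P(X): largest comm smg idemp}) immediate.
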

 
 \begin{proof}
 	Let $x\in X$. For each $\alpha\in\commidemp{X}{x}$ we have $x\alpha=x$ and $y\alpha\in\set{x,y}$ for all $y\in X\setminus\set{x}$. Since there is exactly one possibility for the image of $x$ and exactly two possibilities for the image of every element of $X\setminus\set{x}$, we have that $\abs{\commidemp{X}{x}}=2^{\abs{X\setminus\set{x}}}=2^{\abs{X}-1}$.
 	
 	Now we establish that $\commidemp{X}{x}$ is a subsemigroup of $\tr{X}$. Let $\alpha,\beta \in \commidemp{X}{x}$. Then $x\alpha=x\beta=x$ and $y\alpha,y\beta\in\set{x,y}$ for all $y\in X\setminus\set{x}$. Hence we have $x\alpha\beta=x\beta=x$ and $y\alpha\beta=\parens{y\alpha}\beta\in\set{x\beta,y\beta}\subseteq\set{x,y}$ for all $y\in X\setminus\set{x}$. Therefore $\commidemp{X}{x}$ is a subsemigroup of $\tr{X}$.
 	
 	Let $y\in X$. In the four cases below we prove that $\commidemp{X}{x}$ is commutative.
 	
 	\smallskip
 	
 	\textit{Case 1:} Assume that $y\alpha=y\beta=x$. Then $y\alpha\beta=x\beta=x=x\alpha=y\beta\alpha$.
 	
 	\smallskip
 	
 	\textit{Case 2:} Assume that $y\alpha=x$ and $y\beta=y$. Then $y\alpha\beta=x\beta=x=y\alpha=y\beta\alpha$.
 	
 	\smallskip
 	
 	\textit{Case 3:} Assume that $y\alpha=y$ and $y\beta=x$. Then $y\alpha\beta=y\beta=x=x\alpha=y\beta\alpha$.
 	
 	\smallskip
 	
 	\textit{Case 4:} Assume that $y\alpha=y\beta=y$. Then $y\alpha\beta=y\beta=y=y\alpha=y\beta\alpha$.
 	
 	\smallskip
 	
 	Additionally, for all $y\in X$ we have
 	\begin{displaymath}
 		y\alpha^2 = (y\alpha)\alpha = \braces*{
 			\begin{array}{@{}l@{}ll@{}}
 				y\alpha && \text{(if $y\alpha = y$)} \\[1mm]
 				x\alpha &{}= x& \text{(if $y\alpha = x$)} \\
 			\end{array}
 		} = y\alpha,
 	\end{displaymath}
 	and so $\alpha$ is an idempotent. Therefore $\commidemp{X}{x}$ is a semigroup of idempotents.
 \end{proof}
 
 Now our objective is to show that the largest commutative subsemigroups of idempotents of $\tr{X}$ are precisely the semigroups $\commidemp{X}{x}$ (where $x\in X$), which have size $2^{\abs{X}-1}$. We will prove this result by induction on the size of $X$ (Theorem~\ref{T(X): maximum size comm smg idemp}). In order to use the induction step we need to be able to take a commutative subsemigroup $S$ of idempotents of $\tr{X}$ and use it to construct a commutative subsemigroup of idempotents of $\tr{Y}$, for some $Y\subsetneq X$. It follows from Lemma~\ref{T(X): lemma induction} that it is enough to prove the existence of a non-empty proper subset $Y$ of $X$ such that $\alpha|_Y\in\tr{Y}$ for all $\alpha \in S$. In Lemma~\ref{T(X): existence of I, b|_(X/I)} we will prove that this set exists whenever $S\nsubseteq\sym{X}$. Moreover, Lemma~\ref{T(X): lemma alpha|_I cycle} will be helpful in proving that, when we are dealing with commutative semigroups of idempotents, we can assume that $Y$ has size $\abs{X}-1$.
 
 \begin{lemma}\label{T(X): ab=ba => xb€Im a for all x€Im a}
 	Let $\alpha,\beta\in\tr{X}$ be such that $\alpha\beta=\beta\alpha$. Then $x\beta\in\im\alpha$ for all $x\in\im\alpha$.
 \end{lemma}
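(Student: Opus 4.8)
The plan is to unwind the definition of $\im\alpha$ and then push a factorization through the commutativity hypothesis. First I would fix an arbitrary element $x\in\im\alpha$ and, by the definition of the image, produce a preimage $y\in X$ with $x=y\alpha$. The whole point is then to compute $x\beta$ and recognize it as an element of $\im\alpha$.

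The key computation is the chain
\begin{displaymath}
	x\beta=\parens{y\alpha}\beta=y\parens{\alpha\beta}=y\parens{\beta\alpha}=\parens{y\beta}\alpha,
\end{displaymath}
where the middle equality is exactly the hypothesis $\alpha\beta=\beta\alpha$ and the outer equalities are just associativity of composition (recall that transformations act on the right, so that $y\parens{\alpha\beta}=\parens{y\alpha}\beta$). The final expression $\parens{y\beta}\alpha$ is visibly of the form ``$\parens{\text{something}}\alpha$'', hence lies in $\im\alpha$; and since $x$ was an arbitrary element of $\im\alpha$, this establishes the claim.

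There is essentially no obstacle here: the statement follows from a single use of commutativity after rewriting $x$ as $y\alpha$. The only point requiring care is the left-to-right composition convention, so that $\alpha\beta$ means ``first $\alpha$, then $\beta$'' and $y\parens{\alpha\beta}=\parens{y\alpha}\beta$. Keeping that bookkeeping straight is precisely what makes the factorization $x\beta=\parens{y\beta}\alpha$ land inside $\im\alpha$ (rather than inside $\im\beta$), which is the content of the lemma.
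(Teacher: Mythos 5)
Your proof is correct and is essentially identical to the paper's: both take $x=y\alpha$, apply the hypothesis $\alpha\beta=\beta\alpha$ to rewrite $x\beta=y\alpha\beta$ as $(y\beta)\alpha$, and conclude membership in $\im\alpha$. The extra remarks about associativity and the right-action convention are just a more explicit spelling-out of the same one-line computation.
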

 
 \begin{proof}
 	Let $x\in\im \alpha$. Then there exists $y\in X$ such that $y\alpha=x$. We have $x\beta=y\alpha\beta=y\beta\alpha\in\im\alpha$.
 \end{proof}
 
 We mentioned earlier that the lemma below is going to be used (in the proof of Theorem~\ref{T(X): maximum size comm smg idemp}) to obtain a set $Y\subsetneq X$ and a commutative subsemigroup of idempotents of $\tr{Y}$ from a commutative subsemigroup of idempotents of $\tr{X}$. However, Lemma~\ref{T(X): existence of I, b|_(X/I)} can be employed more generally, in the sense that it can be applied to any commutative subsemigroup of $\tr{X}$ to obtain a set $Y\subsetneq X$ and a commutative subsemigroup of $\tr{Y}$. This will be useful later in Theorem~\ref{T(X): maximum size comm smg}, which gives the maximum size of a commutative subsemigroup of $\tr{X}$ (when $\abs{X}\leqslant 6$).
 
 \begin{lemma}\label{T(X): existence of I, b|_(X/I)}
 	Let $S$ be a commutative subsemigroup of $\tr{X}$. Suppose that $S\nsubseteq \sym{X}$. Then there exists a non-empty proper subset $I$ of $X$ such that $\beta|_{X\setminus I}\in\tr{X\setminus I}$ for all $\beta\in S$.
 \end{lemma}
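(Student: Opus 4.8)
The plan is to produce the required set as the complement of the image of a single non-bijective element of $S$, and then to use commutativity to show that this image is stable under every element of $S$. Observe first that the conclusion is equivalent to finding a proper nonempty subset $J=X\setminus I$ of $X$ that is \emph{invariant} under $S$, in the sense that $y\beta\in J$ whenever $y\in J$ and $\beta\in S$: for such a $J$ we get $\beta|_{J}\in\tr{J}$ for all $\beta\in S$, and $I=X\setminus J$ is nonempty and proper precisely when $J$ is proper and nonempty.

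To build $J$, I would use the hypothesis $S\nsubseteq\sym{X}$ to choose some $\alpha\in S$ that is not a bijection. Since $X$ is finite and $\alpha$ is a full transformation, failing to be a bijection is the same as failing to be surjective, so $\im\alpha\subsetneq X$; and $\im\alpha\neq\emptyset$ because $X\neq\emptyset$. Thus $J:=\im\alpha$ is a proper nonempty subset of $X$, and correspondingly $I:=X\setminus\im\alpha$ is nonempty (as $\im\alpha$ is proper) and proper (as $\im\alpha$ is nonempty).

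It remains to check that $J=\im\alpha$ is $S$-invariant. Fix $\beta\in S$. Commutativity of $S$ gives $\alpha\beta=\beta\alpha$, so Lemma~\ref{T(X): ab=ba => xb€Im a for all x€Im a} applies to the pair $\alpha,\beta$ and yields $x\beta\in\im\alpha$ for every $x\in\im\alpha$. Hence $\beta$ maps $J$ into $J$, i.e.\ $\beta|_{X\setminus I}\in\tr{X\setminus I}$. Since $\alpha$ is chosen once and for all while $\beta$ ranges over $S$, the same set $I$ works simultaneously for every $\beta\in S$, which is exactly what the statement demands.

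I do not expect a genuine obstacle here: the argument is a direct application of Lemma~\ref{T(X): ab=ba => xb€Im a for all x€Im a}. The only points that require care are to commit to a \emph{single} $\alpha$ (so that one set $I$ serves all of $S$) and to invoke the finiteness of $X$ at the step where ``not a permutation'' is upgraded to ``image strictly smaller than $X$''.
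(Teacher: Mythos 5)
Your proof is correct and is essentially identical to the paper's: both take $I=X\setminus\im\alpha$ for a single non-bijective $\alpha\in S$ (using finiteness of $X$ to get $\im\alpha\subsetneq X$) and then invoke Lemma~\ref{T(X): ab=ba => xb€Im a for all x€Im a} to conclude that $\im\alpha$ is invariant under every $\beta\in S$.
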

 
 \begin{proof}
 	It follows from the fact that $S\nsubseteq \sym{X}$ that there exists $\alpha\in S$ such that $\alpha\notin \sym{X}$. Let $I=X\setminus\im\alpha\subseteq X$. We have $\im\alpha\neq\emptyset$ and $\im\alpha\neq X$ (because $\alpha\notin\sym{X}$), which implies that $I\neq X$ and $I\neq\emptyset$. 
 	
 	Let $\beta\in S$. Then $\alpha\beta=\beta\alpha$. It follows from Lemma~\ref{T(X): ab=ba => xb€Im a for all x€Im a} that for all $x\in X\setminus I=\im\alpha$ we have $x\beta\in \im\alpha=X\setminus I$. Hence $\beta|_{X\setminus I}\in\tr{X\setminus I}$.
 \end{proof}
 
 Lemma~\ref{T(X): existence of I, b|_(X/I)} motivates the following definition: for any commutative subsemigroup $S$ of $\tr{X}$, let
 \begin{equation}\label{T(X): C(S,X) definition}
 	\setclass{S}{X}=\gset{I}{\emptyset\neq I\subsetneq X \text{ and } \beta|_{X\setminus I}\in\tr{X\setminus I} \text{ for all } \beta\in S}.
 \end{equation}
 We observe that the class $\setclass{S}{X}$ might be empty. However, if $S\nsubseteq\sym{X}$, then Lemma~\ref{T(X): existence of I, b|_(X/I)} implies that $\setclass{S}{X}$ is not empty.
 
 \begin{lemma}\label{T(X): lemma alpha|_I cycle}
 	Let $S$ be a commutative subsemigroup of $\tr{X}$ such that $\setclass{S}{X}\neq\emptyset$ and let $I\in\setclass{S}{X}$ be of minimum size. If $\abs{I}\geqslant 2$, then there exists $\alpha \in S$ such that $\alpha|_I\in\sym{I}$ is a product of disjoint cycles of the same length and that length is at least $2$.
 \end{lemma}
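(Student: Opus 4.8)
The plan is to reduce the statement to a transitive abelian group action on $I$ and then read off the cycle structure. Throughout, write $J=X\setminus I$. The condition $I\in\setclass{S}{X}$ says exactly that $J$ is $S$-invariant (i.e. $J\beta\subseteq J$ for all $\beta\in S$) and $J\neq X$, while choosing $I$ of minimum size is equivalent to choosing $J$ to be a \emph{maximal proper $S$-invariant subset} of $X$ (if some invariant $J'$ sat strictly between $J$ and $X$, then $X\setminus J'\in\setclass{S}{X}$ would be smaller than $I$). This maximality is the only property of $I$ I will use, and it enters twice.

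First I would prove a dichotomy: for every $\beta\in S$, either $I\beta\subseteq I$ or $I\beta\subseteq J$. The device is to consider $J_\beta=J\cup\gset{x\in I}{x\beta\in J}$ and check, using commutativity, that $J_\beta$ is again $S$-invariant: for $x\in I$ with $x\beta\in J$ and any $\gamma\in S$ we have $(x\gamma)\beta=(x\beta)\gamma\in J$, so $x\gamma$ lands either in $J$ or in the defining set. Since $J\subseteq J_\beta$, maximality forces $J_\beta=J$ (giving $I\beta\subseteq I$) or $J_\beta=X$ (giving $I\beta\subseteq J$). I then set $S_I=\gset{\beta\in S}{I\beta\subseteq I}$, a subsemigroup of $S$ whose elements restrict to members of $\tr{I}$. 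Next I would record transitivity: for $x\in I$, the smallest $S$-invariant set containing $J\cup\set{x}$ equals $J\cup(\set{x}\cup xS)$ (as $JS\subseteq J$), and since it properly contains $J$ it must be all of $X$; hence $I\subseteq\set{x}\cup xS$, and because elements of $S\setminus S_I$ send $x$ into $J$ by the dichotomy, in fact $I\subseteq\set{x}\cup xS_I$. So for distinct $x,y\in I$ there is $\beta\in S_I$ with $x\beta=y$.

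The step I expect to be the crux is showing that \emph{every idempotent $e\in S_I$ satisfies $e|_I=\id{I}$}. Put $I_0=\im(e|_I)\subseteq I$, which is non-empty since $I\neq\emptyset$. Note $\mathrm{Fix}(e)\subseteq J\cup I_0$ (a fixed point lying in $I$ is in $\im(e|_I)$). Using commutativity, for $z\in I_0$ and any $\gamma\in S$ one computes $(z\gamma)e=z e\gamma=z\gamma$, so $z\gamma\in\mathrm{Fix}(e)\subseteq J\cup I_0$; combined with $J\gamma\subseteq J$, this shows $J\cup I_0$ is $S$-invariant. Maximality of $J$ then forces $I_0=\emptyset$ or $I_0=I$, and as $I_0\neq\emptyset$ we get $I_0=I$; an idempotent transformation of $I$ with full image is $\id{I}$, so $e|_I=\id{I}$.

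Finally I would extract the element. The finite commutative semigroup $S_I$ (non-empty since $\abs I\geq 2$ by transitivity) has a minimal ideal $G$ that is an abelian group; its identity is an idempotent of $S_I$, hence restricts to $\id{I}$ by the previous step. Consequently each $g\in G$ restricts to a \emph{permutation} of $I$ (its group inverse restricts to the inverse permutation), so $G|_I$ is an abelian subgroup of $\sym{I}$; it is transitive because for $a\neq b$ in $I$ there is $\beta\in S_I$ with $a\beta=b$, and then $e\beta\in G$ satisfies $a(e\beta)=b$. A transitive abelian permutation group is regular, so $\abs{G|_I}=\abs I\geq 2$, and every non-identity $\alpha|_I$ (with $\alpha\in G$) acts without fixed points and semiregularly, i.e. is a product of disjoint cycles all of length equal to its order, which is at least $2$. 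Choosing any $\alpha\in G$ with $\alpha|_I\neq\id{I}$ yields the desired transformation, completing the proof.
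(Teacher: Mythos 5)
Your proof is correct, and it takes a genuinely different route from the paper's. The paper's proof stays with a single transformation: minimality of $I$ applied to a singleton $\set{y}$, $y\in I$, produces one $\alpha\in S$ with $y\in I\alpha$, and then three further minimality arguments --- applied to the invariant complements of $I\setminus I\alpha$, of $\gset{z\in I}{z\alpha\neq z}$, and of the set of points lying on cycles of maximal length --- show in turn that this particular $\alpha|_I$ is a permutation, is fixed-point-free, and has all cycles of equal length. You instead build structure around $S$: the invariance dichotomy for each $\beta\in S$, the subsemigroup $S_I$ and its transitivity on $I$, the fact that every idempotent of $S_I$ restricts to $\id{I}$, and finally the kernel $G$ of the finite commutative semigroup $S_I$, which restricts to a transitive abelian --- hence regular --- permutation group on $I$ whose non-identity elements automatically have the required uniform cycle structure. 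Both arguments run on the same engine, namely manufacturing $S$-invariant subsets via commutativity and invoking the minimality of $I$ (equivalently, the maximality of $X\setminus I$ among proper invariant subsets); your sets $J_\beta$, $J\cup\set{x}\cup xS$ and $J\cup I_0$ play the role of the paper's three sets. The trade-off: the paper's proof is elementary and self-contained, using nothing beyond the definitions, while yours imports two standard facts (the kernel of a finite commutative semigroup is an abelian group; a transitive abelian permutation group is regular) and in exchange explains conceptually why the lemma holds and delivers more --- a regular abelian group $G|_I$ of order $\abs{I}$, all of whose non-identity elements witness the conclusion, rather than a single $\alpha$.
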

 
 \begin{proof}
 	Suppose that $\abs{I}\geqslant 2$. Let $y\in I$. We have that $\set{y}\subsetneq I$ and $\emptyset\neq\set{y}\subsetneq X$. Moreover, since $I$ is an element of $\setclass{S}{X}$ of minimum size, then $\set{y}\notin\setclass{S}{X}$. Hence there exists $\alpha\in S$ such that $\parens{X\setminus\set{y}}\alpha\nsubseteq X\setminus\set{y}$. This implies the existence of $x\in X\setminus\set{y}$ such that $x\alpha\notin X\setminus\set{y}$, that is, such that $x\alpha=y$. Due to the fact that $I\in\setclass{S}{X}$, we have that $\parens{X\setminus I}\alpha\subseteq X\setminus I\subseteq X\setminus\set{y}$. Then the fact that $x\alpha\notin X\setminus\set{y}$ implies that $x\alpha\notin \parens{X\setminus I}\alpha$ and, consequently, that $x\in I$. Hence $y=x\alpha\in I\alpha$.
 	
 	We divide the remaining of the proof into three parts: first we are going to prove that $\alpha|_I\in\sym{I}$, then we are going to prove that all cycles in $\alpha|_I$ have length at most $2$ and finally we are going to prove that all cycles in $\alpha|_I$ have the same length.
 	
 	\medskip
 	
 	\textbf{Part 1.} The aim of this part is to prove that $\alpha|_I\in\sym{I}$. In order to do this, we show that $I\alpha=I$, so that $\alpha|_I$ is a bijection. Let $J=I\setminus I\alpha$. The fact that $y\in I\cap I\alpha$ implies that $J\subsetneq I$ and so $\abs{J}<\abs{I}$.
 	
 	Let $\beta\in S$ and $z\in X\setminus J$. Then $z\in X\setminus I$ or $z\in I\alpha$.
 	
 	\smallskip
 	
 	\textit{Case 1:} Assume that $z\in X\setminus I$. Since $I\in\setclass{S}{X}$, then we have $z\beta\in \parens{X\setminus I}\beta\subseteq X\setminus I\subseteq X\setminus J$.
 	
 	\smallskip
 	
 	\textit{Case 2:} Assume that $z\in I\alpha$. Then there exists $i\in I$ such that $z=i\alpha$. It follows from the fact that $S$ is commutative that $z\beta=i\alpha\beta=i\beta\alpha$. If $i\beta\in X\setminus I$, then $z\beta\in \parens{X\setminus I}\alpha\subseteq X\setminus I\subseteq X\setminus J$ (because $I\in\setclass{S}{X}$). If $i\beta \in I$, then $z\beta\in I\alpha$, which implies that $z\beta \in X\setminus J$.
 	
 	\smallskip
 	
 	Since $\beta$ and $z$ are arbitrary elements of $S$ and $X\setminus J$, respectively, then we can conclude that $\parens{X\setminus J}\beta\subseteq X\setminus J$ for all $\beta\in S$, that is, $\beta|_{X\setminus J}\in\tr{X\setminus J}$ for all $\beta \in S$. As a consequence of the minimality of the size of $I$ we must have $J\notin\setclass{S}{X}$. Hence $J=\emptyset$ (because $J\subsetneq I\subsetneq X$) and, consequently, $I\subseteq I\alpha$. Since $\abs{I\alpha}\leqslant\abs{I}$, then $I=I\alpha$, which concludes the proof of part 1.
 	
 	\medskip
 	
 	\textbf{Part 2.} In the previous part we proved that $\alpha|_I\in\sym{I}$. This implies that $\alpha|_I$ can be written as the product of disjoint cycles. The aim of this part is to see that none of those cycles have length $1$. We prove this by establishing that $z\alpha\neq z$ for all $z\in I$. Let $Y=\gset{z\in I}{z\alpha\neq z}$. We have $\abs{Y}\leqslant\abs{I}$.
 	
 	Let $\beta\in S$ and $z\in X\setminus Y$. Then we have $z\in X\setminus I$ or $z\alpha=z$.
 	
 	\smallskip
 	
 	\textit{Case 1:} Assume that $z\in X\setminus I$. It follows from the fact that $I\in\setclass{S}{X}$ that $z\beta\in\parens{X\setminus I}\beta\subseteq X\setminus I\subseteq X\setminus Y$.
 	
 	\smallskip
 	
 	\textit{Case 2:} Assume that $z\alpha=z$. Since $S$ is commutative, then we have that $\parens{z\beta}\alpha=z\alpha\beta=z\beta$. Hence $z\beta\in X\setminus Y$.
 	
 	\smallskip
 	
 	Since $\beta$ and $z$ are arbitrary elements of $S$ and $X\setminus Y$, respectively, then we can conclude that $\beta|_{X\setminus Y}\in\tr{X\setminus Y}$ for all $\beta \in S$. We also have $Y\subseteq I\subsetneq X$. Moreover, the fact that $x\in I$ and $x\alpha=y\neq x$ implies that $x\in Y$ and, consequently, that $Y\neq\emptyset$. Hence $Y\in\setclass{S}{X}$ and it follows from the minimality of the size of $I$ that $\abs{Y}\geqslant\abs{I}$. Therefore $\abs{Y}=\abs{I}$ and, consequently, $Y=I$. Thus $z\alpha\neq z$ for all $z\in I$, which implies that none of the cycles in $\alpha|_I$ has length $1$.
 	
 	\medskip
 	
 	\textbf{Part 3.} Finally, we are going to see that all the cycles in $\alpha|_{I}$ have the same length. Let $k$ be the maximum length of a cycle in $\alpha|_{I}$ and let $K=\gset{z\in I}{z \text{ belongs to a cycle in } \alpha|_{I} \text{ of } \allowbreak \text{length } k}$. We have $\abs{K}\leqslant\abs{I}$.
 	
 	Let $\beta\in S$ and $z\in X\setminus K$.
 	
 	\smallskip
 	
 	\textit{Case 1:} Assume that $z\in X\setminus I$. It follows from the fact that $I\in\setclass{S}{X}$ that $z\beta\in\parens{X\setminus I}\beta\subseteq X\setminus I\subseteq X\setminus K$.
 	
 	\smallskip
 	
 	\textit{Case 2:} Assume that $z\beta\in X\setminus I$. Then we can immediately conclude that $z\beta\in X\setminus K$.
 	
 	\smallskip
 	
 	\textit{Case 3:} Assume that $z\in I$ and $z\beta\in I$. Let $l$ be the length of the cycle in $\alpha|_I$ to which $z$ belongs. Since $z\in X\setminus K$, then that cycle has length at most $k-1$, that is, $l<k$. We have that $z\beta=\parens{z\alpha^l}\beta=\parens{z\beta}\alpha^l\in I\alpha^l=I$ (since $S$ is commutative and $I\alpha=I$, by part 1 of the proof). Hence $z\beta$ belongs to a cycle in $\alpha|_I$ whose length is at most $l$. Since $l<k$, then we can conclude that $z\beta\in X\setminus K$.
 	
 	\smallskip

 	
 	Since $\beta$ and $z$ are arbitrary elements of $S$ and $X\setminus K$, respectively, then this means we just proved that $\beta|_{X\setminus K}\in\tr{X\setminus K}$ for all $\beta \in S$. Since we also have $K\neq\emptyset$ and $K\subseteq I\subsetneq X$, then we can conclude that $K\in\setclass{S}{X}$. Furthermore, as a consequence of the minimality of the size of $I$ we have $\abs{K}\geqslant\abs{I}$. Thus $\abs{K}=\abs{I}$ and, consequently, $K=I$. This implies that all the elements of $I$ lie in a cycle in $\alpha|_I$ of maximum length. Therefore all cycles in $\alpha|_I$ have the same length.
 \end{proof}
 
 At last, we can characterize the maximum-order commutative subsemigroups of idempotents of $\tr{X}$.
 
 \begin{theorem}\label{T(X): maximum size comm smg idemp}
 	The maximum size of a commutative subsemigroup of idempotents of $\tr{X}$ is $2^{\abs{X}-1}$. Moreover, the maximum-order commutative subsemigroups of idempotents of $\tr{X}$ are precisely the semigroups $\commidemp{X}{x}$, where $x\in X$.
 \end{theorem}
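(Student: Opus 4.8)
The plan is to argue by induction on $\abs{X}$, with the semigroups $\commidemp{X}{x}$ (whose size $2^{\abs{X}-1}$ is already supplied by Proposition~\ref{T(X): Gamma^X_i comm smg idemp}) as the candidate extremal objects. The base case $\abs{X}=1$ is immediate, since then $\tr{X}=\set{\id{X}}=\commidemp{X}{x}$ for the unique $x\in X$. For the inductive step, let $S$ be a commutative subsemigroup of idempotents of $\tr{X}$ of maximum size. If $S\subseteq\sym{X}$ then, as the identity is the only idempotent permutation, $S=\set{\id{X}}$ has size $1<2^{\abs{X}-1}$ whenever $\abs{X}\geqslant 2$; so I may assume $S\nsubseteq\sym{X}$, and Lemma~\ref{T(X): existence of I, b|_(X/I)} guarantees $\setclass{S}{X}\neq\emptyset$. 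Choose $I\in\setclass{S}{X}$ of minimum size.

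The first key step is to show that $\abs{I}=1$. Suppose instead $\abs{I}\geqslant 2$. Then Lemma~\ref{T(X): lemma alpha|_I cycle} produces $\alpha\in S$ with $\alpha|_I\in\sym{I}$ a product of disjoint cycles all of some common length at least $2$. Since $\alpha$ is idempotent and $\alpha|_I$ maps $I$ onto itself, for $z\in I$ we get $z(\alpha|_I)^2=(z\alpha)\alpha=z\alpha^2=z\alpha|_I$, so $\alpha|_I$ is an idempotent permutation and hence the identity on $I$ --- contradicting the presence of cycles of length at least $2$. Thus $I=\set{x'}$ for some $x'\in X$. Writing $Y=X\setminus\set{x'}$, Lemma~\ref{T(X): lemma induction} shows that $S'=\gset{\beta|_Y}{\beta\in S}$ is a commutative subsemigroup of idempotents of $\tr{Y}$, to which the induction hypothesis applies.

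The second key step is a fibre bound for the restriction homomorphism $\beta\mapsto\beta|_Y$. Two elements of $S$ with the same restriction to $Y$ differ only in the image of $x'$, so I must bound the number of possible values of $x'\beta$. Idempotency forces $x'\beta$ to be a fixed point of $\beta$, since $(x'\beta)\beta=x'\beta^2=x'\beta$. If $\beta,\gamma\in S$ agree on $Y$ with $x'\beta=w_1\in Y$ and $x'\gamma=w_2\in Y$, then $w_1,w_2$ are fixed points of $\beta|_Y=\gamma|_Y$, and commutativity gives $w_1=(x'\beta)\gamma=x'(\beta\gamma)=x'(\gamma\beta)=(x'\gamma)\beta=w_2$. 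Hence each fibre contains at most one element with $x'\beta\in Y$ and at most one with $x'\beta=x'$, so fibres have size at most $2$ and $\abs{S}\leqslant 2\abs{S'}\leqslant 2\cdot 2^{\abs{X}-2}=2^{\abs{X}-1}$, which proves the bound.

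For the characterization, suppose $\abs{S}=2^{\abs{X}-1}$. Then both inequalities are equalities: $S'$ is a maximum-order commutative idempotent subsemigroup of $\tr{Y}$, so by induction $S'=\commidemp{Y}{x''}$ for some $x''\in Y$, and every fibre has size exactly $2$ --- one element fixing $x'$ and one sending $x'$ to some $w\in Y$. The remaining task, which I expect to be the main obstacle, is to show that the new point $x'$ behaves exactly like the other non-sink points, namely $x'\beta\in\set{x'',x'}$ for all $\beta\in S$; combined with the structure of $S'$ this gives $S\subseteq\commidemp{X}{x''}$, and equality of sizes forces $S=\commidemp{X}{x''}$. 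I would extract this by playing commutativity against the distinguished non-identity preimage $\beta_1$ of $\id{Y}$ (the unique element of $S$ fixing $Y$ pointwise but moving $x'$, whose existence is guaranteed by the fibre of $\id{Y}$ having size $2$). Writing $w=x'\beta_1$, I would first show $w=x''$: if $w\neq x''$, the idempotent in $\commidemp{Y}{x''}$ sending $w$ to $x''$ and fixing all other points of $Y$ would, by size-$2$ fibres, have a preimage in $S$ fixing $x'$, yet commuting that preimage with $\beta_1$ forces $w$ to be a fixed point of it, a contradiction. Once $w=x''$, reading off the identity $(x'\beta)\beta_1=(x'\beta_1)\beta=x''\beta=x''$ for arbitrary $\beta\in S$ yields $x'\beta\in\set{x',x''}$, as needed. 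Pinning down the sink $x''$ and transferring the "points to the sink" behaviour from $Y$ to the new point $x'$ is the delicate part; the rest is bookkeeping.
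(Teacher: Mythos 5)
Your proposal is correct and follows essentially the same strategy as the paper: induction on $\abs{X}$, ruling out $S\subseteq\sym{X}$, taking a minimum-size $I\in\setclass{S}{X}$, killing the case $\abs{I}\geqslant 2$ via idempotency against Lemma~\ref{T(X): lemma alpha|_I cycle}, and then the fibre-size-$2$ bound for the restriction map, which forces all the inequalities to be equalities. The only divergence is the endgame: the paper takes the fibre of the constant map $\gamma|_{X\setminus\set{i}}$ with image $\set{x}$, obtains $\gamma'\in S$ with $\im\gamma'=\set{x,i}$, and invokes Lemma~\ref{T(X): ab=ba => xb€Im a for all x€Im a} to conclude $i\beta\in\set{x,i}$ for all $\beta\in S$; you instead work with the fibre of $\id{X\setminus\set{x'}}$, pin down $x'\beta_1=x''$ by commuting $\beta_1$ against a preimage of the idempotent moving $w$ to $x''$, and then read off $x'\beta\in\set{x',x''}$ from $(x'\beta)\beta_1=x''$. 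Both endgames are valid; yours is self-contained (it never needs the image-invariance lemma), at the cost of the extra auxiliary argument showing $w=x''$, while the paper's is shorter once that lemma is available.
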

 
 \begin{proof}
 	We are going to prove, by induction on the size of $X$, that the largest commutative subsemigroups of idempotents of $\tr{X}$ are precisely the semigroups $\commidemp{X}{x}$, where $x\in X$. We note that, by Proposition~\ref{T(X): Gamma^X_i comm smg idemp}, these semigroups are commutative subsemigroups of idempotents of $\tr{X}$ of size $2^{\abs{X}-1}$.
 	
 	Suppose that $\abs{X}=1$. Then $\tr{X}=\set{\id{X}}=\commidemp{X}{\id{X}}$ is a commutative semigroup of idempotents of size $1=2^{\abs{X}-1}$.
 	
 	Suppose that $\abs{X}\geqslant 2$ and assume that for any set $Y$ such that $\abs{Y}=\abs{X}-1$ we have that the largest commutative subsemigroup of idempotents of $\tr{Y}$ are precisely the semigroups $\commidemp{Y}{x}$, where $x\in Y$. (This is the induction hypothesis).
 	
 	Let $S$ be a largest commutative subsemigroup of idempotents of $\tr{X}$. Since for each $x\in X$ we have that $\commidemp{X}{x}$ is a commutative subsemigroup of idempotents of $\tr{X}$ of size $2^{\abs{X}-1}$ (by Proposition~\ref{T(X): Gamma^X_i comm smg idemp}), then we have $\abs{S}\geqslant 2^{\abs{X}-1}$.
 	
 	As a consequence of the fact that $\abs{X}\geqslant 2$, we have $\abs{S}\geqslant 2^{\abs{X}-1}\geqslant 2$ and, since $S$ is a semigroup of idempotents, then $S$ contains an idempotent distinct from $\id{X}$ --- the unique idempotent of $\sym{X}$. Hence $S\nsubseteq\sym{X}$ and, consequently, Lemma~\ref{T(X): existence of I, b|_(X/I)} guarantees that the class $\setclass{S}{X}$ is not empty. Let $I$ be a smallest set in $\setclass{S}{X}$.
 	
 	Assume, with the aim of obtaining a contradiction, that $\abs{I}\geqslant 2$. Then Lemma~\ref{T(X): lemma alpha|_I cycle} guarantees the existence of $\alpha\in S$ such that $\alpha|_I\in \sym{I}$ is a product of (disjoint) cycles whose length is at least $2$. Let $i\in I$. Since $\alpha|_I\in\sym{I}$, then there exists $j\in I$ such that $j\alpha=i$. It follows from the fact that $\alpha$ is an idempotent that $i=j\alpha=j\alpha^2=\parens{j\alpha}\alpha=i\alpha=i\alpha|_I$. Therefore $\parens{i}$ is a cycle (of length $1$) in $\alpha|_I$, which is a contradiction.

 	
 	
 	
 	Therefore $\abs{I}=1$. Assume that $I=\set{i}$. We have that $\beta|_{X\setminus\set{i}}\in\tr{X\setminus\set{i}}$ for all $\beta\in S$. Let $S'=\gset{\beta|_{X\setminus\set{i}}}{\beta\in S}$. It follows from Lemma~\ref{T(X): lemma induction}, and the fact that $S$ is a commutative subsemigroup of idempotents of $\tr{X}$, that $S'$ is a commutative subsemigroup of idempotents of $\tr{X\setminus\set{i}}$. Since $\abs{X\setminus\set{i}}=\abs{X}-1$, then, by the induction hypothesis, the largest commutative subsemigroups of idempotents of $\tr{X\setminus\set{i}}$ are the semigroups (of size $2^{\abs{X\setminus\set{i}}-1}$) $\commidemp{X\setminus\set{i}}{x}$, where $x\in X\setminus\set{i}$. Consequently, we have that $\abs{S'}\leqslant 2^{\abs{X\setminus\set{i}}-1}=2^{\abs{X}-2}$.
 	
 	For each $\gamma\in S'$ let $S_\gamma=\gset{\beta\in S}{\beta|_{X\setminus\set{i}}=\gamma}$. It is straightforward to see that $\set{S_\gamma}_{\gamma\in S'}$ forms a partition of $S$. Then we have $\abs{S}=\sum_{\gamma\in S'}\abs{S_\gamma}$.

 	We are going to show that $\abs{S_\gamma}\leqslant 2$ for all $\gamma\in S'$. Let $\gamma\in S'$. If $\abs{S_\gamma}\leqslant 1$, then the result follows. Now assume that $\abs{S_\gamma}\geqslant 2$. Then there exist distinct $\beta_1,\beta_2\in S_\gamma$. We have that $\beta_1|_{X\setminus\set{i}}=\gamma=\beta_2|_{X\setminus\set{i}}$. Consequently, we must have $i\beta_1\neq i\beta_2$. Hence $i\beta_1\neq i$ or $i\beta_2\neq i$. Assume, without loss of generality, that $i\beta_1\neq i$. Then we have that
 	\begin{align*}
 		\parens{i\beta_2}\beta_1&=\parens{i\beta_1}\beta_2 &\bracks{\text{since } \beta_1,\beta_2\in S, \text{ which is commutative}}\\
 		&=\parens{i\beta_1}\beta_2|_{X\setminus\set{i}} &\bracks{\text{since } i\beta_1\in X\setminus\set{i}}\\
 		&=\parens{i\beta_1}\beta_1|_{X\setminus\set{i}} &\bracks{\text{since } \beta_1|_{X\setminus\set{i}}=\beta_2|_{X\setminus\set{i}}}\\
 		&=\parens{i\beta_1}\beta_1\\
 		&=i\beta_1 &\bracks{\text{since } \beta_1 \text{ is an idempotent}}\\
 		&\neq i\beta_2\\
 		&=\parens{i\beta_2}\beta_2. &\bracks{\text{since } \beta_2 \text{ is an idempotent}}
 	\end{align*}
 	Due to the fact that $\beta_1|_{X\setminus\set{i}}=\beta_2|_{X\setminus\set{i}}$, we must have $i\beta_2=i$.
 	
 	We just proved that, given two distinct transformations $\beta_1,\beta_2\in S_\gamma$, we must have $i\beta_1=i$ or $i\beta_2=i$. Since $\beta_1|_{X\setminus\set{i}}=\beta_2|_{X\setminus\set{i}}$, then we can conclude that $\abs{S_\gamma}\leqslant 2$.
 	
 	Therefore
 	\begin{displaymath}
 		2^{\abs{X}-1}\leqslant \abs{S}=\sum_{\gamma\in S'}\abs{S_\gamma} \leqslant \sum_{\gamma\in S'}2 =\abs{S'}\cdot 2 \leqslant 2^{\abs{X}-2}\cdot 2 =2^{\abs{X}-1}
 	\end{displaymath}
 	and, consequently, $\abs{S}=2^{\abs{X}-1}$ and $\abs{S'}=2^{\abs{X}-2}=2^{\abs{X\setminus\set{i}}-1}$. According to the induction hypothesis, $2^{\abs{X\setminus\set{i}}-1}$ is the maximum size of a commutative subsemigroup of idempotents of $\tr{X\setminus\set{i}}$. Hence (by the induction hypothesis) $S'=\commidemp{X\setminus\set{i}}{x}$ for some $x\in X\setminus\set{i}$.
 	
 	Our objective is to demonstrate that $S=\commidemp{X}{x}$. We observe that, as a consequence of the fact that $\abs{S}=2^{\abs{X}-1}=\abs{\commidemp{X}{x}}$, it is enough to prove that $S\subseteq \commidemp{X}{x}$. Hence we just need to verify that for each $\beta\in S$ we have $x\beta=x$ and $y\beta\in\set{x,y}$ for all $y\in X\setminus\set{x}$.
 	
 	It follows from the fact that $S'=\commidemp{X\setminus\set{i}}{x}$ that, for each $\beta\in S$, we have that $x\beta=x\beta|_{X\setminus\set{i}}=x$ and $y\beta=y\beta|_{X\setminus\set{i}}\in\set{x,y}$ for all $y\in X\setminus\set{x,i}$. Consequently, we only need to verify that $i\beta\in\set{x,i}$ for all $\beta\in S$.
 	
 	First we observe that, since $\sum_{\gamma\in S'}\abs{S_\gamma}=\abs{S}=2^{\abs{X}-1}$ and $\abs{S'}=2^{\abs{X}-2}$ and $\abs{S_\gamma}\leqslant 2$ for all $\gamma\in S'$, then we have that $\abs{S_\gamma}=2$ for all $\gamma\in S'$.
 	
 	It follows from the fact that $S'=\commidemp{X\setminus\set{i}}{x}$ that there exists $\gamma\in S$ such that $\parens{X\setminus\set{i}}\gamma|_{X\setminus\set{i}}=\set{x}$. Furthermore, since $\abs[\big]{S_{\gamma|_{X\setminus\set{i}}}}=2$, then (by what we established earlier in the proof) there exists $\gamma'\in S_{\gamma|_{X\setminus\set{i}}}$ such that $i\gamma'=i$. We have that $\gamma'|_{X\setminus\set{i}}=\gamma|_{X\setminus\set{i}}$, which implies that $\parens{X\setminus\set{i}}\gamma'=\set{x}$. Hence $\im \gamma'=\set{x,i}$. Therefore, by Lemma~\ref{T(X): ab=ba => xb€Im a for all x€Im a}, we have that $i\beta\in\im\gamma'=\set{x,i}$ for all $\beta\in S$.
 	
 	Thus $S\subseteq\commidemp{X}{x}$, which concludes the proof.
 \end{proof}

 The last result of the section concerns the maximum-order commutative subsemigroups of idempotents of $\ptr{X}$.
 
 For all $Y\subseteq X$ we have that $\id{Y}\in\psym{X}$ is an idempotent. Moreover, the set of idempotents of $\psym{X}$ is $\idemp{\psym{X}}=\gset{\id{Y}}{Y\subseteq X}\subseteq\ptr{X}$, which is a subsemigroup of $\psym{X}$ and, consequently, a subsemigroup of $\ptr{X}$. Furthermore, since $\psym{X}$ is an inverse semigroup, its idempotents commute, which implies that $\idemp{\psym{X}}$ is a commutative subsemigroup of idempotents of $\ptr{X}$. In addition, we can easily see that $\abs{\idemp{\psym{X}}}=\abs{\mathbb{P}\parens{X}}=2^{\abs{X}}$ (where $\mathbb{P}\parens{X}$ is the power set).
 
 In the next corollary we are going to show that $\idemp{\psym{X}}$ is precisely the unique maximum-order commutative subsemigroup of idempotents of $\ptr{X}$, which makes $2^{\abs{X}}$ the maximum size of a commutative subsemigroup of idempotents of $\ptr{X}$.
 
 \begin{corollary}\label{P(X): largest comm smg idemp}
 	The maximum size of a commutative subsemigroup of idempotents of $\ptr{X}$ is $2^{\abs{X}}$. Moreover, the unique maximum-order commutative subsemigroup of idempotents of $\ptr{X}$ is $\idemp{\psym{X}}$.
 \end{corollary}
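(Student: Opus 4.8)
The plan is to transport the problem into a full transformation semigroup by means of Proposition~\ref{P(X): proposition P(X) --> T(Y)} and then apply Theorem~\ref{T(X): maximum size comm smg idemp}, just as the preamble to this section suggests. Writing $\newtr{X}=X\cup\set{\new}$ as before, I would first take an arbitrary commutative subsemigroup $S$ of idempotents of $\ptr{X}$ and pass to $\newtr{S}\subseteq\tr{\newtr{X}}$. By Proposition~\ref{P(X): proposition P(X) --> T(Y)} the map $\beta\mapsto\newtr{\beta}$ is an isomorphism of $S$ onto $\newtr{S}$, so $\newtr{S}$ inherits commutativity, and it is a band because $\beta^2=\beta$ gives $\newtr{\beta}\,\newtr{\beta}=\newtr{\parens{\beta^2}}=\newtr{\beta}$. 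Hence $\newtr{S}$ is a commutative subsemigroup of idempotents of $\tr{\newtr{X}}$, and Theorem~\ref{T(X): maximum size comm smg idemp} yields $\abs{S}=\abs{\newtr{S}}\leqslant 2^{\abs{\newtr{X}}-1}=2^{\abs{X}}$. Since the discussion preceding the corollary already exhibits $\idemp{\psym{X}}$ as a commutative subsemigroup of idempotents of size $2^{\abs{X}}$, this pins the maximum size at $2^{\abs{X}}$.

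For uniqueness I would suppose $\abs{S}=2^{\abs{X}}$. Then $\abs{\newtr{S}}=2^{\abs{\newtr{X}}-1}$, so $\newtr{S}$ is a maximum-order commutative subsemigroup of idempotents of $\tr{\newtr{X}}$, and the classification in Theorem~\ref{T(X): maximum size comm smg idemp} forces $\newtr{S}=\commidemp{\newtr{X}}{x}$ for some $x\in\newtr{X}$. The one genuinely new step — which I expect to be the only real obstacle — is to identify the distinguished point $x$ as $\new$. This rests on the observation (recorded in the excerpt) that $\new\newtr{\beta}=\new$ for every $\beta$, so every element of $\newtr{S}$ fixes $\new$. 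If instead $x\neq\new$, then $\new\in\newtr{X}\setminus\set{x}$, and $\commidemp{\newtr{X}}{x}$ contains a transformation sending $\new$ to $x\neq\new$ (for example the constant map onto $x$), contradicting that all of $\newtr{S}$ fixes $\new$. Hence $x=\new$.

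It then remains to read off $\commidemp{\newtr{X}}{\new}$ explicitly and invert the extension map. By \eqref{T(X): Gamma^X_i definition}, the elements of $\commidemp{\newtr{X}}{\new}$ are exactly the $\alpha\in\tr{\newtr{X}}$ with $\new\alpha=\new$ and $y\alpha\in\set{\new,y}$ for every $y\in X$; these are precisely the transformations $\newtr{\parens{\id{Y}}}$ extending the partial identities $\id{Y}$ as $Y$ ranges over the subsets of $X$, so $\commidemp{\newtr{X}}{\new}=\newtr{\parens{\idemp{\psym{X}}}}$. Thus $\newtr{S}$ and the image of $\idemp{\psym{X}}=\gset{\id{Y}}{Y\subseteq X}$ under $\beta\mapsto\newtr{\beta}$ coincide, and since that map is injective this forces $S=\idemp{\psym{X}}$, completing the proof. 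Apart from the identification $x=\new$, the argument is entirely mechanical: once the reduction to $\tr{\newtr{X}}$ is in place, all the content is Theorem~\ref{T(X): maximum size comm smg idemp} together with the fact that the new point must play the role of the common fixed point.
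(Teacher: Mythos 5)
Your proof is correct and follows essentially the same route as the paper: both pass to $\tr{\newtr{X}}$ via Proposition~\ref{P(X): proposition P(X) --> T(Y)}, apply Theorem~\ref{T(X): maximum size comm smg idemp} to get the bound $2^{\abs{X}}$ and the identification $\newtr{S}=\commidemp{\newtr{X}}{x}$, deduce $x=\new$ from $\new\newtr{\beta}=\new$, and pull back to conclude $S=\idemp{\psym{X}}$. The only cosmetic difference is at the end, where you identify $\commidemp{\newtr{X}}{\new}$ as the image of $\idemp{\psym{X}}$ and invoke injectivity of $\beta\mapsto\newtr{\beta}$, while the paper argues pointwise that each $\beta\in S$ equals $\id{\dom\beta}$ and concludes by cardinality.
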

 
 \begin{proof}
 	\textbf{Part 1.} The aim of this part is to show that the maximum size of a commutative subsemigroup of idempotents of $\ptr{X}$ is $2^{\abs{X}}$. Let $S$ be a commutative subsemigroup of idempotents of $\ptr{X}$. It follows from Proposition~\ref{P(X): proposition P(X) --> T(Y)} that $\newtr{S}$ is a subsemigroup of $\tr{\newtr{X}}$. Additionally, Proposition~\ref{P(X): proposition P(X) --> T(Y)} also states that $\newtr{S}$ is isomorphic to $S$. Hence $\abs{\newtr{S}}=\abs{S}$ and, since $S$ is commutative and all its elements are idempotents, then $\newtr{S}$ is commutative and all its elements are idempotents. Thus $\newtr{S}$ is a commutative subsemigroup of idempotents of $\tr{\newtr{X}}$ and, consequently, Theorem~\ref{T(X): maximum size comm smg idemp} ensures that
 	\begin{displaymath}
 		\abs{S}=\abs{\newtr{S}}\leqslant 2^{\abs{\newtr{X}}-1}=2^{\abs{X}}.
 	\end{displaymath}
 	
 	We just proved that the maximum size of a commutative subsemigroup of idempotents of $\ptr{X}$ is at most $2^{\abs{X}}$. Furthermore, we know that there exists at least one commutative subsemigroup of idempotents of $\ptr{X}$ of size $2^{\abs{X}}$ --- namely $\idemp{\psym{X}}$. Therefore the maximum size of a commutative subsemigroup of idempotents of $\ptr{X}$ is $2^{\abs{X}}$.
 	
 	\medskip
 	
 	\textbf{Part 2.} The aim of this part is to establish that the only commutative subsemigroup of idempotents of $\ptr{X}$ of order $2^{\abs{X}}$ is $\idemp{\psym{X}}$. Let $S$ be a commutative subsemigroup of idempotents of $\ptr{X}$ such that $\abs{S}=2^{\abs{X}}$. It follows from Proposition~\ref{P(X): proposition P(X) --> T(Y)} that $\newtr{S}$ is a commutative subsemigroup of idempotents of $\tr{\newtr{X}}$ of size $\abs{\newtr{S}}=\abs{S}=2^{\abs{X}}=2^{\abs{\newtr{X}}-1}$. Hence Theorem~\ref{T(X): maximum size comm smg idemp} implies that $\newtr{S}=\commidemp{\newtr{X}}{y}$ for some $y\in \newtr{X}$. This implies that $y$ is the unique element of $\newtr{X}$ such that $y\newtr{\beta}=y$ for all $\beta\in S$. It follows from the fact that $\new\newtr{\beta}=\new$ for all $\beta\in S$ that $y=\new$. Then we have $x\newtr{\beta}\in\set{x,\new}$ for all $\beta\in S$ and $x\in \newtr{X}\setminus\set{\new}=X$ and, consequently, for all $\beta\in S$ and $x\in \newtr{X}\setminus\set{\new}=X$ we have either $x\in \newtr{X}\setminus\dom\beta$, or $x\in\dom\beta$ and $x\beta=x\newtr{\beta}=x=x\mskip 1.5mu\id{\dom\beta}$. Therefore $\beta=\id{\dom\beta}$ and, thus,
 	\begin{displaymath}
 		S\subseteq\gset{\id{W}}{W\subseteq X}=\idemp{\psym{X}}
 	\end{displaymath}
 	and, since $\abs{S}=2^{\abs{X}}=\abs{\idemp{\psym{X}}}$, then we can conclude that $S=\idemp{\psym{X}}$.
 \end{proof}
 
 \section{The largest commutative (full and partial) transformation semigroups with a unique idempotent}\label{sec: largest comm smg 1 idemp T(X)}
 
 Recall that $X$ denotes a finite set. In this section we investigate commutative subsemigroups of $\tr{X}$ (respectively, $\ptr{X}$) that contain exactly one idempotent. Our goal is to find the maximum size of these semigroups and describe the ones that achieve that size. We will see that, when $\abs{X}\leqslant 3$, the largest commutative subsemigroups of $\tr{X}$ that contain exactly one idempotent are groups of size $\abs{X}$; when $\abs{X}=4$, they are either groups or null semigroups and have size $\abs{X}=\maxnull{\abs{X}}$; and, when $\abs{X}\geqslant 5$, they are null semigroups of size $\maxnull{\abs{X}}$. A corollary of this result is that the maximum-order commutative nilpotent subsemigroups of $\tr{X}$ have size $\maxnull{\abs{X}}$ and are all null semigroups (which was proved directly by Cain, Malheiro and the present author in \cite{Commutative_nilpotent_transformation_semigroups_paper}). Moreover, we will see that the maximum size of a commutative subsemigroup of $\ptr{X}$ with a unique idempotent is $\maxnull{\abs{X}+1}$ and that, when $\abs{X}\leqslant 2$, the subsemigroups that achieve that size are either groups or null semigroups and, when $\abs{X}\geqslant 3$, they are all null semigroups. A corollary of this result is that the maximum-order commutative nilpotent subsemigroups of $\ptr{X}$ have size $\maxnull{\abs{X}+1}$ and are all null semigroups.

 We start by characterizing the largest commutative subsemigroups of $\tr{X}$ with a unique idempotent. Proving this result relies on knowledge of the maximum-order abelian subgroups of $\sym{X}$ (Theorem~\ref{T(X): maximum size abelian subgroup of S(X)}) and the maximum-order null subsemigroups of $\tr{X}$ (Theorem~\ref{null semigroups of maximum size}). Below we supply the background information we need to prove the results of this section.
 
 First we introduce the theorem that describes the largest abelian subgroups of $\sym{X}$ in terms of products of cyclic groups. Recall that $C_n=\langle x \mid x^n=1\rangle$ is the \defterm{cyclic group} of order $n$.
 
 \begin{theorem}[{\cite[Burns, Goldsmith]{Symmetric_group}}]\label{T(X): maximum size abelian subgroup of S(X)}
 	Suppose that $\abs{X}\geqslant 2$. Then the maximum size of an abelian subgroup of $\sym{X}$ is
 	\begin{displaymath}
 		\begin{cases}
 			3^k& \text{if } \abs{X}=3k,\\
 			4\cdot 3^{k-1}& \text{if } \abs{X}=3k+1,\\
 			2\cdot 3^k& \text{if } \abs{X}=3k+2.
 		\end{cases} 
 	\end{displaymath}
 	Moreover, the maximum-order abelian subgroups of $\sym{X}$ are isomorphic to
 	\begin{displaymath}
 		\begin{cases}
 			C_3^k& \text{if } \abs{X}=3k,\\
 			C_4\times C_3^{k-1} \text{ or } C_2\times C_2\times C_3^{k-1}& \text{if } \abs{X}=3k+1,\\
 			C_2\times C_3^k& \text{if } \abs{X}=3k+2.
 		\end{cases} 
 	\end{displaymath}
 \end{theorem}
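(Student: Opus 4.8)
The plan is to study an abelian subgroup $G\leqslant\sym{X}$ through its action on $X$. Let $O_1,\ldots,O_r$ be the orbits of $G$, with $n_i=\abs{O_i}$, so that $\sum_i n_i=\abs{X}$. The starting point is the classical fact that a \emph{transitive} abelian permutation group acts regularly: if a finite abelian group $H$ acts transitively and faithfully on a set $\Omega$, then for $\omega\in\Omega$ and $g\in H$ the stabilisers satisfy $H_{g\omega}=gH_\omega g^{-1}=H_\omega$, so $H_\omega$ is contained in every point stabiliser; faithfulness then forces $H_\omega=1$ and hence $\abs{H}=\abs{\Omega}$. Applying this to the image $G^{(i)}$ of $G$ in $\sym{O_i}$ (which is transitive on $O_i$, abelian as a quotient of $G$, and faithful by construction) gives $\abs{G^{(i)}}=n_i$. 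Since the restriction map $g\mapsto(g|_{O_1},\ldots,g|_{O_r})$ embeds $G$ into $\prod_i G^{(i)}$ (a permutation fixing every orbit pointwise is the identity), I obtain the fundamental bound
\[
\abs{G}\leqslant\prod_{i=1}^r\abs{G^{(i)}}=\prod_{i=1}^r n_i,\qquad\sum_{i=1}^r n_i=\abs{X}.
\]

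The problem then reduces to maximising $\prod_i n_i$ over all ways of writing $\abs{X}$ as a sum of positive integers. Here I would argue that in any optimal partition no part equals $1$ (absorbing such a part into another never decreases the product) and no part is $\geqslant 5$ (splitting $m=3+(m-3)$ gives $3(m-3)>m$ for $m\geqslant 5$), so all parts lie in $\{2,3,4\}$; moreover three $2$'s may be replaced by two $3$'s since $3^2>2^3$, and a single $4$ may be replaced by two $2$'s without changing the product. This forces as many $3$'s as possible and yields exactly the three maxima $3^k$, $4\cdot 3^{k-1}$, $2\cdot 3^k$ according to $\abs{X}\equiv 0,1,2\pmod 3$.

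For attainment and for the ``Moreover'' part, I would realise the bound by an internal direct product of regular abelian groups, one on each block of the optimal partition: a regular $C_3$ on each block of size $3$, a $C_2$ on a block of size $2$, and either $C_4$ or $C_2\times C_2$ on a block of size $4$ (equivalently $C_2\times C_2$ on two blocks of size $2$). Conversely, if $G$ has maximum order then $\prod_i n_i$ must equal the maximum, so $(n_i)$ is an optimal partition and the inequality above is an equality; equality forces the embedding $G\hookrightarrow\prod_i G^{(i)}$ to be onto, so $G\cong\prod_i G^{(i)}$ with each $G^{(i)}$ a regular abelian group of order $n_i\in\{2,3,4\}$. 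Since the only abelian groups of orders $2$ and $3$ are cyclic, while those of order $4$ are $C_4$ and $C_2\times C_2$, the isomorphism types are pinned down to precisely the lists $C_3^k$; $C_4\times C_3^{k-1}$ or $C_2\times C_2\times C_3^{k-1}$; and $C_2\times C_3^k$.

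The hard part will be the classification rather than the size estimate. Two points require care: first, upgrading the injection $G\hookrightarrow\prod_i G^{(i)}$ to an isomorphism at equality, so that the external structure of $G$ is genuinely a direct product; and second, a careful uniqueness analysis of the optimal partition. The latter is what distinguishes the three cases, since the only residual freedom is the interchange of one block of size $4$ with two blocks of size $2$, and this is exactly what produces the genuine alternative $C_4\times C_3^{k-1}$ versus $C_2\times C_2\times C_3^{k-1}$ in the case $\abs{X}=3k+1$; I would then verify that no comparable alternative arises when $\abs{X}\equiv 0$ or $2\pmod 3$, so that the optimal partition, and hence the isomorphism type, is unique in those cases.
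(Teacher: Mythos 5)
The paper does not prove this theorem at all: it is imported as background, attributed to Burns and Goldsmith, so there is no internal proof to compare your argument against. Your proposal is, however, a correct self-contained proof, and it is essentially the classical argument for this result. All three pillars are sound: (i) a faithful transitive abelian permutation group is regular (all point stabilisers coincide since conjugation is trivial, and their intersection is the kernel), so the image $G^{(i)}$ of $G$ on an orbit $O_i$ has order $n_i=\abs{O_i}$; (ii) restriction to the orbits embeds $G$ into $\prod_i G^{(i)}$, giving $\abs{G}\leqslant\prod_i n_i$ with $\sum_i n_i=\abs{X}$; (iii) the partition-product maximisation forces parts in $\set{2,3,4}$, and counting a $4$ as two $2$'s, at most two $2$'s, which yields exactly $3^k$, $4\cdot 3^{k-1}$, $2\cdot 3^k$ in the three congruence classes. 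Your equality analysis is also right: maximality forces both inequalities in $\abs{G}\leqslant\prod_i n_i\leqslant\max$ to be equalities, so the orbit sizes form an optimal partition and the embedding is onto, whence $G\cong\prod_i G^{(i)}$ with each factor a regular abelian group of order $2$, $3$ or $4$; since the abelian groups of order $4$ are $C_4$ and $C_2\times C_2$, the listed isomorphism types follow. The one step you left as a promissory note --- uniqueness of the optimal partition when $\abs{X}\equiv 0$ or $2 \pmod 3$ --- is immediate: if there are $a$ parts equal to $3$ and $b$ parts equal to $2$ (a part $4$ counting as two $2$'s), then $3a+2b=\abs{X}$ forces $b\equiv -\abs{X}\pmod 3$, and since optimality forces $b\in\set{0,1,2}$, the value of $b$ is determined by $\abs{X}$ modulo $3$; the trade of one block of size $4$ against two blocks of size $2$ is therefore available only when $b=2$, i.e.\ when $\abs{X}\equiv 1\pmod 3$, which is precisely where the two isomorphism types appear.
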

 
 Now we discuss maximum-order null subsemigroups of $\tr{X}$. In \cite{Null_semigroups} Cameron et al. introduced two functions $\xi,\alpha:\mathbb{N}\rightarrow\mathbb{N}$ which, for each $n\in\mathbb{N}$, are defined in the following way
 \begin{displaymath}
 	\maxnull{n}=\max\gset[\big]{t^{n-t}}{t\in\Xn}
 \end{displaymath}
 and
 \begin{displaymath}
 	\alphanull{n}=\max\gset[\big]{t\in\Xn}{t^{n-t}=\maxnull{n}}.
 \end{displaymath}
 For the values of $\maxnull{n}$ and $\alphanull{n}$ for $n\in\X{10}$ see Table~\ref{T(X): table xi alpha}.
 
 \begin{table}[hbt]
 	\centering
 	\begin{tabular}{rrr}
 		\toprule
 		$n$ & $(n)\alpha$ & $(n)\xi$      \\
 		\midrule
 		$1$ & $1$         & $1$           \\
 		$2$ & $2$         & $1$           \\
 		$3$ & $2$         & $2$           \\
 		$4$ & $2$         & $4$           \\
 		$5$ & $3$         & $9$           \\
 		$6$ & $3$         & $27$          \\
 		$7$ & $3$         & $81$          \\
 		$8$ & $4$         & $256$         \\
 		$9$ & $4$         & $1024$        \\
 		$10$ & $4$         & $4096$        \\
 		$11$ & $4$         & $16384$       \\
 		$12$ & $5$         & $78125$       \\
 		$13$ & $5$         & $390625$      \\
 		$14$ & $5$         & $1953125$     \\
 		$15$ & $6$         & $10077696$    \\
 		$16$ & $6$         & $60466176$    \\
 		$17$ & $6$         & $362797056$   \\
 		$18$ & $6$         & $2176782336$  \\
 		$19$ & $7$         & $13841287201$ \\
 		$20$ & $7$         & $96889010407$ \\
 		\bottomrule
 	\end{tabular}
 	\caption{Values of $\maxnull{n}$ and $\alphanull{n}$ for $n\in\X{10}$ \cite{Null_semigroups}.}
 	\label{T(X): table xi alpha}
 \end{table}
 
 The next lemma provides some inequalities satisfied by the function $\xi$ described above.
 
 \begin{lemma}[{\cite[Lemma 2.4]{Null_semigroups}}]\label{inequalities xi}
 	We have $(1)\xi=(2)\xi$ and \allowbreak $(n)\xi<(n+1)\xi$ for all $n\geqslant 2$.
 \end{lemma}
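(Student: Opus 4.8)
The plan is to dispose of the two assertions separately: the first is a direct evaluation, and the second follows from a short monotonicity argument based on increasing the exponent in the defining maximum.

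For the equality $\maxnull{1}=\maxnull{2}$, I would simply read off both values from the definition. The only admissible exponent for $\maxnull{1}$ is $t=1$, giving $\maxnull{1}=\max\set{1^{0}}=1$; and for $\maxnull{2}$ the admissible values $t\in\set{1,2}$ give $\maxnull{2}=\max\set{1^{1},2^{0}}=\max\set{1,1}=1$. Hence $\maxnull{1}=\maxnull{2}=1$.

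For the strict inequality $\maxnull{n}<\maxnull{n+1}$ with $n\geqslant 2$, the key observation is that every $t$ admissible for $\maxnull{n}$ (that is, $t\in\X{n}$) is also admissible for $\maxnull{n+1}$, and passing from $n$ to $n+1$ raises the exponent of the corresponding term by one, i.e.\ multiplies it by $t$. Concretely, I would take $t=\alphanull{n}$, so that $\maxnull{n}=t^{n-t}$ by definition of $\xi$ and $\alpha$; then $t^{(n+1)-t}=t\cdot t^{n-t}=t\cdot\maxnull{n}$ occurs among the terms whose maximum is $\maxnull{n+1}$, whence
\[
	\maxnull{n+1}\geqslant t\cdot\maxnull{n}.
\]
Since $\maxnull{n}\geqslant 1$, it then suffices to know that $t=\alphanull{n}\geqslant 2$ in order to obtain $\maxnull{n+1}\geqslant 2\maxnull{n}>\maxnull{n}$.

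The only point needing care --- and the main (though minor) obstacle --- is establishing that $\alphanull{n}\geqslant 2$ whenever $n\geqslant 2$. I would argue this by comparing the candidate $t=2$ against $t=1$: for $n\geqslant 3$ we have $2^{n-2}\geqslant 2>1=1^{n-1}$, so the maximum $\maxnull{n}$ is strictly larger than the value at $t=1$ and therefore is not attained there; since $\alphanull{n}$ is by definition the largest maximizer, this forces $\alphanull{n}\geqslant 2$. For the remaining case $n=2$, both $t=1$ and $t=2$ attain the common value $1=\maxnull{2}$, so the largest maximizer is $\alphanull{2}=2$. Feeding $\alphanull{n}\geqslant 2$ into the displayed inequality yields $\maxnull{n}<\maxnull{n+1}$ and completes the argument.
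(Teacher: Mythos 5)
Your proof is correct. There is, however, nothing in the paper to compare it against: the paper does not prove this lemma but quotes it verbatim from Cameron et al.\ \cite{Null_semigroups}, so your argument supplies a proof where the paper gives only a citation. The argument itself is sound and self-contained: the equality $\maxnull{1}=\maxnull{2}=1$ is immediate from the definition; the lower bound $\maxnull{n+1}\geqslant t\cdot\maxnull{n}$ obtained by evaluating the term $t^{(n+1)-t}=t\cdot t^{n-t}$ at $t=\alphanull{n}$ is legitimate, since $t=\alphanull{n}\leqslant n<n+1$ is an admissible exponent for $\maxnull{n+1}$; and the reduction to showing $\alphanull{n}\geqslant 2$ for $n\geqslant 2$ is handled correctly in both cases (for $n\geqslant 3$ the comparison $2^{n-2}\geqslant 2>1=1^{n-1}$ rules out $t=1$ as a maximizer, and for $n=2$ the convention that $\alphanull{n}$ is the \emph{largest} maximizer gives $\alphanull{2}=2$). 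This matches the tabulated values $\maxnull{2}=1<\maxnull{3}=2<\maxnull{4}=4<\cdots$ and is essentially the natural elementary proof of the statement.
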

 
 Theorem~\ref{maximum size null semigroup} shows that the size of a largest null subsemigroup of $\tr{X}$ depends on the function $\xi$ and Theorem~\ref{null semigroups of maximum size} uses the function $\alpha$ to characterize all the null subsemigroups of $\tr{X}$ of maximum size.
 
 \begin{theorem} [{\cite[Theorem 4.4]{Null_semigroups}}]\label{maximum size null semigroup}
 	The maximum size of a null subsemigroup of $\tr{X}$ is $(|X|)\xi$.
 \end{theorem}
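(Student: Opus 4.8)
The plan is to prove the statement by establishing matching lower and upper bounds on the size of a null subsemigroup $N$ of $\tr X$, where I write $n=\abs X$ and denote by $\theta$ the zero of $N$ (so $\alpha\beta=\theta$ for all $\alpha,\beta\in N$; in particular $\theta=\theta^2$ is idempotent, and $\alpha\theta=\theta\alpha=\theta$ for every $\alpha\in N$).

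For the lower bound I would exhibit an explicit null subsemigroup of size $\maxnull n$. Fix $z\in X$, set $t=\alphanull n$, choose a subset $M$ with $z\in M$ and $\abs M=t$, and put $L=X\setminus M$. I would then consider
\[
    N=\gset{\alpha\in\tr X}{L\alpha\subseteq M \text{ and } M\alpha=\set z}.
\]
For $\alpha,\beta\in N$ we have $X\alpha\subseteq M$, hence $X\alpha\beta\subseteq M\beta=\set z$, so $\alpha\beta$ is the constant map onto $\set z$; this map lies in $N$, so $N$ is null with zero the constant map to $z$. Since each of the $t$ points of $M$ is forced to $z$ while each of the $n-t$ points of $L$ may be sent to any of the $t$ points of $M$, this gives $\abs N=t^{\,n-t}=\maxnull n$.

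The heart of the argument is the upper bound: every null subsemigroup $N$ satisfies $\abs N\le\maxnull n$. I would introduce the set of points on which every element of $N$ acts as $\theta$ does,
\[
    P=\gset{y\in X}{y\beta=y\theta \text{ for all } \beta\in N}.
\]
First, from $\theta\alpha=\theta$ one sees that each $\alpha\in N$ fixes $\im\theta$ pointwise (if $a=y\theta$ then $a\alpha=y\theta\alpha=y\theta=a$), whence $\im\theta\subseteq P$ and so $P\ne\emptyset$. The key observation is that $\im\alpha\subseteq P$ for every $\alpha\in N$: for any $x\in X$ and $\beta\in N$ we have $(x\alpha)\beta=x(\alpha\beta)=x\theta$ and likewise $(x\alpha)\theta=x(\alpha\theta)=x\theta$, so $x\alpha\in P$. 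Consequently each $\alpha\in N$ is determined by its restriction to $X\setminus P$: on $P$ it must agree with $\theta$ (by definition of $P$, taking $\beta=\alpha$), and it sends the $n-\abs P$ points of $X\setminus P$ into $P$. Hence $\abs N\le\abs{P}^{\,n-\abs P}\le\maxnull n$, the final inequality holding because $\maxnull n=\max\gset{t^{\,n-t}}{t\in\set{1,\ldots,n}}$ and $1\le\abs P\le n$.

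I expect the main obstacle to lie in the upper bound, and within it the decisive move is isolating the invariant set $P$ of ``common-sink'' points; once $\im\alpha\subseteq P$ is in hand the counting is immediate. It is worth noting that the extremal construction meets this bound exactly, with $P=M$, which indicates that the very same analysis should feed into the subsequent characterization of \emph{which} null subsemigroups attain the maximum size.
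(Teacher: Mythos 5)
Your proposal is correct, but there is nothing in this paper to compare it against: the statement is imported by citation (it is Theorem~4.4 of the paper \cite{Null_semigroups} by Cameron et al.), and the present paper gives no proof of it, using it only as background for Theorem~\ref{T(X): largest comm smg 1 idemp} and its corollaries. Judged on its own, your argument is complete. The lower bound is exactly the extremal family the paper later names $\nulltr{X}{x_1}{x_t}$ (your $M=\set{x_1,\ldots,x_t}$, $z=x_1$, $t=\alphanull{\abs{X}}$), and your verification that it is null of size $t^{\abs{X}-t}=\maxnull{\abs{X}}$ is right. The upper bound is the real content, and your invariant set $P=\gset{y\in X}{y\beta=y\theta \text{ for all } \beta\in N}$ does the job cleanly: $\im\theta\subseteq P$ gives $P\neq\emptyset$; $(x\alpha)\beta=x\theta=(x\alpha)\theta$ gives $\im\alpha\subseteq P$ for every $\alpha\in N$; and since every $\alpha\in N$ agrees with $\theta$ on $P$ and maps $X\setminus P$ into $P$, the restriction map $\alpha\mapsto\alpha|_{X\setminus P}$ is injective into $P^{X\setminus P}$, so $\abs{N}\leqslant\abs{P}^{\abs{X}-\abs{P}}\leqslant\maxnull{\abs{X}}$. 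Your closing remark is also on target: equality forces $\abs{P}^{\abs{X}-\abs{P}}=\maxnull{\abs{X}}$ and forces $N$ to consist of \emph{all} maps agreeing with $\theta$ on $P$ and sending $X\setminus P$ into $P$, which is essentially the characterization recorded here as Theorem~\ref{null semigroups of maximum size} (the exceptional case $\abs{X}=2$, $S=\set{\id{X}}$ arising because $\alphanull{2}=2$ while $P$ can have size $2$ with $\abs{P}^{\abs{X}-\abs{P}}=1=\maxnull{2}$ attained by a second value of $t$).
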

 
 \begin{theorem} [{\cite[Subsection 4.1]{Null_semigroups}}]\label{null semigroups of maximum size}
 	Let $S$ be a null subsemigroup of $\tr{X}$and let $t=\alphanull{\abs{X}}$. We have that $\abs{S}=\maxnull{\abs{X}}$ if and only if at least one of the following conditions is satisfied:
 	\begin{enumerate}
 		\item There exist pairwise distinct $x_1,\ldots,x_t\in X$ such that
 		\begin{displaymath}
 			S=\gset[\big]{\beta \in \tr{X}}{\set{x_1,\ldots,x_t}\beta=\set{x_1} \text{ and } \im\beta\subseteq\set{x_1,\ldots,x_t}}.
 		\end{displaymath}
 		
 		\item $\abs{X}=2$ and $S=\set{\id{X}}$.
 	\end{enumerate}
 \end{theorem}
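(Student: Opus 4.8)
The plan is to prove both implications of the equivalence. The reverse implication is a direct computation: if $S$ has the form in~(1) with $t=\alphanull{\abs{X}}$, then for $\beta,\gamma\in S$ and any $x\in X$ one has $x\beta\in\set{x_1,\ldots,x_t}$, so $(x\beta)\gamma\in\set{x_1,\ldots,x_t}\gamma=\set{x_1}$; hence $\beta\gamma$ is the constant map onto $x_1$, which lies in $S$ and serves as the zero, so $S$ is null. Counting then gives $\abs{S}=t^{\abs{X}-t}=\maxnull{\abs{X}}$, since each of the $\abs{X}-t$ points outside $\set{x_1,\ldots,x_t}$ has $t$ possible images while the images of $x_1,\ldots,x_t$ are forced. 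Case~(2) is immediate, as $\set{\id{X}}$ is trivially null of size $1=\maxnull{2}$.

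For the forward implication I would first extract the structure common to every null subsemigroup. Write $z$ for the (idempotent) zero of $S$ and put $M=\bigcup_{\beta\in S}\im\beta$. The crucial observation is that every element of $S$ has the same restriction to $M$: indeed, if $u\in M$ then $u=x\beta_0$ for some $x\in X$ and $\beta_0\in S$, so for every $\gamma\in S$ we get $u\gamma=(x\beta_0)\gamma=x(\beta_0\gamma)=xz$, a value independent of $\gamma$; thus $\gamma|_M=z|_M$ for all $\gamma\in S$. Since moreover $\im\gamma\subseteq M$ for every $\gamma$ by the definition of $M$, each $\gamma\in S$ is completely determined by its restriction to $X\setminus M$, which is a map of $X\setminus M$ into $M$. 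Hence $\abs{S}\leqslant\abs{M}^{\abs{X}-\abs{M}}\leqslant\maxnull{\abs{X}}$, the last inequality holding by the definition $\maxnull{\abs{X}}=\max\gset{t^{\abs{X}-t}}{t\in\X{\abs{X}}}$; in particular this already reproves that the maximum size of a null subsemigroup is $\maxnull{\abs{X}}$.

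The remaining work is the equality analysis, which I expect to be the main obstacle. Assuming $\abs{S}=\maxnull{\abs{X}}$, both inequalities above become equalities, so $\abs{M}^{\abs{X}-\abs{M}}=\maxnull{\abs{X}}$ and every map $X\setminus M\to M$ occurs as some $\gamma|_{X\setminus M}$. If $M\neq X$, pick $x\in X\setminus M$; applying the zero condition $\gamma\delta=z$ at such an $x$, and using that $x\gamma\in M$ together with $\delta|_M=z|_M$, gives $(x\gamma)z=xz$, and since $x\gamma$ ranges over all of $M$ as $\gamma$ varies, $z$ must be constant on $M$, say with value $x_1$. As $\im z\subseteq M$, this forces $z$ to be the constant map onto $x_1$; consequently every $\gamma$ sends $M$ to $x_1$ and $X\setminus M$ arbitrarily into $M$, which is exactly form~(1) with $\set{x_1,\ldots,x_t}=M$ and $t=\abs{M}$. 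If instead $M=X$, then $\abs{S}\leqslant 1$, forcing $\maxnull{\abs{X}}=1$ and hence $\abs{X}\leqslant 2$; here $S=\set{z}$ with $\im z=M=X$, so the idempotent $z$ is a bijection and thus $z=\id{X}$, giving form~(2) (or the degenerate form~(1) when $\abs{X}=1$). The one delicate point is to match $t=\abs{M}$ with the prescribed value $\alphanull{\abs{X}}$: one checks that $t\mapsto t^{\abs{X}-t}$ has a unique maximizer for $\abs{X}\geqslant 3$, so that $\abs{M}=\alphanull{\abs{X}}$ is forced, while for $\abs{X}\leqslant 2$ the coincidence of maximizers yields the same constant-map semigroups together with the exceptional $\set{\id{X}}$.
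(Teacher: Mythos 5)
There is no internal proof to compare against here: the paper does not prove this theorem but imports it verbatim from Cameron et al.\ \cite{Null_semigroups} (Subsection 4.1). Judged on its own merits, your blind reconstruction is correct and essentially complete. The reverse implication is the expected routine computation. For the forward implication your decomposition works: with $M=\bigcup_{\beta\in S}\im\beta$, every element of $S$ agrees on $M$ (each $u\in M$ equals $x\beta_0$ for some $x$, $\beta_0$, so $u\gamma=x(\beta_0\gamma)=xz$ independently of $\gamma$), whence $\gamma\mapsto\gamma|_{X\setminus M}$ injects $S$ into the maps $X\setminus M\to M$ and $\abs{S}\leqslant\abs{M}^{\abs{X}-\abs{M}}\leqslant\maxnull{\abs{X}}$; the equality analysis (surjectivity of the restriction map together with $(x\gamma)z=xz$ for $x\notin M$) then pins $S$ down to the asserted form, and the $M=X$ branch correctly produces the exceptional semigroup $\set{\id{X}}$. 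Two steps that you assert should be written out. First, ``$z$ constant on $M$ and $\im z\subseteq M$ forces $z$ constant everywhere'' genuinely needs idempotency of $z$: for $y\in X$ one has $yz\in M$, so $yz=yz^2=(yz)z=x_1$; you recorded that $z$ is idempotent, but the inference is silently used. Second, your identification $\abs{M}=\alphanull{\abs{X}}$ rests on the claim that $t\mapsto t^{\abs{X}-t}$ has a unique integer maximizer when $\abs{X}\geqslant 3$; this is true but requires an argument, e.g.\ strict concavity of $(n-t)\ln t$ shows any tie is between consecutive integers $t$ and $t+1$, and then $t^{m+1}=(t+1)^m$ (with $m=n-t-1$) forces $t=1$, $m=0$ since $\gcd(t,t+1)=1$, so ties occur only for $n=2$ --- precisely the case absorbed by clause (2) and by the coincidence of the $t=1$ and $t=2$ semigroups that you noted.
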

 
 We are going to adopt the notation $\nulltr{X}{x_1}{x_t}$ to designate the maximum-order null semigroup described in part 1 of Theorem~\ref{null semigroups of maximum size}; that is,
 \begin{displaymath}
 	\nulltr{X}{x_1}{x_t}=\gset[\big]{\beta \in \tr{X}}{\set{x_1,\ldots,x_t}\beta=\set{x_1} \text{ and } \im\beta\subseteq\set{x_1,\ldots,x_t}},
 \end{displaymath}
 where $t=\alphanull{\abs{X}}$ and $x_1,\ldots,x_t\in X$ are pairwise distinct.

 A certain notation for idempotents of $\tr{X}$ was introduced in \cite{Commuting_graph_T_X}, which we now describe: if $\set{A_i}_{i=1}^n$ is a partition of $X$, where $n\in\mathbb{N}$, and $x_i\in A_i$ for all $i\in\Xn$, then we denote by
 \begin{displaymath}
 	e=\chain{A_1, x_1}\chain{A_2, x_2}\cdots\chain{A_n, x_n}
 \end{displaymath}
 the idempotent such that $A_ie=\set{x_i}$ for all $i\in\Xn$.
 
 We note that all idempotents $e$ of $\tr{X}$ can be written using that notation. In fact, if $\im e=\set{x_1,\ldots,x_n}$ and $A_i=\set{x_i}e^{-1}$ for all $i\in\Xn$, then $\set{A_i}_{i=1}^n$ is a partition of $X$ and, since $e$ is an idempotent, we have $x_i\in \set{x_i}e^{-1}=A_i$ for all $i\in\Xn$. Then we can write $e$ using the notation introduced above.
 
 Below we exhibit a method to identify the transformations that commute with a given idempotent. A different form of this result is also present in \cite[Lemma 2.2]{Characterization_idempotents_1}.
 
 \begin{lemma}[{\cite[Lemma 2.2]{Commuting_graph_T_X}}]\label{T(X): tr commutes with idemp} 
 	Let $e=\chain{A_1, x_1}\chain{A_2, x_2}\cdots\chain{A_n, x_n}$ be an idempotent of $\tr{X}$ and let $\beta\in\tr{X}$. Then $e\beta=\beta e$ if and only if for all $i\in\Xn$ there exists $j\in\Xn$ such that $x_i\beta=x_j$ and $A_i\beta\subseteq A_j$.
 \end{lemma}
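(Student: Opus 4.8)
The plan is to translate the functional equation $e\beta=\beta e$ into a pointwise condition and then read off the combinatorial description block by block. Writing the equation as $\parens{xe}\beta=\parens{x\beta}e$ for all $x\in X$, the key observation is that $e$ detects blocks: since $e$ collapses each $A_i$ onto $x_i$ and the images $x_1,\ldots,x_n$ are pairwise distinct (as $\im e=\set{x_1,\ldots,x_n}$), for any $y\in X$ we have $ye=x_j$ if and only if $y\in A_j$. Moreover $e$ fixes each representative, i.e. $x_ie=x_i$, because $x_i\in A_i$. These two facts are what make the argument work.

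First I would prove the forward direction. Assuming $e\beta=\beta e$, I fix $i\in\Xn$ and evaluate the pointwise equation at the representative $x=x_i$, obtaining $x_i\beta=\parens{x_ie}\beta=\parens{x_i\beta}e$. Letting $A_j$ be the block containing $x_i\beta$, the block-detection property gives $\parens{x_i\beta}e=x_j$, hence $x_i\beta=x_j$. For the containment, I take an arbitrary $x\in A_i$; then $xe=x_i$, so $\parens{xe}\beta=x_i\beta=x_j$, and the equation forces $\parens{x\beta}e=x_j$, which by block detection means $x\beta\in A_j$. Thus $A_i\beta\subseteq A_j$ for this same $j$, as required.

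Conversely, suppose that for each $i\in\Xn$ there is some $j$ (depending on $i$) with $x_i\beta=x_j$ and $A_i\beta\subseteq A_j$. To verify $e\beta=\beta e$ I would check $\parens{xe}\beta=\parens{x\beta}e$ for an arbitrary $x$, say $x\in A_i$: the left-hand side is $x_i\beta=x_j$, while $x\beta\in A_i\beta\subseteq A_j$ gives $\parens{x\beta}e=x_j$, so the two sides agree. Since $x$ was arbitrary, the two transformations coincide.

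This is essentially a routine element-chasing argument, so I do not expect a genuine obstacle. The only point requiring care is the block-detection equivalence $ye=x_j\iff y\in A_j$, which relies on the representatives $x_1,\ldots,x_n$ being pairwise distinct; this is guaranteed by the chain notation for idempotents. The one mild subtlety worth flagging is that once the representative $x_i$ determines the target block $A_j$, the same $j$ automatically governs every element of $A_i$ — this is precisely what lets the pointwise equation collapse into the stated block-to-block description rather than a condition varying across $A_i$.
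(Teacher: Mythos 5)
Your proof is correct, but there is nothing in the paper to compare it against: the paper does not prove this lemma at all, it imports it by citation from Ara\'ujo--Kinyon--Konieczny (their Lemma 2.2), so your argument supplies exactly what the paper leaves to the reference. Your two pillars are the right ones and both hold: $x_ie=x_i$ because $x_i\in A_i$, and the block-detection equivalence $ye=x_j\iff y\in A_j$ because the blocks $A_1,\ldots,A_n$ are pairwise disjoint, hence the representatives $x_1,\ldots,x_n$ are pairwise distinct. Given these, evaluating $(xe)\beta=(x\beta)e$ at $x=x_i$ forces $x_i\beta=x_j$ where $A_j$ is the block containing $x_i\beta$, and evaluating at an arbitrary $x\in A_i$ forces $x\beta\in A_j$ for that same $j$, which is the stated block-to-block condition; the converse is the same computation read in reverse, using that every $x$ lies in some $A_i$. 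The only point of care, as you note, is that the composition convention here is right-to-left under the right-action notation, i.e.\ $x(e\beta)=(xe)\beta$, and your argument is consistent with that throughout. No gaps.
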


 The technique we will use to obtain the maximum size of a commutative subsemigroup of $\tr{X}$ with a unique idempotent, and to identify the semigroups that achieve that size, is based on the one used in \cite[Theorems 3.7 and 3.12]{Commutative_nilpotent_transformation_semigroups_paper} to determine the maximum size of a commutative nilpotent subsemigroup of $\tr{X}$ and the maximum-order commutative nilpotent subsemigroup of $\tr{X}$. This result will come as a corollary of the one we prove in this section.
 
 In \cite[Theorem 3.7]{Commutative_nilpotent_transformation_semigroups_paper} the authors constructed a tree from a commutative nilpotent transformation semigroup whose zero has rank $1$, modified it and proved that the resulting tree was one corresponding to a null semigroup. In this section we will construct, in a similar way, a tree from a commutative transformation semigroup whose unique idempotent is not the identity. The modifications we apply on the tree are more complex than the ones used in \cite[Theorem 3.7]{Commutative_nilpotent_transformation_semigroups_paper}, but at the end we are also capable of obtaining a tree of a null semigroup.
 
 Below we describe how to obtain a special partition of $X$ from a commutative transformation semigroup with a unique idempotent. This partition is an adaptation (and also an extension) of the $S$-partition defined in \cite[Definition 3.3]{Commutative_nilpotent_transformation_semigroups_paper} for commutative nilpotent subsemigroups of $\tr{X}$ whose zero (the unique idempotent) has rank 1. This partition is the starting point for obtaining a tree from a semigroup.
 
 \begin{definition}[$S$-partition]
 	Let $S$ be a commutative subsemigroup of $\tr{X}$ with a unique idempotent $e\in S$. Given a partition $\{A_j\}_{j=0}^k$ of $X$, we say that $\{A_j\}_{j=0}^k$ is an \textit{$S$-partition of $X$} if
 	\begin{align*} &A_0=\im e \\
 		&A_j=\left\{x\in X\setminus \bigcup_{l=0}^{j-1}A_l: x\beta\in\bigcup_{l=0}^{j-1} A_l \textrm{ for all } \beta \in S\right\}, \textrm{ } j=1,\ldots,k.\end{align*}
 \end{definition}
 
 Note that, from construction, given a commutative subsemigroup $S$ of $\tr{X}$ with a unique idempotent there is at most one $S$-partition of $X$. We will prove in Proposition~\ref{T(X): S-partition} below that an $S$-partition always exists, but first we illustrate the definition with an example.
 
 \begin{example} \label{example S-partition}
 	We consider the semigroup $\Tr{7}$ of full transformations over $\{1,2,3,4,5,6,7\}$. Let $S$ be the subsemigroup of $\Tr{7}$ formed by the following transformations:
 	\begin{align*}
 		&\begin{pmatrix} 1&2&3&4&5&6&7 \\ 1&7&4&4&4&4&7\end{pmatrix} && \begin{pmatrix}1&2&3&4&5&6&7 \\ 4&1&7&7&7&7&1\end{pmatrix} &\begin{pmatrix}1&2&3&4&5&6&7 \\ 7&4&1&1&1&1&4\end{pmatrix}\\
 		&\begin{pmatrix} 1&2&3&4&5&6&7 \\ 1&7&4&4&4&3&7\end{pmatrix} &&&\begin{pmatrix}1&2&3&4&5&6&7 \\ 7&3&1&1&1&1&4\end{pmatrix}\\
 		&\begin{pmatrix} 1&2&3&4&5&6&7 \\ 1&7&4&4&4&5&7\end{pmatrix} &&&\begin{pmatrix}1&2&3&4&5&6&7 \\ 7&5&1&1&1&1&4\end{pmatrix}
 	\end{align*}
 	
 	Notice that the top-leftmost transformation is the (unique) idempotent of the semigroup. It is straightforward to verify that $S$ is a commutative semigroup.
 	
 	We are going to determine the $S$-partition of $\set{1,2,3,4,5,6,7}$. The set $A_0$ is equal to the image of the idempotent of $S$, which implies that $A_0=\set{1,4,7}$. The set $A_1$ is formed by all the elements of $\set{1,2,3,4,5,6,7}\setminus A_0=\set{2,3,5,6}$ whose image, in all the transformations of $S$, belongs to $A_0$, that is, whose image is either $1$, $4$ or $7$. Those elements are precisely $3$ and $5$. Hence $A_1=\set{3,5}$. The set $A_2$ is formed by all the elements of $\set{1,2,3,4,5,6,7}\setminus\parens{A_0\cup A_1}$ whose image, in all the transformations of $S$, belongs to $A_0\cup A_1=\set{1,3,4,5,7}$. The image of $4$ and $6$ in the transformations of $S$ always belongs to $A_0\cup A_1=\set{1,3,4,5,7}$, and so $A_2$ comprises the remaining elements of $X$. Since $A_0\cup A_1\cup A_2=\set{1,2,3,4,5,6,7}$, then $\set{A_j}_{j=0}^2$ is the $S$-partition of $\set{1,2,3,4,5,6,7}$.
 \end{example}
 
 In order to prove that it is always possible to construct an $S$-partition from a commutative transformation semigroup $S$ with a unique idempotent, we first need to introduce the following lemma.
 
 \begin{lemma}\label{T(X): exists x not in union of images}
 	Let $S$ be a commutative subsemigroup of $\tr{X}$ with a unique idempotent. If $S\nsubseteq\sym{X}$, then $\bigcup_{\beta\in S} \im \beta \subsetneq X$.
 \end{lemma}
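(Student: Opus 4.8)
The plan is to exhibit a single point of $X$ that lies outside $\im\beta$ for every $\beta\in S$. Write $e$ for the unique idempotent of $S$. Since $S\nsubseteq\sym{X}$, choose $\alpha\in S$ that is not a bijection; as $S$ is finite, some power of $\alpha$ is idempotent, and by uniqueness that idempotent is $e$, whence $\im e\subseteq\im\alpha\subsetneq X$. Thus $N:=X\setminus\im e$ is non-empty, and this is the only place the hypothesis $S\nsubseteq\sym{X}$ enters. I will locate the desired point inside $N$.

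The key observation I would establish first is that a point of $\im e$ can never be sent into $N$ by an element of $S$. Using $\im e=\{x\in X:xe=x\}$ together with commutativity, if $ze=z$ then $(z\beta)e=z(e\beta)=(ze)\beta=z\beta$, so $z\beta\in\im e$. Taking the contrapositive, whenever $z\beta\in N$ we must have $z\in N$. In other words, $N$ is closed under taking $S$-preimages, so any element mapping forward into $N$ already lies in $N$.

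Next I would run a finiteness argument to find a source in $N$. Form the digraph $D$ on vertex set $N$ with an arc $z\to y$ whenever $z\neq y$ and $z\beta=y$ for some $\beta\in S$; by the previous step both endpoints then lie in $N$. If every vertex of $D$ had an incoming arc, then following arcs backwards through the finite digraph $D$ would produce a directed cycle $y_1\to\cdots\to y_r\to y_1$; composing the corresponding elements of $S$ yields a single $\gamma\in S$ with $y_1\gamma=y_1$, and then $y_1=y_1\gamma^k=y_1e\in\im e$ for a suitable power $\gamma^k=e$, contradicting $y_1\in N$. Hence some $y^\ast\in N$ has indegree $0$ in $D$.

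Finally I would verify that this $y^\ast$ is omitted by every image. Suppose $z\beta=y^\ast$ for some $\beta\in S$; then $z\in N$ by the key observation. If $z\neq y^\ast$ this is an arc into $y^\ast$, contradicting indegree $0$; if $z=y^\ast$ then $y^\ast$ is fixed by $\beta$, forcing $y^\ast\in\im e$ as above, again impossible. Therefore $y^\ast\notin\bigcup_{\beta\in S}\im\beta$, giving $\bigcup_{\beta\in S}\im\beta\subseteq X\setminus\{y^\ast\}\subsetneq X$. The main obstacle is exactly that each individual image $\im\beta$ is a proper subset yet the union could a priori exhaust $X$; the device that overcomes it is the preimage-closure of $N$ combined with the acyclicity forced by the unique idempotent, which together guarantee a common source point that no transformation in $S$ can reach.
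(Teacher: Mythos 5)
Your proof is correct and rests on the same ingredients as the paper's: the containment $\im e\subseteq\im\alpha\subsetneq X$, the closure of $X\setminus\im e$ under taking preimages under elements of $S$ (which the paper gets from its lemma on commuting transformations and images), and the impossibility of a backward cycle inside $X\setminus\im e$, since a cycle would produce a point fixed by some $\gamma\in S$ and hence lying in $\im\gamma^k=\im e$ for the power $\gamma^k=e$. The difference is purely organizational: the paper assumes $\bigcup_{\beta\in S}\im\beta=X$ and follows a single backward chain until it repeats, whereas you run the identical argument directly to exhibit a source vertex $y^\ast$ of the preimage digraph that no element of $S$ can reach.
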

 
 \begin{proof}
 	Let $e$ be the unique idempotent of $S$. Since $S\nsubseteq\sym{X}$, then there exists $\alpha\in S$ such that $\alpha\notin\sym{X}$. Then $\im\alpha\subsetneq X$. Due to the fact that $S$ is finite, we have that there exists $m\in\mathbb{N}$ such that $\alpha^m$ is an idempotent. Since $e$ is the unique idempotent of $S$, then $\alpha^m=e$. Therefore $\im e=\im\alpha^m\subseteq\im\alpha\subsetneq X$.
 	
 	Suppose, with the aim of obtaining a contradiction, that $\bigcup_{\beta\in S} \im \beta = X$. 
 	
 	We have that $X\setminus\im e\neq \emptyset$. Let $x_1 \in X\setminus\im e$. There exist $\beta_1 \in S$ and $x_2 \in X$ such that $x_1=x_2\beta_1$. Since $e\beta_1=\beta_1 e$ and $x_2\beta_1=x_1\in X\setminus\im e$, then Lemma~\ref{T(X): ab=ba => xb€Im a for all x€Im a} implies that $x_2\in X\setminus\im e$. Continuing in this way, construct a sequence $(x_n)_{n\in\mathbb{N}}$ of elements of $X\setminus\im e$ and a sequence $(\beta_n)_{n\in\mathbb{N}}$ of elements of $S$ that satisfy $x_n=x_{n+1}\beta_n$ for all $n\in\mathbb{N}$. Since $X$ is finite, then there exist $i<j$ such that $x_i=x_j$ and there exists $k\in\mathbb{N}$ such that $(\beta_{j-1}\cdots\beta_{i})^k$ is an idempotent. Hence $(\beta_{j-1}\cdots\beta_{i})^k=e$. Moreover, we have that
 	\begin{displaymath}
 		x_i=x_{i+1}\beta_i=x_{i+2}\beta_{i+1}\beta_i=\ldots=x_j\beta_{j-1}\cdots\beta_{i}=x_i\beta_{j-1}\cdots\beta_{i}.
 	\end{displaymath}
 	Consequently,
 	\begin{displaymath}
 		x_i=x_i\beta_{j-1}\cdots\beta_{i}=x_i(\beta_{j-1}\cdots\beta_{i})^2=\ldots=x_i(\beta_{j-1}\cdots\beta_{i})^k=x_ie\in\im e,
 	\end{displaymath}
 	which is a contradiction.
 	
 	Therefore $\bigcup_{\beta\in S} \im \beta \subsetneq X$.
 \end{proof}

 \begin{proposition}\label{T(X): S-partition}
 	Let $S$ be a commutative subsemigroup of $\tr{X}$ with a unique idempotent. Then there exists an $S$-partition of $X$. 
 \end{proposition}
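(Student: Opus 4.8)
The plan is to read the iterative formula in the definition as a recursive construction and then verify that it exhausts $X$. I would set $A_0=\im e$ and, for $j\geqslant 1$, define $A_j$ by the displayed formula; each $A_j$ is then forced by $A_0,\ldots,A_{j-1}$, so the content of the proposition is entirely that $\bigcup_j A_j=X$. Since $A_j\subseteq X\setminus\bigcup_{l<j}A_l$, the sets $A_j$ are pairwise disjoint, and as $X$ is finite only finitely many of them can be non-empty; hence there is a least index $k$ with $A_{k+1}=\emptyset$. Writing $B=\bigcup_{l=0}^{k}A_l$, it suffices to prove $B=X$, because then $\set{A_l}_{l=0}^{k}$ is the required $S$-partition (and, as already noted, it is the only one). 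Observe at the outset that $\im e=A_0\subseteq B$, a fact I will use at the very end.

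The heart of the argument, and the step I expect to be the main obstacle, is ruling out that the construction stalls, i.e. that $B\subsetneq X$. Suppose for contradiction that $C:=X\setminus B\neq\emptyset$. The equality $A_{k+1}=\emptyset$ says precisely that no element of $C$ is carried into $B$ by \emph{every} element of $S$; equivalently, for each $x\in C$ there is some $\beta\in S$ with $x\beta\in C$. I would exploit this to build a forward sequence $x_1,x_2,\ldots$ in $C$ with $x_{n+1}=x_n\beta_n$ for suitable $\beta_n\in S$. Because $C$ is finite some value repeats, say $x_i=x_j$ with $i<j$; setting $\gamma=\beta_i\beta_{i+1}\cdots\beta_{j-1}\in S$ gives $x_i\gamma=x_j=x_i$, and hence $x_i\gamma^m=x_i$ for all $m$. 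Since $S$ is finite, some power $\gamma^m$ is idempotent, and as $e$ is the unique idempotent of $S$ we get $\gamma^m=e$. Therefore $x_i=x_i\gamma^m=x_ie\in\im e\subseteq B$, contradicting $x_i\in C=X\setminus B$.

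This cycle-extraction step is a forward analogue of the sequence construction used in Lemma~\ref{T(X): exists x not in union of images}: in both places the whole point is to turn a would-be fixed point of some $\gamma\in S$ into a member of $\im e$, using uniqueness of the idempotent. I note that the case $S\subseteq\sym{X}$ requires no separate treatment, since then $e=\id{X}$, so $A_0=\im e=X$ and the construction terminates immediately with $B=X$. The genuine difficulty lies only in the non-stalling step above; once it is in hand, pairwise disjointness together with finiteness of $X$ yields $\bigcup_{l=0}^{k}A_l=X$, establishing the existence (and uniqueness) of the $S$-partition.
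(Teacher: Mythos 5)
Your proof is correct, but it takes a genuinely different route from the paper's. The paper proceeds by induction on $\abs{X}$: it first handles $S\subseteq\sym{X}$ directly, and otherwise invokes Lemma~\ref{T(X): exists x not in union of images} to find $t\in X\setminus\bigcup_{\beta\in S}\im\beta$, restricts $S$ to $X\setminus\set{t}$ (via Lemma~\ref{T(X): lemma induction}), applies the induction hypothesis to get an $S'$-partition of $X\setminus\set{t}$, and then extends it to an $S$-partition of $X$ by a case analysis on where $t$ can be inserted (either as a new block $A_{k+1}=\set{t}$ or merged into an existing block $A_r$). You instead run the defining recursion greedily and prove directly that it exhausts $X$: if $C=X\setminus\bigcup_l A_l$ were non-empty, every $x\in C$ would admit some $\beta\in S$ with $x\beta\in C$, and your forward-orbit/cycle-extraction argument (taking $\gamma=\beta_i\cdots\beta_{j-1}$ with $x_i\gamma=x_i$, an idempotent power $\gamma^m=e$, hence $x_i\in\im e$) yields a contradiction. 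This is essentially the same mechanism the paper uses inside the proof of Lemma~\ref{T(X): exists x not in union of images} (there run backwards along preimages), so the two proofs share their engine but package it differently: your version is shorter and self-contained, avoiding both the induction and the insertion bookkeeping, while the paper's version reuses lemmas it needs elsewhere anyway and keeps the restriction-to-a-subset machinery that recurs throughout Section~\ref{sec: largest comm smg 1 idemp T(X)}. One small point worth making explicit in your write-up: once some $A_j=\emptyset$ all later $A_{j'}$ are empty too (since the union $\bigcup_{l<j}A_l$ stops growing), which is what guarantees that stopping at the least $k$ with $A_{k+1}=\emptyset$ loses nothing and that $A_0,\ldots,A_k$ are all non-empty, as a partition requires.
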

 
 \begin{proof}
 	Let $e\in S$ be the unique idempotent of $S$. 
 	
 	We are going to prove the result by induction on the size of $X$.
 	
 	Suppose that $\abs{X}=1$. Then $S=\tr{X}=\set{e}$ and $X=\im e$. Thus $\set{\im e}$ is an $S$-partition of $X$.
 	
 	Suppose that $\abs{X}\geqslant 2$ and assume that the result is valid for any set of size $\abs{X}-1$.
 	
 	If $S\subseteq\sym{X}$, then the unique idempotent of $S$ is $\id{X}$. Hence $e=\id{X}$ and, consequently, $\im e=\im\id{X}=X$. Thus $\set{\im e}$ is an $S$-partition of $X$.
 	
 	Now assume that $S\nsubseteq\sym{X}$. Then, by Lemma~\ref{T(X): exists x not in union of images}, there exists $t\in X\setminus\bigcup_{\beta\in S}\im \beta$, which implies that $\beta|_{X\setminus\set{t}}\in\tr{X\setminus\set{t}}$ for all $\beta\in S$. It follows from Lemma~\ref{T(X): lemma induction} that $S'=\gset{\beta|_{X\setminus\set{t}}}{\beta\in S}$ is a commutative subsemigroup of $\tr{X\setminus\set{t}}$ whose unique idempotent is $e|_{X\setminus\set{t}}$. Therefore, by the induction hypothesis, $X\setminus\set{t}$ admits an $S'$-partition $\set{A_j}_{j=0}^{k}$, where $A_0=\im e|_{X\setminus\set{t}}$ and, for all $j\in\{1,\ldots,k\}$,
 	\begin{align*}
 		A_j&=\gset[\Bigg]{x\in (X\setminus \set{t})\setminus \bigcup_{l=0}^{j-1}A_l}{x\beta\in\bigcup_{l=0}^{j-1} A_l \textrm{ for all } \beta \in S'}\\
 		&=\gset[\Bigg]{x\in (X\setminus \set{t})\setminus \bigcup_{l=0}^{j-1}A_l}{x\beta|_{X\setminus\set{t}}\in\bigcup_{l=0}^{j-1} A_l \textrm{ for all } \beta \in S}\\
 		&=\gset[\Bigg]{x\in (X\setminus \{t\})\setminus \bigcup_{l=0}^{j-1}A_l}{x\beta\in\bigcup_{l=0}^{j-1} A_l \textrm{ for all } \beta \in S}.
 	\end{align*}
 	
 	We observe that we have $\im e\setminus\set{te}\subseteq\im e|_{X\setminus\set{t}}\subseteq\im e$. We are going to see that $te\in\im e|_{X\setminus\set{t}}$, which implies that $\im e|_{X\setminus\set{t}}=\im e$. Since $t\in X\setminus\bigcup_{\beta\in S}\im \beta\subseteq X\setminus\im e$, then $te\in \parens{X\setminus\set{t}}\cap\im e$. Hence $te=\parens{te}e\in \parens{X\setminus\set{t}}e=\im e|_{X\setminus\set{t}}$ and, consequently, we must have $\im e|_{X\setminus\set{t}}=\im e$. Thus $A_0=\im e|_{X\setminus\set{t}}=\im e$.

 	
 	From the definition of $t$, we have $t\in X\setminus\im e$ and $t\beta\in X\setminus\set{t}=\bigcup_{j=0}^k A_j$ for all $\beta\in S$. Let
 	\begin{displaymath}
 		r=\min\gset[\Bigg]{j\in\set{1,\ldots,k+1}}{t\beta\in\bigcup_{l=0}^{j-1}A_l \textrm{ for all } \beta \in S}.
 	\end{displaymath} 
 	
 	We want to construct an $S$-partition of $X$ from the $S'$-partition $\{A_j\}_{j=0}^k$ of $X\setminus\{t\}$. We will either create a new set $A_{k+1}$ formed exclusively by $t$, or add $t$ to one of the existing sets of $\{A_j\}_{j=0}^k$. The way we extend the partition of $X\setminus\{t\}$ depends on the value of $r$ defined above and is chosen so that the new partition is an $S$-partition of $X$. We consider two cases.
 	
 	\smallskip
 	
 	\textit{Case 1:} Suppose that $r=k+1$. This implies that there exists $\beta \in S$ such that $t\beta \notin \bigcup_{l=0}^{k-1}A_l$. Consequently, 
 	\begin{displaymath}
 		A_j=\gset[\Bigg]{x\in X\setminus \bigcup_{l=0}^{j-1}A_l}{x\beta\in\bigcup_{l=0}^{j-1} A_l \textrm{ for all } \beta \in S}
 	\end{displaymath}
 	for all $j\in\X{k}$. Let $A_{k+1}=\set{t}$. Then
 	\begin{displaymath}
 		A_{k+1}=\gset[\Bigg]{x\in X\setminus \bigcup_{l=0}^{k}A_l}{x\beta\in\bigcup_{l=0}^{k} A_l \textrm{ for all } \beta \in S}
 	\end{displaymath}
 	and $\set{A_j}_{j=0}^{k+1}$ is an $S$-partition of $X$.
 	
 	\smallskip
 	
 	\textit{Case 2:} Suppose that $r\leqslant k$. Let $B_r=A_r\cup\{t\}$ and $B_j=A_j$ for all $j\in\set{0,\ldots,k}\setminus\set{r}$. We also have $t\beta \notin \bigcup_{l=0}^{r-2}B_l$ for some $\beta \in S$. Then
 	\begin{displaymath}
 		B_j=\gset[\Bigg]{x\in X\setminus \bigcup_{l=0}^{j-1}B_l}{x\beta\in\bigcup_{l=0}^{j-1} B_l \textrm{ for all } \beta \in S}
 	\end{displaymath}
 	for all $j\in\X{k}$. Thus $\set{B_j}_{j=0}^{k}$ is an $S$-partition of $X$.
 \end{proof}
 
 Below we describe how to construct a labelled tree from a commutative subsemigroup of $\tr{X}$ with a unique idempotent. For an illustration of how to construct a tree from a specific semigroup see Example~\ref{T(X): example tree}.
 
 \begin{definition}[Tree of a semigroup]\label{T(X): definition tree}
 	Let $S$ be a commutative subsemigroup of $\tr{X}$ with a unique idempotent and assume that $n=\abs{X}$. Let $\{A_j\}_{j=0}^{k}$ be the $S$-partition of $X$.
 	
 	In order to obtain a labelled tree from $S$, we need to order the elements of $X$ in a convenient way and then use that order to associate each transformation of $S$ to a word of length $n$ over $X$. The tree of $S$ is constructed from those words.
 	
 	We reorder the elements of $X$ in a way such that the elements of $A_j$ appear before the elements of $A_{j+1}$ for all $j\in\{0,\ldots,k-1\}$. Assume that, after reordering, the elements of $X$ are sequenced in the following way: $x_1,\ldots,x_n$. Each transformation $\beta\in S$ determines the word $w_\beta$ of length $n$ over $X$ whose $i$-th letter is $x_i\beta$. Let $W_S=\{w_{\beta}: \beta\in S\}$ be the set of words determined by (the transformations of) $S$, whose size is $\abs{S}$.
 	
 	The \defterm{tree of $S$}, denoted by $T_S$, is a labelled tree whose vertex set is the set of prefixes of the words belonging to $W_S$, that is, the set of vertices is $\gset{u\in X^*}{uv \in W_S \textrm{ for some } v\in X^*}$. Each arc of the tree is labelled with a letter from the alphabet $X$ and, given two vertices $u$ and $v$, we have an arc from $u$ to $v$ labelled by the letter $x$ if and only if $ux=v$.
 \end{definition}
 
 The result below provides some basic properties regarding trees of semigroups.
 
 \begin{lemma}\label{T(X): properties tree of S}
 	Let $S$ be a commutative subsemigroup of $\tr{X}$ with a unique idempotent and assume that $n=\abs{X}$. Let $x_1,\ldots,x_n$ be the order of the elements of $X$ used to construct $T_S$. Then
 	\begin{enumerate}
 		\item The vertex $\varepsilon$ is the root of $T_S$.
 		
 		\item The number of leaves of $T_S$ is $\abs{S}$.
 	\end{enumerate}
 \end{lemma}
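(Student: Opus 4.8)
The plan is to verify both parts directly from the definition of $T_S$, exploiting the single fact that every word in $W_S$ has the common length $n=\abs{X}$, together with the (transitivity) observation that any prefix of a vertex of $T_S$ is again a vertex: if $u$ is a prefix of some $w\in W_S$ and $u'$ is a prefix of $u$, then $u'$ is a prefix of $w$, hence a vertex. I would state this prefix-transitivity at the outset, since both parts lean on it.

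For part (1), I would confirm that $\varepsilon$ satisfies the defining property of a root, namely that there is a unique directed path from $\varepsilon$ to every vertex. The empty word $\varepsilon$ is a prefix of every word of $W_S$, so it is a vertex. The key structural observation is that each nonempty vertex $v$ has exactly one incoming arc: an arc from $u$ to $v$ means $ux=v$ for some letter $x$, which forces $u$ to be $v$ with its last letter deleted and $x$ to be that last letter; meanwhile $\varepsilon$ has indegree $0$. For existence of a path, given a vertex $u=x_{i_1}\cdots x_{i_m}$ I would exhibit the path $\varepsilon, x_{i_1}, x_{i_1}x_{i_2},\ldots,u$, where each partial product is a vertex by prefix-transitivity and each consecutive pair is joined by an arc. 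For uniqueness, tracing backwards from $u$ the predecessor is forced at every step by the indegree-one property, so the path is unique. This identifies $\varepsilon$ as the root.

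For part (2), the strategy is to pin down the leaves exactly. First I would note the map $\beta\mapsto w_\beta$ is injective: $w_\beta=w_\gamma$ gives $x_i\beta=x_i\gamma$ for all $i$, and since $x_1,\ldots,x_n$ exhaust $X$ this yields $\beta=\gamma$, so $\abs{W_S}=\abs{S}$. The crucial leverage is again that every word of $W_S$ has length exactly $n$. Hence a vertex $u$ with $\abs{u}<n$ is a proper prefix of some $w\in W_S$; writing $w=uxv$ with $x$ a letter, the word $ux$ is a vertex and gives an outgoing arc from $u$, so $u$ has positive outdegree and is not a leaf. Conversely a vertex $u$ with $\abs{u}=n$ must equal some $w\in W_S$, and since no prefix can have length $n+1$ it has outdegree $0$ and is a leaf. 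Thus the leaves are precisely the length-$n$ vertices, which are precisely the words of $W_S$, yielding $\abs{W_S}=\abs{S}$ leaves.

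I do not expect a genuine obstacle: both parts are essentially bookkeeping about prefixes and word lengths, and nothing here uses commutativity or the uniqueness of the idempotent of $S$ (those hypotheses only serve to guarantee the $S$-partition and hence the ordering $x_1,\ldots,x_n$ underlying the construction). The one place needing mild care, and the reason I would isolate prefix-transitivity first, is ensuring that the intermediate words along the path in part (1) and the one-letter extension in part (2) are genuinely vertices of $T_S$ rather than arbitrary words over $X$.
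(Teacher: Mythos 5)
Your proof is correct and follows essentially the same route as the paper's: identify $\varepsilon$ as the unique indegree-zero vertex (hence the root) and characterize the leaves as exactly the words of $W_S$, whose number is $\abs{S}$. The paper's own proof simply asserts these facts from the definition of $T_S$; you supply the bookkeeping (prefix-transitivity, forced predecessors, the length-$n$ argument, and injectivity of $\beta\mapsto w_\beta$) that the paper leaves implicit.
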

 
 \begin{proof}
 	It follows from the way we defined $T_S$ that the vertex $\varepsilon$ is the only vertex of $T_S$ whose indegree is zero. Thus vertex $\varepsilon$ is the root of $T_S$.
 	
 	Additionally, we have that
 	\begin{align*}
 		u \text{ is a leaf of } T_S &\iff \text{the outdegree of } u \text{ is zero}\\
 		&\iff u\in W_S,
 	\end{align*}
 	where $W_S$ corresponds to the set of words over $X$ determined by (the transformations of) $S$. Hence the number of leaves of $T_S$ is $\abs{W_S}=\abs{S}$.
 \end{proof}

 Next we define some terms that will be used frequently when we use trees of semigroups.
 
 \begin{definition}
 	Let $S$ be a commutative subsemigroup of $\tr{X}$ with a unique idempotent and assume that $n=\abs{X}$. Let $x_1,\ldots,x_n$ be the order of the elements of $X$ used to construct the tree of $S$.
 	\begin{enumerate}
 		\item If a vertex of $T_S$ has outdegree at least $2$, that is, if a vertex of $T_S$ has at least two arcs starting in it, then we say that a \defterm{branching} occurs. If $s$ is the outdegree of the vertex, then we say that we have a branching with $s$ arcs.
 		
 		\item Let $i\in\Xn$. We say that the arcs of $T_S$ whose starting vertex is a word of length $i-1$ and ending vertex is a word of length $i$ form the \defterm{level $x_i$} of $T_S$.
 		
 		\item Let $i\in\Xn$. We say that the level $x_i$ is a \defterm{branching level} if there is at least one branching at level $x_i$ (that is, if there exists a vertex that is a word of length $i-1$ whose outdegree is at least $2$).
 		
 		\item Let $i\in\Xn$. We say that the level $x_i$ is a \defterm{linear level} if no branching occurs at level $x_i$ (that is, if all the vertices that are words of length $i-1$ have outdegree $1$).
 		
 		\item If level $x_1$ is linear, then the beginning of the tree is a single path whose length is equal to the number of linear levels occurring at the beginning of the tree (that is, the number of linear levels that appear before the first branching level). We call that path the \defterm{trunk of $T_S$}.
 	\end{enumerate}
 \end{definition}
 
 \begin{example}\label{T(X): example tree}
 	The aim of this example is to construct $T_S$, where $S$ is the semigroup from Example~\ref{example S-partition}. We saw that $\set{A_j}_{j=0}^2$ is the $S$-partition of $\set{1,2,3,4,5,6,7}$, where $A_0=\set{1,4,7}$, $A_1=\set{3,5}$ and $A_2=\set{2,6}$.
 	
 	We want to choose a sequence of the elements of $\set{1,2,3,4,5,6,7}$ such that the elements of $A_0$ are the first to appear in that sequence (in any order), followed by the elements of $A_1$ (in any order) and the last elements are the ones belonging to $A_3$ (in any order). A possible way of ordering the elements is $1,4,7,3,5,2,6$.
 	
 	In order to facilitate obtaining words from the transformations of $S$, we are going to write the transformations of $S$ in a different form so that, in their first row, the elements of $\set{1,2,3,4,5,6,7}$ appear in the order $1,4,7,3,5,2,6$.
 	\begin{align*}
 		&\begin{pmatrix} 1&4&7&3&5&2&6 \\ 1&4&7&4&4&7&4\end{pmatrix} &&\begin{pmatrix}1&4&7&3&5&2&6 \\ 4&7&1&7&7&1&7\end{pmatrix} &\begin{pmatrix}1&4&7&3&5&2&6 \\ 7&1&4&1&1&4&1\end{pmatrix}\\
 		&\begin{pmatrix} 1&4&7&3&5&2&6 \\ 1&4&7&4&4&7&3\end{pmatrix} &&&\begin{pmatrix}1&4&7&3&5&2&6 \\ 7&1&4&1&1&3&1\end{pmatrix}\\
 		&\begin{pmatrix} 1&4&7&3&5&2&6 \\ 1&4&7&4&4&7&5\end{pmatrix} &&&\begin{pmatrix}1&4&7&3&5&2&6 \\ 7&1&4&1&1&5&1\end{pmatrix}
 	\end{align*}
 	
 	The words we construct from the transformations of $S$ (using the order $1,4,7,3,5,2,6$) can be obtained simply by reading the second row (from left to right) of the transformations written above. The set of words we get is
 	\begin{align*}
 		W_S &=\set{1474474, 1474473, 1474475, 4717717, 7141141, 7141131, 7141151}\\
 		&\subseteq \set{1,2,3,4,5,6,7}^*,
 	\end{align*}
 	which allows us to construct the tree $T_S$, which is represented in Figure~\ref{T(X): figure, tree of S}.
 	
 	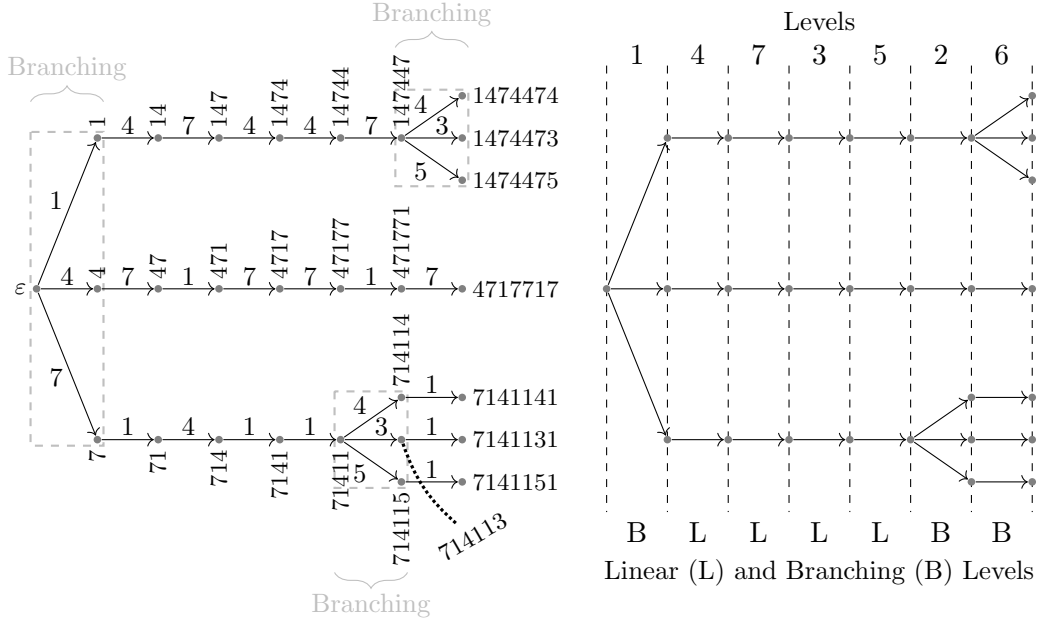
\begin{figure}[hbt]
 		\begin{center}
 			
 			\begin{tikzpicture}[x=8mm,y=8mm]
 				
 				\draw[color=lightgray, dashed, thick](-0.1,2.6) rectangle (1.1,-2.6);
 				\draw[color=lightgray, dashed, thick](5.9,3.3) rectangle (7.1,1.7);
 				\draw[color=lightgray, dashed, thick](4.9,-1.7) rectangle (6.1,-3.3);
 				
 				\draw[decorate, decoration={brace, amplitude=6pt}, color=lightgray] (-0.1,3.1) -- (1.1,3.1);
 				\node at (0.5,3.65) {\small \textcolor{lightgray}{Branching}};
 				\draw[decorate, decoration={brace, amplitude=6pt}, color=lightgray] (5.9,4) -- (7.1,4);
 				\node at (6.5,4.55) {\small \textcolor{lightgray}{Branching}};
 				\draw[decorate, decoration={brace, amplitude=6pt, mirror}, color=lightgray] (4.9,-4.7) -- (6.1,-4.7);
 				\node at (5.5,-5.25) {\small \textcolor{lightgray}{Branching}};
 				
 				\begin{scope}[
 					every node/.style={treenode},
 					]
 					\node (root) at (0,0) {};
 					\node (1) at (1,2.5) {};
 					\node (14) at (2,2.5) {};
 					\node (147) at (3,2.5) {};
 					\node (1474) at (4,2.5) {};
 					\node (14744) at (5,2.5) {};
 					\node (147447) at (6,2.5) {};
 					\node (1474474) at (7,3.2) {};
 					\node (1474473) at (7,2.5) {};
 					\node (1474475) at (7,1.8) {};
 					\node (4) at (1,0) {};
 					\node (47) at (2,0) {};
 					\node (471) at (3,0) {};
 					\node (4717) at (4,0) {};
 					\node (47177) at (5,0) {};
 					\node (471771) at (6,0) {};
 					\node (4717717) at (7,0) {};
 					\node (7) at (1,-2.5) {};
 					\node (71) at (2,-2.5) {};
 					\node (714) at (3,-2.5) {};
 					\node (7141) at (4,-2.5) {};
 					\node (71411) at (5,-2.5) {};
 					\node (714114) at (6,-1.8) {};
 					\node (714113) at (6,-2.5) {};
 					\node (714115) at (6,-3.2) {};
 					\node (7141141) at (7,-1.8) {};
 					\node (7141131) at (7,-2.5) {};
 					\node (7141151) at (7,-3.2) {};
 				\end{scope}
 				
 				\node[anchor=east] at (root) {\footnotesize $\varepsilon$};
 				\node[anchor=west, rotate=90] at (1) {\footnotesize $1$};
 				\node[anchor=west, rotate=90] at (14) {\footnotesize $14$};
 				\node[anchor=west, rotate=90] at (147) {\footnotesize $147$};
 				\node[anchor=west, rotate=90] at (1474) {\footnotesize $1474$};
 				\node[anchor=west, rotate=90] at (14744) {\footnotesize $14744$};
 				\node[anchor=west, rotate=90] at (147447) {\footnotesize $147447$};
 				\node[anchor=west] at (1474474) {\footnotesize $1474474$};
 				\node[anchor=west] at (1474473) {\footnotesize $1474473$};
 				\node[anchor=west] at (1474475) {\footnotesize $1474475$};
 				\node[anchor=west, rotate=90] at (4) {\footnotesize $4$};
 				\node[anchor=west, rotate=90] at (47) {\footnotesize $47$};
 				\node[anchor=west, rotate=90] at (471) {\footnotesize $471$};
 				\node[anchor=west, rotate=90] at (4717) {\footnotesize $4717$};
 				\node[anchor=west, rotate=90] at (47177) {\footnotesize $47177$};
 				\node[anchor=west, rotate=90] at (471771) {\footnotesize $471771$};
 				\node[anchor=west] at (4717717) {\footnotesize $4717717$};
 				\node[anchor=east, rotate=90] at (7) {\footnotesize $7$};
 				\node[anchor=east, rotate=90] at (71) {\footnotesize $71$};
 				\node[anchor=east, rotate=90] at (714) {\footnotesize $714$};
 				\node[anchor=east, rotate=90] at (7141) {\footnotesize $7141$};
 				\node[anchor=east, rotate=90] at (71411) {\footnotesize $71411$};
 				\node[anchor=west, rotate=90] at (714114) {\footnotesize $714114$};
 				\node[anchor=east, rotate=90] at (714115) {\footnotesize $714115$};
 				\node[anchor=west] at (7141141) {\footnotesize $7141141$};
 				\node[anchor=west] at (7141131) {\footnotesize $7141131$};
 				\node[anchor=west] at (7141151) {\footnotesize $7141151$};
 				
 				\node[anchor=west, rotate=30] (label) at (6.5,-4.5) {\footnotesize $714113$};
 				\draw[very thick, densely dotted] (714113) edge[bend right=15] (label);

 				\begin{scope}[
 					->,
 					every node/.style={edgelabel},
 					]
 					\draw (root) -- node[anchor = south east] {$1$} (1);
 					\draw (1) -- node[above] {$4$} (14);
 					\draw (14) -- node[above] {$7$} (147);
 					\draw (147) -- node[above] {$4$} (1474);
 					\draw (1474) -- node[above] {$4$} (14744);
 					\draw (14744) -- node[above] {$7$} (147447);
 					\draw (147447) -- node[anchor=south east] {$4$} (1474474);
 					\draw (147447) -- node[pos=.7, above] {$3$} (1474473);
 					\draw (147447) -- node[anchor=north east] {$5$} (1474475);
 					\draw (root) -- node[above] {$4$} (4);
 					\draw (4) -- node[above] {$7$} (47);
 					\draw (47) -- node[above] {$1$} (471);
 					\draw (471) -- node[above] {$7$} (4717);
 					\draw (4717) -- node[above] {$7$} (47177);
 					\draw (47177) -- node[above] {$1$} (471771);
 					\draw (471771) -- node[above] {$7$} (4717717);
 					\draw (root) -- node[anchor=north east] {$7$} (7);
 					\draw (7) -- node[above] {$1$} (71);
 					\draw (71) -- node[above] {$4$} (714);
 					\draw (714) -- node[above] {$1$} (7141);
 					\draw (7141) -- node[above] {$1$} (71411);
 					\draw (71411) -- node[anchor=south east] {$4$} (714114);
 					\draw (71411) -- node[pos=.7, above] {$3$} (714113);
 					\draw (71411) -- node[anchor=north east] {$5$} (714115);
 					\draw (714114) -- node[above] {$1$} (7141141);
 					\draw (714113) -- node[above] {$1$} (7141131);
 					\draw (714115) -- node[above] {$1$} (7141151);
 				\end{scope}
 				
 				\begin{scope}[xshift=7.5cm]
 					\foreach \x in {0,1,2,3,4,5,6,7} {
 						\draw[black,dashed] (\x,3.7) --  (\x,-3.7);
 					}
 					\foreach \x/\t in {0/B,1/L,2/L,3/L,4/L,5/B,6/B} {
 						\node[columnlabel] at ({\x+.5},-3.7) {\t};
 					}
 					\foreach \x/\t in {0/1,1/4,2/7,3/3,4/5,5/2,6/6} {
 						\node[columnlabel] at ({\x+.5},4.2) {\t};
 					}
 					\node at (3.5,4.45) {\small Levels};
 					\node at (3.5,-4.7) {\small Linear (L) and Branching (B) Levels};
 					
 					\begin{scope}[
 						every node/.style={treenode},
 						]
 						\node (root) at (0,0) {};
 						\node (1) at (1,2.5) {};
 						\node (14) at (2,2.5) {};
 						\node (147) at (3,2.5) {};
 						\node (1474) at (4,2.5) {};
 						\node (14744) at (5,2.5) {};
 						\node (147447) at (6,2.5) {};
 						\node (1474474) at (7,3.2) {};
 						\node (1474473) at (7,2.5) {};
 						\node (1474475) at (7,1.8) {};
 						\node (4) at (1,0) {};
 						\node (47) at (2,0) {};
 						\node (471) at (3,0) {};
 						\node (4717) at (4,0) {};
 						\node (47177) at (5,0) {};
 						\node (471771) at (6,0) {};
 						\node (4717717) at (7,0) {};
 						\node (7) at (1,-2.5) {};
 						\node (71) at (2,-2.5) {};
 						\node (714) at (3,-2.5) {};
 						\node (7141) at (4,-2.5) {};
 						\node (71411) at (5,-2.5) {};
 						\node (714114) at (6,-1.8) {};
 						\node (714113) at (6,-2.5) {};
 						\node (714115) at (6,-3.2) {};
 						\node (7141141) at (7,-1.8) {};
 						\node (7141131) at (7,-2.5) {};
 						\node (7141151) at (7,-3.2) {};
 					\end{scope}
 					
 					\begin{scope}[
 						->,
 						every node/.style={edgelabel},
 						]
 						\draw (root) -- (1);
 						\draw (1) -- (14);
 						\draw (14) -- (147);
 						\draw (147) -- (1474);
 						\draw (1474) -- (14744);
 						\draw (14744) -- (147447);
 						\draw (147447) -- (1474474);
 						\draw (147447) -- (1474473);
 						\draw (147447) -- (1474475);
 						\draw (root) -- (4);
 						\draw (4) -- (47);
 						\draw (47) -- (471);
 						\draw (471) -- (4717);
 						\draw (4717) -- (47177);
 						\draw (47177) -- (471771);
 						\draw (471771) -- (4717717);
 						\draw (root) -- (7);
 						\draw (7) -- (71);
 						\draw (71) -- (714);
 						\draw (714) -- (7141);
 						\draw (7141) -- (71411);
 						\draw (71411) -- (714114);
 						\draw (71411) -- (714113);
 						\draw (71411) -- (714115);
 						\draw (714114) -- (7141141);
 						\draw (714113) -- (7141131);
 						\draw (714115) -- (7141151);
 					\end{scope}
 				\end{scope}
 			\end{tikzpicture}
 			\caption[Tree of $S$.]{\emph{Tree of $S$.} The image on the left is the tree of $S$ with the arcs and vertices labelled. The branchings of $T_S$ are also identified. The branchings of the tree are the ones associated with the vertices $\varepsilon$, $147447$ and $71411$, and are marked by dashed rectangles. Moreover, since level $1$ (the first level of $T_S$) is a branching level, then $T_S$ contains no trunk. In the image on the right, the levels of $T_S$ are indicated at the top of the tree, and at the bottom are distinguished the linear and branching levels of the tree.}
 			\label{T(X): figure, tree of S}
 		\end{center}
 	\end{figure}
 \end{example}
 
 Next we introduce a lemma which adds information about the labels of the arcs of the tree of a semigroup. Moreover, part 2 of the lemma shows what the presence of a branching implies for the semigroup.
 
 \begin{lemma}\label{T(X): labels of T_S}
 	Let $S$ be a commutative subsemigroup of $\tr{X}$ with a unique idempotent and assume that $n=\abs{X}$. Let $x_1,\ldots,x_n$ be the order of the elements of $X$ used to construct $T_S$. Let $i\in\Xn$. Then
 	\begin{enumerate}
 		\item If $x$ is the label of some arc of level $x_i$, then there exists $\beta\in S$ such that $x=x_i\beta$. Furthermore, if $i\leqslant\abs{\im e}$, then $x\in \set{x_1,\ldots,x_{\abs{\im e}}}$ and if $i>\abs{\im e}$, then $x\in\set{x_1,\ldots,x_{i-1}}$.
 		
 		\item If level $x_i$ contains a branching with $s$ arcs whose labels are $x_{i_1},\ldots,x_{i_s}$, then there exist $\beta_1,\ldots,\beta_s\in S$ such that $\beta_1,\ldots,\beta_s$ are equal in $\set{x_1,\ldots,x_{i-1}}$ and $x_{i_m}=x_i\beta_m$ for all $m\in\X{s}$.
 	\end{enumerate}
 \end{lemma}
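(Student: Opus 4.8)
The plan is to prove both parts by directly unfolding the definitions of $T_S$, of the words $w_\beta$, and of the $S$-partition, treating the two parts in turn. For the first claim of Part 1, I would start with an arc of level $x_i$ carrying a label $x$: its tail $u$ is a vertex of length $i-1$ and its head is the vertex $ux$ of length $i$. By the definition of $T_S$, the head $ux$ is a prefix of some word $w_\beta\in W_S$, so the $i$-th letter of $w_\beta$ is $x$. Since by construction the $i$-th letter of $w_\beta$ is $x_i\beta$, I conclude $x=x_i\beta$, which gives a witnessing $\beta\in S$.

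For the ``furthermore'' part I would split on whether $i\leqslant\abs{\im e}$. If $i\leqslant\abs{\im e}$, then because the chosen order lists the elements of $A_0=\im e$ first, we have $x_i\in\im e$ and $\set{x_1,\ldots,x_{\abs{\im e}}}=\im e$; as $e\in S$ and $S$ is commutative we have $e\beta=\beta e$, so Lemma~\ref{T(X): ab=ba => xb€Im a for all x€Im a} gives $x=x_i\beta\in\im e=\set{x_1,\ldots,x_{\abs{\im e}}}$. If instead $i>\abs{\im e}$, then $x_i\notin A_0$, so $x_i\in A_j$ for some $j\geqslant 1$; by the defining condition of the $S$-partition, $x_i\beta\in\bigcup_{l=0}^{j-1}A_l$ for every $\beta\in S$. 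Since the order lists the classes $A_0,\ldots,A_{j-1}$ strictly before $A_j$, this union equals $\set{x_1,\ldots,x_m}$ for some $m<i$, hence is contained in $\set{x_1,\ldots,x_{i-1}}$, giving $x=x_i\beta\in\set{x_1,\ldots,x_{i-1}}$.

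For Part 2 I would note that a branching of level $x_i$ with $s$ arcs is, by definition, a single vertex $u$ of length $i-1$ with outdegree $s$, whose outgoing arcs carry the distinct labels $x_{i_1},\ldots,x_{i_s}$. Each head $ux_{i_m}$ is a prefix of some word $w_{\beta_m}\in W_S$, so I may pick $\beta_1,\ldots,\beta_s\in S$ accordingly. All these words share the common prefix $u$, so their first $i-1$ letters coincide; that is, $x_l\beta_1=\cdots=x_l\beta_s$ for every $l\in\set{1,\ldots,i-1}$, which is exactly the statement that $\beta_1,\ldots,\beta_s$ are equal on $\set{x_1,\ldots,x_{i-1}}$. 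Reading off the $i$-th letter of $w_{\beta_m}$ then yields $x_{i_m}=x_i\beta_m$ for all $m\in\X{s}$.

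The whole argument is essentially a careful translation between arcs and prefixes in the tree on one side and the transformations producing them on the other, so I expect no genuine difficulty. The one point needing real care is the index bookkeeping in the case $i>\abs{\im e}$ of Part 1: one must verify that membership of $x_i$ in the class $A_j$, together with the convention that the classes are listed in increasing order, forces $\bigcup_{l=0}^{j-1}A_l\subseteq\set{x_1,\ldots,x_{i-1}}$. I would regard this indexing check as the main (though mild) obstacle.
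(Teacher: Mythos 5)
Your proposal is correct and follows essentially the same route as the paper's proof: read labels off as letters of the words $w_\beta$, use the ordering induced by the $S$-partition for the case $i>\abs{\im e}$, and for Part 2 exploit the common prefix $u$ of the words through the branching vertex. The only cosmetic difference is that in the case $i\leqslant\abs{\im e}$ you invoke Lemma~\ref{T(X): ab=ba => xb€Im a for all x€Im a} with $\alpha=e$, whereas the paper cites Lemma~\ref{T(X): tr commutes with idemp}; both yield $x_i\beta\in\im e$ and are equally valid.
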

 
 \begin{proof}
 	\textbf{Part 1.} Let $x$ be the label of an arc of level $x_i$. Then there exists $\beta \in S$ such that $x$ corresponds to the $i$-th letter of the word $w_\beta$ determined by $\beta$, which is equal to $x_i\beta$. Hence $x=x_i\beta$.
 	
 	Let $\set{A_j}_{j=0}^k$ be the $S$-partition of $X$. We observe that the order $x_1,\ldots,x_n$ was obtained by making the elements of $A_j$ appear before the elements of $A_{j+1}$ for all $j\in\X{k-1}$. Consequently, in the sequence $x_1,\ldots,x_n$, the first $\abs{A_0}=\abs{\im e}$ elements are precisely the elements of $A_0=\im e$. Thus $\im e=\set{x_1,\ldots,x_{\abs{\im e}}}$.
 	
 	\smallskip
 	
 	\textit{Case 1:} Assume that $i\leqslant\abs{\im e}$. Then $x_i\in\im e$. We have that $e$ is an idempotent and $\beta e=e\beta$. Hence Lemma~\ref{T(X): tr commutes with idemp} implies that $x=x_i\beta\in\im e=\set{x_1,\ldots,x_{\abs{\im e}}}$.
 	
 	\smallskip
 	
 	\textit{Case 2:} Assume that $i>\abs{\im e}$. Then $x_i\in X\setminus\im e=X\setminus A_0=\bigcup_{j=1}^k A_j$. Let $m\in\X{k}$ be such that $x_i\in A_m$. Then $x=x_i\beta\in\bigcup_{j=0}^{m-1} A_j$ and, consequently, $x$ precedes $x_i$ in the sequence $x_1,\ldots,x_n$. Thus $x\in\set{x_1,\ldots,x_{i-1}}$.
 	
 	\medskip
 	
 	\textbf{Part 2.} Suppose that there exists a branching with $s\geqslant 2$ arcs at level $x_i$ whose labels are $x_{i_1},\ldots,x_{i_s}$.
 	
 	Let $w$ be the starting vertex (which is a word of length $i-1$) of the $s$ arcs that form the branching. Then the ending vertices of those $s$ arcs are $wx_{i_1},\ldots,wx_{i_s}$. Hence, for each $m\in\X{s}$, there exists $\beta_m\in S$ such that $wx_{i_m}$ is a prefix of the word $w_{\beta_m}$ determined by $\beta_m$.
 	
 	For each $m\in\X{s}$ and $j\in\X{i-1}$ we have
 	\begin{displaymath}
 		x_j\beta_m = j \text{-th letter of } w_{\beta_m} = j \text{-th letter of } wx_{i_m} = j \text{-th letter of } w.
 	\end{displaymath}
 	This implies that $x_j\beta_1=\cdots=x_j\beta_s$ for all $j\in\X{i-1}$, that is, $\beta_1,\ldots,\beta_s$ are equal in $\set{x_1,\ldots,x_{i-1}}$. Additionally, for all $m\in\X{s}$ we have
 	\begin{displaymath}
 		x_i\beta_m = i \text{-th letter of } w_{\beta_m} = i \text{-th letter of } wx_{i_m} = x_{i_m}. \qedhere
 	\end{displaymath}
 \end{proof}
 
 Now that we know how to construct a tree from a commutative transformation semigroup with a unique idempotent, we need some results that will allow us to modify this tree and obtain a new one, which will be the tree of a null semigroup.
 
 The first result will allow us to show later that the tree of a commutative transformation semigroup with one idempotent contains a subgraph which is the tree of a group.
 
 \begin{proposition}\label{T(X): unique idemp is id_X => group in S(X)}
 	Let $S$ be a subsemigroup of $\tr{X}$ with a unique idempotent. If that idempotent is $\id{X}$, then $S$ is a subgroup of $\sym{X}$.
 \end{proposition}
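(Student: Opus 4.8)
The plan is to exploit the finiteness of $S$ together with the uniqueness of the idempotent. Since $X$ is finite, $\tr{X}$ is finite, and hence so is its subsemigroup $S$. The key standard fact I would invoke is that in a finite semigroup every element has an idempotent power: for each $\alpha\in S$ there exists $n\in\mathbb{N}$ such that $\alpha^n$ is an idempotent of $S$. Because $\id{X}$ is the \emph{unique} idempotent of $S$, this forces $\alpha^n=\id{X}$. This single observation is the engine of the whole argument.

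From $\alpha^n=\id{X}$ I would first deduce that $S\subseteq\sym{X}$. Indeed, the equation $\alpha^n=\id{X}$ exhibits $\alpha$ as a transformation possessing a two-sided inverse in $\tr{X}$, namely $\alpha^{n-1}$ (interpreting $\alpha^0=\id{X}$ in the degenerate case $n=1$, where $\alpha=\id{X}$). A transformation of a finite set that has an inverse is necessarily a bijection, so $\alpha\in\sym{X}$. As $\alpha$ was arbitrary, $S\subseteq\sym{X}$.

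It then remains to verify the subgroup axioms for $S$ inside $\sym{X}$. Closure under the operation is immediate, as $S$ is by assumption a subsemigroup. The identity $\id{X}$ lies in $S$ by hypothesis (it is the idempotent of $S$), and it is the identity element of $\sym{X}$ since $\id{X}\gamma=\gamma\id{X}=\gamma$ for every $\gamma$. Finally, for each $\alpha\in S$ the element $\alpha^{n-1}$ belongs to $S$ and satisfies $\alpha\,\alpha^{n-1}=\alpha^{n-1}\alpha=\alpha^n=\id{X}$, so it is the inverse of $\alpha$ in $\sym{X}$ while remaining inside $S$. Thus $S$ contains the identity, is closed under the operation, and is closed under taking inverses, so $S$ is a subgroup of $\sym{X}$.

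I do not anticipate any genuine difficulty here: the argument is essentially the classical fact that a finite semigroup whose only idempotent is a two-sided identity is a group. The only points requiring a little care are the boundary case $n=1$, where one must make sense of $\alpha^{n-1}=\id{X}$, and the appeal to the idempotent-power property of finite semigroups, which is where all the real content resides.
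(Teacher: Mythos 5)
Your proof is correct and takes essentially the same approach as the paper: finiteness of $S$ gives an idempotent power $\alpha^n$, uniqueness of the idempotent forces $\alpha^n=\id{X}$, from which $\alpha\in\sym{X}$ and $\alpha^{-1}=\alpha^{n-1}\in S$. The only cosmetic difference is that the paper avoids the $n=1$ boundary case by treating $S=\set{\id{X}}$ separately and taking $\alpha\neq\id{X}$ (so $m\geqslant 2$), whereas you handle it with the convention $\alpha^0=\id{X}$; both are fine.
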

 
 \begin{proof}
 	Suppose that the unique idempotent of $S$ is $\id{X}$.
 	
 	If $S=\set{\id{X}}$, then $S\subseteq\sym{X}$ and $S$ is a group.
 	
 	Now assume that $S\neq\set{\id{X}}$. Let $\alpha\in S\setminus\set{\id{X}}$. Since ($\tr{X}$ and, consequently,) $S$ is finite, then there exists $m\in\mathbb{N}$ such that $\alpha^m$ is an idempotent. Hence $\alpha^m=\id{X}$ (because $\id{X}$ is the unique idempotent of $S$). Consequently, $X=\im\id{X}=\im\alpha^m\subseteq\im\alpha\subseteq X$, which implies that $\alpha\in\sym{X}$. In addition, we have that $m\geqslant 2$ (because $\alpha\neq\id{X}$) and, thus, $\alpha\parens{\alpha^{m-1}}=\id{X}=\parens{\alpha^{m-1}}\alpha$. Therefore $\alpha^{-1}=\alpha^{m-1}\in S$.
 	
 	Since $\alpha$ is an arbitrary element of $S\setminus\set{\id{X}}$, we can conclude that $S$ is a subgroup of $\sym{X}$.
 \end{proof}
 
 The following lemma will allow us to perform the first modification of the tree of a semigroup.
 
 \begin{lemma}\label{T(X): unique idemp, upper bound abelian subgroup S(X)}
 	Let $G$ be an abelian subgroup of $\sym{X}$. Then $\abs{G}\leqslant\maxnull{\abs{X}+1}$. Moreover, if $\abs{X}\geqslant 5$, then $\abs{G}< \maxnull{\abs{X}}$.
 \end{lemma}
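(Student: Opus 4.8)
The plan is to reduce the statement to an elementary numerical comparison: bound $\abs{G}$ using the Burns--Goldsmith classification (Theorem~\ref{T(X): maximum size abelian subgroup of S(X)}), and estimate $\maxnull{\cdot}$ from below by specializing the parameter in its defining maximum. The single useful observation is that, since $\maxnull{m}=\max\gset{t^{m-t}}{t\in\X{m}}$, the choice $t=3$ (valid whenever $m\geqslant 3$) already gives $\maxnull{m}\geqslant 3^{m-3}$. In particular $\maxnull{\abs{X}+1}\geqslant 3^{\abs{X}-2}$, and, when $\abs{X}\geqslant 3$, also $\maxnull{\abs{X}}\geqslant 3^{\abs{X}-3}$.

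For the first inequality I would first dispose of the small cases $\abs{X}\in\set{1,2}$ by hand, where $\abs{G}\leqslant 1=\maxnull{2}$ and $\abs{G}\leqslant 2=\maxnull{3}$ respectively. For $\abs{X}\geqslant 3$, Theorem~\ref{T(X): maximum size abelian subgroup of S(X)} bounds $\abs{G}$ by one of $3^{k}$, $4\cdot 3^{k-1}$, or $2\cdot 3^{k}$ according to the residue of $\abs{X}$ modulo $3$. A one-line check in each of the three residue classes shows that this value is at most $3^{\abs{X}-2}$ (for example $3^{k}\leqslant 3^{3k-2}$ reduces to $k\geqslant 1$, while $4\cdot 3^{k-1}\leqslant 3^{3k-1}$ reduces to $4\leqslant 3^{2k}$, and $2\cdot 3^{k}\leqslant 3^{3k}$ reduces to $2\leqslant 3^{2k}$). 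Combining with $\maxnull{\abs{X}+1}\geqslant 3^{\abs{X}-2}$ yields $\abs{G}\leqslant\maxnull{\abs{X}+1}$.

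For the strict inequality when $\abs{X}\geqslant 5$ I would run the same argument against the sharper target $3^{\abs{X}-3}$: in each residue class occurring for $\abs{X}\geqslant 5$ the Burns--Goldsmith value is \emph{strictly} below $3^{\abs{X}-3}$ (for instance $2\cdot 3^{k}<3^{3k-1}$ reduces to $2<3^{2k-1}$, true for $k\geqslant 1$, i.e.\ $\abs{X}\geqslant 5$; and $3^{k}<3^{3k-3}$ reduces to $k\geqslant 2$, which is forced when $\abs{X}=3k\geqslant 5$). Since $3^{\abs{X}-3}\leqslant\maxnull{\abs{X}}$, this gives $\abs{G}<\maxnull{\abs{X}}$.

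The only mild obstacle is bookkeeping. The clean bound $\abs{G}\leqslant 3^{\abs{X}-2}$ fails at $\abs{X}=2$ (where $3^{0}=1<2$), so that case must be handled separately in the first part; and in each part one must verify that the residue classes actually arising in the relevant range all have $k$ large enough (namely $k\geqslant 1$ throughout the first part, and $k\geqslant 2$ for the class $\abs{X}\equiv 0$ in the strict part) for the exponent inequalities to hold. Everything else is a direct substitution.
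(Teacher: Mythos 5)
Your proposal is correct, and its engine is the same as the paper's: the Burns--Goldsmith bound (Theorem~\ref{T(X): maximum size abelian subgroup of S(X)}) combined with the lower bound $\maxnull{m}\geqslant t^{m-t}$ obtained by specializing the parameter in the defining maximum, followed by residue-class-by-residue-class exponent comparisons. The organization differs slightly, in your favour. The paper verifies $\abs{G}\leqslant\maxnull{\abs{X}+1}$ by hand for all $\abs{X}\leqslant 4$ using the table of values of $\xi$, proves $\abs{G}<\maxnull{\abs{X}}$ for $\abs{X}\geqslant 5$ by the same three-case analysis (mixing the specializations $t=3$ and, in the class $\abs{X}=3k+1$, $t=2$, via $4\cdot 3^{k-1}<2^{\abs{X}-2}\leqslant\maxnull{\abs{X}}$), and then deduces the first inequality for $\abs{X}\geqslant 5$ from the strict one together with the monotonicity $\maxnull{\abs{X}}<\maxnull{\abs{X}+1}$ (Lemma~\ref{inequalities xi}). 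You instead run the three-case comparison twice, against the uniform targets $3^{\abs{X}-2}$ and $3^{\abs{X}-3}$; this shrinks the hand-checked cases to $\abs{X}\in\set{1,2}$ and removes the dependence on Lemma~\ref{inequalities xi} altogether, at the cost of repeating the case analysis. The only point to complete when writing it up: in the strict part you display two of the three residue classes, and the omitted one, $\abs{X}=3k+1$, requires $4\cdot 3^{k-1}<3^{3k-2}$, equivalently $4<3^{2k-1}$, which indeed holds because $\abs{X}=3k+1\geqslant 5$ forces $k\geqslant 2$.
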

 
 \begin{proof}
 	Suppose that $\abs{X}\leqslant 4$. Then, by Theorem~\ref{T(X): maximum size abelian subgroup of S(X)}, we have $\abs{G} \leqslant \abs{X}$. We consider four cases.
 	
 	\smallskip
 	
 	\textit{Case 1:} Assume that $\abs{X}=1$. Then $\abs{G}=1=\maxnull{2}=\maxnull{\abs{X}+1}$.
 	
 	\smallskip
 	
 	\textit{Case 2:} Assume that $\abs{X}=2$. Then $\abs{G}\leqslant 2=\maxnull{3}=\maxnull{\abs{X}+1}$.
 	
 	\smallskip
 	
 	\textit{Case 3:} Assume that $\abs{X}=3$. Then $\abs{G}\leqslant 3<4=\maxnull{4}=\maxnull{\abs{X}+1}$.
 	
 	\smallskip
 	
 	\textit{Case 4:} Assume that $\abs{X}=4$. Then $\abs{G}\leqslant 4<9=\maxnull{5}=\maxnull{\abs{X}+1}$.
 	
 	\smallskip
 	
 	Now suppose that $\abs{X}\geqslant 5$. We divide the proof into three cases.
 	
 	\smallskip
 	
 	
 	
 	
 	\textit{Case 1:} Assume that $\abs{X}=3k$ for some $k\in\mathbb{Z}$. We have $k\geqslant 2$ (since $\abs{X}\geqslant 5$). This implies that $2k-3\geqslant 1$ and, consequently, Theorem~\ref{T(X): maximum size abelian subgroup of S(X)} implies that
 	\begin{displaymath}
 		\abs{G}\leqslant 3^k<3^k\cdot 3^{2k-3} =3^{3k-3} =3^{\abs{X}-3}\leqslant \parens{\abs{X}}\xi. 
 	\end{displaymath}
 	
 	\smallskip
 	
 	\textit{Case 2:} Assume that $\abs{X}=3k+1$ for some $k\in\mathbb{Z}$. Then $k\geqslant 2$ (since $\abs{X}\geqslant 5$). It follows from Theorem~\ref{T(X): maximum size abelian subgroup of S(X)} that
 	\begin{displaymath}
 		\abs{G}=4\cdot 3^{k-1}< 4\cdot 8^{k-1}=2^2\cdot \parens{2^3}^{k-1}=2^{\parens{3k+1}-2}=2^{\abs{X}-2}\leqslant\parens{\abs{X}}\xi.
 	\end{displaymath}
 	
 	\smallskip
 	
 	\textit{Case 3:} Assume that $\abs{X}=3k+2$ for some $k\in\mathbb{Z}$. Then $k\geqslant 1$ (since $\abs{X}\geqslant 5$) and, consequently, $2k-1\geqslant 1$. Hence, by Theorem~\ref{T(X): maximum size abelian subgroup of S(X)}, we have
 	\begin{displaymath}
 		\abs{G}\leqslant 2\cdot 3^k <3^{2k-1} \cdot 3^k =3^{\parens{3k+2}-3}=3^{\abs{X}-3}\leqslant\parens{\abs{X}}\xi.
 	\end{displaymath}
 	
 	\smallskip
 	
 	We just showed that, when $\abs{X}\geqslant 5$, we have $\abs{G}<\maxnull{\abs{X}}$. Moreover, Lemma~\ref{inequalities xi}, and the fact that $\abs{X}\geqslant 5$, imply that $\maxnull{\abs{X}}<\maxnull{\abs{X}+1}$, which concludes the proof.
 \end{proof}
 
 Lemma~\ref{i_1,...,i_s<i, i_2,...,i_s linear levels} is the last lemma we need to modify the tree of a semigroup. This lemma provides some properties of the tree of a semigroup that relate the notions of branching and linear level. This result will be important later to show that the resulting tree (after all the modifications) has enough linear levels to be the tree of a null semigroup. In order to prove Lemma~\ref{i_1,...,i_s<i, i_2,...,i_s linear levels} we require another lemma (Lemma~\ref{T(X): main lemma}), which explains how commutativity restricts the structure of the maps of a commutative transformation semigroup with a unique idempotent and whose proof relies on the concept of $S$-partition.
 
 \begin{lemma}\label{T(X): main lemma}
 	Let $S$ be a commutative subsemigroup of $\tr{X}$ with a unique idempotent and let $\set{A_j}_{j=0}^{k}$ be the $S$-partition of $X$. Let $i\in\X{k}$ and define $A=\bigcup_{j=0}^{i-1}A_j$. Let $x\in A_i$ and $\beta_1,\ldots,\beta_m \in S$ be such that $\beta_1|_A=\cdots=\beta_m|_A$. Then $(x\beta_1)\gamma=\cdots=(x\beta_m)\gamma$ for all $\gamma \in S$.
 \end{lemma}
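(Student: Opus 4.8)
The plan is to reduce the statement to a single application of commutativity combined with the defining property of the $S$-partition. The crucial observation is that, by the definition of $A_i$ in the $S$-partition, the element $x\in A_i$ satisfies $x\gamma\in\bigcup_{j=0}^{i-1}A_j=A$ for every $\gamma\in S$; that is, every $\gamma$ carries $x$ into the set $A$ on which the maps $\beta_1,\ldots,\beta_m$ are assumed to coincide.

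With this in hand, I would fix an arbitrary $\gamma\in S$ and two indices $s,t\in\{1,\ldots,m\}$ and show that $(x\beta_s)\gamma=(x\beta_t)\gamma$. Using that $S$ is commutative, so that $\beta_s\gamma=\gamma\beta_s$, I would rewrite $(x\beta_s)\gamma=x(\beta_s\gamma)=x(\gamma\beta_s)=(x\gamma)\beta_s$. Since $x\gamma\in A$ and $\beta_s|_A=\beta_t|_A$ by hypothesis, it follows that $(x\gamma)\beta_s=(x\gamma)\beta_t$. Reversing the same manipulation with $t$ in place of $s$ gives $(x\gamma)\beta_t=x(\gamma\beta_t)=x(\beta_t\gamma)=(x\beta_t)\gamma$, and therefore $(x\beta_s)\gamma=(x\beta_t)\gamma$. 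As $\gamma$, $s$ and $t$ were arbitrary, this yields $(x\beta_1)\gamma=\cdots=(x\beta_m)\gamma$ for all $\gamma\in S$.

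There is no serious obstacle here: the entire argument is driven by the identity $(x\beta_s)\gamma=(x\gamma)\beta_s$, which simply pushes $\gamma$ past $\beta_s$ via commutativity. The only point requiring care is to notice that one should apply $\gamma$ \emph{first}: this places the input to the maps $\beta_s$ inside $A$, which is precisely where the hypothesis $\beta_1|_A=\cdots=\beta_m|_A$ can be used. In other words, the weaker assumption that the $\beta_s$ agree only on $A$ (rather than on all of $X$) suffices exactly because the $S$-partition guarantees $x\gamma\in A$.
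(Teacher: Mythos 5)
Your proof is correct and is essentially identical to the paper's own argument: both use the defining property of the $S$-partition to get $x\gamma\in A$, then push $\gamma$ past $\beta_s$ by commutativity to obtain $(x\beta_s)\gamma=(x\gamma)\beta_s=(x\gamma)\beta_t=(x\beta_t)\gamma$. Nothing is missing.
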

 
 \begin{proof}
 	Let $\gamma \in S$ and $l,t\in\X{m}$. Since $x\in A_i$, then $x\gamma \in \bigcup_{j=0}^{i-1}A_j=A$. Hence, since $S$ is commutative, we have
 	\begin{displaymath}
 		(x\beta_l)\gamma=(x\gamma)\beta_l=(x\gamma)\beta_l|_A=(x\gamma)\beta_t|_A =(x\gamma)\beta_t=(x\beta_t)\gamma. \qedhere
 	\end{displaymath}
 \end{proof}
 
 \begin{lemma}\label{i_1,...,i_s<i, i_2,...,i_s linear levels}
 	Let $S$ be a commutative subsemigroup of $\tr{X}$ whose unique idempotent is $e\in\tr{X}\setminus\set{\id{X}}$ and assume that $n=\abs{X}$. Let $x_1,\ldots,x_n$ be the order of the elements of $X$ used to construct $T_S$. If there exists $i\in\set{\abs{\im e}+1,\ldots,n}$ such that $T_S$ contains a branching at level $x_i$ with $s\geqslant 2$ arcs whose labels are $x_{i_1},\ldots, x_{i_s}$ (where $i_1<i_2<\cdots<i_s$), then $i_s<i$, the levels $x_{i_2},\ldots,x_{i_s}$ are linear and $x_{i_2},\ldots,x_{i_s}\in \set{x_{\abs{\im e}+1},\ldots,x_n}$.
 \end{lemma}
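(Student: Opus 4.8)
The plan is to distill the branching at level $x_i$ into a single strong ``common image'' identity via Lemma~\ref{T(X): main lemma}, and then read off all three conclusions from it. The inequality $i_s<i$ is immediate: since $i>\abs{\im e}$, part~1 of Lemma~\ref{T(X): labels of T_S} shows that every label of an arc at level $x_i$ lies in $\set{x_1,\ldots,x_{i-1}}$, so $i_1,\ldots,i_s\leqslant i-1$ and in particular $i_s<i$.

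Next I would apply part~2 of Lemma~\ref{T(X): labels of T_S} to the branching at level $x_i$ to obtain $\beta_1,\ldots,\beta_s\in S$ that agree on $\set{x_1,\ldots,x_{i-1}}$ and satisfy $x_{i_m}=x_i\beta_m$ for all $m\in\X{s}$. Because $i>\abs{\im e}$ we have $x_i\notin A_0=\im e$, so $x_i\in A_p$ for some $p\in\X{k}$. Writing $A=\bigcup_{j=0}^{p-1}A_j$, every element of $A$ precedes $x_i$ in the chosen ordering, so $A\subseteq\set{x_1,\ldots,x_{i-1}}$ and the maps $\beta_1,\ldots,\beta_s$ agree on $A$. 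Applying Lemma~\ref{T(X): main lemma} with $x=x_i\in A_p$ and using $x_{i_m}=x_i\beta_m$ then yields the key identity
\begin{displaymath}
	x_{i_1}\gamma=x_{i_2}\gamma=\cdots=x_{i_s}\gamma \quad\text{for all }\gamma\in S.
\end{displaymath}
I expect assembling this identity --- correctly matching the level index $i$ with the $S$-partition block $A_p$ so that the hypotheses of Lemma~\ref{T(X): main lemma} are met --- to be the main technical point of the argument; the remaining deductions are short.

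With the common-image identity in hand, the linearity of the levels $x_{i_2},\ldots,x_{i_s}$ follows by contradiction. Fix $m\in\set{2,\ldots,s}$ and suppose level $x_{i_m}$ contained a branching. Then part~2 of Lemma~\ref{T(X): labels of T_S} would furnish $\delta_1,\delta_2\in S$ that agree on $\set{x_1,\ldots,x_{i_m-1}}$ but with $x_{i_m}\delta_1\neq x_{i_m}\delta_2$. Taking $\gamma=\delta_1$ and $\gamma=\delta_2$ in the identity gives $x_{i_1}\delta_1=x_{i_m}\delta_1\neq x_{i_m}\delta_2=x_{i_1}\delta_2$, so $x_{i_1}\delta_1\neq x_{i_1}\delta_2$. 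But $i_1<i_m$ forces $x_{i_1}\in\set{x_1,\ldots,x_{i_m-1}}$, a set on which $\delta_1$ and $\delta_2$ agree --- a contradiction. Hence each of the levels $x_{i_2},\ldots,x_{i_s}$ is linear.

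Finally, to show $x_{i_2},\ldots,x_{i_s}\in\set{x_{\abs{\im e}+1},\ldots,x_n}$, suppose some $x_{i_m}$ with $m\geqslant 2$ lay in $\im e$. Since the ordering places $\im e=\set{x_1,\ldots,x_{\abs{\im e}}}$ first, we would have $i_m\leqslant\abs{\im e}$, and as $i_1<i_m$ also $x_{i_1}\in\im e$. The idempotent $e$ fixes $\im e$ pointwise, so $x_{i_1}e=x_{i_1}$ and $x_{i_m}e=x_{i_m}$; taking $\gamma=e$ in the common-image identity then gives $x_{i_1}=x_{i_1}e=x_{i_m}e=x_{i_m}$, which contradicts the fact that distinct arcs of a branching carry distinct labels. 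This establishes all three assertions.
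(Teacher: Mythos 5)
Your proof is correct, and it rests on the same two pillars as the paper's: part 1 of Lemma~\ref{T(X): labels of T_S} for $i_s<i$, and the identity obtained by feeding part 2 of Lemma~\ref{T(X): labels of T_S} into Lemma~\ref{T(X): main lemma}. The organizational difference is that you extract that identity once, globally ($x_{i_1}\gamma=\cdots=x_{i_s}\gamma$ for all $\gamma\in S$), and derive everything from it, whereas the paper re-invokes Lemma~\ref{T(X): main lemma} locally with whichever transformation is at hand. This buys you two simplifications. For linearity, the paper argues directly: given a vertex $u$ of length $i_m-1$ and an arc from it labelled $x'$, it reads $x_{i_1}\beta$ and $x_{i_m}\beta$ off a word $w_\beta$ having $ux'$ as a prefix and concludes that $x'$ must equal the $i_1$-th letter of $u$; you instead argue by contradiction, applying part 2 of Lemma~\ref{T(X): labels of T_S} a second time to a hypothetical branching at level $x_{i_m}$ --- same ingredients, and both work (note both routes tacitly use that distinct arcs of a branching carry distinct labels, which holds since arcs from a common vertex with equal labels would coincide). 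The more substantial improvement is in the third conclusion: where the paper chooses $y,y'$ with $ye=x_{i_1}$, $y'e=x_{i_m}$, takes a power $\beta_m^t=e$, and chases a chain of equalities, you simply set $\gamma=e$ in the identity and use that the idempotent $e$ fixes $\im e$ pointwise; this is genuinely shorter and cleaner, and it is legitimate since $e\in S$.
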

 
 \begin{proof}
 	Suppose that there exists $i\in\{\abs{\im e}+1,\ldots,n\}$ such that $T_S$ contains a branching at level $x_i$. Assume that there are $s\geqslant 2$ arcs in that branching and that their labels are $x_{i_1},\ldots, x_{i_s}$ (where $i_1<i_2<\cdots<i_s$). Let $\set{A_j}_{j=0}^k$ be the $S$-partition of $X$. We have that, in the sequence $x_1,\ldots,x_n$, the elements of $A_j$ appear before the elements of $A_{j+1}$ for all $j\in\X{k-1}$. Hence $A_0=\im e=\set{x_1,\ldots,x_{\abs{\im e}}}$.
 	
 	Since $i>\abs{\im e}$, then $x_i\in X\setminus A_0=\bigcup_{j=1}^k A_j$, which implies the existence of $l\in\set{1,\ldots,k}$ such that $x_i\in A_l$. Because of the way we ordered the elements of $X$, we have that the elements of $\bigcup_{j=0}^{l-1}A_j$ precede the elements of $A_l$, which implies that the elements of $\bigcup_{j=0}^{l-1}A_j$ precede $x_i$, that is, $\bigcup_{j=0}^{l-1}A_j\subseteq \set{x_1,\ldots,x_{i-1}}$.
 	
 	Furthermore, the existence of a branching at level $x_i$ with $s$ arcs, whose labels are $x_{i_1},\ldots,x_{i_s}$, implies, by part 2 of Lemma~\ref{T(X): labels of T_S}, the existence of $\beta_1,\ldots,\beta_s\in S$ such that $\beta_1,\ldots,\beta_s$ are equal in $\set{x_1,\ldots,x_{i-1}}$ and $x_{i_m}=x_i\beta_m$ for all $m\in\X{s}$. Then, since $\bigcup_{j=0}^{l-1}A_j\subseteq \set{x_1,\ldots,x_{i-1}}$, we also have that $\beta_1,\ldots,\beta_s$ are equal in $\bigcup_{j=0}^{l-1}A_j$.

 	First, we are going to prove that $i_s<i$. Since $i>\abs{\im e}$, then part 1 of Lemma~\ref{T(X): labels of T_S} guarantees that the labels of the arcs of level $x_i$ belong to $\set{x_1,\ldots,x_{i-1}}$. In particular, we have $x_{i_s}\in\set{x_1,\ldots,x_{i-1}}$ and, consequently, $i_s<i$.
 	
 	
 	Now we want to see that the levels $x_{i_2},\ldots,x_{i_s}$ are linear. Let $m\in\set{2,\ldots,s}$. Let $u$ be a vertex that is a word of length $i_m-1$. Then $u$ is the starting vertex of some arc of level $x_{i_m}$. Let $x$ be the $i_1$-th letter of $u$ (notice that $i_1\leqslant i_2-1\leqslant i_m-1$, the length of $u$). Choose one of the arcs whose starting vertex is $u$ and assume that $x'$ is its label. Then the ending vertex of the arc we chose is $ux'$. We have that there exists $\beta\in S$ such that $ux'$ is a prefix of the word $w_{\beta}$ determined by $\beta$. As a consequence of the fact that $u$ is a word of length $i_m-1\geqslant i_2-1\geqslant i_1$ we have that
 	\begin{gather*}
 		x_{i_1}\beta = i_1 \text{-th letter of } w_\beta = i_1 \text{-th letter of } ux' = i_1 \text{-th letter of } u = x\\
 		\shortintertext{and}
 		x_{i_m}\beta = i_m \text{-th letter of } w_\beta = i_m \text{-th letter of } ux'=x'.
 	\end{gather*}
 	Additionally, it follows from Lemma~\ref{T(X): main lemma}, as well as the fact that $x_i\in A_l$ and $\beta_1$ and $\beta_m$ are equal in $\bigcup_{j=0}^{l-1} A_j$, that $\parens{x_i\beta_1}\beta=\parens{x_i\beta_m}\beta$. Consequently, we have
 	\begin{displaymath}
 		x=x_{i_1}\beta=\parens{x_i\beta_1}\beta=\parens{x_i\beta_m}\beta=x_{i_m}\beta=x'.
 	\end{displaymath}
 	Therefore the only arc with starting vertex $u$ is the one with label $x$. Thus $u$ has outdegree $1$.
 	
 	We just proved that all the starting vertices of the arcs of level $x_{i_m}$ have outdegree $1$. Thus the level $x_{i_m}$ is linear. Since $m$ is an arbitrary element of $\set{2,\ldots,s}$, then the levels $x_{i_2},\ldots,x_{i_s}$ are all linear.
 	
 	Finally we are going to prove that $x_{i_2},\ldots,x_{i_s}\in \set{x_{\abs{\im e}+1},\ldots,x_n}$. We consider two cases.
 	
 	\smallskip
 	
 	\textit{Case 1:} Assume that $x_{i_1}\in \set{x_{\abs{\im e}+1},\ldots,x_n}$. Then we immediately obtain that $x_{i_2},\ldots,x_{i_s}\in \set{x_{\abs{\im e}+1},\ldots,x_n}$ (because $i_1<i_2<\cdots<i_s$).
 	
 	\smallskip
 	
 	\textit{Case 2:} Assume that $x_{i_1}\in\set{x_1,\ldots,x_{\abs{\im e}}}=\im e$. Let $m\in\X{s}$ be such that $x_{i_m}\in\set{x_1,\ldots,x_{\abs{\im e}}}=\im e$. Then there exist $y,y'\in X$ such that $ye=x_{i_1}$ and $y'e=x_{i_m}$. In addition, we also know that there exists $t\in\mathbb{N}$ such that $\beta_m^t$ is an idempotent, which implies that $\beta_m^t=e$ (since $e$ is the unique idempotent of $S$). Furthermore, we have that
 	\begin{align*}
 		x_{i_1}\beta_m &=x_i\beta_1\beta_m &\bracks{\text{because } x_i\beta_1=x_{i_1}}\\
 		&=x_i\beta_m\beta_1 &\bracks{\text{because } \beta_1,\beta_m\in S, \text{ which is commutative}}\\
 		&=x_{i_m}\beta_1 &\bracks{\text{because } x_i\beta_m=x_{i_m}}\\
 		&=x_{i_m}\beta_m. &\bracks{\text{because } \beta_1 \text{ is equal to } \beta_m \text{ in } \set{x_1,\ldots,x_{i-1}} \text{ and } i_m\leqslant i_s<i}
 	\end{align*}
 	Consequently, we have
 	\begin{displaymath}
 		x_{i_1}=ye=\parens{ye}e=x_{i_1}e=x_{i_1}\beta_m^t=x_{i_m}\beta_m^t=x_{i_m}e=\parens{y'e}e=y'e=x_{i_m}.
 	\end{displaymath}
 	Thus $\set{x_{i_1},\ldots,x_{i_m}}\cap\im e=\set{x_{i_1}}$ and, consequently, we have $\set{x_{i_2},\ldots,x_{i_m}}\subseteq X\setminus\im e=\set{x_{\abs{\im e}+1},\ldots,x_n}$. 
 \end{proof}
 
 As a consequence of Lemma~\ref{i_1,...,i_s<i, i_2,...,i_s linear levels}, we have that a branching with $s$ arcs is associated to $s$ levels that precede it: the first one can either be a linear or a branching level and the last $s-1$ are all linear levels.
 
 We can finally show how the concept of tree of a semigroup can be used to prove that, for each commutative subsemigroup of $\tr{X}$ whose unique idempotent is not the identity, there is a null subsemigroup of $\tr{X}$ of the same size. Moreover, this result implies, together with Theorem~\ref{maximum size null semigroup}, that the maximum size of these semigroups is $\maxnull{\abs{X}}$.

 \begin{theorem}\label{|S|=|N|, idemp distinct id_X}
 	Let $S$ be a commutative subsemigroup of $\tr{X}$ with a unique idempotent. If that idempotent is distinct from $\id{X}$, then there exists a null subsemigroup $N$ of $\tr{X}$ such that $|S|=|N|$.
 \end{theorem}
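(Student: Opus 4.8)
The plan is to realise $S$ through its tree $T_S$ (Definition~\ref{T(X): definition tree}) and then surgically reshape $T_S$, without changing its number of leaves, into the tree of a null semigroup; since the number of leaves equals the order of the semigroup (Lemma~\ref{T(X): properties tree of S}), the null semigroup $N$ produced will satisfy $\abs{N}=\abs{S}$. To set up, write $e$ for the unique idempotent of $S$ and put $m=\abs{\im e}$ and $n=\abs{X}$; since $e\neq\id{X}$ we have $S\nsubseteq\sym{X}$ and $m<n$. Every $\beta\in S$ commutes with $e$, so $\im e$ is invariant under each $\beta$ and the restriction $G=\gset{\beta|_{\im e}}{\beta\in S}$ is, by Lemma~\ref{T(X): lemma induction}, a commutative subsemigroup of $\tr{\im e}$ whose unique idempotent is $e|_{\im e}=\id{\im e}$. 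Proposition~\ref{T(X): unique idemp is id_X => group in S(X)} then shows that $G$ is an abelian subgroup of $\sym{\im e}$, and Lemma~\ref{T(X): unique idemp, upper bound abelian subgroup S(X)} bounds its order by $\abs{G}\leqslant\maxnull{m+1}$. This inequality, together with the fact that $\abs{X}\geqslant m+1$ (so that at least one symbol lies outside $\im e$), is what will let the group part of the tree be re-encoded inside a null structure.

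Next I would build $T_S$ and read off its coarse shape. In the order $x_1,\dots,x_n$ used to construct $T_S$ the first $m$ symbols are exactly $\im e$, so the top $m$ levels of $T_S$ form the tree of the group $G$ and carry $\abs{G}$ nodes at depth $m$; below each such node the tail levels $x_{m+1},\dots,x_n$ continue. The crucial structural input for the tail is Lemma~\ref{i_1,...,i_s<i, i_2,...,i_s linear levels}: every branching occurring at a tail level with $s$ arcs is ``paid for'' by $s-1$ linear levels that lie below $\im e$ and precede it. Thus branchings in the tail come with a reservoir of linear levels, exactly the budget needed to spread them out into the free branchings of a null tree.

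With these facts in hand, the modification proceeds in two stages, each preserving the number of leaves. In the first stage I reshape the group block: using $\abs{G}\leqslant\maxnull{m+1}$ and borrowing the one guaranteed symbol outside $\im e$, I replace the top block by a trunk followed by branchings on $m+1$ symbols that realises the same number $\abs{G}$ of emerging paths, now arranged in the null pattern (a linear trunk mapping into a fixed image set, followed by branchings whose labels lie in that set). In the second stage I use the linear levels furnished by Lemma~\ref{i_1,...,i_s<i, i_2,...,i_s linear levels} to redistribute each tail branching into the null pattern as well, again without altering the leaf count. The resulting tree $T'$ has a trunk and, below it, only branchings whose labels lie in the fixed image set, which is precisely the shape of the tree of a null subsemigroup $N$ of $\tr{X}$ with a rank-one zero; reading $N$ off from $T'$ gives $\abs{N}=\abs{S}$.

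The main obstacle is the first stage: turning the abelian-group block, which genuinely branches, into a null block while keeping the leaf count exactly equal and ensuring the outcome is an honest tree of transformations (in particular that the zero map appears among the leaves, so that the leaf set is closed under composition). The bookkeeping linking the ``$+1$'' extra symbol in $\maxnull{m+1}$ to a symbol of $X\setminus\im e$, and the matching of the linear-level budget from Lemma~\ref{i_1,...,i_s<i, i_2,...,i_s linear levels} against the branchings to be created, is where the real work lies. As a safety net, once the surgery yields leaf count $\abs{S}\leqslant\maxnull{\abs{X}}$, the existence of $N$ also follows directly: a maximum null subsemigroup of $\tr{X}$ has order $\maxnull{\abs{X}}$ by Theorem~\ref{maximum size null semigroup}, and deleting any $\maxnull{\abs{X}}-\abs{S}$ of its non-zero elements leaves a null subsemigroup of order exactly $\abs{S}$.
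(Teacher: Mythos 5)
Your outline retraces the paper's own proof: the same group block $G=\gset{\beta|_{\im e}}{\beta\in S}$, the same bound $\abs{G}\leqslant\maxnull{\abs{\im e}+1}$ from Lemma~\ref{T(X): unique idemp, upper bound abelian subgroup S(X)}, and the same two-stage surgery on $T_S$ driven by Lemma~\ref{i_1,...,i_s<i, i_2,...,i_s linear levels}. Your closing ``safety net'' is in fact a nice simplification of the endgame: since any subset of a null semigroup containing its zero is again a null subsemigroup, one only needs the inequality $\abs{S}\leqslant\maxnull{\abs{X}}$, and this follows from counting leaves of the reshaped tree (a tree with $n=\abs{X}$ levels, a trunk of length $r$, and every vertex below the trunk of outdegree at most $r$ has at most $r^{n-r}\leqslant\maxnull{n}$ leaves), which spares you the paper's verification that the words read off the final tree close under composition. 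But the two places where you say ``the real work lies'' are exactly the content of the theorem, and your sketch does not contain the ideas that make them work.

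Concretely: (1) the null block you want to put in place of $T_G$ is the tree of a null subsemigroup $M$ of $\tr{Y}$, $Y=\im e\cup\set{x_{\abs{\im e}+1}}$, with $\abs{M}=\abs{G}$; this tree has $\abs{\im e}+1$ levels, while the slot it must fill has only $\abs{\im e}$ levels, and level $x_{\abs{\im e}+1}$ of $T_S$ is already occupied by the tail, so it cannot simply be ``borrowed''. The paper's resolution is the inequality $\alphanull{\abs{\im e}+1}\geqslant 2$, which guarantees that $T_M$ has a trunk of length at least $2$, so that one trunk arc can be deleted to produce a block $T'_M$ with the correct number of levels, $\abs{G}$ leaves, and at least one surviving linear level. (2) Moving all linear levels of the spliced tree into the trunk is legitimate only if every branching with $s$ arcs is matched by at least $s$ linear levels in total, not the $s-1$ that Lemma~\ref{i_1,...,i_s<i, i_2,...,i_s linear levels} supplies; the missing $s$-th level is precisely the linear level retained inside $T'_M$ by point (1), and for branchings lying inside $T'_M$ itself one needs the separate facts that every branching of $T_M$ has at most $\alphanull{\abs{Y}}$ arcs while its trunk has length at least $\alphanull{\abs{Y}}$, plus again one extra linear level coming from outside $T'_M$. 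Both repairs hinge on $\alphanull{\abs{\im e}+1}\geqslant 2$, an inequality your proposal never isolates. Without (1) the splice is not even defined, and without (2) the surgery is not known to terminate in a null-shaped tree, so the safety net has no inequality $\abs{S}\leqslant\maxnull{\abs{X}}$ to start from.
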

 
 \begin{proof}
 	The idea of the proof is to construct the tree of $S$, modify it and obtain a new one which will be the labelled tree of a null semigroup of size $|S|$. (For an illustration of how the proof applies to a particular semigroup, see Example~\ref{example proof}.) Throughout this proof we will perform two modifications on the tree of $S$, after which we will relabel the arcs and rename the vertices of the final tree. In this process we define several trees. In order to make the proof easier to follow, we introduce the diagram below, which provides a scheme of the proof and a way to distinguish the several trees that we will use in it.
 	\begin{equation}\label{T(X): diagram proof tree}
 		\begin{tikzpicture}[baseline=(labelhere)]
 			\coordinate (labelhere) at (0,-3);
 			
 			\node at (0,0) {$T_M$};
 			\draw[->, decorate, decoration=snake] (0.5,0) -- (1.5,0);
 			\node[columnlabel] at (1,-0.4) {\begin{minipage}{1.5cm}\centering\small
 					Removing one arc from the trunk
 			\end{minipage}};
 			\node at (2,0) {$T'_M$};
 			
 			\node at (0,-3) {$T_S$};
 			\draw[->, decorate, decoration=snake] (0.5,-3) -- (1.5,-3);
 			\node[columnlabel] at (1,-3.4) {\begin{minipage}{1.5cm}\centering\small
 					Replacing $T_G$ by $T'_M$
 			\end{minipage}};
 			\node at (2,-3) {$T_1$};
 			\draw[->, decorate, decoration=snake] (2.5,-3) -- (3.5,-3);
 			\node[columnlabel] at (3,-3.4) {\begin{minipage}{1.5cm}\centering\small
 					Moving the linear levels to the trunk
 			\end{minipage}};
 			\node at (4,-3) {$T_2$};
 			\draw[->, decorate, decoration=snake] (4.5,-3) -- (5.5,-3);
 			\node[columnlabel] at (5,-3.4) {\begin{minipage}{1.5cm}\centering\small
 					Adding labels to arcs and renaming vertices
 			\end{minipage}};
 			\node at (6,-3) {$T_N$};
 		\end{tikzpicture}
 	\end{equation}
 	
 	Let $n=|X|$. Assume that $\set{A_j}_{j=0}^k$ is the $S$-partition of $X$ and that the order of the elements of $X$ used to construct $T_S$ is $x_1,\ldots,x_n$. Let $e\in S$ be the unique idempotent of $S$ and assume that $e\neq\id{X}$. We notice that, due to the way we organized the elements of $X$, the first $\abs{A_0}=\abs{\im e}$ elements of $X$ belong to $A_0=\im e$. Thus $\im e=\set{x_1,\ldots,x_{\abs{\im e}}}$.
 	
 	Let $G=\gset{\beta|_{\im e}}{\beta \in S}$. It follows from Lemma~\ref{T(X): tr commutes with idemp} that $\beta|_{\im e}\in\tr{\im e}$ for all $\beta\in S$. Furthermore, the fact that $S$ is a commutative subsemigroup of $\tr{X}$ whose unique idempotent is $e$ implies, by Lemma~\ref{T(X): lemma induction}, that $G$ is a commutative subsemigroup of $\tr{\im e}$ whose unique idempotent is $e|_{\im e}=\id{\im e}$. Hence Proposition~\ref{T(X): unique idemp is id_X => group in S(X)} guarantees that $G$ is an abelian subgroup of $\sym{\im e}$.
 	
 	We consider $T_G$, the tree of $G$, which we construct using the order $x_1,\ldots,x_{\abs{\im e}}$ of the elements of $\im e$. We are going to see that $T_G$ corresponds to the subgraph of $T_S$ located at levels $x_1,\ldots,x_{\abs{\im e}}$ of $T_S$. For each $\beta\in S$ (respectively, $\beta\in G$) let $w_\beta$ (respectively, $w'_\beta$) be the word over $X^*$ (respectively, $\parens{\im e}^*\subseteq X^*$) determined by $\beta$. Let $W_S=\gset{w_\beta}{\beta\in S}$ and $W_G=\gset{w'_\beta}{\beta\in G}$. The length of the words of the sets $W_S$ and $W_G$ is $n$ and $\abs{\im e}$, respectively.
 	
 	
 	The vertex set of $T_G$ is the set of prefixes of the words belonging to $W_G$ and the vertex set of the subgraph of $T_S$ located at levels $x_1,\ldots,x_{\abs{\im e}}$ is the set formed by the prefixes of length at most $\abs{\im e}$ of the words belonging to $W_S$. In order to show that these two vertex sets are equal, it is enough to prove that $W_G$ corresponds to the set of prefixes of length $\abs{\im e}$ of the words belonging to $W_S$. In fact, since $\im e=\set{x_1,\ldots,x_{\abs{\im e}}}$, then for all $w\in X^*$ we have that
 	\begin{align*}
 		w\in W_G &\iff \text{there exists } \beta\in S \text{ such that } w=w'_{\beta|_{\im e}}\\
 		&\iff \left\{\begin{minipage}{8.5cm} the length of $w$ is $\abs{\im e}$ and there exists $\beta\in S$ such that for all $i\in\X{\abs{\im e}}$ the $i$-th letter of $w$ is $x_i\beta|_{\im e}$\end{minipage}\right.\\
 		&\iff \left\{\begin{minipage}{8.5cm} the length of $w$ is $\abs{\im e}$ and there exists $\beta\in S$ such that for all $i\in\X{\abs{\im e}}$ the $i$-th letter of $w$ is $x_i\beta$\end{minipage}\right.\\
 		&\iff \left\{\begin{minipage}{8.5cm} there exists $\beta\in S$ such that $w$ is a prefix of length $\abs{\im e}$ of $w_\beta$\end{minipage}\right.\\
 		&\iff w \text{ is a prefix of length } \abs{\im e} \text{ of a word belonging to } W_S, 
 	\end{align*}
 	which proves the desired equality.
 	
 	We have that the set of words used to construct $T_G$ comprises the words used to construct $T_S$ whose length is at most $\abs{\im e}$. Thus, it follows from the way we defined the tree of a semigroup that $T_G$ is the subgraph of $T_S$ located at levels $x_1,\ldots,x_{\abs{\im e}}$.
 	
 	This result motivates the first modification of $T_S$, which consists on replacing the subgraph $T_G$ of $T_S$ by another tree with the same number of leaves as $T_G$. Before we do that, we delete the labels of all the arcs and the names of all the vertices of the tree $T_S$ (since we will not need them for the rest of the proof). In what follows we explain how to obtain the new tree meant to replace $T_G$.
 	
 	Let $Y=\set{x_1,\ldots,x_{\abs{\im e}+1}}=\im e \cup\set{x_{\abs{\im e}+1}}$ and $t=\alphanull{\abs{Y}}=\alphanull{\abs{\im e}+1}$. (We observe that, since $e\neq\id{X}$, then $\abs{\im e} <\abs{X}=n$.) We have that $\nulltr{Y}{x_1}{x_t}$ is a null subsemigroup of $\tr{Y}$ whose zero is the transformation $f$ over $Y$ such that $\im f=\set{x_1}$. It follows from Lemma~\ref{T(X): unique idemp, upper bound abelian subgroup S(X)} and Theorem~\ref{null semigroups of maximum size} that $\abs{G}\leqslant\maxnull{\abs{\im e}+1}=\maxnull{\abs{Y}}=\abs{\nulltr{Y}{x_1}{x_t}}$. Hence there exists $M\subseteq\nulltr{Y}{x_1}{x_t}$ such that $f\in M$ and $\abs{M}=\abs{G}$. As a consequence of the fact that $\nulltr{Y}{x_1}{x_t}$ is a null subsemigroup of $\tr{Y}$ we have that $M$ is also a null subsemigroup of $\tr{Y}$. Let $\set{B_j}_{j=0}^m$ be the $M$-partition of $Y$. We consider any order of the elements of $Y$ where the elements of $B_j$ appear before the elements of $B_{j+1}$ for all $j\in\X{m-1}$, and we use it to construct the tree $T_M$. In what follows we describe $T_M$. 
 	
 	First we will see that $T_M$ has a trunk whose length is at least $t$. We have that $B_0=\im f=\set{x_1}$. Furthermore, for all $x\in B_1$ and $\beta\in M\subseteq \nulltr{Y}{x_1}{x_t}$ we have that $x\beta\in B_0=\set{x_1}$. (We observe that we must have $m\geqslant 1$ due to the fact that $\abs{Y}\geqslant 2$.) This implies that, for all $i\in\X{\abs{B_0}+\abs{B_1}}$, the $i$-th letter of all the words determined by the transformations of $M$ is $x_1$. Hence $x_1, x_1^2, \ldots, x_1^{\abs{B_0}+\abs{B_1}}$ are prefixes of all these words and, consequently, for each $i\in\X{\abs{B_0}+\abs{B_1}}$ we have that $x_1^i$ is the only vertex of length $i$. In the tree $T_M$, this translates into a path of length $\abs{B_0}+\abs{B_1}$ that begins at the vertex $\varepsilon$ (the root of the tree) and ends at the vertex $x_1^{\abs{B_0}+\abs{B_1}}$, and where all the arcs have label $x_1$. Thus the first $\abs{B_0}+\abs{B_1}$ levels of $T_M$ are linear and, consequently, $T_M$ contains a trunk whose length is at least $\abs{B_0}+\abs{B_1}$. Moreover, for all $\beta\in M\subseteq \nulltr{Y}{x_1}{x_t}$ we have that $\set{x_2,\ldots,x_t}\beta=\set{x_1}$, which implies that $x_2,\ldots,x_t\in B_1$. Thus $\abs{B_1}\geqslant t-1$ and, since $\abs{B_0}=1$, we can conclude that the length of the trunk of $T_M$ is at least $t$.
 	
 	Now we will see that (if $T_M$ contains branchings, then) any branching of $T_M$ contains at most $t$ arcs. Let $i\in\X{\abs{\im e}+1}$ be such that level $x_i$ of $T_M$ contains a branching. For all $\beta\in M\subseteq\nulltr{Y}{x_1}{x_t}$ we have $x_i\beta\in\set{x_1,\ldots,x_t}$, which implies that the labels of the arcs of level $x_i$ must belong to $\set{x_1,\ldots,x_t}$. In particular, the labels of the arcs of any branching at level $x_i$ belong to $\set{x_1,\ldots,x_t}$, which implies that any branching at level $x_i$ has at most $t$ arcs. Since $x_i$ is an arbitrary branching level of $T_M$, we can conclude that any branching of $T_M$ has at most $t$ arcs.
 	
 	Since the trunk of $T_M$ precedes any branching of $T_M$, then the former two paragraphs allow us to conclude that each branching of $T_M$ is preceded by at least $t$ linear levels.
 	
 	Just like we did with $T_S$, we can now remove the labels from all the arcs of $T_M$, as well as the names of its vertices, which will not be necessary for the rest of the proof.
 	
 	At this moment we have enough to perform the first modification of the tree of $S$. Like we mentioned before, we are going to replace the subgraph $T_G$ of $T_S$ by a tree whose number of leaves is equal to the number of leaves of the tree $T_G$. It follows from part 2 of Lemma~\ref{T(X): properties tree of S} that the number of leaves of $T_G$ and $T_M$ are $\abs{G}$ and $\abs{M}$, respectively. Due to the fact that $\abs{G}=\abs{M}$, then $T_M$ is a good choice to replace $T_G$. However, $T_M$ has one more level than $T_G$: $T_M$ has $\abs{\im e}+1$ levels and $T_G$ has $\abs{\im e}$ levels. In order to solve this discrepancy, we remove one arc from the trunk of $T_M$ and obtain a new tree --- which we denote by $T'_M$ --- that has $\abs{G}=\abs{M}$ leaves, $\abs{\im e}$ levels and a trunk of length at least $t-1$. (We note that $t=\alphanull{\abs{Y}}=\alphanull{\abs{\im e}+1}\geqslant 2$ because $2^{\abs{Y}-2}=2^{\abs{\im e}-1}\geqslant 1=1^{\abs{Y}-1}$.) In $T_S$, we replace $T_G$ by this new tree. Since $T_G$ and $T'_M$ have the same number of leaves, then this replacement does not cause any problems. We are going to denote the tree we obtain from $T_S$, after the first modification, by $T_1$. We notice that, due to the fact that $e\neq\id{X}$, we have that $\abs{\im e}<n$. Hence $T_G$ is not equal to $T_S$ and $T_S$ contains more levels than $T_G$. More specifically, the last level of $T_S$ (the one where the leaves are) is not a level of $T_G$. Thus, replacing $T_G$ with $T'_M$ in $T_S$, and obtaining $T_1$, does not change the number of leaves of the tree and, consequently, $T_1$ has $\abs{S}$ leaves.

 	Now we are ready to do the second modification. We consider all the linear levels of $T_1$ that do not correspond to the trunk of the tree. Assume that there are $r$ linear levels in the tree $T_1$, $r'$ of which are the linear levels outside of the trunk. Then $r$ is equal to the sum of $r'$ and the number of arcs in the trunk of $T_1$. We are going to move those $r'$ linear levels to the trunk of the tree, that is, we are going to eliminate all the arcs that correspond to those levels, and we are going to add $r'$ arcs to the trunk of the tree (that is, we are adding $r'$ linear levels to the trunk). Of course, if $T_1$ has all its linear levels in the trunk, then we do not need to perform any changes in the tree. Note that, since all the starting vertices of the arcs belonging to the linear levels have outdegree $1$, then eliminating linear levels does not cause any problems in the tree. This entire process does not change either the number of leaves of the tree, or the number of linear and branching levels of the tree. Furthermore, these transformations do not create new branchings and maintain the number of arcs of the existing ones. This means that each branching of the resulting tree --- which we denote by $T_2$ --- was also a branching of $T_1$ (and it has the same number of arcs). We also note that the tree $T_2$ has a trunk with $r$ arcs and all its linear levels are the ones associated with its trunk.
 	
 	Before we show that it is possible to obtain a null semigroup from $T_2$ we need to demonstrate that the length of the trunk of $T_2$ is an upper bound for the number of arcs of each branching of $T_2$. Since $T_2$ was obtained from $T_1$ by moving all the linear levels to its trunk, then it is enough to demonstrate that the number of linear levels of $T_1$ is an upper bound for the number of arcs of each branching of $T_1$. Assume that there is a branching in $T_1$ with $s$ arcs. We have two possible cases, depending on the location of that branching in the tree $T_1$. We recall that the tree $T'_M$ is a subgraph of $T_1$. Hence the branching could be in the tree $T'_M$ or outside the tree $T'_M$.
 	
 	\smallskip
 	
 	\textit{Case 1:} Assume that the branching is not in $T'_M$. Then the branching comes from the original tree $T_S$ and it is located at one of the levels $x_{\abs{\im e}+1},\ldots,x_n$ of $T_S$. (Recall that $T_1$ was obtained from $T_S$ by replacing $T_G$ --- located at levels $x_1,\ldots,x_{\abs{\im e}}$ --- with $T'_M$ and, consequently, no changes were made at the levels $x_{\abs{\im e}+1},\ldots,x_n$.) Assume that, in $T_S$, the labels of the arcs of the selected branching are $x_{i_1},\ldots,x_{i_s}$ and assume that $i_1<i_2<\cdots<i_s$. Then, by Lemma~\ref{i_1,...,i_s<i, i_2,...,i_s linear levels}, we have that levels $x_{i_2},\ldots,x_{i_s}$ of $T_S$ are linear and $x_{i_2},\ldots,x_{i_s}\in\set{x_{\abs{\im e}+1},\ldots,x_n}$. Since the process of modifying $T_S$ and obtaining $T_1$ does not make any changes in the levels $x_{\abs{\im e}+1},\ldots,x_n$, then we can conclude that $T_1$ contains $s-1$ linear levels that are not levels of $T'_M$. Additionally, $T_1$ contains a trunk (which is the trunk of $T'_M$), which implies that there is at least one more linear level in $T_1$ (which is located in $T'_M$ and, consequently, is distinct from the previous $s-1$ linear levels). Thus $T_1$ contains at least $s$ linear levels, the number of arcs of the branching we chose.
 	
 	\smallskip
 	
 	\textit{Case 2:} Assume that the branching is in $T'_M$. Since $T'_M$ is obtained from $T_M$ by removing the first arc of its trunk, then the branching is also in $T_M$. We proved earlier that every branching of $T_M$ is preceded by at least as many linear levels as arcs of that branching. Hence the branching we are considering, which has $s$ arcs, is preceded by at least $s$ linear levels in $T_M$. Therefore the branching is preceded by at least $s-1$ linear levels in $T'_M$ (and, consequently, it is preceded by $s-1$ linear levels in $T_1$). In order to see that $T_1$ contains at least one more linear level, we consider the following two sub-cases.
 	
 	\smallskip
 	
 	\textsc{Sub-case 1:} Assume that all the levels of $T_1$ that are not levels of $T'_M$ are linear. Then $T_1$ contains one linear level which is not part of $T'_M$, that is, the linear level is distinct from the $s-1$ linear levels previously mentioned.
 	
 	\smallskip
 	
 	\textsc{Sub-case 2:} Assume that among the levels of $T_1$ that are not levels of $T'_M$ there is a branching level. If we present an argument similar to the one of case 1, then we can conclude that there exists at least one linear level that is not a level of $T'_M$ and, consequently, there exists one linear level in $T_1$ distinct from the $s-1$ linear levels previously mentioned.
 	
 	\smallskip
 	
 	It follows from cases 1 and 2 that $T_1$ has at least $s$ linear levels. Moreover, since we considered an arbitrary branching of $T_1$, then we can conclude that the number of linear levels of $T_1$ is an upper bound for the number of arcs of any branching of $T_1$. Consequently, the length of the trunk of $T_2$ (and, consequently, the number of linear levels of $T_2$) is an upper bound for the number of arcs of each branching. Since the length of the trunk of $T_2$ is $r$, then we can conclude that each branching of $T_2$ has at most $r$ arcs.
 	
 	For the remainder of the proof we will show how to extract a null semigroup from the tree $T_2$. The first thing we need to do is to add labels to the arcs of $T_2$ and rename its vertices (we recall that we removed these at the beginning of the proof). We do this in a way that guarantees that $T_2$ is the tree of a (null) semigroup.
 	
 	We start by labelling the arcs. All the $r$ arcs belonging to the trunk of the tree are labelled by $x_1$. We now consider the starting vertices of the arcs that do not belong to the trunk of the new tree. We want to label these arcs using exclusively elements from $\set{x_1,\ldots,x_r}$. If we have a vertex with outdegree $1$ then we label the corresponding arc by $x_1$. Now assume that we have a vertex with outdegree $s\geqslant 2$. Then we have a branching at that vertex and, since each branching of $T_2$ has at most $r$ arcs, we must have $s\leqslant r$ and, consequently, we label the arcs of this branching by $x_1,\ldots,x_s$.
 	
 	Finally, we rename the vertices. We want the vertices to be the prefixes of the words associated with the leaves, which should be words of length $n$. Hence the root of the tree needs to be the word $\varepsilon$. We also want to guarantee that, given two vertices $u$ and $v$, there is an arc labelled by $x$ from $u$ to $v$ if and only if $v=ux$. Hence the vertices that are not the root must be given by $wx$, where $x$ is the label of the only arc that ends at the vertex we are considering and $w$ is the starting vertex of that arc.
 	
 	Let $Z$ be the set of words formed by the labels of the leaves of $T_2$. Note that the trees $T_S$, $T_1$ and $T_2$ have the same number of leaves, which is equal to $\abs{S}$. Then we have $\abs{S}$ words, all of which have length $n$. Using again the order $x_1,\ldots,x_n$ of the elements of $X$, we are going to obtain from each word of $Z$ a transformation of $\tr{X}$. Let $w=w_1\cdots w_n\in Z$ (where $w_1,\ldots,w_n \in \set{x_1,\ldots,x_n}$). Then $w$ determines the transformation $\beta\in \tr{X}$ such that $x_i\beta=w_i$. Let $N$ be the set formed by the transformations obtained from $Z$. We want to prove that $N$ is a null semigroup. First, we notice that $x_1^n\in Z$. Hence the constant map $f$ with image $\set{x_1}$ belongs to $N$. Let $\beta,\gamma\in N$ and $x\in X$. Since the labels of the arcs of the new tree belong to $\set{x_1,\ldots,x_r}$, then $Z\subseteq \set{x_1,\ldots,x_r}^*$ and, consequently, $x\beta\in\set{x_1,\ldots,x_r}$. However, at the trunk of the tree $T_2$, the arcs are all labelled $x_1$, which implies that $x_1^r$ is a prefix of all the words in $Z$. Therefore $\set{x_1,\ldots,x_r}\gamma=\set{x_1}$ and, as a consequence, $x\beta\gamma=x_1$. Thus $\beta\gamma=f$.
 	
 	Therefore $N$ is a null subsemigroup of $\tr{X}$ such that $\abs{N}=\abs{Z}=\abs{S}$. Moreover, we observe that, when we added labels to the arcs of the tree $T_2$ and named its vertices, we obtained the tree $T_N$ (constructed using the order $x_1,\ldots,x_n$ of the elements of $X$).
 \end{proof}
 
 \begin{example}\label{example proof}
 	The present example serves as a way to show how the proof of Theorem~\ref{|S|=|N|, idemp distinct id_X} works. We will use the semigroup $S$ described in Example~\ref{example S-partition} to do this. Moreover, in Example~\ref{T(X): example tree} we constructed $T_S$ (see Figure~\ref{T(X): figure, tree of S}).
 	
 	Let $e$ be the idempotent of $S$, that is,
 	\begin{displaymath}
 		e=\begin{pmatrix} 1&2&3&4&5&6&7 \\ 1&7&4&4&4&4&7\end{pmatrix}
 	\end{displaymath}
 	and let
 	\begin{displaymath}
 		G=\gset{\beta|_{\im e}}{\beta\in S}=\gset{\beta_{\set{1,4,7}}}{\beta\in S}=\set*{\begin{pmatrix}
 				1&4&7\\1&4&7
 			\end{pmatrix},
 			\begin{pmatrix}
 				1&4&7\\4&7&1
 			\end{pmatrix},
 			\begin{pmatrix}
 				1&4&7\\7&1&4
 		\end{pmatrix}}.
 	\end{displaymath}
 	
 	If we use the sequence $1,4,7$ to construct $T_G$, then the tree we obtain is the one inside the dashed rectangle in Figure~\ref{T(X): figure, T_G in T_S}, that is, the subgraph of $T_S$ located at levels $1$, $4$ and $7$.	
 	
 	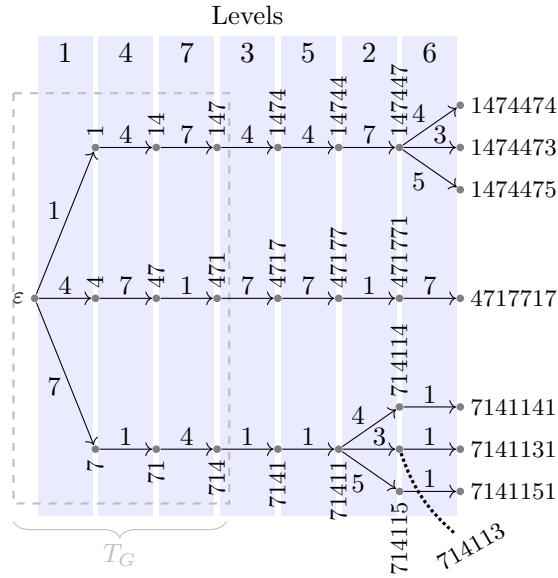
\begin{figure}[h!tb]
 		\begin{center}
 			
 			\begin{tikzpicture}[x=8mm,y=8mm]
 				
 				\fill[color=blue!8](0.05,4.35) rectangle (0.95,-3.6);
 				\fill[color=blue!8](1.05,4.35) rectangle (1.95,-3.6);
 				\fill[color=blue!8](2.05,4.35) rectangle (2.95,-3.6);
 				\fill[color=blue!8](3.05,4.35) rectangle (3.95,-3.6);
 				\fill[color=blue!8](4.05,4.35) rectangle (4.95,-3.6);
 				\fill[color=blue!8](5.05,4.35) rectangle (5.95,-3.6);
 				\fill[color=blue!8](6.05,4.35) rectangle (6.95,-3.6);
 				
 				\foreach \x/\t in {0/1,1/4,2/7,3/3,4/5,5/2,6/6} {
 					\node[columnlabel] at ({\x+.5},4.4) {\t};
 				}
 				\node at (3.5,4.7) {\small Levels};
 				
 				\draw[color=lightgray, dashed, thick](-0.35,3.4) rectangle (3.2,-3.4);
 				
 				\draw[decorate, decoration={brace, amplitude=6pt, mirror}, color=lightgray] (-0.35,-3.7) -- (3.15,-3.7);
 				
 				
 				\node at (1.4,-4.25) {\small \textcolor{lightgray}{$T_G$}};
 				
 				\begin{scope}[
 					every node/.style={treenode},
 					]
 					\node (root) at (0,0) {};
 					\node (1) at (1,2.5) {};
 					\node (14) at (2,2.5) {};
 					\node (147) at (3,2.5) {};
 					\node (1474) at (4,2.5) {};
 					\node (14744) at (5,2.5) {};
 					\node (147447) at (6,2.5) {};
 					\node (1474474) at (7,3.2) {};
 					\node (1474473) at (7,2.5) {};
 					\node (1474475) at (7,1.8) {};
 					\node (4) at (1,0) {};
 					\node (47) at (2,0) {};
 					\node (471) at (3,0) {};
 					\node (4717) at (4,0) {};
 					\node (47177) at (5,0) {};
 					\node (471771) at (6,0) {};
 					\node (4717717) at (7,0) {};
 					\node (7) at (1,-2.5) {};
 					\node (71) at (2,-2.5) {};
 					\node (714) at (3,-2.5) {};
 					\node (7141) at (4,-2.5) {};
 					\node (71411) at (5,-2.5) {};
 					\node (714114) at (6,-1.8) {};
 					\node (714113) at (6,-2.5) {};
 					\node (714115) at (6,-3.2) {};
 					\node (7141141) at (7,-1.8) {};
 					\node (7141131) at (7,-2.5) {};
 					\node (7141151) at (7,-3.2) {};
 				\end{scope}
 				
 				\node[anchor=east] at (root) {\footnotesize $\varepsilon$};
 				\node[anchor=west, rotate=90] at (1) {\footnotesize $1$};
 				\node[anchor=west, rotate=90] at (14) {\footnotesize $14$};
 				\node[anchor=west, rotate=90] at (147) {\footnotesize $147$};
 				\node[anchor=west, rotate=90] at (1474) {\footnotesize $1474$};
 				\node[anchor=west, rotate=90] at (14744) {\footnotesize $14744$};
 				\node[anchor=west, rotate=90] at (147447) {\footnotesize $147447$};
 				\node[anchor=west] at (1474474) {\footnotesize $1474474$};
 				\node[anchor=west] at (1474473) {\footnotesize $1474473$};
 				\node[anchor=west] at (1474475) {\footnotesize $1474475$};
 				\node[anchor=west, rotate=90] at (4) {\footnotesize $4$};
 				\node[anchor=west, rotate=90] at (47) {\footnotesize $47$};
 				\node[anchor=west, rotate=90] at (471) {\footnotesize $471$};
 				\node[anchor=west, rotate=90] at (4717) {\footnotesize $4717$};
 				\node[anchor=west, rotate=90] at (47177) {\footnotesize $47177$};
 				\node[anchor=west, rotate=90] at (471771) {\footnotesize $471771$};
 				\node[anchor=west] at (4717717) {\footnotesize $4717717$};
 				\node[anchor=east, rotate=90] at (7) {\footnotesize $7$};
 				\node[anchor=east, rotate=90] at (71) {\footnotesize $71$};
 				\node[anchor=east, rotate=90] at (714) {\footnotesize $714$};
 				\node[anchor=east, rotate=90] at (7141) {\footnotesize $7141$};
 				\node[anchor=east, rotate=90] at (71411) {\footnotesize $71411$};
 				\node[anchor=west, rotate=90] at (714114) {\footnotesize $714114$};
 				\node[anchor=east, rotate=90] at (714115) {\footnotesize $714115$};
 				\node[anchor=west] at (7141141) {\footnotesize $7141141$};
 				\node[anchor=west] at (7141131) {\footnotesize $7141131$};
 				\node[anchor=west] at (7141151) {\footnotesize $7141151$};
 				
 				\node[anchor=west, rotate=30] (label) at (6.5,-4.5) {\footnotesize $714113$};
 				\draw[very thick, densely dotted] (714113) edge[bend right=15] (label);

 				\begin{scope}[
 					->,
 					every node/.style={edgelabel},
 					]
 					\draw (root) -- node[anchor = south east] {$1$} (1);
 					\draw (1) -- node[above] {$4$} (14);
 					\draw (14) -- node[above] {$7$} (147);
 					\draw (147) -- node[above] {$4$} (1474);
 					\draw (1474) -- node[above] {$4$} (14744);
 					\draw (14744) -- node[above] {$7$} (147447);
 					\draw (147447) -- node[anchor=south east] {$4$} (1474474);
 					\draw (147447) -- node[pos=.7, above] {$3$} (1474473);
 					\draw (147447) -- node[anchor=north east] {$5$} (1474475);
 					\draw (root) -- node[above] {$4$} (4);
 					\draw (4) -- node[above] {$7$} (47);
 					\draw (47) -- node[above] {$1$} (471);
 					\draw (471) -- node[above] {$7$} (4717);
 					\draw (4717) -- node[above] {$7$} (47177);
 					\draw (47177) -- node[above] {$1$} (471771);
 					\draw (471771) -- node[above] {$7$} (4717717);
 					\draw (root) -- node[anchor=north east] {$7$} (7);
 					\draw (7) -- node[above] {$1$} (71);
 					\draw (71) -- node[above] {$4$} (714);
 					\draw (714) -- node[above] {$1$} (7141);
 					\draw (7141) -- node[above] {$1$} (71411);
 					\draw (71411) -- node[anchor=south east] {$4$} (714114);
 					\draw (71411) -- node[pos=.7, above] {$3$} (714113);
 					\draw (71411) -- node[anchor=north east] {$5$} (714115);
 					\draw (714114) -- node[above] {$1$} (7141141);
 					\draw (714113) -- node[above] {$1$} (7141131);
 					\draw (714115) -- node[above] {$1$} (7141151);
 				\end{scope}
 			\end{tikzpicture}
 			\caption{Tree of $S$ (with tree of $G$ highlighted).}
 			\label{T(X): figure, T_G in T_S}
 		\end{center}
 	\end{figure}
 	
 	Let $Y=\im e \cup \set{3}=\set{1,4,7,3}$. We have that $\alphanull{\abs{Y}}=\alphanull{4}=2$ and $N^Y_{1,4}$ is a null semigroup of size $\maxnull{\abs{Y}}=\maxnull{4}=4$. Now we construct a null subsemigroup of $N^Y_{1,4}$ of size $\abs{G}=3$. Let
 	\begin{displaymath}
 		M=\set*{\begin{pmatrix}
 				1&4&7&3\\1&1&1&1
 			\end{pmatrix},
 			\begin{pmatrix}
 				1&4&7&3\\1&1&1&2
 			\end{pmatrix},
 			\begin{pmatrix}
 				1&4&7&3\\1&1&2&1
 		\end{pmatrix}}
 	\end{displaymath}
 	be that semigroup. In Figure~\ref{T(X): figure, T_M} we have the tree of $M$ constructed using the sequence $1,4,7,3$ of the elements of $Y$. We observe that $T_M$ contains a trunk of length $2$. The tree $T'_M$, obtained from $T_M$ by removing the first arc of its trunk, corresponds to the one located at the first three levels of the rightmost tree of Figure~\ref{T(X): figure, T_S --> T_1}.
 	
 	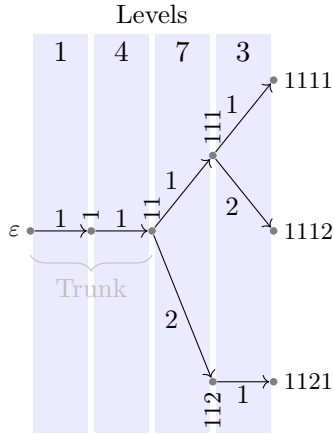
\begin{figure}[h!bt]
 		\begin{center}
 			
 			\begin{tikzpicture}[x=8mm,y=8mm]
 				
 				\fill[color=blue!8](0.05,3.25) rectangle (0.95,-3.35);
 				\fill[color=blue!8](1.05,3.25) rectangle (1.95,-3.35);
 				\fill[color=blue!8](2.05,3.25) rectangle (2.95,-3.35);
 				\fill[color=blue!8](3.05,3.25) rectangle (3.95,-3.35);
 				
 				\draw[decorate, decoration={brace, amplitude=6pt, mirror}, color=lightgray] (0,-0.4) -- (2,-0.4);
 				
 				
 				\node at (1,-0.95) {\small \textcolor{lightgray}{Trunk}};
 				
 				\foreach \x/\t in {0/1,1/4,2/7,3/3} {
 					\node[columnlabel] at ({\x+.5},3.3) {\t};
 				}
 				\node at (2,3.6) {\small Levels};
 				
 				\begin{scope}[
 					every node/.style={treenode},
 					]
 					\node (root) at (0,0) {};
 					\node (1) at (1,0) {};
 					\node (11) at (2,0) {};
 					\node (111) at (3,1.25) {};
 					\node (1111) at (4,2.5) {};
 					\node (1112) at (4,0) {};
 					\node (112) at (3,-2.5) {};
 					\node (1121) at (4,-2.5) {};
 				\end{scope}
 				
 				\node[anchor=east] at (root) {\footnotesize $\varepsilon$};
 				\node[anchor=west, rotate=90] at (1) {\footnotesize $1$};
 				\node[anchor=west, rotate=90] at (11) {\footnotesize $11$};
 				\node[anchor=west, rotate=90] at (111) {\footnotesize $111$};
 				\node[anchor=west] at (1111) {\footnotesize $1111$};
 				\node[anchor=west] at (1112) {\footnotesize $1112$};
 				\node[anchor=east, rotate=90] at (112) {\footnotesize $112$};
 				\node[anchor=west] at (1121) {\footnotesize $1121$};
 				
 				\begin{scope}[
 					->,
 					every node/.style={edgelabel},
 					]
 					\draw (root) -- node[above] {$1$} (1);
 					\draw (1) -- node[above] {$1$} (11);
 					\draw (11) -- node[anchor=south east] {$1$} (111);
 					\draw (111) -- node[anchor=south east] {$1$} (1111);
 					\draw (111) -- node[anchor=north east] {$2$} (1112);
 					\draw (11) -- node[anchor=north east] {$2$} (112);
 					\draw (112) -- node[below] {$1$} (1121);
 				\end{scope}
 			\end{tikzpicture}
 			\caption{Tree of $M$.}
 			\label{T(X): figure, T_M}
 		\end{center}
 	\end{figure}
 	
 	Now we are going to perform some modifications in tree of $S$ in order to obtain a new tree. The first modification we do is replacing, in the tree $T_S$, the tree $T_G$ by the tree $T'_M$. This can be seen in Figure~\ref{T(X): figure, T_S --> T_1}: the tree on the left is $T_S$ and the tree on the right is the resulting tree, which we denote by $T_1$.
 	
 	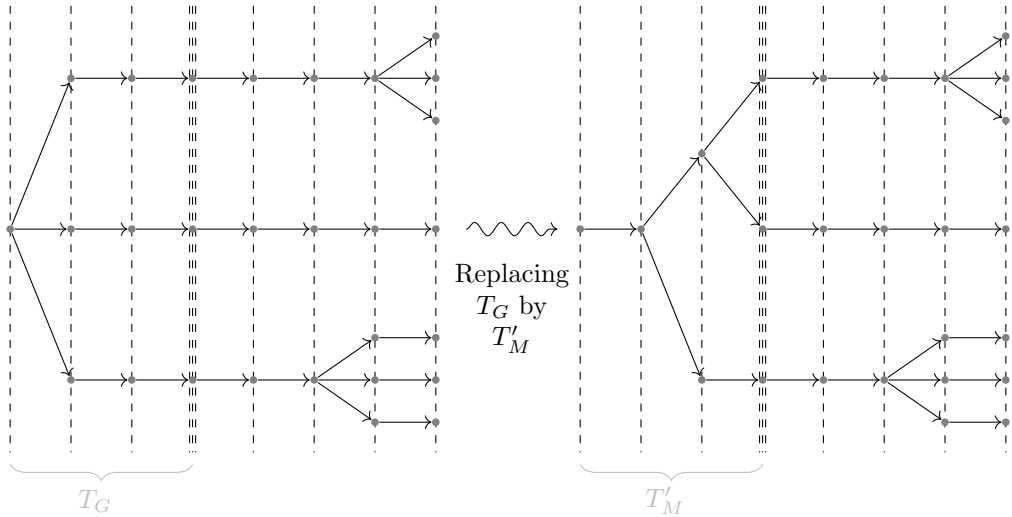
\begin{figure}[h!tb]
 		\begin{center}
 			
 			\begin{tikzpicture}[x=8mm,y=8mm]
 				
 				\foreach \x in {0,1,2,3,4,5,6,7} {
 					\draw[black,dashed] (\x,3.7) --  (\x,-3.7);
 				}
 				\draw[black,dashed] (2.95,3.7) --  (2.95,-3.7);
 				\draw[black,dashed] (3.05,3.7) --  (3.05,-3.7);

 				\draw[decorate, decoration={brace, amplitude=6pt, mirror}, color=lightgray] (0,-3.9) -- (3,-3.9);
 				
 				\node at (1.4,-4.5) {\small \textcolor{lightgray}{$T_G$}};
 				
 				\begin{scope}[
 					every node/.style={treenode},
 					]
 					\node (root) at (0,0) {};
 					\node (1) at (1,2.5) {};
 					\node (14) at (2,2.5) {};
 					\node (147) at (3,2.5) {};
 					\node (1474) at (4,2.5) {};
 					\node (14744) at (5,2.5) {};
 					\node (147447) at (6,2.5) {};
 					\node (1474474) at (7,3.2) {};
 					\node (1474473) at (7,2.5) {};
 					\node (1474475) at (7,1.8) {};
 					\node (4) at (1,0) {};
 					\node (47) at (2,0) {};
 					\node (471) at (3,0) {};
 					\node (4717) at (4,0) {};
 					\node (47177) at (5,0) {};
 					\node (471771) at (6,0) {};
 					\node (4717717) at (7,0) {};
 					\node (7) at (1,-2.5) {};
 					\node (71) at (2,-2.5) {};
 					\node (714) at (3,-2.5) {};
 					\node (7141) at (4,-2.5) {};
 					\node (71411) at (5,-2.5) {};
 					\node (714114) at (6,-1.8) {};
 					\node (714113) at (6,-2.5) {};
 					\node (714115) at (6,-3.2) {};
 					\node (7141141) at (7,-1.8) {};
 					\node (7141131) at (7,-2.5) {};
 					\node (7141151) at (7,-3.2) {};
 				\end{scope}
 				
 				\begin{scope}[
 					->,
 					every node/.style={edgelabel},
 					]
 					\draw (root) -- (1);
 					\draw (1) -- (14);
 					\draw (14) -- (147);
 					\draw (147) -- (1474);
 					\draw (1474) -- (14744);
 					\draw (14744) -- (147447);
 					\draw (147447) -- (1474474);
 					\draw (147447) -- (1474473);
 					\draw (147447) -- (1474475);
 					\draw (root) -- (4);
 					\draw (4) -- (47);
 					\draw (47) -- (471);
 					\draw (471) -- (4717);
 					\draw (4717) -- (47177);
 					\draw (47177) -- (471771);
 					\draw (471771) -- (4717717);
 					\draw (root) -- (7);
 					\draw (7) -- (71);
 					\draw (71) -- (714);
 					\draw (714) -- (7141);
 					\draw (7141) -- (71411);
 					\draw (71411) -- (714114);
 					\draw (71411) -- (714113);
 					\draw (71411) -- (714115);
 					\draw (714114) -- (7141141);
 					\draw (714113) -- (7141131);
 					\draw (714115) -- (7141151);
 				\end{scope}

 				\draw[->, decorate, decoration=snake] (7.5,0) -- (9,0);
 				\node[columnlabel] at (8.25,-0.4) {\begin{minipage}{1.5cm}\centering\small
 						Replacing $T_G$ by $T'_M$
 				\end{minipage}};

 				\begin{scope}[xshift=7.5cm]
 					\foreach \x in {0,1,2,3,4,5,6,7} {
 						\draw[black,dashed] (\x,3.7) --  (\x,-3.7);
 					}
 					\draw[black,dashed] (2.95,3.7) --  (2.95,-3.7);
 					\draw[black,dashed] (3.05,3.7) --  (3.05,-3.7);

 					\draw[decorate, decoration={brace, amplitude=6pt, mirror}, color=lightgray] (0,-3.9) -- (3,-3.9);
 					
 					\node at (1.4,-4.5) {\small \textcolor{lightgray}{$T'_M$}};
 					
 					\begin{scope}[
 						every node/.style={treenode},
 						]
 						\node (1) at (0,0) {};
 						\node (11) at (1,0) {};
 						\node (111) at (2,1.25) {};
 						\node (1111) at (3,2.5) {};
 						\node (1112) at (3,0) {};
 						\node (112) at (2,-2.5) {};
 						\node (1121) at (3,-2.5) {};
 						\node (1474) at (4,2.5) {};
 						\node (14744) at (5,2.5) {};
 						\node (147447) at (6,2.5) {};
 						\node (1474474) at (7,3.2) {};
 						\node (1474473) at (7,2.5) {};
 						\node (1474475) at (7,1.8) {};
 						\node (4717) at (4,0) {};
 						\node (47177) at (5,0) {};
 						\node (471771) at (6,0) {};
 						\node (4717717) at (7,0) {};
 						\node (7141) at (4,-2.5) {};
 						\node (71411) at (5,-2.5) {};
 						\node (714114) at (6,-1.8) {};
 						\node (714113) at (6,-2.5) {};
 						\node (714115) at (6,-3.2) {};
 						\node (7141141) at (7,-1.8) {};
 						\node (7141131) at (7,-2.5) {};
 						\node (7141151) at (7,-3.2) {};
 					\end{scope}
 					
 					\begin{scope}[
 						->,
 						every node/.style={edgelabel},
 						]
 						\draw (1) -- (11);
 						\draw (11) -- (111);
 						\draw (111) -- (1111);
 						\draw (111) -- (1112);
 						\draw (11) -- (112);
 						\draw (112) -- (1121);
 						\draw (1111) -- (1474);
 						\draw (1474) -- (14744);
 						\draw (14744) -- (147447);
 						\draw (147447) -- (1474474);
 						\draw (147447) -- (1474473);
 						\draw (147447) -- (1474475);
 						\draw (1112) -- (4717);
 						\draw (4717) -- (47177);
 						\draw (47177) -- (471771);
 						\draw (471771) -- (4717717);
 						\draw (1121) -- (7141);
 						\draw (7141) -- (71411);
 						\draw (71411) -- (714114);
 						\draw (71411) -- (714113);
 						\draw (71411) -- (714115);
 						\draw (714114) -- (7141141);
 						\draw (714113) -- (7141131);
 						\draw (714115) -- (7141151);
 					\end{scope}
 				\end{scope}
 			\end{tikzpicture}
 			\caption{Transforming the tree $T_S$ into the tree $T_1$.}
 			\label{T(X): figure, T_S --> T_1}
 		\end{center}
 	\end{figure}
 	
 	Now we modify the tree $T_1$ and obtain a new one, which we denote by $T_2$. This modification can be seen in Figure~\ref{T(X): figure, T_1 --> T_2}. We remove the two linear levels of $T_1$ which are not in the trunk (that is, we delete the arcs which belong to the linear levels outside the trunk --- the ones in bold in the tree on the left in Figure~\ref{T(X): figure, T_1 --> T_2}), and then we add two linear levels to the trunk of the tree (that is, we add two arcs to the trunk --- the ones in bold in the tree on the right in Figure~\ref{T(X): figure, T_1 --> T_2}).
 	
 	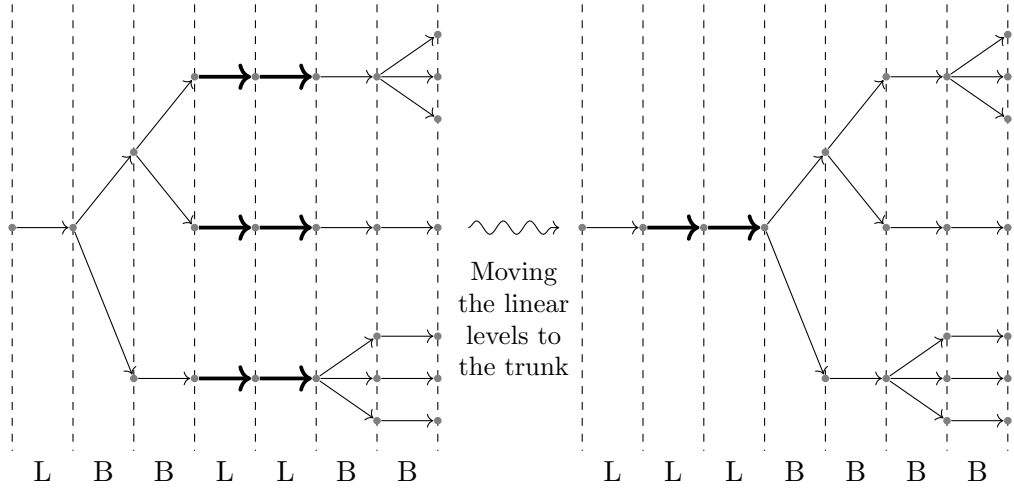
\begin{figure}[h!bt]
 		\begin{center}
 			
 			\begin{tikzpicture}[x=8mm,y=8mm]
 				
 				\foreach \x in {0,1,2,3,4,5,6,7} {
 					\draw[black,dashed] (\x,3.7) --  (\x,-3.7);
 				}
 				\foreach \x/\t in {0/L,1/B,2/B,3/L,4/L,5/B,6/B} {
 					\node[columnlabel] at ({\x+.5},-3.7) {\t};
 				}
 				
 				\begin{scope}[
 					every node/.style={treenode},
 					]
 					\node (1) at (0,0) {};
 					\node (11) at (1,0) {};
 					\node (111) at (2,1.25) {};
 					\node (1111) at (3,2.5) {};
 					\node (1112) at (3,0) {};
 					\node (112) at (2,-2.5) {};
 					\node (1121) at (3,-2.5) {};
 					\node (1474) at (4,2.5) {};
 					\node (14744) at (5,2.5) {};
 					\node (147447) at (6,2.5) {};
 					\node (1474474) at (7,3.2) {};
 					\node (1474473) at (7,2.5) {};
 					\node (1474475) at (7,1.8) {};
 					\node (4717) at (4,0) {};
 					\node (47177) at (5,0) {};
 					\node (471771) at (6,0) {};
 					\node (4717717) at (7,0) {};
 					\node (7141) at (4,-2.5) {};
 					\node (71411) at (5,-2.5) {};
 					\node (714114) at (6,-1.8) {};
 					\node (714113) at (6,-2.5) {};
 					\node (714115) at (6,-3.2) {};
 					\node (7141141) at (7,-1.8) {};
 					\node (7141131) at (7,-2.5) {};
 					\node (7141151) at (7,-3.2) {};
 				\end{scope}
 				
 				\begin{scope}[
 					->,
 					every node/.style={edgelabel},
 					]
 					\draw (1) -- (11);
 					\draw (11) -- (111);
 					\draw (111) -- (1111);
 					\draw (111) -- (1112);
 					\draw (11) -- (112);
 					\draw (112) -- (1121);
 					\draw[line width=.5mm] (1111) -- (1474);
 					\draw[line width=.5mm] (1474) -- (14744);
 					\draw (14744) -- (147447);
 					\draw (147447) -- (1474474);
 					\draw (147447) -- (1474473);
 					\draw (147447) -- (1474475);
 					\draw[line width=.5mm] (1112) -- (4717);
 					\draw[line width=.5mm] (4717) -- (47177);
 					\draw (47177) -- (471771);
 					\draw (471771) -- (4717717);
 					\draw[line width=.5mm] (1121) -- (7141);
 					\draw[line width=.5mm] (7141) -- (71411);
 					\draw (71411) -- (714114);
 					\draw (71411) -- (714113);
 					\draw (71411) -- (714115);
 					\draw (714114) -- (7141141);
 					\draw (714113) -- (7141131);
 					\draw (714115) -- (7141151);
 				\end{scope}

 				\draw[->, decorate, decoration=snake] (7.5,0) -- (9,0);
 				\node[columnlabel] at (8.25,-0.4) {\begin{minipage}{1.5cm}\centering\small
 						Moving the linear levels to the trunk
 				\end{minipage}};

 				\begin{scope}[xshift=7.5cm]
 					\foreach \x in {0,1,2,3,4,5,6,7} {
 						\draw[black,dashed] (\x,3.7) --  (\x,-3.7);
 					}
 					\foreach \x/\t in {0/L,1/L,2/L,3/B,4/B,5/B,6/B} {
 						\node[columnlabel] at ({\x+.5},-3.7) {\t};
 					}

 					\begin{scope}[
 						every node/.style={treenode},
 						]
 						\node (root) at (0,0) {};
 						\node (1) at (1,0) {};
 						\node (11) at (2,0) {};
 						\node (111) at (3,0) {};
 						\node (1111) at (4,1.25) {};
 						\node (1114) at (4,-2.5) {};
 						\node (11111) at (5,2.5) {};
 						\node (11114) at (5,0) {};
 						\node (11141) at (5,-2.5) {};
 						\node (111111) at (6,2.5) {};
 						\node (111141) at (6,0) {};
 						\node (111411) at (6,-1.8) {};
 						\node (111414) at (6,-2.5) {};
 						\node (111417) at (6,-3.2) {};
 						\node (1111111) at (7,3.2) {};
 						\node (1111114) at (7,2.5) {};
 						\node (1111117) at (7,1.8) {};
 						\node (1111411) at (7,0) {};
 						\node (1114111) at (7,-1.8) {};
 						\node (1114141) at (7,-2.5) {};
 						\node (1114171) at (7,-3.2) {};
 					\end{scope}
 					
 					\begin{scope}[
 						->,
 						every node/.style={edgelabel},
 						]
 						\draw (root) -- (1);
 						\draw[line width=.5mm] (1) -- (11);
 						\draw[line width=.5mm] (11) -- (111);
 						\draw (111) -- (1111);
 						\draw (111) -- (1114);
 						\draw (1111) -- (11111);
 						\draw (1111) -- (11114);
 						\draw (1114) -- (11141);
 						\draw (11111) -- (111111);
 						\draw (11114) -- (111141);
 						\draw (11141) -- (111411);
 						\draw (11141) -- (111414);
 						\draw (11141) -- (111417);
 						\draw (111111) -- (1111111);
 						\draw (111111) -- (1111114);
 						\draw (111111) -- (1111117);
 						\draw (111141) -- (1111411);
 						\draw (111411) -- (1114111);
 						\draw (111414) -- (1114141);
 						\draw (111417) -- (1114171);
 					\end{scope}
 				\end{scope}
 			\end{tikzpicture}
 			\caption{Transforming the tree $T_1$ into the tree $T_2$.}
 			\label{T(X): figure, T_1 --> T_2}
 		\end{center}
 	\end{figure}
 	
 	Finally, we just need to relabel the arcs and vertices of the tree $T_2$. Figure~\ref{T(X): figure, T_N} shows the labelled tree obtained from $T_2$.
 	
 	\begin{figure}[h!bt]
 		\begin{center}
 			
 			\begin{tikzpicture}[x=8mm,y=8mm]
 				
 				\begin{scope}[
 					every node/.style={treenode},
 					]
 					\node (root) at (0,0) {};
 					\node (1) at (1,0) {};
 					\node (11) at (2,0) {};
 					\node (111) at (3,0) {};
 					\node (1111) at (4,1.25) {};
 					\node (1114) at (4,-2.5) {};
 					\node (11111) at (5,2.5) {};
 					\node (11114) at (5,0) {};
 					\node (11141) at (5,-2.5) {};
 					\node (111111) at (6,2.5) {};
 					\node (111141) at (6,0) {};
 					\node (111411) at (6,-1.8) {};
 					\node (111414) at (6,-2.5) {};
 					\node (111417) at (6,-3.2) {};
 					\node (1111111) at (7,3.2) {};
 					\node (1111114) at (7,2.5) {};
 					\node (1111117) at (7,1.8) {};
 					\node (1111411) at (7,0) {};
 					\node (1114111) at (7,-1.8) {};
 					\node (1114141) at (7,-2.5) {};
 					\node (1114171) at (7,-3.2) {};
 				\end{scope}
 				
 				\node[anchor=east] at (root) {\footnotesize $\varepsilon$};
 				\node[anchor=west, rotate=90] at (1) {\footnotesize $1$};
 				\node[anchor=west, rotate=90] at (11) {\footnotesize $11$};
 				\node[anchor=west, rotate=90] at (111) {\footnotesize $111$};
 				\node[anchor=west, rotate=90] at (1111) {\footnotesize $1111$};
 				\node[anchor=west, rotate=90] at (11111) {\footnotesize $11111$};
 				\node[anchor=west, rotate=90] at (111111) {\footnotesize $111111$};
 				\node[anchor=west] at (1111111) {\footnotesize $1111111$};
 				\node[anchor=west] at (1111114) {\footnotesize $1111114$};
 				\node[anchor=west] at (1111117) {\footnotesize $1111117$};
 				\node[anchor=west, rotate=90] at (11114) {\footnotesize $11114$};
 				\node[anchor=west, rotate=90] at (111141) {\footnotesize $111141$};
 				\node[anchor=west] at (1111411) {\footnotesize $1111411$};
 				\node[anchor=east, rotate=90] at (1114) {\footnotesize $1114$};
 				\node[anchor=east, rotate=90] at (11141) {\footnotesize $11141$};
 				\node[anchor=west, rotate=90] at (111411) {\footnotesize $111411$};
 				\node[anchor=east, rotate=90] at (111417) {\footnotesize $111417$};
 				\node[anchor=west] at (1114111) {\footnotesize $1114111$};
 				\node[anchor=west] at (1114141) {\footnotesize $1114141$};
 				\node[anchor=west] at (1114171) {\footnotesize $1114171$};
 				
 				\node[anchor=west, rotate=30] (label) at (6.5,-4.5) {\footnotesize $111414$};
 				\draw[very thick, densely dotted] (111414) edge[bend right=15] (label);

 				\begin{scope}[
 					->,
 					every node/.style={edgelabel},
 					]
 					\draw (root) -- node[above] {$1$} (1);
 					\draw (1) -- node[above] {$1$} (11);
 					\draw (11) -- node[above] {$1$} (111);
 					\draw (111) -- node[anchor=south east] {$1$} (1111);
 					\draw (111) -- node[anchor=north east] {$4$} (1114);
 					\draw (1111) -- node[anchor=south east] {$1$} (11111);
 					\draw (1111) -- node[anchor=north east] {$4$} (11114);
 					\draw (1114) -- node[below] {$1$} (11141);
 					\draw (11111) -- node[above] {$1$} (111111);
 					\draw (11114) -- node[above] {$1$} (111141);
 					\draw (11141) -- node[anchor=south east] {$1$} (111411);
 					\draw (11141) -- node[pos=0.7, above] {$4$} (111414);
 					\draw (11141) -- node[anchor=north east] {$7$} (111417);
 					\draw (111111) -- node[anchor=south east] {$1$} (1111111);
 					\draw (111111) -- node[pos=0.7, above] {$4$} (1111114);
 					\draw (111111) -- node[anchor=north east] {$7$} (1111117);
 					\draw (111141) -- node[above] {$1$} (1111411);
 					\draw (111411) -- node[above] {$1$} (1114111);
 					\draw (111414) -- node[above] {$1$} (1114141);
 					\draw (111417) -- node[above] {$1$} (1114171);
 				\end{scope}
 			\end{tikzpicture}
 			\caption{Tree (of a null semigroup) obtained after all the modifications.}
 			\label{T(X): figure, T_N}
 		\end{center}
 	\end{figure}
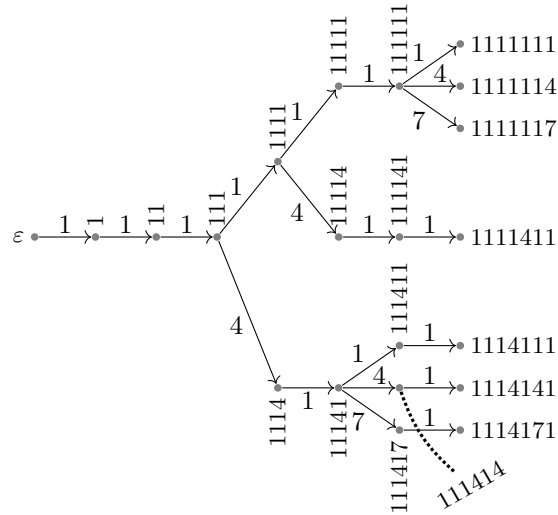

 	This new tree gives us the set of words
 	\begin{align*}
 		Z &=\set{1111111, 1111114, 1111117, 1111411, 1114111, 1114141, 1114171}\\
 		&\subseteq \set{1,2,3,4,5,6,7}^*.
 	\end{align*}
 	Using the words from $Z$ and the order $1,4,7,3,5,2,6$, we obtain the transformations below.
 	\begin{align*}
 		&\begin{pmatrix} 1&2&3&4&5&6&7 \\ 1&1&1&1&1&1&1\end{pmatrix}  &&\begin{pmatrix}1&2&3&4&5&6&7 \\ 1&1&1&1&4&1&1\end{pmatrix} &\begin{pmatrix}1&2&3&4&5&6&7 \\ 1&1&4&1&1&1&1\end{pmatrix}\\
 		&\begin{pmatrix} 1&2&3&4&5&6&7 \\ 1&1&1&1&1&4&1\end{pmatrix} &&&\begin{pmatrix}1&2&3&4&5&6&7 \\ 1&4&4&1&1&1&1\end{pmatrix}\\
 		&\begin{pmatrix} 1&2&3&4&5&6&7 \\ 1&1&1&1&1&7&1\end{pmatrix} &&&\begin{pmatrix}1&2&3&4&5&6&7 \\ 1&7&4&1&1&1&1\end{pmatrix}
 	\end{align*}
 	We can easily check that the product of any two transformations is equal to the top-leftmost transformation, which is the zero of this new semigroup. Hence we obtained a null subsemigroup of $\Tr{7}$ with the same number of elements as $S$. Additionally, we can easily verify that the tree in Figure~\ref{T(X): figure, T_N} is the tree of this null semigroup (when we use the order $1,4,7,3,5,2,6$ to construct it).
 \end{example}
 
 Our next goal is to prove that the largest commutative transformation semigroups with one idempotent are either groups or null semigroups. This is demonstrated in Theorem~\ref{T(X): largest comm smg 1 idemp}.  In order to prove it, we need one more result, which we present below.
 
 \begin{lemma}\label{T(X): |im e|=1}
 	Suppose that $\abs{X}=4$. Let $S$ be a commutative subsemigroup of $\tr{X}$ whose unique idempotent is $e \in S$. If $\abs{\im e}\in\set{2,3}$, then $\abs{S}<4$.
 \end{lemma}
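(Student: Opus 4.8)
The plan is to analyse the tree $T_S$ of $S$, which is available because $\abs{\im e}\in\set{2,3}<4=\abs{X}$ forces $e\neq\id{X}$. Following the opening of the proof of Theorem~\ref{|S|=|N|, idemp distinct id_X}, I set $G=\gset{\beta|_{\im e}}{\beta\in S}$; Lemma~\ref{T(X): lemma induction}, Lemma~\ref{T(X): tr commutes with idemp} and Proposition~\ref{T(X): unique idemp is id_X => group in S(X)} show that $G$ is an abelian subgroup of $\sym{\im e}$, and the subgraph of $T_S$ spanned by the first $\abs{\im e}$ levels is exactly the tree of $G$, so (by Lemma~\ref{T(X): properties tree of S}) $T_S$ has $\abs{G}$ vertices at level $x_{\abs{\im e}}$. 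By Theorem~\ref{T(X): maximum size abelian subgroup of S(X)}, $\abs{G}\leqslant 3$ if $\abs{\im e}=3$ and $\abs{G}\leqslant 2$ if $\abs{\im e}=2$. It then remains to control the $4-\abs{\im e}$ levels of $T_S$ lying above $\im e$.

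When $\abs{\im e}=3$ there is a single such level, $x_4$. A branching there with $s\geqslant 2$ arcs and labels $x_{i_1}<\dots<x_{i_s}$ would, by Lemma~\ref{i_1,...,i_s<i, i_2,...,i_s linear levels}, satisfy $x_{i_2},\dots,x_{i_s}\in\set{x_4}$ together with $i_s<4$, which is impossible. Hence level $x_4$ is linear, the map $\beta\mapsto\beta|_{\im e}$ is a bijection $S\to G$, and $\abs{S}=\abs{G}\leqslant 3<4$.

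When $\abs{\im e}=2$ there are two extra levels, $x_3$ and $x_4$. The same use of Lemma~\ref{i_1,...,i_s<i, i_2,...,i_s linear levels} rules out any branching at level $x_3$, so that level is linear and $T_S$ has $\abs{G}$ vertices at level $x_3$; it also shows that any branching at level $x_4$ has at most two arcs, the larger label of such a branching being $x_3$ (it must lie in $\set{x_3,x_4}$ yet have index below $4$). Consequently each level-$x_3$ vertex has at most two children and $\abs{S}\leqslant 2\abs{G}\leqslant 4$. The main obstacle is to improve this to a strict inequality, and I would do so by contradiction: if $\abs{S}=4$ then $\abs{G}=2$, so $G=\set{\id{\im e},\sigma}$ with $\sigma$ the transposition of $\im e$, and both level-$x_3$ vertices branch into exactly two arcs at level $x_4$, each such branching carrying an arc labelled $x_3$. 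This produces $\beta,\gamma\in S$ with $\beta|_{\im e}=\id{\im e}$, $\gamma|_{\im e}=\sigma$ and $x_4\beta=x_4\gamma=x_3$; moreover, by part 1 of Lemma~\ref{T(X): labels of T_S}, the level-$x_3$ labels $c\coloneqq x_3\beta$ and $c'\coloneqq x_3\gamma$ both lie in $\set{x_1,x_2}=\im e$.

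The contradiction then falls out of commutativity of $\beta$ and $\gamma$. Tracking $x_4$ gives $x_4(\beta\gamma)=x_3\gamma=c'$ and $x_4(\gamma\beta)=x_3\beta=c$, forcing $c=c'$; tracking $x_3$ gives $x_3(\beta\gamma)=c\gamma=c\sigma$ and $x_3(\gamma\beta)=c'\beta=c'$, forcing $c\sigma=c'$. Since $c\in\im e$ and $\sigma$ is the transposition of $\im e$, we have $c\sigma\neq c$, so $c=c'=c\sigma\neq c$ is absurd. Hence $\abs{S}\leqslant 3<4$ in this case as well, which finishes the proof. The only genuinely delicate point is confirming that $\abs{S}=4$ forces exactly this rigid configuration (an identity-branch and a transposition-branch sharing the label $x_3$ at level $x_4$); once that is pinned down, the non-commutativity of $\beta$ and $\gamma$ is immediate.
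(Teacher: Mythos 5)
Your proof is correct, and it takes a genuinely different route from the paper's. The paper argues directly: for $\abs{\im e}=2$ it writes $e=\chain{B_1, x}\chain{B_4, y}$, runs a case analysis on the block sizes $(\abs{B_1},\abs{B_4})$, and uses Lemma~\ref{T(X): tr commutes with idemp} together with the $S$-partition to list explicitly at most three transformations that $S$ can contain; for $\abs{\im e}=3$ it shows each $\gamma\in G$ extends to a \emph{unique} element of $S$ (the value at the point outside $\im e$ is forced, via Lemma~\ref{T(X): exists x not in union of images}), giving $\abs{S}=\abs{G}\leqslant 3$ by Theorem~\ref{T(X): maximum size abelian subgroup of S(X)}. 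You instead run everything through the tree machinery of Section~\ref{sec: largest comm smg 1 idemp T(X)}: Lemma~\ref{i_1,...,i_s<i, i_2,...,i_s linear levels} forces level $x_4$ (respectively $x_3$) to be linear and caps branchings at level $x_4$ at two arcs with larger label $x_3$, which yields $\abs{S}=\abs{G}\leqslant 3$ when $\abs{\im e}=3$ and $\abs{S}\leqslant 2\abs{G}\leqslant 4$ when $\abs{\im e}=2$; the extremal configuration $\abs{S}=4$ is then destroyed by the commutativity computation $c=c'$ and $c\sigma=c'$, which is indeed contradictory since the transposition $\sigma$ of the two-element set $\im e$ has no fixed point. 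I checked the delicate point you flagged: linearity of level $x_3$ plus the two-arc bound does force exactly two depth-three vertices each carrying a two-arc branching with one arc labelled $x_3$, so the transformations $\beta$ (identity branch) and $\gamma$ (transposition branch) with $x_4\beta=x_4\gamma=x_3$ really exist, and part 1 of Lemma~\ref{T(X): labels of T_S} does place $c,c'$ in $\im e$. There is no circularity, since the tree lemmas you invoke are proved before and independently of this lemma. What each approach buys: yours is shorter, uniform across the two cases, and shows the structural lemmas do almost all the work; the paper's is elementary, self-contained (it never needs the branching lemma, and for $\abs{\im e}=2$ not even Burns--Goldsmith), and produces the explicit candidate transformations, at the cost of a longer case analysis.
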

 
 \begin{proof}
 	Assume, without loss of generality, that $X=\set{1,2,3,4}$.
 	
 	Let $G=\gset{\beta|_{\im e}}{\beta \in S}$. It follows from Lemma~\ref{T(X): tr commutes with idemp} that $\beta|_{\im e}\in \tr{\im e}$ for all $\beta\in S$. Hence, since $S$ is a commutative subsemigroup of $\tr{X}$ whose unique idempotent is $e$, then Lemma~\ref{T(X): lemma induction} guarantees that $G$ is a commutative subsemigroup of $\tr{\im e}$ whose unique idempotent is $e|_{\im e}=\id{\im e}$. Thus, by Proposition~\ref{T(X): unique idemp is id_X => group in S(X)}, $G$ is an abelian subgroup of $\sym{\im e}$.
 	
 	\medskip
 	
 	\textbf{Part 1.} Suppose that $\abs{\im e}=2$. Assume, without loss of generality, that $\im e=\set{1,4}$. Then there exists a partition $\set{B_1,B_4}$ of $X$ such that $e=\chain{B_1, 1}\chain{B_4, 4}$. (Note that $1\in B_1$ and $4\in B_4$.)
 	
 	Let $\set{A_j}_{j=0}^k$ be the $S$-partition of $X$. We have that $A_0=\im e=\set{1,4}$. Furthermore, $2,3\notin\im e$, which implies that $k\geqslant 1$ and, consequently, $2\in A_1$ or $3\in A_1$. Assume, without loss of generality, that $2\in A_1$. Then, for all $\beta\in S$ we have $2\beta\in\im e=\set{1,4}$. Additionally, there exists $i\in\X{k}$ such that $3\in A_i$, which implies that $3\beta\in\bigcup_{j=0}^{i-1}A_j$ for all $\beta\in S$ and, consequently, that $3\beta\neq 3$ for all $\beta\in S$.
 	
 	We have $\abs{B_1}=3$ and $\abs{B_4}=1$, or $\abs{B_1}=1$ and $\abs{B_4}=3$, or $\abs{B_1}=\abs{B_4}=2$.
 	
 	\smallskip
 	
 	\textit{Case 1:} Suppose that $\abs{B_1}=3$ and $\abs{B_4}=1$. Then $B_1=\set{1,2,3}$ and $B_4=\set{4}$ and, thus,
 	\begin{displaymath}
 		e=\begin{pmatrix}
 			1&2&3&4\\
 			1&1&1&4
 		\end{pmatrix}.
 	\end{displaymath}
 	
 	Let
 	\begin{displaymath}
 		\beta_1=\begin{pmatrix}
 			1&2&3&4\\
 			1&1&2&4
 		\end{pmatrix} \quad \text{and} \quad
 		\beta_2=\begin{pmatrix}
 			1&2&3&4\\
 			4&4&4&1
 		\end{pmatrix}.
 	\end{displaymath}
 	
 	Let $\beta\in S$. Since $\beta|_{\set{1,4}}=\beta|_{\im e}\in G\subseteq\sym{\im e}=\sym{\set{1,4}}$, then we have either $1\beta=1$ and $4\beta=4$ or $1\beta=4$ and $4\beta=1$.
 	
 	\smallskip
 	
 	\textsc{Sub-case 1:} Suppose that $1\beta=1$ and $4\beta=4$. It follows from Lemma~\ref{T(X): tr commutes with idemp} that $\set{2,3}\beta\subseteq B_1\beta\subseteq B_1=\set{1,2,3}$. We also have $2\beta\in\set{1,4}$ and $3\beta\neq 3$, which implies that $2\beta=1$ and $3\beta\in\set{1,2}$. Thus $\beta\in\set{e,\beta_1}$.
 	
 	\smallskip
 	
 	\textsc{Sub-case 2:} Suppose that $1\beta=4$ and $4\beta=1$. Then Lemma~\ref{T(X): tr commutes with idemp} implies that $\set{2,3}\beta\subseteq B_1\beta\subseteq B_4=\set{4}$. Hence $\beta=\beta_2$.
 	
 	\smallskip
 	
 	Since $\beta$ is an arbitrary element of $S$, then we can conclude that $S\subseteq\set{e,\beta_1,\beta_2}$. Therefore $\abs{S}<4$.
 	
 	\smallskip
 	
 	\textit{Case 2:} Suppose that $\abs{B_1}=1$ and $\abs{B_4}=3$. We can prove in a parallel way to case 1 that $\abs{S}<4$.
 	
 	\smallskip
 	
 	\textit{Case 3:} Suppose that $\abs{B_1}=\abs{B_4}=2$. Assume, without loss of generality, that $B_1=\set{1,2}$ and $B_4=\set{3,4}$. Then
 	\begin{displaymath}
 		e=\begin{pmatrix}
 			1&2&3&4\\
 			1&1&4&4
 		\end{pmatrix}.
 	\end{displaymath}
 	Let
 	\begin{displaymath}
 		\beta_1=\begin{pmatrix}
 			1&2&3&4\\
 			4&4&1&1
 		\end{pmatrix} \quad \text{and} \quad
 		\beta_2=\begin{pmatrix}
 			1&2&3&4\\
 			4&4&2&1
 		\end{pmatrix}.
 	\end{displaymath}
 	
 	Let $\beta\in S$. We have either $1\beta=1$ and $4\beta=4$ or $1\beta=4$ and $4\beta=1$.
 	
 	\smallskip
 	
 	\textsc{Sub-case 1:} Suppose that $1\beta=1$ and $4\beta=4$. Then, by Lemma~\ref{T(X): tr commutes with idemp}, we have that $2\beta\in B_1\beta\subseteq B_1=\set{1,2}$ and $3\beta\in B_4\beta\subseteq B_4=\set{3,4}$. In addition, we have that $2\beta\in \set{1,4}$ and $3\beta\neq 3$, which implies that $2\beta=1$ and $3\beta=4$. Thus $\beta=e$.
 	
 	\smallskip
 	
 	\textsc{Sub-case 2:} Suppose that $1\beta=4$ and $4\beta=1$. As a consequence of Lemma~\ref{T(X): tr commutes with idemp} we have that $2\beta\in B_1\beta\subseteq B_4=\set{3,4}$ and $3\beta\in B_4\beta\subseteq B_1=\set{1,2}$. Since we also have $2\beta\in \set{1,4}$, then we can conclude that $2\beta=4$. Hence $\beta\in\set{\beta_1,\beta_2}$.
 	
 	\smallskip
 	
 	We just proved that $S\subseteq\set{e,\beta_1,\beta_2}$. Therefore $\abs{S}<4$.
 	
 	\medskip
 	
 	\textbf{Part 2.} Suppose that $\abs{\im e}=3$. Let $B_1=\set{1,2}$, $B_3=\set{3}$ and $B_4=\set{4}$. Assume, without lost of generality, that
 	\begin{displaymath}
 		e=\begin{pmatrix}
 			1&2&3&4\\
 			1&1&3&4
 		\end{pmatrix}=\chain{B_1, 1}\chain{B_3, 3}\chain{B_4, 4}.
 	\end{displaymath}
 	
 	Let $\gamma\in G$ and let $\gamma'\in S$ be such that $\gamma'|_{\im e}=\gamma$. It is clear that $\gamma$ determines $\gamma'$ in $\im e=X\setminus\set{2}$. In what follows we will see that $\gamma$ also determines $\gamma'$ in $2$. Let $i\in X$ be such that $1\gamma'=1\gamma=i$. We note that, since $G\subseteq\sym{\im e}$, then $i\in\im e=\set{1,3,4}$. By Lemma~\ref{T(X): tr commutes with idemp}, we have that $2\gamma'\in B_1\gamma'\subseteq B_i$. In addition, Lemma~\ref{T(X): exists x not in union of images} implies that $\bigcup_{\beta\in S}\im \beta\subsetneq X$. Then, since $X\setminus\set{2}=\im e$, we must have $\bigcup_{\beta\in S}\im \beta=X\setminus\set{2}$. Consequently, $2\gamma'\in B_i\cap \parens{X\setminus\set{2}} = B_i\setminus\set{2}$, which is a singleton.
 	
 	We just proved that there is a one-to-one correspondence between the elements of $G$ and the elements of $S$. Thus $\abs{S}=\abs{G}$. Furthermore, Theorem~\ref{T(X): maximum size abelian subgroup of S(X)} guarantees that $\abs{G}\leqslant 3$ and, consequently, we have $\abs{S}<4$.
 \end{proof}
 
 \begin{theorem}\label{T(X): largest comm smg 1 idemp}
 	The maximum size of a commutative subsemigroup of $\tr{X}$ with a unique idempotent is
 	\begin{displaymath}
 		\begin{cases}
 			\abs{X}& \text{if } \abs{X}\leqslant 4,\\
 			\maxnull{\abs{X}}& \text{if } \abs{X}\geqslant 5.
 		\end{cases}
 	\end{displaymath}
 	Moreover, if $S$ is a maximum-order commutative subsemigroup of $\tr{X}$ with a unique idempotent, then
 	\begin{enumerate}
 		\item If $\abs{X}\leqslant 3$, then $C_{\abs{X}}\simeq S\subseteq\sym{X}$.
 		
 		\item If $\abs{X}=4$, then either $C_4\simeq S\subseteq\sym{X}$, or $ C_2\times C_2\simeq S\subseteq\sym{X}$, or $S=N^{X}_{x_1,x_2}$ for some distinct $x_1,x_2\in X$.
 		
 		\item If $\abs{X}\geqslant 5$, then $S=\nulltr{X}{x_1}{x_{\alphanull{\abs{X}}}}$, for some pairwise distinct $x_1,\ldots,x_{\alphanull{\abs{X}}}\in X$.
 	\end{enumerate}
 \end{theorem}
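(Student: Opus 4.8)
The plan is to split according to whether the unique idempotent $e$ of $S$ equals $\id{X}$. If $e=\id{X}$, then Proposition~\ref{T(X): unique idemp is id_X => group in S(X)} together with the commutativity of $S$ shows that $S$ is an abelian subgroup of $\sym{X}$, so $\abs{S}$ is bounded by the maximum order of such a subgroup, given by Theorem~\ref{T(X): maximum size abelian subgroup of S(X)}. If instead $e\neq\id{X}$, then Theorem~\ref{|S|=|N|, idemp distinct id_X} yields a null subsemigroup of $\tr{X}$ of the same order as $S$, whence $\abs{S}\leqslant\maxnull{\abs{X}}$ by Theorem~\ref{maximum size null semigroup}. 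Thus every commutative subsemigroup of $\tr{X}$ with a unique idempotent has order at most the larger of the maximum abelian order and $\maxnull{\abs{X}}$, and I would identify the maximum with whichever bound dominates.

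To compare the two bounds I would invoke Lemma~\ref{T(X): unique idemp, upper bound abelian subgroup S(X)}: for $\abs{X}\geqslant 5$ every abelian subgroup of $\sym{X}$ has order strictly below $\maxnull{\abs{X}}$, so the null bound wins and the maximum order is $\maxnull{\abs{X}}$, attained by the null semigroup $\nulltr{X}{x_1}{x_{\alphanull{\abs{X}}}}$. For $\abs{X}\leqslant 4$ the case $\abs{X}=1$ is immediate, and for $2\leqslant\abs{X}\leqslant 4$ a direct comparison of Theorem~\ref{T(X): maximum size abelian subgroup of S(X)} with the tabulated values $\maxnull{2}=1$, $\maxnull{3}=2$ and $\maxnull{4}=4$ (Table~\ref{T(X): table xi alpha}) shows that the maximum abelian order $\abs{X}$ is at least $\maxnull{\abs{X}}$, so the maximum order is $\abs{X}$ and is attained by a cyclic group. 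This establishes the displayed size formula.

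For the characterization I would feed the equality cases back through the dichotomy. When $\abs{X}\leqslant 3$ one has $\maxnull{\abs{X}}<\abs{X}$ for $\abs{X}\in\set{2,3}$ (and the only idempotent available when $\abs{X}=1$ is $\id{X}$), so a maximum-order $S$ cannot have $e\neq\id{X}$; hence $S$ is an abelian subgroup of $\sym{X}$ of maximum order, which Theorem~\ref{T(X): maximum size abelian subgroup of S(X)} identifies as $C_{\abs{X}}$. When $\abs{X}=4$ the two bounds coincide at $4$, so a maximum-order $S$ is either a maximum abelian subgroup of $\sym{X}$---necessarily $C_4$ or $C_2\times C_2$---or has $e\neq\id{X}$; in the latter case Lemma~\ref{T(X): |im e|=1} forces $\abs{\im e}=1$, and it remains to identify $S$ with $N^{X}_{x_1,x_2}$.

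The heart of the matter, and the step I expect to be hardest, is the case $\abs{X}\geqslant 5$ (and the rank-one subcase of $\abs{X}=4$), where I must upgrade ``$S$ has the same order as a null semigroup'' to ``$S$ is exactly $\nulltr{X}{x_1}{x_{\alphanull{\abs{X}}}}$''. Here the size comparison forces $e\neq\id{X}$, Theorem~\ref{|S|=|N|, idemp distinct id_X} produces a null semigroup of order $\maxnull{\abs{X}}$, and Theorem~\ref{null semigroups of maximum size} forces that null semigroup to be of the form $\nulltr{X}{x_1}{x_t}$ with $t=\alphanull{\abs{X}}$. The difficulty is that the tree modifications used to prove Theorem~\ref{|S|=|N|, idemp distinct id_X} might change $S$; the plan is to show that at maximum order every inequality in that construction is tight, which pins down $\abs{\im e}=1$ and forces the subtree $T_G$ and every linear level of $T_S$ to lie already in the canonical position. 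Concretely, I would prove a tree-rigidity statement---that among the trees arising from such semigroups the leaf count $\maxnull{\abs{X}}$ is attained only by the canonical tree consisting of a trunk of length $t$ followed by a complete $t$-ary tree---using Lemmas~\ref{T(X): labels of T_S} and~\ref{i_1,...,i_s<i, i_2,...,i_s linear levels}. Once $T_S$ is shown to be this tree, reading off the associated transformations gives $S=\nulltr{X}{x_1}{x_t}$, and the same rigidity applied with $\abs{X}=4$ yields $S=N^{X}_{x_1,x_2}$.
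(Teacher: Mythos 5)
Your proposal follows essentially the same route as the paper: the same dichotomy on whether the unique idempotent is $\id{X}$, the same size comparison via Theorem~\ref{T(X): maximum size abelian subgroup of S(X)}, Theorem~\ref{|S|=|N|, idemp distinct id_X}, Theorem~\ref{maximum size null semigroup} and Lemma~\ref{T(X): unique idemp, upper bound abelian subgroup S(X)}, and the same tightness analysis of the tree construction (with Lemma~\ref{T(X): |im e|=1} handling $\abs{X}=4$) to pin down $S=\nulltr{X}{x_1}{x_{\alphanull{\abs{X}}}}$. The paper's execution of your ``rigidity'' step additionally needs Lemma~\ref{T(X): main lemma} and Lemma~\ref{T(X): tr commutes with idemp} to force the arc labels (trunk arcs labelled $x_1$, branchings labelled $x_1,\ldots,x_{\alphanull{\abs{X}}}$) and not merely the tree shape, but this is precisely the plan you describe.
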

 
 \begin{proof}
 	We partition the class of commutative subsemigroups of $\tr{X}$ with a unique idempotent into two classes. One of the classes, which we denote by $\mathcal{C}_1$, comprises the semigroups whose unique idempotent is $\id{X}$, and the other class, which we denote by $\mathcal{C}_2$, comprises the semigroups whose unique idempotent is not $\id{X}$. We begin by determining in which one of these classes the largest commutative subsemigroups of $\tr{X}$ with a unique idempotent lie.
 	
 	We have that all abelian subgroups of $\sym{X}$ are commutative subsemigroup of $\tr{X}$ whose unique idempotent is $\id{X}$. Hence $\mathcal{C}_1$ contains all the abelian subgroups of $\sym{X}$. Furthermore, Proposition~\ref{T(X): unique idemp is id_X => group in S(X)} guarantees that all the semigroups of the class $\mathcal{C}_1$ are abelian subgroups of $\sym{X}$. Thus $\mathcal{C}_1$ is the class of abelian subgroups of $\sym{X}$.
 	
 	We have that, when $\abs{X}=1$, the class $\mathcal{C}_2$ contains no semigroups. Assume that $\abs{X}\geqslant 2$. Let $x_1,\ldots,x_{\alphanull{\abs{X}}}\in X$ be pairwise distinct elements. We have that the zero of the null semigroup $\nulltr{X}{x_1}{x_{\alphanull{\abs{X}}}}$ has rank 1, which implies that its unique idempotent is not $\id{X}$. Hence $\nulltr{X}{x_1}{x_{\alphanull{\abs{X}}}}\in\mathcal{C}_2$. It follows from Theorem~\ref{null semigroups of maximum size} that $\abs{\nulltr{X}{x_1}{x_{\alphanull{\abs{X}}}}}=\maxnull{\abs{X}}$ and, consequently, $\mathcal{C}_2$ contains semigroups of size $\maxnull{\abs{X}}$. Moreover, Theorem~\ref{|S|=|N|, idemp distinct id_X} guarantees that the size of each semigroup in $\mathcal{C}_2$ is equal to the size of some null subsemigroup of $\tr{X}$ and, since the maximum size of a null subsemigroup of $\tr{X}$ is $\maxnull{\abs{X}}$ (by Theorem~\ref{maximum size null semigroup}), then we can conclude that the maximum size of a semigroup in $\mathcal{C}_2$ is $\maxnull{\abs{X}}$.
 	
 	With this in mind, we consider the following cases, where we ascertain which classes ($\mathcal{C}_1$ or $\mathcal{C}_2$) contain maximum-order commutative subsemigroups of $\tr{X}$ with a unique idempotent.
 	
 	\smallskip
 	
 	\textit{Case 1:} Assume that $\abs{X}\leqslant 3$. If $\abs{X}=1$, then the class $\mathcal{C}_2$ is empty and, consequently, the class $\mathcal{C}_1$ contains the unique maximum-order subsemigroup of $\tr{X}$ with a unique idempotent, which is isomorphic to $C_1$. Now assume that $\abs{X}\in\set{2,3}$. We have that $2>1=\maxnull{2}$ and $3>2=\maxnull{3}$, which implies that $\abs{X}>\maxnull{\abs{X}}$. Due to the fact that $\abs{X}$ is the maximum size of a semigroup in $\mathcal{C}_1$ (by Theorem~\ref{T(X): maximum size abelian subgroup of S(X)}) and $\maxnull{\abs{X}}$ is the maximum size of a semigroup in $\mathcal{C}_2$, then we can conclude that the maximum-order commutative subsemigroups of $\tr{X}$ with a unique idempotent lie in $\mathcal{C}_1$ and, consequently, are isomorphic to $C_{\abs{X}}$ (by Theorem~\ref{T(X): maximum size abelian subgroup of S(X)}).
 	
 	\smallskip
 	
 	\textit{Case 2:} Assume that $\abs{X}=4$. By Theorem~\ref{T(X): maximum size abelian subgroup of S(X)} the maximum size of a semigroup in $\mathcal{C}_1$ is $4$, which is equal to $\maxnull{4}$, the maximum size of a semigroup in $\mathcal{C}_2$. Hence we have maximum-order commutative subsemigroups of $\tr{X}$ with a unique idempotent in $\mathcal{C}_1$ and in $\mathcal{C}_2$. It follows from Theorem~\ref{T(X): maximum size abelian subgroup of S(X)} that the ones that lie in $\mathcal{C}_1$ are either isomorphic to $C_4$ or $C_2\times C_2$. We will describe after the next case the ones that lie in $\mathcal{C}_2$.
 	
 	\smallskip
 	
 	\textit{Case 3:} Assume that $\abs{X}\geqslant 5$. Lemma~\ref{T(X): unique idemp, upper bound abelian subgroup S(X)} ensures that the size of any semigroup in $\mathcal{C}_1$ (an abelian subgroup of $\sym{X}$) is less than $\maxnull{\abs{X}}$, the maximum size of a semigroup in $\mathcal{C}_2$. Therefore all the maximum-order commutative subsemigroups of $\tr{X}$ with a unique idempotent lie in $\mathcal{C}_2$.
 	
 	\smallskip
 	
 	In order to conclude this proof, we only need to describe, when $\abs{X}\geqslant 4$, the largest semigroups in $\mathcal{C}_2$, that is, the semigroups in $\mathcal{C}_2$ of size $\maxnull{\abs{X}}$.
 	
 	Suppose that $\abs{X}\geqslant 4$. Let $S$ be a semigroup in $\mathcal{C}_2$ such that $\abs{S}=\maxnull{\abs{X}}$. Let $e\in S\setminus\set{\id{X}}$ be its unique idempotent. We are going to use the proof of Theorem~\ref{|S|=|N|, idemp distinct id_X} to prove that $S$ is a null semigroup.

 	Let $n=\abs{X}\geqslant 4$. Let $\set{A_i}_{j=0}^{k}$ be the $S$-partition of $X$ and let $x_1,\ldots,x_n$ be the order of the elements of $X$ used to construct the tree of $S$. Let $G=\gset{\beta|_{\im e}}{\beta \in S}$, which is an abelian subgroup of $\sym{\im e}$. Let $M$ be a null subsemigroup of $\nulltr{Y}{x_1}{x_t}$ such that $\abs{M}=\abs{G}$, where $Y=\set{x_1,\ldots,x_{\abs{\im e}+1}}$ and $t=\alphanull{\abs{Y}}$. Just like in the proof of Theorem~\ref{|S|=|N|, idemp distinct id_X}, we denote by $T'_M$ the tree obtained from $T_M$ by removing the first arc of the trunk, we denote by $T_1$ the tree obtained from $T_S$ by replacing $T_G$ with $T'_M$, and we denote by $T_2$ the tree obtained from $T_1$ by moving all the linear levels to its trunk. See \eqref{T(X): diagram proof tree} for a diagram showing the relationship between these trees.
 	
 	Let $N$ be the null subsemigroup of $\tr{X}$ (of size $\abs{S}$) obtained from $S$ by modifying $T_S$. It follows from the way we constructed $N$ that its zero (that is, its idempotent) has rank 1 and its image is equal to $\set{x_1}$. Furthermore, we know that there exists $r\in\mathbb{N}$ such that $\set{x_1,\ldots,x_r}\beta=\set{x_1}$ and $\im\beta\subseteq\set{x_1,\ldots,x_r}$ for all $\beta\in N$. Since $\abs{N}=\abs{S}=\maxnull{n}$, then, by Theorem~\ref{null semigroups of maximum size}, we have that $r=\alphanull{n}$ and $N=\nulltr{X}{x_1}{x_{\alphanull{n}}}$.
 	
 	
 	Now we will describe the tree of $N$ (constructed with the order $x_1,\ldots,x_n$ of the elements of $X$). The set of words determined by (the transformations of) $N$ is
 	\begin{displaymath}
 		W_N=\gset{x_1^{\alphanull{n}}y_1\cdots y_{n-\alphanull{n}}}{y_1,\ldots, y_{n-\alphanull{n}}\in\set{x_1,\ldots,x_{\alphanull{n}}}}
 	\end{displaymath}
 	and the vertex set of $T_N$ is the set of prefixes of the words of $W_N$.
 	
 	First we will verify that the starting vertices of the arcs of levels $x_{\alphanull{n}+1},\ldots,x_n$ have outdegree $\alphanull{n}$. Let $i\in\set{\alphanull{n}+1,\ldots,n}$ and let $w$ be the starting vertex of an arc of level $x_i$. Then $w$ is a word of length $i-1$. We have that $wx_1,wx_2,\ldots,wx_{\alphanull{n}}$ are precisely the vertices of length $i$ of $T_N$ that contain $w$ as a prefix. This implies that the ending vertices of the arcs whose starting vertex is $w$ are precisely $wx_1,wx_2,\ldots,wx_{\alphanull{n}}$ and, consequently, $w$ has outdegree $\alphanull{n}$. Since $w$ is an arbitrary vertex of length $i-1$ and $i$ is an arbitrary element of $\set{\alphanull{n}+1,\ldots,n}$, then we can conclude that the starting vertices of the arcs of levels $x_{\alphanull{n}+1},\ldots,x_n$ have outdegree $\alphanull{n}$, that is, a branching with $\alphanull{n}$ arcs occurs at the starting vertices of the arcs of levels $x_{\alphanull{n}+1},\ldots,x_n$. Note that this also implies that levels $x_{\alphanull{n}+1},\ldots,x_n$ are branching levels.
 	
 	Now we will see that $T_N$ contains a trunk of length $\alphanull{n}$. It is straightforward to see that $x_1,x_1^2,\ldots, x_1^{\alphanull{n}}$ are prefixes of all the words of $W_N$, which implies that for all $i\in\X{\alphanull{n}}$ the only vertex of length $i$ of $T_N$ is $x_1^i$. This implies that the subgraph of $T_N$ located at levels $x_1,\ldots,x_{\alphanull{n}}$ is a path of length $\alphanull{n}$ starting at vertex $\varepsilon$ and ending at vertex $x_1^{\alphanull{n}}$. Hence $x_1,\ldots,x_{\alphanull{n}}$ are linear and $T_N$ contains a trunk of length $\alphanull{n}$ (we note that the fact that level $x_{\alphanull{n}+1}$ is a branching level implies that the trunk of $T_N$ has at most $\alphanull{n}$ arcs).
 	
 	It follows from the previous two paragraphs that $T_N$ has $\alphanull{n}$ linear levels (namely, levels $x_1,\ldots,x_{\alphanull{n}}$) which are all associated with the trunk of $T_N$, and $T_N$ has $n-\alphanull{n}$ branching levels (namely, levels $x_{\alphanull{n}+1},\ldots,x_n$).
 	
 	
 	Notice that, since $T_N$ was obtained from $T_2$ simply by adding labels to its arcs and renaming its vertices, then the trees $T_2$ and $T_N$ have the same structure. Therefore, $T_2$ shares with $T_N$ all the properties we mentioned in the previous three paragraphs.
 	
 	We have that $T_2$ was obtained from $T_1$ by moving all its linear levels, that were not in the trunk, to the trunk of the tree (assuming that there were any linear levels outside the trunk of $T_1$). This means that, in the process of transforming the tree $T_1$ into the tree $T_2$, we do not change the number of linear levels. Therefore $T_1$ and $T_2$ have the same number of linear levels, which is equal to $\alphanull{n}$. Furthermore, this process also does not change the content of the branching levels, which implies that every branching of $T_1$ has $\alphanull{n}$ arcs.

 	Now we are going to establish that $\abs{G}<\maxnull{n}=\abs{S}$. We recall that $G$ is an abelian subgroup of $\sym{\im e}$.
 	
 	\smallskip
 	
 	\textit{Case 1:} Assume that $\abs{\im e}\leqslant 3$. Then Theorem~\ref{T(X): maximum size abelian subgroup of S(X)} implies that $\abs{G}\leqslant \abs{\im e}<4=\maxnull{4}$. Due to the fact that $n=\abs{X}\geqslant 4$, and by Lemma~\ref{inequalities xi}, we have that $\maxnull{4}\leqslant\maxnull{n}$ and, consequently, that $\abs{G}<\maxnull{n}$.
 	
 	\smallskip
 	
 	\textit{Case 2:} Assume that $\abs{\im e}=4$. Since $e\neq\id{X}$, then we have that $\abs{\im e}<n$, which implies that $n>4$. Hence, by Theorem~\ref{T(X): maximum size abelian subgroup of S(X)} and Lemma~\ref{inequalities xi} we have that $\abs{G}\leqslant 4=\maxnull{4}<\maxnull{n}$.
 	
 	\smallskip
 	
 	\textit{Case 3:} Assume that $\abs{\im e}\geqslant 5$. Since $e\neq\id{X}$, then we have that $\abs{\im e}<n$. As a consequence of Lemma~\ref{T(X): unique idemp, upper bound abelian subgroup S(X)} and Lemma~\ref{inequalities xi} we have that $\abs{G}<\maxnull{\abs{\im e}}<\maxnull{n}$.
 	
 	\smallskip
 	
 	We just proved that $\abs{G}<\maxnull{n}=\abs{S}$. Then, by part 2 of Lemma~\ref{T(X): properties tree of S}, we have that the number of leaves of $T_G$ is smaller than the number of leaves of $T_S$. Due to the fact that $T_G$ is the subgraph of $T_S$ located at the levels $x_1,\ldots,x_{\abs{\im e}}$, then we can conclude that it is not possible for the remaining levels $x_{\abs{\im e}+1},\ldots,x_n$ of $T_S$ to be all linear, that is, among the levels $x_{\abs{\im e}+1},\ldots,x_n$ of $T_S$ there exist branching levels. Let $i\in\set{\abs{\im e}+1,\ldots,n}$ be the minimal element such that level $x_i$ is a branching level of $T_S$ (that is, $x_i$ is the leftmost branching level among the levels $x_{\abs{\im e}+1},\ldots,x_n$ of $T_S$). Since that branching is not in $T_G$, then, when we replace $T_G$ by $T'_M$, and obtain the tree $T_1$, the branching is not deleted. Hence the branching is also in $T_1$ (and outside $T'_M$). We have that all the branchings of $T_1$ have $\alphanull{n}$ arcs, which implies that the branching of $T_S$ we are referring to also has $\alphanull{n}$ arcs. Furthermore, it follows from Lemma~\ref{i_1,...,i_s<i, i_2,...,i_s linear levels} that there exist at least $\alphanull{n}-1$ linear levels among the levels $x_{\abs{\im e}+1},\ldots,x_{i-1}$ of $T_S$. This means that these $\alphanull{n}-1$ linear levels are not levels of $T_G$ and, consequently, they are also not levels of $T'_M$, which implies that they remain unaltered when we modify $T_S$ to obtain $T_1$. Hence those $\alphanull{n}-1$ linear levels of $T_S$ are also linear levels of $T_1$. Due to the fact that $T_1$ has $\alphanull{n}$ linear levels, $\alphanull{n}-1$ of which are not levels of $T'_M$, and $T'_M$ contains at least one linear level (located at its trunk), then we can conclude that, among the $\alphanull{n}$ linear levels of $T_1$, there is exactly one that is a linear level of $T'_M$ and the remaining $\alphanull{n}-1$ linear levels of $T_1$ are located outside $T'_M$. This implies that among the levels $x_{\abs{\im e}+1},\ldots,x_{n}$ of $T_S$ there are exactly $\alphanull{n}-1$ linear levels. Since among the levels $x_{\abs{\im e}+1},\ldots,x_{i-1}$ of $T_S$ there are at least $\alphanull{n}-1$ linear levels, then we can conclude that among the levels $x_{\abs{\im e}+1},\ldots,x_{i-1}$ of $T_S$ there are exactly $\alphanull{n}-1$ linear levels, and the levels $x_i,\ldots,x_n$ are all branching levels. Moreover, we know that $x_i$ is the leftmost branching level among the levels $x_{\abs{\im e}+1},\ldots,x_n$ of $T_S$, which implies that $i=\abs{\im e}+\alphanull{n}$. Therefore the $\alphanull{n}-1$ linear levels of $T_S$ are the levels $x_{\abs{\im e}+1},\ldots,x_{\abs{\im e}+\alphanull{n}-1}$, and the levels $x_{\abs{\im e}+\alphanull{n}},\ldots,x_n$ of $T_S$ are branching levels.


 	
 	
 	Now our goal is to show that $T_G$ has only one level (that is, that $\im e=\set{x_1}$). In order to do this, we separate the proof into two cases.
 	
 	\smallskip
 	
 	\textit{Case 1:} Assume that $n=\abs{X}=4$. Since $e\neq\id{X}$, then we have $\abs{\im e}<n=4$. Furthermore, due to the fact that $\abs{S}=\maxnull{n}=\maxnull{4}=4$, and by Lemma~\ref{T(X): |im e|=1}, we have that $\abs{\im e}\notin\set{2,3}$. Thus $\abs{\im e}=1$ and, consequently, $T_G$ has only one level --- level $x_1$.
 	
 	\smallskip
 	
 	\textit{Case 2:} Assume that $n=\abs{X}\geqslant 5$. We have that
 	\begin{gather*}
 		1^{n-1}=1<3^2\cdot 3^{n-5}=3^{n-3}\leqslant\maxnull{n}\\
 		\shortintertext{and}
 		2^{n-2}=2^3\cdot 2^{n-5}<3^2\cdot 3^{n-5}=3^{n-3}\leqslant\maxnull{n},
 	\end{gather*}
 	which implies that $\alphanull{n}\geqslant 3$. This means that every branching of $T_1$ has $\alphanull{n}\geqslant 3$ arcs. In addition, we have that the trunk of $T'_M$ has length $1$ (because the levels of a trunk are all linear and $T'_M$ contains exactly one linear level) and, consequently, the trunk of $T_M$ has length $2$ (recall that $T'_M$ is obtained from $T_M$ by removing one of the arcs of its trunk). Moreover, we saw in the proof of Theorem~\ref{|S|=|N|, idemp distinct id_X} that, if $T_M$ contains branchings, then the length of the trunk of $T_M$ (which is equal to $2$) is an upper bound for the number of arcs of any branching of $T_M$. Hence, if $T_M$ contains branchings, then they will all have exactly $2$ arcs. We know that any branching of $T_M$ is also going to be a branching of $T'_M$ and, consequently, a branching of $T_1$ (we note that $T'_M$ is a subgraph of $T_1$). This allows us to conclude that, if $T_M$ contains branchings, then $T_1$ contains branchings with $2$ arcs, which is not possible. Hence $T_M$ contains no branchings and, consequently, neither does $T'_M$. This implies that $T'_M$ has just one level (which is linear). Since $T_G$ and $T'_M$ have the same number of levels, then we conclude that $T_G$ has only one level --- level $x_1$.
 	
 	\smallskip

 	In both cases we established that $T_G$ contains only one level. This implies that $\abs{\im e}=1$ and, consequently, that $\abs{G}=1$. Hence $T_G$ has only one leaf (by part 2 of Lemma~\ref{T(X): properties tree of S}), which implies that $T_G$ is just a path of length $1$. As a consequence of the fact that $T'_M$ has the same number of levels and the same number of leaves as $T_G$, we have that $T'_M$ is also a path of length $1$. Thus $T_G$ and $T'_M$ have the same structure and, consequently, $T_S$ and $T_1$ also have the same structure (recall that $T_1$ is obtained from $T_S$ by replacing its subgraph $T_G$ by $T'_M$).
 	
 	We have that level $x_1$ of $T_S$ (the unique level of $T_G$) is linear. Furthermore, we also know that levels $x_{\abs{\im e}+1},\ldots,x_{\abs{\im e}+\alphanull{n}-1}$ of $T_S$ are linear, that is, levels $x_2,\ldots,x_{\alphanull{n}}$ of $T_S$ are linear (recall that $\abs{\im e}=1$), and we know that levels $x_{\abs{\im e}+\alphanull{n}},\ldots,x_n$ of $T_S$ are branching levels. Hence $T_S$ and $T_1$ have a trunk of length $\alphanull{n}$ and all their linear levels are associated with the trunk. This means that we do not make any modifications in the tree $T_1$ in order to obtain $T_2$ (because there are no linear levels outside the trunk of $T_1$). Thus $T_S$, $T_1$, $T_2$ and $T_N$ all have the same structure. Therefore $T_S$ has a trunk of length $\alphanull{n}$, all the linear levels of $T_S$ are associated with its trunk, and a branching with $(n)\alpha$ arcs occurs in all vertices that are words of length between $(n)\alpha$ and $n-1$.
 	
 	Now we are going to see what the labels of the arcs of $T_S$ look like. We begin by considering a branching of $T_S$ (which is located at one of the levels $x_{\alphanull{n}+1},\ldots,x_n$). This branching has $(n)\alpha$ arcs. Let $x_{i_1},\ldots,x_{i_{\alphanull{n}}}$ (where $i_1<i_2<\cdots<i_{\alphanull{n}}$) be their labels. According to Lemma~\ref{i_1,...,i_s<i, i_2,...,i_s linear levels}, $x_{i_2},\ldots,x_{i_{\alphanull{n}}}$ are linear levels of $T_S$. Since level $x_{i_1}$ precedes those $x_{i_2},\ldots,x_{i_{\alphanull{n}}}$ linear levels and $T_S$ has exactly $\alphanull{n}$ linear levels (levels $x_1,\ldots,x_{\alphanull{n}}$) then we must have $x_{i_m}=x_m$ for all $m\in\X{\alphanull{n}}$. Since we took an arbitrary branching of $T_S$, then we can conclude that all the branchings of $T_S$ are labelled with $x_1,\ldots,x_{\alphanull{n}}$. Now we will see how the arcs of the trunk of $T_S$ are labelled. For each $m\in\X{\alphanull{n}}$ let $y_m$ be the label of the arc of the trunk located at level $x_m$. It follows from part 2 of Lemma~\ref{T(X): labels of T_S}, and the fact that $T_S$ contains branchings at level $x_{\alphanull{n}+1}$ whose arcs have labels $x_1,\ldots,x_{\alphanull{n}}$, that there exist $\beta_1,\ldots,\beta_{\alphanull{n}}\in S$ which are equal in $\set{x_1,\ldots,x_{\alphanull{n}}}$ and such that $x_m=x_{\alphanull{n}+1}\beta_m$ for all $m\in\X{\alphanull{n}}$. Let $l\in\X{k}$ be such that $x_{\alphanull{n}+1}\in A_l$ (we observe that $x_{\alphanull{n}+1}\notin\set{x_1}=\im e=A_0$). Since in the sequence $x_1,\ldots,x_n$ the elements of $A_j$ precede the elements of $A_{j+1}$ for all $j\in\set{0,\ldots,k-1}$, then we have that the elements of $\bigcup_{j=0}^{l-1}A_j$ precede $x_{\alphanull{n}+1}$. Hence $\bigcup_{j=0}^{l-1}A_j\subseteq\set{x_1,\ldots,x_{\alphanull{n}}}$ and, consequently, $\beta_1,\ldots,\beta_s\in S$ are equal in $\bigcup_{j=0}^{l-1}A_j$. Therefore Lemma~\ref{T(X): main lemma} implies that for all $\gamma\in S$ we have
 	\begin{displaymath}
 		\parens{x_{\alphanull{n}+1}\beta_1}\gamma=\parens{x_{\alphanull{n}+1}\beta_2}\gamma=\cdots=\parens{x_{\alphanull{n}+1}\beta_{\alphanull{n}}}\gamma,
 	\end{displaymath}
 	that is, $x_1\gamma=x_2\gamma=\cdots=x_{\alphanull{n}}\gamma$. Additionally, Lemma~\ref{T(X): tr commutes with idemp} guarantees that $x_1\gamma\in \im e=\set{x_1}$ for all $\gamma\in S$ and, consequently, we have $x_1=x_1\gamma=x_2\gamma=\cdots=x_{\alphanull{n}}\gamma$ for all $\gamma\in S$. By part 1 of Lemma~\ref{T(X): labels of T_S} we have that, for each $m\in\X{\alphanull{n}}$, there exists $\gamma_m\in S$ such that $y_m=x_m\gamma_m$. Hence $x_1=y_1=y_2=\cdots=y_{\alphanull{n}}$ and, thus, all the arcs of the trunk of $T_S$ have label $x_1$.
 	
 	
 	Finally, we are going to establish that $S=N=\nulltr{X}{x_1}{x_{\alphanull{n}}}$. Let $\beta\in S$. For all $i\in\Xn$ we have that $x_i\beta$ is equal to the $i$-th letter of the word $w_\beta$ determined by $\beta$, which is equal to the label of an arc of level $x_i$. Moreover, we have that levels $x_1,\ldots,x_{\alphanull{n}}$ have one arc each, whose label is $x_1$. Hence $\set{x_1,\ldots,x_{\alphanull{n}}}\beta=\set{x_1}$. In addition, we have that the labels of the arcs of levels $x_{\alphanull{n}+1},\ldots,x_n$ belong to $\set{x_1,\ldots,x_{\alphanull{n}}}$. This implies that $\set{x_{\alphanull{n}+1},\ldots,x_n}\beta\subseteq\set{x_1,\ldots,x_{\alphanull{n}}}$ and, consequently, we have that $\im\beta=\set{x_1,\ldots,x_{\alphanull{n}}}\beta\cup\set{x_{\alphanull{n}+1},\ldots,x_n}\beta\subseteq\set{x_1}\cup\set{x_1,\ldots,x_{\alphanull{n}}}=\set{x_1,\ldots,x_{\alphanull{n}}}$. Therefore $\beta\in \nulltr{X}{x_1}{x_{\alphanull{n}}}$. Since $\beta$ is an arbitrary transformation of $S$, then we conclude that $S\subseteq\nulltr{X}{x_1}{x_{\alphanull{n}}}$ and, since $\abs{S}=\abs{N}=\abs{\nulltr{X}{x_1}{x_{\alphanull{n}}}}$, we obtain that $S=\nulltr{X}{x_1}{x_{\alphanull{n}}}$, which concludes the proof.
 \end{proof}
 
 Now we will use the characterization of the maximum-order commutative transformation semigroups with a unique idempotent to prove, in an alternative way, that the maximum-order commutative nilpotent subsemigroups of $\tr{X}$ are all null semigroups. This result appeared in \cite[Theorem 3.12]{Commutative_nilpotent_transformation_semigroups_paper} but was proved directly, not deduced as a consequence of the more general results above.
 
 \begin{corollary}\label{T(X): maximum size nilpotent smg}
 	The maximum size of a commutative nilpotent subsemigroup of $\tr{X}$ is $\maxnull{\abs{X}}$. Moreover, $S$ is a commutative nilpotent subsemigroup of $\tr{X}$ of size $\maxnull{\abs{X}}$ if and only if at least one of the following conditions is satisfied:
 	\begin{enumerate}
 		\item There exist pairwise distinct $x_1,\ldots,x_t\in S$ such that $S=\nulltr{X}{x_1}{x_t}$, where $t=\alphanull{\abs{X}}$.
 		
 		\item $\abs{X}=2$ and $S=\set{\id{X}}$.
 	\end{enumerate}
 \end{corollary}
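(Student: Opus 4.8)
The plan is to deduce this corollary from the results already established for commutative subsemigroups with a unique idempotent, exploiting the elementary observation that every commutative nilpotent subsemigroup is one of these. Indeed, a finite nilpotent semigroup $S$ has a zero $z$ with $S^m=\set{z}$ for some $m$, so any idempotent $f\in S$ satisfies $f=f^m=z$; hence $z$ is the unique idempotent of $S$. Thus the commutative nilpotent subsemigroups of $\tr{X}$ form a subclass of those treated in Theorem~\ref{T(X): largest comm smg 1 idemp}, the distinguishing feature being that the unique idempotent is a zero of $S$ (and so equals $\id{X}$ precisely when $S=\set{\id{X}}$).

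First I would settle the maximum size. For the lower bound, a null subsemigroup is commutative and nilpotent, so a maximum-order null subsemigroup, which has size $\maxnull{\abs{X}}$ by Theorem~\ref{maximum size null semigroup}, already realises $\maxnull{\abs{X}}$. For the upper bound, let $S$ be commutative nilpotent with zero $z$. If $z=\id{X}$ then $S=\set{\id{X}}$ and $\abs{S}=1\leqslant\maxnull{\abs{X}}$; otherwise Theorem~\ref{|S|=|N|, idemp distinct id_X} produces a null subsemigroup $N$ with $\abs{S}=\abs{N}\leqslant\maxnull{\abs{X}}$ (the last inequality by Theorem~\ref{maximum size null semigroup}). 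Hence the maximum size is exactly $\maxnull{\abs{X}}$, uniformly in $\abs{X}$.

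For the characterisation I would split on $\abs{X}$. When $\abs{X}\geqslant 4$, Theorem~\ref{T(X): largest comm smg 1 idemp} tells us that the maximum size of a commutative subsemigroup of $\tr{X}$ with a unique idempotent is $\maxnull{\abs{X}}$, which agrees with the maximum nilpotent size just found; so a maximum-order commutative nilpotent $S$ is a maximum-order commutative subsemigroup with a unique idempotent and therefore appears on the list of that theorem. Since $S$ is nilpotent its unique idempotent is a zero, hence distinct from $\id{X}$, which rules out the groups $C_4$ and $C_2\times C_2$ (whose unique idempotent is $\id{X}$), leaving exactly $S=\nulltr{X}{x_1}{x_{\alphanull{\abs{X}}}}$, i.e.\ condition~(1). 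When $\abs{X}\leqslant 3$ this shortcut fails, because the abelian groups push the unique-idempotent maximum ($\abs{X}$) strictly above $\maxnull{\abs{X}}\leqslant 2$; here I would instead use that any commutative nilpotent semigroup of order at most $2$ is automatically a null semigroup (all products equal the zero). Thus a maximum-order example, of size $\maxnull{\abs{X}}$, is a maximum-order null subsemigroup, and Theorem~\ref{null semigroups of maximum size} forces either $S=\nulltr{X}{x_1}{x_t}$ or $\abs{X}=2$ and $S=\set{\id{X}}$, giving conditions~(1) and~(2). The converse is immediate, since the semigroups in~(1) and~(2) are null, hence commutative and nilpotent, of size $\maxnull{\abs{X}}$ by Theorem~\ref{null semigroups of maximum size}.

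The main obstacle is the small-$\abs{X}$ regime: Theorem~\ref{T(X): largest comm smg 1 idemp} only identifies the maximum-order members of the non-identity class for $\abs{X}\geqslant 4$, so for $\abs{X}\leqslant 3$ one cannot read off the structure directly and must lean on the ad hoc fact that low-order commutative nilpotent semigroups are null in order to fall back on Theorem~\ref{null semigroups of maximum size}. Some care is also needed at the boundary values $\abs{X}\in\set{1,2}$, where $\maxnull{\abs{X}}=1$ and the singleton $\set{\id{X}}$ must be sorted into condition~(1) (when $\abs{X}=1$, where $\nulltr{X}{x_1}{x_t}$ with $t=\alphanull{1}=1$ is $\set{\id{X}}$) or condition~(2) (when $\abs{X}=2$).
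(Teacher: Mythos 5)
Your proposal is correct and takes essentially the same approach as the paper: both reduce the corollary to the unique-idempotent results of the section, split at $\abs{X}=4$, handle the small cases by noting that a maximum-order example has at most $2$ elements and is therefore null (so Theorem~\ref{null semigroups of maximum size} applies), and handle $\abs{X}\geqslant 4$ via Theorem~\ref{T(X): largest comm smg 1 idemp} together with the fact that a nilpotent semigroup of size greater than $1$ cannot be a group. The only (harmless) deviation is that you derive the uniform upper bound $\abs{S}\leqslant\maxnull{\abs{X}}$ directly from Theorem~\ref{|S|=|N|, idemp distinct id_X} and Theorem~\ref{maximum size null semigroup}, whereas the paper extracts it case by case from Theorem~\ref{T(X): largest comm smg 1 idemp}.
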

 
 \begin{proof}
 	First we notice that commutative nilpotent semigroups have exactly one idempotent.
 	
 	Let $S$ be commutative nilpotent subsemigroup of $\tr{X}$ of maximum size. Since null semigroups are commutative nilpotent semigroups, then we have $\abs{S}\geqslant\maxnull{\abs{X}}$ (by Theorem~\ref{maximum size null semigroup}). We analyse three cases.
 	
 	\smallskip
 	
 	\textit{Case 1:} Assume that $\abs{X}=1$. It is straightforward to see that $\abs{S}=1=\maxnull{1}=\maxnull{\abs{X}}$ and that $S=\set{\id{X}}=N^X_x$, where $X=\set{x}$. We note that $\alphanull{\abs{X}}=\alphanull{1}=1$.
 	
 	\smallskip
 	
 	\textit{Case 2:} Assume that $\abs{X}\in\set{2,3}$. It follows from Theorem~\ref{T(X): largest comm smg 1 idemp} that the largest commutative subsemigroup of $\tr{X}$ with a unique idempotent is a group of size $\abs{X}$. Since $S$ is a nilpotent semigroup, then this implies that $\abs{S}\leqslant\abs{X}-1\leqslant 2$ and, consequently, we have that $S$ is a null semigroup. Then Theorem~\ref{maximum size null semigroup} implies that $\abs{S}\leqslant\maxnull{\abs{X}}$ and, consequently, we have $\abs{S}=\maxnull{\abs{X}}$.
 	
 	If $\abs{X}=2$, then Theorem~\ref{null semigroups of maximum size} implies that $S=\set{\id{X}}$ or $S=\nulltr{X}{x_1}{x_t}$, where $t=\alphanull{\abs{X}}$ and $x_1,\ldots,x_t\in S$ are pairwise distinct.
 	
 	If $\abs{X}=3$, then Theorem~\ref{null semigroups of maximum size} implies that $S=\nulltr{X}{x_1}{x_t}$, where $t=\alphanull{\abs{X}}$ and $x_1,\ldots,x_t\in S$ are pairwise distinct.
 	
 	\smallskip
 	
 	\textit{Case 3:} Assume that $\abs{X}\geqslant 4$. It follows from Theorem~\ref{T(X): largest comm smg 1 idemp} that the maximum-order commutative subsemigroup of $\tr{X}$ with a unique idempotent have size $\maxnull{\abs{X}}$ and they can either be groups or one of the null semigroups $\nulltr{X}{x_1}{x_t}$, where $t=\alphanull{\abs{X}}$ and $x_1,\ldots,x_t\in S$ are pairwise distinct. The former implies that $\abs{S}=\maxnull{\abs{X}}$. Furthermore, since $S$ is a nilpotent semigroup and $\abs{S}>1$, then $S$ is not a group. Hence there exist pairwise distinct $x_1,\ldots,x_t\in S$ ($t=\alphanull{\abs{X}}$) such that $S=\nulltr{X}{x_1}{x_t}$.
 \end{proof}
 
 

 Our next objective is to characterize the largest commutative subsemigroups of $\ptr{X}$ with a unique idempotent. With this in mind, we define the following subsets of $\ptr{X}$, which will turn out to be null semigroups.
 
 For each $B\subseteq X$ such that $\abs{B}=\parens{\abs{X}+1}\alpha-1$ we define:
 \begin{displaymath}
 	\nullptr{X}{B}=\gset{\beta\in\ptr{X}}{\dom{\beta}\subseteq X\setminus B \text{ and } \im{\beta}\subseteq B}.
 \end{displaymath}

 \begin{proposition}\label{P(X): Omega^X_B}
 	For each $B\subseteq X$ such that $\abs{B}=\alphanull{\abs{X}+1}-1$, we have that $\nullptr{X}{B}$ is a null subsemigroup of $\ptr{X}$ of size $\maxnull{\abs{X}+1}$.
 \end{proposition}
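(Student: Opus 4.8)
The plan is to verify directly that every product of two elements of $\nullptr{X}{B}$ collapses to the empty partial transformation, which at once gives closure and the null property, and then to count $\nullptr{X}{B}$ and match the result with $\maxnull{\abs{X}+1}$ through the definitions of $\xi$ and $\alpha$.

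First I would show that $\nullptr{X}{B}$ is a null semigroup. Let $\theta\in\ptr{X}$ denote the empty partial transformation, i.e. the partial map with $\dom\theta=\emptyset$. Take $\beta,\gamma\in\nullptr{X}{B}$ and suppose $x\in\dom\parens{\beta\gamma}$. Then $x\in\dom\beta\subseteq X\setminus B$ and $x\beta\in\dom\gamma$; but $x\beta\in\im\beta\subseteq B$, whereas $\dom\gamma\subseteq X\setminus B$, so $x\beta\notin\dom\gamma$, a contradiction. Hence $\dom\parens{\beta\gamma}=\emptyset$, that is, $\beta\gamma=\theta$. Since $\dom\theta=\emptyset\subseteq X\setminus B$ and $\im\theta=\emptyset\subseteq B$, we have $\theta\in\nullptr{X}{B}$; and $\theta$ is a zero of $\ptr{X}$. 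Thus $\nullptr{X}{B}$ is closed under multiplication, every product of its elements equals $\theta$, and so $\nullptr{X}{B}$ is a null subsemigroup of $\ptr{X}$ with zero $\theta$.

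Next I would count the elements. Write $t=\alphanull{\abs{X}+1}$, so that $\abs{B}=t-1$ and $\abs{X\setminus B}=\abs{X}-\parens{t-1}=\parens{\abs{X}+1}-t$. An element $\beta\in\nullptr{X}{B}$ is determined by its behaviour on the points of $X\setminus B$, because $\dom\beta\subseteq X\setminus B$; and for each such point there are exactly $t$ independent possibilities, namely either the point lies outside $\dom\beta$ or it is mapped to one of the $\abs{B}=t-1$ elements of $B$. Therefore $\abs{\nullptr{X}{B}}=t^{\parens{\abs{X}+1}-t}$.

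Finally I would identify this count with $\maxnull{\abs{X}+1}$: by the definition of $\alphanull{\cdot}$, the value $t=\alphanull{\abs{X}+1}$ attains the maximum in the definition of $\maxnull{\cdot}$, so $t^{\parens{\abs{X}+1}-t}=\maxnull{\abs{X}+1}$, giving $\abs{\nullptr{X}{B}}=\maxnull{\abs{X}+1}$. There is no serious obstacle in this argument; the one point that requires care is to match the exponent $\parens{\abs{X}+1}-t$ coming out of the count with the exponent appearing in $\maxnull{\abs{X}+1}$, which is precisely why $\abs{B}$ is prescribed to be $\alphanull{\abs{X}+1}-1$ rather than $\alphanull{\abs{X}+1}$.
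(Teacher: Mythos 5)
Your proposal is correct and follows essentially the same route as the paper's proof: both verify that any product $\beta\gamma$ has empty domain because $\im\beta\subseteq B$ is disjoint from $\dom\gamma\subseteq X\setminus B$, and both count $\nullptr{X}{B}$ by giving each point of $X\setminus B$ exactly $\abs{B}+1=\alphanull{\abs{X}+1}$ choices, then invoke the definition of $\alphanull{\cdot}$ to identify the total with $\maxnull{\abs{X}+1}$. The only cosmetic difference is that you derive $\dom(\beta\gamma)=\emptyset$ by contradiction at a point, whereas the paper computes $\dom\beta\gamma=(\im\beta\cap\dom\gamma)\beta^{-1}=\emptyset$ directly.
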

 
 \begin{proof}
 	Let $B\subseteq X$ be such that $\abs{B}=\alphanull{\abs{X}+1}-1$. We have that $\emptyset\subseteq X\setminus B$ and $\emptyset\subseteq B$. Hence $\emptyset\in\nullptr{X}{B}$. Moreover, for all $\beta,\gamma\in\nullptr{X}{B}$ we have
 	\begin{align*}
 		\dom\beta\gamma&=\parens{\im\beta\cap\dom\gamma}\beta^{-1}\\
 		&\subseteq \parens{B\cap\parens{X\setminus B}}\beta^{-1} &\bracks{\text{because } \im\beta\subseteq B \text{ and } \dom\beta\subseteq X\setminus B}\\
 		&=\emptyset,
 	\end{align*}
 	which implies that for all $\beta,\gamma\in\nullptr{X}{B}$ we have $\beta\gamma=\emptyset$. Therefore $\nullptr{X}{B}$ is a null subsemigroup of $\ptr{X}$.
 	
 	Now we will see that $\abs{\nullptr{X}{B}}=\maxnull{\abs{X}+1}$. Let $\beta\in\nullptr{X}{B}$. For each $x\in B$ we have that $x\in X\setminus\dom\beta$, and for each $x\in X\setminus B$ we have either $x\in X\setminus\dom\beta$ or $x\in\dom\beta$ and $x\beta\in B$. This implies that, in $\beta$, we have $1$ possibility for each $x\in B$, and we have $\abs{B}+1$ possibilities for each $x\in X\setminus B$. Hence there are
 	\begin{displaymath}
 		1^{\abs{B}}\cdot\parens{\abs{B}+1}^{\abs{X\setminus B}}=\parens{\abs{B}+1}^{\abs{X}-\abs{B}}=\parens{\alphanull{\abs{X}+1}}^{\parens{\abs{X}+1}-\alphanull{\abs{X}+1}}=\maxnull{\abs{X}+1}
 	\end{displaymath}
 	possibilities for $\beta$. This is enough to conclude that $\abs{\nullptr{X}{B}}=\maxnull{\abs{X}+1}$.
 \end{proof}
 
 Now we will demonstrate that the null semigroups $\nullptr{X}{B}$, where $B\subseteq X$ is such that $\abs{B}=\alphanull{\abs{X}+1}-1$, are maximum-order commutative subsemigroups of $\ptr{X}$ with a unique idempotent. Moreover, when $\abs{X}\geqslant 3$, there are no other commutative subsemigroups of $\ptr{X}$ with a unique idempotent that have maximum size. When $\abs{X}\leqslant 2$, one of the maximum-order commutative subsemigroups of $\ptr{X}$ with a unique idempotent is a cyclic group.
 
 \begin{corollary}\label{P(X): comm smg 1 idemp}
 	The maximum size of a commutative subsemigroup of $\ptr{X}$ with a unique idempotent is $\maxnull{\abs{X}+1}$. Furthermore, if $S$ is a maximum-order commutative subsemigroup of $\ptr{X}$ with a unique idempotent, then:
 	\begin{enumerate}
 		\item If $\abs{X}\leqslant 2$, then either $C_{\abs{X}}\simeq S\subseteq\sym{X}$ or $S=\nullptr{X}{B}$ for some $B\subseteq X$ such that $\abs{B}=\alphanull{\abs{X}+1}-1$.
 		\item If $\abs{X}\geqslant 3$, then $S=\nullptr{X}{B}$ for some $B\subseteq X$ such that $\abs{B}=\alphanull{\abs{X}+1}-1$.
 	\end{enumerate}
 \end{corollary}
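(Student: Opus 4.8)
The plan is to transfer the problem to the full transformation semigroup on $\newtr{X}=X\cup\set{\new}$, where $\abs{\newtr{X}}=\abs{X}+1$, and then invoke Theorem~\ref{T(X): largest comm smg 1 idemp}. For the lower bound, each $\nullptr{X}{B}$ with $\abs{B}=\alphanull{\abs{X}+1}-1$ is, by Proposition~\ref{P(X): Omega^X_B}, a null subsemigroup of $\ptr{X}$ of size $\maxnull{\abs{X}+1}$; since a null semigroup has its zero as its unique idempotent, $\nullptr{X}{B}$ is a commutative subsemigroup of $\ptr{X}$ with a unique idempotent of the desired size. For the rest, given any commutative subsemigroup $S$ of $\ptr{X}$ with a unique idempotent, Proposition~\ref{P(X): proposition P(X) --> T(Y)} shows $\newtr{S}$ is an isomorphic copy of $S$ inside $\tr{\newtr{X}}$; the isomorphism preserves commutativity and the property of having a unique idempotent, so $\newtr{S}$ is a commutative subsemigroup of $\tr{\newtr{X}}$ with a unique idempotent and $\abs{S}=\abs{\newtr{S}}$. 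The crucial extra feature, used throughout, is that $\new\newtr{\beta}=\new$ for every $\beta\in S$, i.e.\ every element of $\newtr{S}$ fixes $\new$.

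For the upper bound I would apply Theorem~\ref{T(X): largest comm smg 1 idemp} to $\newtr{S}\subseteq\tr{\newtr{X}}$, recalling $\abs{\newtr{X}}=\abs{X}+1$. When $\abs{X}\geqslant 4$ we have $\abs{\newtr{X}}\geqslant 5$ and the theorem gives $\abs{\newtr{S}}\leqslant\maxnull{\abs{\newtr{X}}}=\maxnull{\abs{X}+1}$; when $\abs{X}=3$ it gives $\abs{\newtr{S}}\leqslant\abs{\newtr{X}}=4=\maxnull{4}=\maxnull{\abs{X}+1}$. When $\abs{X}\leqslant 2$ the theorem only yields $\abs{\newtr{S}}\leqslant\abs{\newtr{X}}=\abs{X}+1$, and equality would force, by part~(1) of the theorem (as $\abs{\newtr{X}}\leqslant 3$), that $\newtr{S}\cong C_{\abs{X}+1}$ lies in $\sym{\newtr{X}}$; but every element of $\newtr{S}$ fixes $\new$, so a generator would be a permutation of the $\abs{X}$-point set $X$ of order $\abs{X}+1$, which is impossible since no permutation of a set of size $\abs{X}\leqslant 2$ has order $\abs{X}+1$. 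Hence $\abs{\newtr{S}}\leqslant\abs{X}=\maxnull{\abs{X}+1}$, and the bound $\maxnull{\abs{X}+1}$ is attained by $\nullptr{X}{B}$ in every case.

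For the characterization when $\abs{X}\geqslant 3$, suppose $\abs{S}=\maxnull{\abs{X}+1}$, so $\newtr{S}$ is a maximum-order commutative subsemigroup of $\tr{\newtr{X}}$ with a unique idempotent. If $\abs{X}\geqslant 4$ then $\abs{\newtr{X}}\geqslant 5$ and part~(3) forces $\newtr{S}=\nulltr{\newtr{X}}{z_1}{z_t}$ to be a null semigroup; if $\abs{X}=3$ then $\abs{\newtr{X}}=4$ and part~(2) allows also $C_4$ or $C_2\times C_2$, but these would be abelian subgroups of order $4$ consisting of $\new$-fixing permutations, i.e.\ abelian subgroups of order $4$ of $\sym{X}$, a group of order $6$, which do not exist, so again $\newtr{S}=\nulltr{\newtr{X}}{z_1}{z_t}$ with $t=\alphanull{\abs{X}+1}$. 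The zero of this null semigroup is the constant map onto $\set{z_1}$; since it fixes $\new$ we get $z_1=\new$. Writing $B=\set{z_2,\ldots,z_t}\subseteq X$ (so $\abs{B}=t-1=\alphanull{\abs{X}+1}-1$), I would then unwind the defining conditions of $\nulltr{\newtr{X}}{z_1}{z_t}$ (with $z_1=\new$) through the correspondence $\beta\mapsto\newtr{\beta}$: the requirement $z_m\newtr{\beta}=\new$ for $m\geqslant 2$ translates to $\dom\beta\subseteq X\setminus B$, while $\im\newtr{\beta}\subseteq\set{\new}\cup B$ translates to $\im\beta\subseteq B$, giving exactly $\newtr{\beta}\in\newtr{S}\iff\beta\in\nullptr{X}{B}$ and hence $S=\nullptr{X}{B}$.

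Finally, the cases $\abs{X}\leqslant 2$ I would settle by direct inspection of $\ptr{X}$: the maximum-order commutative subsemigroups with a unique idempotent turn out to be exactly the cyclic group $\sym{X}\cong C_{\abs{X}}$ (unique idempotent $\id{X}$) and the null semigroups $\nullptr{X}{B}$ with $\abs{B}=\alphanull{\abs{X}+1}-1$, which is the content of part~(1). The main obstacle is the translation step in the previous paragraph, namely matching the combinatorial description of $\nulltr{\newtr{X}}{z_1}{z_t}$ with that of $\nullptr{X}{B}$ through the map $\beta\mapsto\newtr{\beta}$, together with the need to treat separately the small values of $\abs{X}$, where the extremal sizes for $\tr{\newtr{X}}$ and $\ptr{X}$ differ and the group possibilities must be excluded by hand using the fixed point $\new$.
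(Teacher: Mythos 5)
Your proposal is correct and follows essentially the same route as the paper's proof: embed $S$ into $\tr{\newtr{X}}$ via Proposition~\ref{P(X): proposition P(X) --> T(Y)}, apply Theorem~\ref{T(X): largest comm smg 1 idemp} to $\newtr{S}$, use the fact that every element of $\newtr{S}$ fixes $\new$ to pin down $z_1=\new$, and translate the null semigroup $\nulltr{\newtr{X}}{z_1}{z_t}$ back to $\nullptr{X}{B}$. The only differences are minor and harmless: where you exclude the group alternatives by order arguments in $\sym{X}$ (no abelian subgroup of order $4$ in a group of order $6$ when $\abs{X}=3$; no permutation of order $\abs{X}+1$ on a set of size $\abs{X}\leqslant 2$), the paper instead notes that $\newtr{S}\subseteq\sym{\newtr{X}}$ forces $S\subseteq\tr{X}$ and re-applies the theorem to $S$ for $\abs{X}=3$, and it handles $\abs{X}\leqslant 2$ by direct enumeration of $\ptr{X}$ rather than through the transfer.
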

 
 \begin{proof}
 	
 	Let $S$ be a largest commutative subsemigroup of $\ptr{X}$ with a unique idempotent. Then, by Proposition~\ref{P(X): proposition P(X) --> T(Y)}, $\newtr{S}$ is a subsemigroup of $\tr{\newtr{X}}$ and $\newtr{S}\simeq S$. Therefore $\abs{S}=\abs{\newtr{S}}$ and $\newtr{S}$ is commutative (because $S$ is commutative) and $\newtr{S}$ contains only one idempotent (because $S$ contains only one idempotent).
 	
 	\smallskip
 	
 	\textit{Case 1:} Assume that $\abs{X}=1$. We have that $\ptr{X}=\set{\emptyset,\id{X}}$. Since $S$ contains only one idempotent, then we must have $S=\set{\id{X}}=\sym{X}\simeq C_{\abs{X}}$ or $S=\set{\emptyset}=\nullptr{X}{X}$. (We observe that $\abs{X}=1=2-1=\alphanull{2}-1=\alphanull{\abs{X}+1}-1$.) Consequently, $\abs{S}=1=\maxnull{2}=\maxnull{\abs{X}+1}$.
 	
 	\smallskip
 	
 	\textit{Case 2:} Assume that $\abs{X}=2$ and $X=\set{x_1,x_2}$. We have that
 	\begin{displaymath}
 		\ptr{X}=\set*{\emptyset,
 			\begin{pmatrix}
 				x_1\\x_1
 			\end{pmatrix},
 			\begin{pmatrix}
 				x_2\\x_2
 			\end{pmatrix},
 			\begin{pmatrix}
 				x_1\\x_2
 			\end{pmatrix},
 			\begin{pmatrix}
 				x_2\\x_1
 			\end{pmatrix},
 			\begin{pmatrix}
 				x_1&x_2\\x_1&x_1
 			\end{pmatrix},
 			\begin{pmatrix}
 				x_1&x_2\\x_2&x_2
 			\end{pmatrix},
 			\begin{pmatrix}
 				x_1&x_2\\x_2&x_1
 			\end{pmatrix},
 			\id{X}}.
 	\end{displaymath}
 	Furthermore, it is easy to verify that there are only three partial transformations in $\ptr{X}$ that are not idempotents, namely,
 	\begin{displaymath}
 		\beta_1=\begin{pmatrix}
 			x_2\\x_1
 		\end{pmatrix} \quad \text{and} \quad
 		\beta_2=\begin{pmatrix}
 			x_1\\x_2
 		\end{pmatrix} \quad \text{and} \quad
 		\beta_3=\begin{pmatrix}
 			x_1&x_2\\x_2&x_1
 		\end{pmatrix},
 	\end{displaymath}
 	and it is also easy to verify that these three partial transformations do not commute with each other. Hence $S$ contains at most one of them. We notice that, since $S$ is a maximum-order commutative subsemigroup of $\ptr{X}$ with a unique idempotent, then $S$ must contain exactly one idempotent and exactly one element of $\set{\beta_1,\beta_2,\beta_3}$. Therefore $\abs{S}=2=\maxnull{3}=\maxnull{\abs{X}+1}$. In the next two sub-cases we characterize $S$.
 	
 	\smallskip
 	
 	\textsc{Sub-case 1:} Assume that $\beta_i\in S$ for some $i\in\set{1,2}$. Then $\emptyset=\beta_i^2\in S$, which implies that $S=\set{\emptyset,\beta_i}=\nullptr{X}{\set{x_i}}$. (Note that $\abs{\set{x_i}}=1=2-1=\alphanull{3}-1=\alphanull{\abs{X}+1}-1$.)
 	
 	\smallskip
 	
 	\textsc{Sub-case 2:} Assume that $\beta_3\in S$. Then $\id{X}=\beta_3^2\in S$ and, consequently, we have that $C_{\abs{X}}\simeq S=\set{\id{X},\beta_3}\subseteq\sym{X}$. 
 	
 	
 	\smallskip
 	
 	\textit{Case 3:} Assume that $\abs{X}\geqslant 3$. Then $\abs{\newtr{X}}\geqslant 4$ and, consequently, Theorem~\ref{T(X): largest comm smg 1 idemp} implies that $\abs{\newtr{S}}\leqslant \maxnull{\abs{\newtr{X}}}$. (We note that, if $\abs{\newtr{X}}=4$, then $\maxnull{\abs{\newtr{X}}}=\maxnull{4}=4=\abs{\newtr{X}}$.) Furthermore, in Proposition~\ref{P(X): Omega^X_B} we saw that the semigroups $\nullptr{X}{B}$, where $B\subseteq X$ is such that $\abs{B}=\alphanull{\abs{X}+1}-1$, are null subsemigroups of $\ptr{X}$ (which are commutative semigroups with exactly one idempotent) of size $\maxnull{\abs{X}+1}$. As a consequence of the fact that $S$ is a largest commutative subsemigroup of $\ptr{X}$ with a unique idempotent, we have that $\abs{\newtr{S}}=\abs{S}\geqslant \maxnull{\abs{X}+1}=\maxnull{\abs{\newtr{X}}}$. Thus $\abs{\newtr{S}}=\maxnull{\abs{\newtr{X}}}$. Then, by Theorem~\ref{T(X): largest comm smg 1 idemp} we have that:
 	\begin{enumerate}
 		\item If $\abs{\newtr{X}}=4$ (that is, $\abs{X}=3$), then either $\newtr{S}\subseteq \sym{\newtr{X}}$ or $\newtr{S}=N^{\newtr{X}}_{x_1,x_2}$ for some distinct $x_1,x_2\in \newtr{X}$.
 		
 		\item If $\abs{\newtr{X}}\geqslant 5$ (that is, $\abs{X}\geqslant 4$), then $\newtr{S}=\nulltr{\newtr{X}}{x_1}{x_{\alphanull{\abs{\newtr{X}}}}}$ for some pairwise distinct $x_1,\ldots,x_{\alphanull{\abs{\newtr{X}}}}\in \newtr{X}$.
 	\end{enumerate}
 	
 	Before we proceed with the characterization of $S$ we need to establish that, regardless of the size of $\newtr{X}$, we must have $\newtr{S}=\nulltr{\newtr{X}}{x_1}{x_{\alphanull{\abs{\newtr{X}}}}}$ for some pairwise distinct $x_1,\ldots,x_{\alphanull{\abs{\newtr{X}}}}\in \newtr{X}$. It is clear that we just need to show that, when $\abs{\newtr{X}}=4$ (that is, $\abs{X}=3$), we must have $\newtr{S}\nsubseteq\sym{\newtr{X}}$.
 	
 	Suppose that $\abs{\newtr{X}}=4$ (and $\abs{X}=3$). Assume, with the aim of obtaining a contradiction that $\newtr{S}\subseteq\sym{\newtr{X}}$. This implies that $\set{\new}\newtr{\beta}^{-1}=\set{\new}$ for all $\beta\in S$. Hence we must have $\dom\beta=X$ for all $\beta\in S$; that is, $S\subseteq\tr{X}$. Consequently, $S$ is a commutative subsemigroup of $\tr{X}$ with a unique idempotent and, by Theorem~\ref{T(X): largest comm smg 1 idemp}, we have that $4=\maxnull{4} =\maxnull{\abs{\newtr{X}}} =\abs{\newtr{S}} =\abs{S}\leqslant\abs{X}=3$, which is a contradiction. Therefore $\newtr{S}\nsubseteq\sym{\newtr{X}}$ and, thus, $\newtr{S}=N^{\newtr{X}}_{x_1,x_2}$ for some distinct $x_1,x_2\in \newtr{X}$. (We note that $\alphanull{\abs{\newtr{X}}}=\alphanull{4}=2$.)
 	
 	The previous paragraph allows us to conclude that $\newtr{S}=\nulltr{\newtr{X}}{x_1}{x_{\alphanull{\abs{\newtr{X}}}}}$ for some pairwise distinct $x_1,\ldots,x_{\alphanull{\abs{\newtr{X}}}}\in \newtr{X}$. It follows from the definition of $\nulltr{\newtr{X}}{x_1}{x_{\alphanull{\abs{\newtr{X}}}}}$ that $x_1$ is the unique element of $\newtr{X}$ such that $x_1\newtr{\beta}=x_1$ for all $\beta\in S$; and it follows from the definition of $\newtr{S}$ that $\new\newtr{\beta}=\new$ for all $\beta\in S$. Hence $x_1=\new$. Let $B=\set{x_1,\ldots,x_{\alphanull{\abs{\newtr{X}}}}}\setminus\set{\new}=\set{x_2,\ldots,x_{\alphanull{\abs{\newtr{X}}}}}$ and $\beta\in S$. Since $\newtr{S}=\nulltr{\newtr{X}}{x_1}{x_{\alphanull{\abs{\newtr{X}}}}}$, then we have that $B\newtr{\beta}=\set{x_2,\ldots,x_{\alphanull{\abs{\newtr{X}}}}}\newtr{\beta}=\set{x_1}=\set{\new}$, which implies that $\dom\beta\subseteq X\setminus B$; and we have that $\im\newtr{\beta}\subseteq\set{x_1,\ldots,x_{\alphanull{\abs{\newtr{X}}}}}=B\cup\set{\new}$, which implies that $\im\beta\subseteq B$. Thus $\beta\in\nullptr{X}{B}$ and, consequently, $S\subseteq \nullptr{X}{B}$. (We notice that $\abs{B}=\alphanull{\abs{\newtr{X}}}-1=\alphanull{\abs{X}+1}-1$.) Due to the fact that $\abs{S}=\abs{\newtr{S}}=\maxnull{\abs{\newtr{X}}}=\maxnull{\abs{X}+1}=\abs{\nullptr{X}{B}}$ (where the last equality follows from Proposition~\ref{P(X): Omega^X_B}), then we can conclude that $S=\nullptr{X}{B}$.
 \end{proof}
 
 In the last corollary of the section we establish that the commutative nilpotent subsemigroups of $\ptr{X}$ of maximum size are all null semigroups of size $\maxnull{\abs{X}+1}$.
 
 \begin{corollary}\label{P(X): largest comm nilpotent smg}
 	The maximum size of a commutative nilpotent subsemigroup of $\ptr{X}$ is $\maxnull{\abs{X}+1}$. Moreover, $S$ is a commutative nilpotent subsemigroup of $\ptr{X}$ of size $\maxnull{\abs{X}+1}$ if and only if at least one of the following conditions is satisfied:
 	\begin{enumerate}
 		
 		\item $S=\nullptr{X}{B}$ for some $B\subseteq X$ such that $\abs{B}=\parens{\abs{X}+1}\alpha-1$.
 		
 		\item $\abs{X}=1$ and $S=\set{\id{X}}$.
 	\end{enumerate}
 \end{corollary}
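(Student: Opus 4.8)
The plan is to deduce this corollary directly from Corollary~\ref{P(X): comm smg 1 idemp}, in exactly the same spirit as Corollary~\ref{T(X): maximum size nilpotent smg} was obtained from Theorem~\ref{T(X): largest comm smg 1 idemp}. The key observation is that a commutative nilpotent semigroup possesses a zero, which is its unique idempotent; hence every commutative nilpotent subsemigroup of $\ptr{X}$ is in particular a commutative subsemigroup of $\ptr{X}$ with a unique idempotent. Since the class of commutative nilpotent subsemigroups is therefore contained in the class of commutative subsemigroups with a unique idempotent, Corollary~\ref{P(X): comm smg 1 idemp} immediately bounds the size of any commutative nilpotent subsemigroup of $\ptr{X}$ by $\maxnull{\abs{X}+1}$. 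To see that the bound is attained, I would invoke Proposition~\ref{P(X): Omega^X_B}: each $\nullptr{X}{B}$ (with $\abs{B}=\alphanull{\abs{X}+1}-1$) is a null semigroup of size $\maxnull{\abs{X}+1}$, and null semigroups are commutative and nilpotent. This establishes that the maximum size is exactly $\maxnull{\abs{X}+1}$ and simultaneously proves the ``if'' direction for condition (1). For the ``if'' direction of condition (2), I would note that when $\abs{X}=1$ the trivial semigroup $\set{\id{X}}$ has size $1=\maxnull{2}=\maxnull{\abs{X}+1}$ and is trivially nilpotent, since it consists of a single element which is its own zero.

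The heart of the argument is the ``only if'' direction. Let $S$ be a commutative nilpotent subsemigroup of $\ptr{X}$ with $\abs{S}=\maxnull{\abs{X}+1}$. Then $S$ is a maximum-order commutative subsemigroup of $\ptr{X}$ with a unique idempotent, so Corollary~\ref{P(X): comm smg 1 idemp} applies. The crucial extra input is that a nilpotent semigroup cannot be a nontrivial group: if $G$ is a group then $G^n=G$ for all $n$, so $G$ can be nilpotent only when $\abs{G}=1$. When $\abs{X}\geqslant 3$, Corollary~\ref{P(X): comm smg 1 idemp} forces $S=\nullptr{X}{B}$ for some $B$, which is condition (1). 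When $\abs{X}=2$, the alternative $C_2\simeq S\subseteq\sym{X}$ is a nontrivial group and is therefore excluded by nilpotency, again leaving $S=\nullptr{X}{B}$. When $\abs{X}=1$, the two options offered by Corollary~\ref{P(X): comm smg 1 idemp} are $S=\set{\id{X}}\simeq C_1$ and $S=\nullptr{X}{X}=\set{\emptyset}$; both are nilpotent, the former being condition (2) and the latter condition (1).

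The main obstacle is minor but requires care: it is the bookkeeping in the small cases $\abs{X}\leqslant 2$, where Corollary~\ref{P(X): comm smg 1 idemp} permits a cyclic group. I would verify that nilpotency eliminates the nontrivial group $C_2$ when $\abs{X}=2$, yet retains the trivial group $\set{\id{X}}$ when $\abs{X}=1$ (which is precisely why condition (2) is stated only for $\abs{X}=1$), and I would check the identifications $\nullptr{X}{X}=\set{\emptyset}$ together with $\abs{X}=\alphanull{2}-1=1$, so that $\set{\emptyset}$ genuinely has the form required by condition (1). With these verifications in place, the equivalence follows cleanly, and no recourse to the tree machinery of Section~\ref{sec: largest comm smg 1 idemp T(X)} is needed beyond what is already packaged into Corollary~\ref{P(X): comm smg 1 idemp} and Proposition~\ref{P(X): Omega^X_B}.
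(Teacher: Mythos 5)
Your proposal is correct and follows essentially the same route as the paper's own proof: both reduce the problem to Corollary~\ref{P(X): comm smg 1 idemp} via the observation that a commutative nilpotent semigroup has its zero as unique idempotent, both obtain the lower bound from Proposition~\ref{P(X): Omega^X_B}, and both dispose of the exceptional case by noting that a nontrivial group (here $C_2$) cannot be a nilpotent semigroup. Your explicit bookkeeping of the $\abs{X}=1$ case (identifying $\nullptr{X}{X}=\set{\emptyset}$ and retaining $\set{\id{X}}$) is slightly more detailed than the paper's, but the argument is the same.
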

 
 \begin{proof}
 	Let $S$ be commutative nilpotent subsemigroup of $\ptr{X}$ of maximum size. Then $S$ contains only one idempotent --- its zero. Consequently, it follows from Corollary~\ref{P(X): comm smg 1 idemp} that $\abs{S}\leqslant \maxnull{\abs{X}+1}$. Furthermore, Proposition~\ref{P(X): Omega^X_B} implies that for all $B \subseteq X$ such that $\abs{B}=\alphanull{\abs{X}+1}-1$ we have that $\nullptr{X}{B}$ is a null subsemigroup of $\ptr{X}$ (and, consequently, a commutative nilpotent subsemigroup of $\ptr{X}$) whose size is $\maxnull{\abs{X}+1}$. Then, as a consequence of the fact that $S$ is a maximum-order commutative nilpotent subsemigroup of $\ptr{X}$, we have that $\abs{S}\geqslant\maxnull{\abs{X}+1}$. Consequently, $\abs{S}=\maxnull{\abs{X}+1}$. Hence, by Corollary~\ref{P(X): comm smg 1 idemp}, at least one of the following conditions hold:
 	\begin{enumerate}
 		\item There exists $B\subseteq X$ such that $\abs{B}=\parens{\abs{X}+1}\alpha-1$ and $S=\nullptr{X}{B}$.
 		
 		\item $\abs{X}=1$ and $S\subseteq\sym{X}=\set{\id{X}}$.
 		
 		\item $\abs{X}=2$ and $S\simeq C_{\abs{X}}=C_2$. 
 	\end{enumerate}
 	In order to finish this proof we just need to observe that condition 3 never holds: in fact, when $\abs{X}=2$ we cannot have $S\simeq C_2$ because $C_2$ is not a nilpotent semigroup.
 \end{proof}

 \section{The largest commutative (full and partial) transformation semigroups}\label{sec: largest comm smg T(X)}
 
 Recall that $X$ denotes a finite set. This section concerns the maximum-order commutative subsemigroups of $\tr{X}$ and of $\ptr{X}$. We will prove that the maximum size of a commutative subsemigroup of $\tr{X}$ is $2^{\abs{X}-1}$, when $\abs{X}\leqslant 6$, and at least $\maxnull{\abs{X}}+1$, when $\abs{X}\geqslant 7$. We recall that in Section~\ref{sec: largest comm smg idemp T(X)} we described some commutative subsemigroups (of idempotents) of $\tr{X}$ of size $2^{\abs{X}-1}$ --- the semigroups $\commidemp{X}{x}$, where $x\in X$ \eqref{T(X): Gamma^X_i definition}. In this section we will see that, when $\abs{X}\leqslant 6$ but $\abs{X}\neq 2$, those semigroups are precisely the maximum-order commutative subsemigroups of $\tr{X}$ and, when $\abs{X}=2$, the only commutative subsemigroup of $\tr{X}$ (other than the semigroups $\commidemp{X}{x}$, where $x\in X$) is the subgroup of $\sym{X}$ isomorphic to the cyclic group $C_{\abs{X}}$. Furthermore, in \cite{Null_semigroups} were described null semigroups (which are commutative) of size $\maxnull{\abs{X}}$. We will see that, when $\abs{X}\geqslant 7$, these semigroups have size greater than $2^{\abs{X}-1}$, and so the semigroups $\commidemp{X}{x}$, where $x\in X$, are no longer the largest commutative subsemigroups of $\tr{X}$. Finally, we will demonstrate that, when $\abs{X}\leqslant 5$, the unique commutative subsemigroup of $\ptr{X}$ of maximum size is $\idemp{\psym{X}}$ --- the unique commutative subsemigroup of idempotents of $\ptr{X}$ of maximum size --- which has size $2^{\abs{X}}$. When $\abs{X}\geqslant 6$, we will demonstrate that $\maxnull{\abs{X}+1}+1$ is a lower bound for the maximum size of a commutative subsemigroup of $\ptr{X}$, and that the maximum size of a commutative subsemigroup of $\tr{Y}$, where $Y$ is a set such that $\abs{Y}=\abs{X}+1$, is an upper bound.

 Like in the previous two sections, we begin by proving the results concerning $\tr{X}$. With this goal in mind, we introduce the first lemma of this section, which shows that, when $\abs{X}\geqslant 3$, the largest commutative subsemigroups of $\tr{X}$ are not contained in $\sym{X}$. (Recall that, by Proposition~\ref{T(X): Gamma^X_i comm smg idemp}, there exist commutative subsemigroups of $\tr{X}$ of size $2^{\abs{X}-1}$.)
 
 \begin{lemma}\label{T(X): upper bound comm subsmg S(X)}
 	Suppose that $\abs{X}\geqslant 3$. Let $S$ be a commutative subsemigroup of $\tr{X}$. If $S\subseteq\sym{X}$, then $\abs{S}< 2^{\abs{X}-1}$.
 \end{lemma}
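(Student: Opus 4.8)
The plan is to first reduce the statement to a bound on abelian subgroups of $\sym{X}$, and then to compare the Burns--Goldsmith bound of Theorem~\ref{T(X): maximum size abelian subgroup of S(X)} with $2^{\abs{X}-1}$.

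For the reduction, I would argue that $S$ is an abelian subgroup of $\sym{X}$. Since $S$ is a finite (nonempty) semigroup it contains an idempotent; but the only idempotent of the group $\sym{X}$ is $\id{X}$, so $\id{X}$ is the unique idempotent of $S$. Proposition~\ref{T(X): unique idemp is id_X => group in S(X)} then gives that $S$ is a subgroup of $\sym{X}$, and as $S$ is commutative it is an abelian subgroup of $\sym{X}$. Writing $n=\abs{X}\geqslant 3$ and applying Theorem~\ref{T(X): maximum size abelian subgroup of S(X)}, I obtain $\abs{S}\leqslant 3^k$, $\abs{S}\leqslant 4\cdot 3^{k-1}$ or $\abs{S}\leqslant 2\cdot 3^k$ according as $n=3k$, $n=3k+1$ or $n=3k+2$.

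It then remains to check $\abs{S}<2^{n-1}$ in each of these three cases, in a manner completely analogous to the proof of Lemma~\ref{T(X): unique idemp, upper bound abelian subgroup S(X)}. The crucial observation is that the hypothesis $n\geqslant 3$ forces $k\geqslant 1$ in every residue class, since the only values excluded by $n\geqslant 3$, namely $n=1$ and $n=2$, are exactly those that would permit $k=0$. With $k\geqslant 1$ one has $3^{k-1}\leqslant 8^{k-1}$, and the three estimates become routine: for $n=3k$ one uses $2\cdot 3^k<8\cdot 3^{k-1}\leqslant 8^k=2^{n}$ to get $3^k<2^{n-1}$; for $n=3k+1$ one uses $4\cdot 3^{k-1}\leqslant 4\cdot 8^{k-1}=2^{n-1}/2<2^{n-1}$; and for $n=3k+2$ one uses $2\cdot 3^k<2\cdot 8^k=2^{n-1}$, which follows from $3^k<8^k$.

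I do not expect any substantial obstacle. The one point requiring care is the boundary bookkeeping: under the single hypothesis $\abs{X}\geqslant 3$ one must confirm that $k\geqslant 1$ holds simultaneously in all three residue classes, so that the inequalities $3^{k-1}\leqslant 8^{k-1}$ and the strictness of the final comparisons are genuinely available. This is exactly where $\abs{X}=3$ appears as the threshold value, mirroring the role of $\abs{X}\geqslant 5$ in Lemma~\ref{T(X): unique idemp, upper bound abelian subgroup S(X)}.
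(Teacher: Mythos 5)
Your proposal is correct and takes essentially the same route as the paper: the same reduction via Proposition~\ref{T(X): unique idemp is id_X => group in S(X)} (noting that a finite semigroup contains an idempotent, and the only idempotent in $\sym{X}$ is $\id{X}$) to an abelian subgroup of $\sym{X}$, followed by the Burns--Goldsmith bound of Theorem~\ref{T(X): maximum size abelian subgroup of S(X)} and a three-case comparison with $2^{\abs{X}-1}$ according to the residue of $\abs{X}$ modulo $3$. The only differences are in the elementary inequality manipulations (the paper uses, e.g., $3^k<4^k$ where you use $3^{k-1}\leqslant 8^{k-1}$), which are immaterial.
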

 
 \begin{proof}
 	Suppose that $S\subseteq\sym{X}$. Let $\alpha\in S$. Since the unique idempotent of $\sym{X}$ is $\id{X}$, then $\id{X}$ is the unique idempotent of $S$. Hence Proposition~\ref{T(X): unique idemp is id_X => group in S(X)} implies that $S$ is a subgroup of $\sym{X}$.
 	
 	In order to prove the result, we consider the following three cases:
 	
 	\smallskip
 	
 	\textit{Case 1:} Suppose that $\abs{X}=3k$, for some $k\in\mathbb{Z}$. Since $\abs{X}\geqslant 3$, then $k\geqslant 1$. Furthermore, it follows from Theorem~\ref{T(X): maximum size abelian subgroup of S(X)} that $\abs{S}\leqslant 3^k$. Hence
 	\begin{displaymath}
 		\abs{S}\leqslant 3^k<4^k=2^{2k}\leqslant 2^{2k}\cdot 2^{k-1}=2^{3k-1}=2^{\abs{X}-1}.
 	\end{displaymath}
 	
 	\smallskip
 	
 	\textit{Case 2:} Suppose that $\abs{X}=3k+1$, for some $k\in\mathbb{Z}$. Due to the fact that $\abs{X}\geqslant 3$, we must have $k\geqslant 1$. Additionally, as a result of Theorem~\ref{T(X): maximum size abelian subgroup of S(X)}, we have $\abs{S}\leqslant 4\cdot 3^{k-1}$. Thus
 	\begin{displaymath}
 		\abs{S}\leqslant 4\cdot 3^{k-1}\leqslant 4\cdot 4^{k-1}=4^k=2^{2k}<2^{2k}\cdot 2^k=2^{\parens{3k+1}-1}=2^{\abs{X}-1}.
 	\end{displaymath}
 	
 	\smallskip
 	
 	\textit{Case 3:} Suppose that $\abs{X}=3k+2$, for some $k\in\mathbb{Z}$. Then $k\geqslant 1$ (because $\abs{X}\geqslant 3$). Furthermore, Theorem~\ref{T(X): maximum size abelian subgroup of S(X)} guarantees that $\abs{S}\leqslant 2\cdot 3^k$. Thus
 	\begin{displaymath}
 		\abs{S}\leqslant 2\cdot 3^k<2\cdot 8^k=2\cdot 2^{3k}=2^{\parens{3k+2}-1}=2^{\abs{X}-1}.\qedhere
 	\end{displaymath}
 \end{proof}
 
 In the next lemma we mention the class $\setclass{S}{X}$ (where $S$ is a commutative subsemigroup of $\tr{X}$), which was originally introduced in Section~\ref{sec: largest comm smg idemp T(X)} in \eqref{T(X): C(S,X) definition}. We also recall that, if $S\nsubseteq\sym{X}$, then Lemma~\ref{T(X): existence of I, b|_(X/I)} implies that the class $\setclass{S}{X}$ is non-empty.
 
 It follows from Lemma~\ref{T(X): upper bound comm subsmg S(X)} that, when $\abs{X}\geqslant 3$, the largest commutative subsemigroups of $\tr{X}$ are not contained in $\sym{X}$. Hence, if $\abs{X}\geqslant 3$ and $S$ is a maximum-order commutative subsemigroup of $\tr{X}$, then Lemma~\ref{T(X): existence of I, b|_(X/I)} implies that $\setclass{S}{X}\neq\emptyset$. The aim of Lemma~\ref{T(X): upper bound S, |I|=2} is to provide the tools to prove (in Theorem~\ref{T(X): maximum size comm smg}) that, when $\abs{X}\in\set{3,4,5,6}$ and $S$ is a maximum-order commutative subsemigroup of $\tr{X}$, the smallest sets of $\setclass{S}{X}$ do not have size $2$. 
 
 \begin{lemma}\label{T(X): upper bound S, |I|=2}
 	Suppose that $\abs{X}\leqslant 6$. Let $S$ be a commutative subsemigroup of $\tr{X}$ such that $\setclass{S}{X}\neq\emptyset$ and let $I\in\setclass{S}{X}$ be of minimum size. Let $S'=\gset{\beta|_{X\setminus I}}{\beta\in S}$. If $\abs{I}=2$, then
 	\begin{enumerate}
 		\item $\abs{S}\leqslant \max\set{\abs{X},\parens{\abs{X}-1}\parens{\abs{X}-2},\abs{S'}\cdot\parens{\abs{X}-2}}$.
 		
 		\item If either $\abs{X}\leqslant 5$ and $\abs{S'}\leqslant 2^{\abs{X}-3}$ or $\abs{X}=6$ and $\abs{S'}<2^{\abs{X}-3}$, then $\abs{S}<2^{\abs{X}-1}$.
 		
 		\item If $\abs{X}=6$, then $S'\neq\commidemp{X\setminus I}{x}$ for all $x\in X\setminus I$.
 	\end{enumerate}
 \end{lemma}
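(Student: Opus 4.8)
The plan is to isolate the single map whose restriction to $I$ must be a transposition, and then to count $S$ fibrewise over $S'$.

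\textbf{Setup and counting skeleton.} Write $I=\{a,b\}$. Since $I$ has minimum size in $\setclass{S}{X}$ and $\abs{I}=2$, Lemma~\ref{T(X): lemma alpha|_I cycle} supplies $\alpha\in S$ with $\alpha|_I$ the transposition interchanging $a$ and $b$. Reading $\alpha\beta=\beta\alpha$ at $a$ gives $b\beta=(a\beta)\alpha$ for every $\beta\in S$, so $\beta$ is completely determined by the pair $(\beta|_{X\setminus I},\,a\beta)$. Writing $S=\bigsqcup_{\gamma\in S'}S_\gamma$ for the fibres $S_\gamma=\{\beta\in S:\beta|_{X\setminus I}=\gamma\}$ and $V_\gamma=\{a\beta:\beta\in S_\gamma\}$, the map $\beta\mapsto a\beta$ is injective on each fibre, so $\abs{S}=\sum_{\gamma\in S'}\abs{V_\gamma}$ with $\abs{V_\gamma}=\abs{S_\gamma}$; trivially $\abs{V_\gamma}\leqslant 2+\abs{X\setminus I}=n$, where $n=\abs{X}$.

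\textbf{Per-fibre bounds and Part 1.} I would call $\beta$ \emph{inner} if $a\beta\in I$ (at most two such extensions per fibre, namely $\beta|_I=\id{I}$ or $\beta|_I=\alpha|_I$) and \emph{outer} if $a\beta\in X\setminus I$. Evaluating $\beta\delta=\delta\beta$ at $a$ for two members of one fibre yields three facts: (A) two outer elements with targets $c,c'$ satisfy $c\gamma=c'\gamma$; (B) an $\id{I}$-extension together with an outer element of target $c$ forces $c\gamma=c$; (C) an $\alpha|_I$-extension together with an outer element of target $c$ forces $c\alpha=c\gamma$. Facts (A)--(B) give $\abs{V_\gamma}\leqslant 3$ once both inner extensions are present and $\abs{V_\gamma}\leqslant n-2$ otherwise, the only configuration reaching $n-1$ being an $\alpha|_I$-extension alongside $\abs{X\setminus I}$ outer elements; by (A) and (C) that would force $\alpha|_{X\setminus I}$ to be constant, whence $\im\alpha=\{a,b,d\}$ has size $3$ and Lemma~\ref{T(X): ab=ba => xb€Im a for all x€Im a} caps $\abs{V_\gamma}\leqslant 3$ uniformly. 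Assembling: for $n\geqslant 5$ the degenerate case is impossible (it would cap at $3<n-1$) so every fibre has at most $n-2$ elements, giving $\abs{S}\leqslant\abs{S'}(n-2)$; for $n=4$ each fibre has at most $3=n-1$ elements and $\abs{S'}\leqslant 2=n-2$ (the largest commutative subsemigroup of $\tr{X\setminus I}$ on two points has two elements), giving $(n-1)(n-2)$; and for $n=3$ one has $\abs{S'}=1$ with the single fibre of size at most $n$. Taking the maximum is Part 1.

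\textbf{Part 2} is then pure arithmetic from Part 1: for $3\leqslant n\leqslant 6$ one checks $n<2^{n-1}$ and $(n-1)(n-2)<2^{n-1}$, while $\abs{S'}(n-2)<2^{n-1}$ holds exactly under the stated hypotheses --- strictly already from $\abs{S'}\leqslant 2^{n-3}$ when $n\leqslant 5$ (there $n-2<4$) and from the strict bound $\abs{S'}<2^{n-3}$ when $n=6$ (there $n-2=4$).

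\textbf{Part 3 is the main obstacle} and requires a genuine rigidity argument. Suppose $n=6$ and $S'=\commidemp{X\setminus I}{x}$, so $X\setminus I=\{x,p,q,r\}$ and every $\gamma\in S'$ is an idempotent fixing $x$; record each $\beta$ by $a\beta$ together with its \emph{collapse set} $T_\beta=\{y\in X\setminus I:y\beta=x\}$, which determines $\beta|_{X\setminus I}$. The key input is that minimality of $I$ forces each of $p,q,r$ to be an outer target: if some $z\in\{p,q,r\}$ were never hit from $I$, then --- since the idempotents of $\commidemp{X\setminus I}{x}$ send each $y\in X\setminus I$ into $\{x,y\}$ and so cannot map one point of $X\setminus I$ onto a different non-$x$ point --- the singleton $\{z\}$ would lie in $\setclass{S}{X}$, contradicting that $I$ has minimum size $2$. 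Feeding this back into (A)--(C) pins every collapse set to its value $a\beta$: an $\id{I}$-extension must have $T=\emptyset$, the unique $\alpha|_I$-extension has $T=T_\alpha$, an outer element of target $z$ must have $T\supseteq\{p,q,r\}\setminus\{z\}$, and an element sending $I$ to $x$ must have $T=\{p,q,r\}$. Consequently a singleton collapse set $\{p\}$, $\{q\}$ or $\{r\}$ is realisable only by $\alpha$ and only when $T_\alpha$ equals that very singleton, so at most one of the three idempotents with collapse set $\{p\}$, $\{q\}$, $\{r\}$ lies in $S'$ --- contradicting $S'=\commidemp{X\setminus I}{x}$, which contains all three. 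The delicate point is precisely this collapse-set bookkeeping forced by having all of $p,q,r$ be outer targets simultaneously.
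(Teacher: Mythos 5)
Your argument is correct in substance, and it takes a genuinely different route from the paper's. The paper partitions $S$ according to the value of $\beta$ at one point of $I$ (writing $I=\set{i_1,i_2}$) into the $\abs{X}$ classes $A_1,A_2,B_1,\ldots,B_{\abs{X}-2}$, proves an auxiliary lemma saying that $i_1\beta$ together with fixed witnesses $\beta_j\in B_j$ determines $i_2\beta$ and each $x_j\beta$, and then splits on how many $B_j$ are non-empty; the three configurations (all non-empty, exactly $\abs{X}-3$, at most $\abs{X}-4$) yield exactly the three terms of the maximum in Part 1, and its Part 3 is a global count showing $8=\abs{S'}\leqslant\abs{S}\leqslant 6$. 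You instead fibre $S$ over $S'$, observe that $\beta\mapsto a\beta$ is injective on each fibre, bound fibres by the commutation facts (A)--(C), and split on $\abs{X}$ itself; your exclusion of the degenerate fibre via Lemma~\ref{T(X): ab=ba => xb€Im a for all x€Im a} (a constant $\alpha|_{X\setminus I}$ forces $\abs{\im\alpha}=3$, which caps every fibre at $3$) is an ingredient the paper never uses. Your route buys, for $\abs{X}\geqslant 5$, the single clean bound $\abs{S}\leqslant\abs{S'}\parens{\abs{X}-2}$ rather than a three-way maximum, and in Part 3 a more local contradiction: at most one of the three idempotents of $\commidemp{X\setminus I}{x}$ with singleton collapse set can occur as a restriction of an element of $S$, yet all three lie in $S'$. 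The paper's route, in exchange, is uniform in $\abs{X}$ and needs no information about $\im\alpha$.

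Two steps are stated too loosely, though both are easily patched. (i) In Part 3 you identify ``$z$ is hit from $I$'' with ``$z$ is an outer target'', but minimality of $I$ only yields $w\beta=z$ for some $w\in I$, and $w$ could be $b$. The patch follows from your own setup: if $b\beta=z$ then $z=\parens{a\beta}\alpha$; here $a\beta\notin I$ (otherwise $z\in I\alpha=I$), so $a\beta\in X\setminus I$ and $z=\parens{a\beta}\alpha\in\set{x,a\beta}$ because $\alpha|_{X\setminus I}\in S'=\commidemp{X\setminus I}{x}$; since $z\neq x$ this forces $a\beta=z$, so $z$ is an outer target after all. (ii) Facts (A)--(C) are proved for two members of a single fibre, but in Part 3 you apply them to pairs lying in different fibres: the $\id{I}$-extension and the outer elements realising $p$, $q$, $r$ need not have the same restriction. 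The computations go through verbatim with each conclusion referring to the element's own restriction (for instance the cross-fibre form of (B) reads $c\parens{\beta|_{X\setminus I}}=c$), so nothing fails, but the facts should be stated in that generality before being used this way.
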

 
 We note that it follows from the definition of $\setclass{S}{X}$ and Lemma~\ref{T(X): lemma induction} that $S'$ is a commutative subsemigroup of $\tr{X\setminus I}$.
 
 \begin{proof}
 	Suppose that $\abs{I}=2$. Let $n=\abs{X}$. Assume that $I=\set{i_1,i_2}$ and $X\setminus I=\set{x_1,\ldots,x_{n-2}}$.
 	
 	It follows from Lemma~\ref{T(X): lemma alpha|_I cycle} that there exists $\alpha\in S$ such that $\alpha|_I$ is a product of (disjoint) cycles of the same length, which is at least $2$. Since $\abs{I}=2$, then $\alpha|_I$ must be a cycle of length $2$. Hence
 	\begin{displaymath}
 		\alpha|_I=\begin{pmatrix}
 			i_1 & i_2 \\
 			i_2 & i_1
 		\end{pmatrix}.
 	\end{displaymath}

 	For each $j\in\set{1,2}$ we define $A_j=\gset{\beta\in S}{i_1\beta=i_j}$ and for each $j\in\X{n-2}$ we define $B_j=\gset{\beta\in S}{i_1\beta=x_j}$. It is clear that these $n$ sets form a partition of $S$. We begin by proving the lemma below.
 	
 	\begin{lemma}\label{T(X): lemma upper bound S, |I|=2}
 		Let $j\in\X{n-2}$ and suppose that $B_j\neq\emptyset$. Let $\beta_j\in B_j$ and $\beta\in S$.
 		\begin{enumerate}
 			\item If $i_1\beta=i_1$, then $i_2\beta=i_2$ and $x_j\beta=x_j$.
 			
 			\item If $i_1\beta=i_2$, then $i_2\beta=i_1$ and $x_j\beta=x_j\alpha$.
 			
 			\item If $i_1\beta=x_k$ for some $k\in\X{n-2}$, then $i_2\beta=x_k\alpha$ and $x_j\beta=x_k\beta_j$.
 		\end{enumerate}
 	\end{lemma}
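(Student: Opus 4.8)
The plan is to reduce all three parts to two ``master identities'' valid for every $\beta\in S$ that follow purely from commutativity, and then read off each conclusion by substituting the hypothesis on $i_1\beta$. First I would record how $i_2\beta$ is controlled by $i_1\beta$. Since $S$ is commutative, $\alpha\beta=\beta\alpha$; evaluating both sides at $i_1$ and using $i_1\alpha=i_2$ gives
\begin{displaymath}
	i_2\beta=(i_1\alpha)\beta=i_1(\alpha\beta)=i_1(\beta\alpha)=(i_1\beta)\alpha,
\end{displaymath}
so $i_2\beta=(i_1\beta)\alpha$ for every $\beta\in S$. Next I would perform the same computation with $\beta_j$ in place of $\alpha$: commutativity gives $\beta_j\beta=\beta\beta_j$, and evaluating at $i_1$ while using $i_1\beta_j=x_j$ (which holds because $\beta_j\in B_j$) yields
\begin{displaymath}
	x_j\beta=(i_1\beta_j)\beta=i_1(\beta_j\beta)=i_1(\beta\beta_j)=(i_1\beta)\beta_j,
\end{displaymath}
so $x_j\beta=(i_1\beta)\beta_j$ for every $\beta\in S$. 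These two identities do all the work.

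With the master identities in hand, each part is immediate by substituting the relevant value of $i_1\beta$. For part~1, if $i_1\beta=i_1$ then $i_2\beta=(i_1\beta)\alpha=i_1\alpha=i_2$ and $x_j\beta=(i_1\beta)\beta_j=i_1\beta_j=x_j$. For part~3, if $i_1\beta=x_k$ then $i_2\beta=(i_1\beta)\alpha=x_k\alpha$ and $x_j\beta=(i_1\beta)\beta_j=x_k\beta_j$. Part~2 requires one extra observation: applying the first master identity to $\beta_j$ itself (legitimate since $\beta_j\in S$) gives $i_2\beta_j=(i_1\beta_j)\alpha=x_j\alpha$; then if $i_1\beta=i_2$ we obtain $i_2\beta=(i_1\beta)\alpha=i_2\alpha=i_1$ and $x_j\beta=(i_1\beta)\beta_j=i_2\beta_j=x_j\alpha$.

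I do not expect a genuine obstacle: the statement is essentially a bookkeeping consequence of the relations $i_1\alpha=i_2$, $i_2\alpha=i_1$, $i_1\beta_j=x_j$ together with commutativity. The only point that needs a little care is part~2, where the direct evaluation $x_j\beta=(i_1\beta)\beta_j=i_2\beta_j$ does not yet visibly match the claimed $x_j\alpha$; the resolution is simply to recognise $i_2\beta_j$ as an instance of the first master identity, which forces $i_2\beta_j=x_j\alpha$. Everything else is direct substitution, and no case analysis beyond the three disjoint possibilities for $i_1\beta$ (namely $i_1$, $i_2$, or some $x_k$) is needed.
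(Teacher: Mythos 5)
Your proof is correct and is essentially the paper's argument: the two "master identities" $i_2\beta=(i_1\beta)\alpha$ and $x_j\beta=(i_1\beta)\beta_j$ are exactly the commutativity computations the paper repeats inside each of its three cases (including the extra step $i_2\beta_j=(i_1\beta_j)\alpha=x_j\alpha$ in part 2), so you have merely factored the common steps out rather than taken a different route.
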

 	
 	\begin{proof}
 		It follows from the fact that $\beta_j\in B_j$ that $i_1\beta_j=x_j$. We consider the following cases.
 		
 		\smallskip
 		
 		\textit{Case 1:} Suppose that $i_1\beta=i_1$. We have
 		\begin{align*}
 			i_2\beta &=\parens{i_1\alpha}\beta &\bracks{\text{since } i_1\alpha=i_2}\\
 			&=\parens{i_1\beta}\alpha &\bracks{\text{since } \alpha,\beta\in S, \text{ which is commutative}}\\
 			&=i_1\alpha &\bracks{\text{since } i_1\beta=i_1}\\
 			&=i_2
 		\end{align*}
 		and
 		\begin{align*}
 			x_j\beta &=\parens{i_1\beta_j}\beta &\bracks{\text{since } i_1\beta_j=x_j}\\
 			&=\parens{i_1\beta}\beta_j &\bracks{\text{since } \beta_j,\beta\in S, \text{ which is commutative}}\\
 			&=i_1\beta_j &\bracks{\text{since } i_1\beta=i_1}\\
 			&=x_j.
 		\end{align*}
 		
 		\smallskip
 		
 		\textit{Case 2:} Suppose that $i_1\beta=i_2$. We have
 		\begin{align*}
 			i_2\beta &=\parens{i_1\alpha}\beta &\bracks{\text{since } i_1\alpha=i_2}\\
 			&=\parens{i_1\beta}\alpha &\bracks{\text{since } \alpha,\beta\in S, \text{ which is commutative}}\\
 			&=i_2\alpha &\bracks{\text{since } i_1\beta=i_2}\\
 			&=i_1
 		\end{align*}
 		and
 		\begin{align*}
 			x_j\beta &=\parens{i_1\beta_j}\beta &\bracks{\text{since } i_1\beta_j=x_j}\\
 			&=\parens{i_1\beta}\beta_j &\bracks{\text{since } \beta_j,\beta\in S, \text{ which is commutative}}\\
 			&=i_2\beta_j &\bracks{\text{since } i_1\beta=i_2}\\
 			&=\parens{i_1\alpha}\beta_j &\bracks{\text{since } i_1\alpha=i_2}\\
 			&=\parens{i_1\beta_j}\alpha &\bracks{\text{since } \alpha,\beta_j\in S, \text{ which is commutative}}\\
 			&=x_j\alpha.
 		\end{align*}
 		
 		\smallskip
 		
 		\textit{Case 3:} Suppose that $i_1\beta=x_k$ for some $k\in\X{n-2}$. We have
 		\begin{align*}
 			i_2\beta &=\parens{i_1\alpha}\beta &\bracks{\text{since } i_1\alpha=i_2}\\
 			&=\parens{i_1\beta}\alpha &\bracks{\text{since } \alpha,\beta\in S, \text{ which is commutative}}\\
 			&=x_k\alpha
 		\end{align*}
 		and
 		\begin{align*}
 			x_j\beta &=\parens{i_1\beta_j}\beta &\bracks{\text{since } i_1\beta_j=x_j}\\
 			&=\parens{i_1\beta}\beta_j &\bracks{\text{since } \beta_j,\beta\in S, \text{ which is commutative}}\\
 			&=x_k\beta_j. & &\qedhere
 		\end{align*}
 	\end{proof}
 	
 	We now continue with the proof of Lemma~\ref{T(X): upper bound S, |I|=2}.
 	
 	\medskip
 	
 	\textbf{Part 1.} The aim of this part is to show that $\abs{S}\leqslant \max\set{n,\parens{n-1}\parens{n-2},\abs{S'}\cdot\parens{n-2}}$. We have that $\set{A_1,A_2,B_1,\ldots,B_{n-2}}$ is a partition of $S$. So, in order to determine an upper bound for $\abs{S}$, we just need to determine upper bounds for $\abs{A_1},\abs{A_2},\abs{B_1},\ldots,\abs{B_{n-2}}$. We consider the three cases below.
 	
 	\smallskip
 	
 	\textit{Case 1:} Suppose that the sets $B_1,\ldots,B_{n-2}$ are all non-empty. For each $j\in\X{n-2}$ we select $\beta_j\in B_j$. Then $i_1\beta_j=x_j$ for all $j\in\X{n-2}$.

 	Let $\beta\in A_1$. Then $i_1\beta=i_1$. It follows from part 1 of Lemma~\ref{T(X): lemma upper bound S, |I|=2} that
 	\begin{displaymath}
 		\beta=\begin{pmatrix}
 			i_1 & i_2 & x_1 & x_2 & \cdots & x_{n-3} & x_{n-2}\\
 			i_1 & i_2 & x_1 & x_2 & \cdots & x_{n-3} & x_{n-2}
 		\end{pmatrix}=\id{X}.
 	\end{displaymath}
 	Hence $A_1=\set{\id{X}}$ and, consequently, $\abs{A_1}=1$.
 	
 	Let $\beta\in A_2$. Then $i_1\beta=i_2$ and, by part 2 of Lemma~\ref{T(X): lemma upper bound S, |I|=2} we have that
 	\begin{displaymath}
 		\beta=\begin{pmatrix}
 			i_1 & i_2 & x_1 & x_2 & \cdots & x_{n-3} & x_{n-2}\\
 			i_2 & i_1 & x_1\alpha & x_2\alpha & \cdots & x_{n-3}\alpha & x_{n-2}\alpha
 		\end{pmatrix}=\alpha,
 	\end{displaymath}
 	which implies that $A_2=\set{\alpha}$ and, consequently, we have $\abs{A_2}=1$.
 	
 	Let $k\in\X{n-2}$ and $\beta\in B_k$. We have that $i_1\beta=x_k$ and, by part 3 of Lemma~\ref{T(X): lemma upper bound S, |I|=2}, we have that
 	\begin{displaymath}
 		\beta=\begin{pmatrix}
 			i_1 & i_2 & x_1 & x_2 & \cdots & x_{n-3} & x_{n-2}\\
 			x_k & x_k\alpha & x_k\beta_1 & x_k\beta_2 & \cdots & x_k\beta_{n-3} & x_k\beta_{n-2}
 		\end{pmatrix}.
 	\end{displaymath}
 	This allows us to conclude that $\abs{B_k}=1$ for all $k\in\X{n-2}$.
 	
 	Therefore
 	\begin{displaymath}
 		\abs{S}=\abs[\bigg]{A_1\cup A_2\cup\parens[\bigg]{\,\bigcup_{j=1}^{n-2} B_j}}=\abs{A_1}+\abs{A_2}+\sum_{j=1}^{n-2}\abs{B_j}=1+1+\sum_{j=1}^{n-2} 1 =n.
 	\end{displaymath}
 	
 	\smallskip
 	
 	\textit{Case 2:} Suppose that among the sets $B_1,\ldots,B_{n-2}$ there are exactly $n-3$ that are non-empty. Assume, without loss of generality, that $B_1,\ldots,B_{n-3}$ are those sets. Hence $B_{n-2}=\emptyset$. For each $j\in\X{n-3}$ we select $\beta_j\in B_j$. Then $i_1\beta_j=x_j$ for all $j\in\X{n-3}$.

 	Let $\beta\in A_1$. Then $i_1\beta=i_1$. It follows from part 1 of Lemma~\ref{T(X): lemma upper bound S, |I|=2} that
 	\begin{displaymath}
 		\beta=\begin{pmatrix}
 			i_1 & i_2 & x_1 & x_2 & \cdots & x_{n-3} & x_{n-2}\\
 			i_1 & i_2 & x_1 & x_2 & \cdots & x_{n-3} & y
 		\end{pmatrix},
 	\end{displaymath}
 	for some $y\in \set{x_1,\ldots,x_{n-2}}$ (we recall that the fact that $I\in\setclass{S}{X}$ implies that $\set{x_1,\ldots,x_{n-2}}\beta=\parens{X\setminus I}\beta\subseteq X\setminus I=\set{x_1,\ldots,x_{n-2}}$). This implies that there exist at most $n-2$ possibilities for $\beta$. Since $\beta$ is an arbitrary element of $A_1$, we can conclude that $\abs{A_1}\leqslant n-2$.
 	
 	Let $\beta\in A_2$. Then $i_1\beta=i_2$ and, consequently, part 2 of Lemma~\ref{T(X): lemma upper bound S, |I|=2} implies that
 	\begin{displaymath}
 		\beta=\begin{pmatrix}
 			i_1 & i_2 & x_1 & x_2 & \cdots & x_{n-3} & x_{n-2}\\
 			i_2 & i_1 & x_1\alpha & x_2\alpha & \cdots & x_{n-3}\alpha & y
 		\end{pmatrix},
 	\end{displaymath}
 	for some $y\in \set{x_1,\ldots,x_{n-2}}$ (we recall that the fact that $I\in\setclass{S}{X}$ implies that $\set{x_1,\ldots,x_{n-2}}\beta=\parens{X\setminus I}\beta\subseteq X\setminus I=\set{x_1,\ldots,x_{n-2}}$). Hence we have $n-2$ possibilities for $\beta$, which implies that $\abs{A_2}\leqslant n-2$.
 	
 	Let $k\in\X{n-3}$ and $\beta\in B_k$. Then $i_1\beta=x_k$ and, by part 3 of Lemma~\ref{T(X): lemma upper bound S, |I|=2}, we have that
 	\begin{displaymath}
 		\beta=\begin{pmatrix}
 			i_1 & i_2 & x_1 & x_2 & \cdots & x_{n-3} & x_{n-2}\\
 			x_k & x_k\alpha & x_k\beta_1 & x_k\beta_2 & \cdots & x_k\beta_{n-3} & y
 		\end{pmatrix},
 	\end{displaymath}
 	for some $y\in \set{x_1,\ldots,x_{n-2}}$ (we note that we have $\set{x_1,\ldots,x_{n-2}}\beta=\parens{X\setminus I}\beta\subseteq X\setminus I=\set{x_1,\ldots,x_{n-2}}$ because $I\in\setclass{S}{X}$). Hence there are at most $n-2$ possibilities for $\beta$. It follows from the fact that $k$ is an arbitrary element of $\X{n-3}$ that $\abs{B_k}\leqslant n-2$ for all $k\in\X{n-3}$.
 	
 	Therefore
 	\begin{align*}
 		\abs{S} &=\abs[\bigg]{A_1\cup A_2\cup\parens[\bigg]{\,\bigcup_{j=1}^{n-2} B_j}}\\
 		&=\abs{A_1}+\abs{A_2}+\sum_{j=1}^{n-2}\abs{B_j}\\
 		&\leqslant \parens{n-2} + \parens{n-2} + \sum_{j=1}^{n-3}\parens{n-2}\\
 		&=\parens{n-1}\parens{n-2}.
 	\end{align*}
 	
 	\smallskip
 	
 	\textit{Case 3:} Suppose that among the sets $B_1,\ldots,B_{n-2}$ there are at most $n-4$ that are non-empty. Let $t$ be the number of non-empty sets. We have that $0\leqslant t \leqslant n-4$. Assume, without loss of generality, that those $t$ non-empty sets are $B_1,\ldots,B_t$.
 	
 	Let $S'=\gset{\beta|_{X\setminus I}}{\beta\in S}$ and $\overline{S}=\gset{\beta|_I}{\beta\in S}$. We have that $\abs{S}\leqslant \abs{S'}\cdot\abs{\overline{S}}$. In what follows we find upper bounds for $\abs{\overline{S}}$.
 	
 	
 	It is clear that $\overline{S}$ is given by the (disjoint) union of the $t+2$ sets $\gset{\beta|_I}{\beta\in A_1}$ and $\gset{\beta|_I}{\beta\in A_2}$ and $\gset{\beta|_I}{\beta\in B_j}$, where $j\in\X{t}$. (We recall that $B_{t+1}=\cdots=B_{n-2}=\emptyset$ and, consequently, $S=A_1\cup A_2\cup\bigcup_{j=1}^t B_j$.) We are going to see that these sets are singletons.
 	
 	Let $\beta\in A_1$. Then $i_1\beta=i_1$. It follows from part 1 of Lemma~\ref{T(X): lemma upper bound S, |I|=2} that
 	\begin{displaymath}
 		\beta|_I=\begin{pmatrix}
 			i_1 & i_2\\
 			i_1 & i_2
 		\end{pmatrix}.
 	\end{displaymath}
 	Consequently, we have that $\abs{\gset{\beta|_I}{\beta\in A_1}}=1$.
 	
 	Let $\beta\in A_2$. Then $i_1\beta=i_2$ and, consequently, part 2 of Lemma~\ref{T(X): lemma upper bound S, |I|=2} implies that
 	\begin{displaymath}
 		\beta|_I=\begin{pmatrix}
 			i_1 & i_2\\
 			i_2 & i_1
 		\end{pmatrix}.
 	\end{displaymath}
 	Hence $\abs{\gset{\beta|_I}{\beta\in A_2}}=1$.
 	
 	Let $j\in\X{t}$ and $\beta\in B_j$. We have that $i_1\beta=x_j$, which implies, by part 3 of Lemma~\ref{T(X): lemma upper bound S, |I|=2}, that
 	\begin{displaymath}
 		\beta|_I=\begin{pmatrix}
 			i_1 & i_2\\
 			x_j & x_j\alpha
 		\end{pmatrix}.
 	\end{displaymath}
 	This allows us to conclude that $\abs{\gset{\beta|_I}{\beta\in B_j}}=1$ for all $j\in\X{t}$.
 	
 	It follows from the last three paragraphs that
 	\begin{align*}
 		\abs{\overline{S}}&=\abs{\gset{\beta|_I}{\beta\in A_1}}+\abs{\gset{\beta|_I}{\beta\in A_2}}+\sum_{j=1}^t\abs{\gset{\beta|_I}{\beta\in B_j}}\\
 		&= 1+1+\sum_{j=1}^t 1\\
 		&= t+2\\
 		&\leqslant n-2. &\kern -10mm\bracks{\text{since } t\leqslant n-4}
 	\end{align*}
 	Therefore $\abs{S}\leqslant\abs{S'}\cdot\abs{\overline{S}}\leqslant \abs{S'}\cdot\parens{n-2}$.
 	
 	\smallskip
 	
 	The previous three cases allow us to conclude that $\abs{S}\leqslant\max\set{n,\parens{n-1}\parens{n-2},\abs{S'}\cdot\parens{n-2}}$.
 	
 	\medskip
 	
 	\textbf{Part 2.} Now we will prove that, if either $n\leqslant 5$ and $\abs{S'}\leqslant 2^{n-3}$ or $n=6$ and $\abs{S'}<2^{n-3}$, then $\abs{S}<2^{n-1}$.
 	
 	\smallskip
 	
 	\textit{Case 1:} Suppose that $n\leqslant 5$ and $\abs{S'}\leqslant 2^{n-3}$. Then, by part 1,
 	\begin{displaymath}
 		\abs{S}\leqslant\max\set{n,\parens{n-1}\parens{n-2},\abs{S'}\cdot\parens{n-2}}\leqslant\max\set{n,\parens{n-1}\parens{n-2},2^{n-3}\parens{n-2}}.
 	\end{displaymath}
 	So, in order to prove that $\abs{S}<2^{n-1}$, we just need to verify that $n<2^{n-1}$ and $\parens{n-1}\parens{n-2}<2^{n-1}$ and $2^{n-3}\parens{n-2}<2^{n-1}$. We observe that we have $2=\abs{I}<\abs{X}=n\leqslant 5$, which implies that $n\in\set{3,4,5}$. Therefore we just need to verify that these three inequalities hold when $n\in\set{3,4,5}$. From Table~\ref{T(X): table |I|=2} we can easily verify that $n<2^{n-1}$ and $\parens{n-1}\parens{n-2}<2^{n-1}$ and $2^{n-3}\parens{n-2}<2^{n-1}$ when $n\in\set{3,4,5}$. Thus $\abs{S}<2^{n-1}$.
 	
 	\smallskip
 	
 	\textit{Case 2:} Suppose that $n=6$ and $\abs{S'}<2^{n-3}$. From Table~\ref{T(X): table |I|=2} we can conclude that $n<2^{n-1}$ and $\parens{n-1}\parens{n-2}<2^{n-1}$ and $\abs{S'}\cdot\parens{n-2}<2^{n-3}\parens{n-2}=2^{n-1}$. Hence, by part 1,
 	\begin{displaymath}
 		\abs{S}\leqslant\max\set{n,\parens{n-1}\parens{n-2},\abs{S'}\cdot\parens{n-2}}<2^{n-1}.
 	\end{displaymath}
 	
 	
 	\begin{table}[hbt]
 		\centering
 		\begin{tabular}{cccc}
 			\toprule
 			$n$ & $(n-1)(n-2)$ & $2^{n-3}(n-2)$ & $2^{n-1}$ \\
 			\midrule
 			$3$ & $2$          & $1$            & $4$       \\
 			$4$ & $6$          & $4$            & $8$       \\
 			$5$ & $12$         & $12$           & $16$      \\
 			$6$ & $20$         & $32$           & $32$      \\
 			\bottomrule
 		\end{tabular}
 		\caption{Comparison between $n$ and $\parens{n-1}\parens{n-2}$ and $2^{n-3}\parens{n-2}$ and $2^{n-1}$ when $n\in\set{3,4,5,6}$.}
 		\label{T(X): table |I|=2}
 	\end{table}
 	
 	\medskip
 	
 	\textbf{Part 3.} Finally, we will demonstrate that, if $n=6$, then $S'\neq\commidemp{X\setminus\set{I}}{x}$ for all $x\in X\setminus I$. Suppose that $n=6$ and assume, with the aim of obtaining a contradiction, that there exists $x\in X\setminus I=\set{x_1,x_2,x_3,x_4}$ such that $S'=\commidemp{X\setminus I}{x}$. We can assume, without loss of generality, that $x=x_1$. Due to the fact that $S'=\commidemp{X\setminus I}{x_1}$, we have $x_1\beta=x_1$ and $y\beta\in\set{x_1,y}$ for all $\beta\in S$ and $y\in \parens{X\setminus I}\setminus\set{x_1}=\set{x_2,x_3,x_4}$.
 	
 	We have
 	\begin{align*}
 		8 &=2^{\abs{X\setminus I}-1} &\bracks{\text{since } \abs{X}=n=6 \text{ and } \abs{I}=2}\\
 		&=\abs{\commidemp{X\setminus I}{x_1}} &\bracks{\text{by Proposition~\ref{T(X): Gamma^X_i comm smg idemp}}}\\
 		&=\abs{S'}\\
 		&\leqslant\abs{S} &\bracks{\text{since } S'=\gset{\beta|_{X\setminus I}}{\beta\in S}}\\
 		&=\sum_{j=1}^2\abs{A_j}+\sum_{j=1}^4\abs{B_j}. &\bracks{\text{since } \set{A_1,A_2,B_1,B_2,B_3,B_4} \text{ is a partition of } S}    
 	\end{align*}
 	So our goal is to determine the size of $A_1,A_2,B_1,B_2,B_3,B_4$ and then show that their sum is smaller than $8$, which will be a contradiction.
 	
 	Let $j\in\set{2,3,4}$. It follows from the minimality of the size of $I$ that $\set{x_j}\notin\setclass{S}{X}$, which implies the existence of $z_j\in X\setminus\set{x_j}$ and $\beta_j\in S$ such that $z_j\beta_j=x_j$. Additionally, we have $x_1\beta_j=x_1$ and $y\beta_j\in\set{x_1,y}$ for all $y\in \parens{X\setminus I}\setminus\set{x_1}=\set{x_2,x_3,x_4}$. Consequently, we must have $z_j\in\set{i_1,i_2,x_j}\setminus\set{x_j}$; that is, $i_1\beta_j=x_j$ or $i_2\beta_j=x_j$. In what follows we will show that $i_1\beta_j=x_j$. If $i_1\beta_j=x_j$, then there is nothing to prove. If $i_2\beta_j=x_j$, then we have
 	\begin{align*}
 		\parens{i_1\beta_j}\alpha &=\parens{i_1\alpha}\beta_j &\bracks{\text{since } \beta_j,\alpha\in S, \text{ which is commutative}}\\
 		&=i_2\beta_j &\bracks{\text{since } i_1\alpha=i_2}\\
 		&=x_j
 	\end{align*}
 	and, since $\set{i_1,i_2}\alpha=\set{i_1,i_2}$ and $y\alpha\in\set{x_1,y}$ for all $y\in\set{x_2,x_3,x_4}$, by a process of elimination, the only remaining possibility for $i_1\beta_j$ is $x_j$; that is, we must have $i_1\beta_j=x_j$.
 	
 	Since $j$ is an arbitrary element of $\set{2,3,4}$, then we can conclude that there exist $\beta_2,\beta_3,\beta_4\in S$ such that $i_1\beta_2=x_2$ and $i_1\beta_3=x_3$ and $i_1\beta_4=x_4$. We notice that $\beta_2\in B_2$, $\beta_3\in B_3$ and $\beta_4\in B_4$.
 	
 	Now we determine the size of $A_1,A_2,B_1,B_2,B_3,B_4$. Before we do that, we recall that, since $S'=\commidemp{X\setminus I}{x_1}$, then $x_1\beta=x_1$ for all $\beta\in S$.
 	
 	Let $\beta\in A_1$. Then $i_1\beta=i_1$ and, consequently, part 1 of Lemma~\ref{T(X): lemma upper bound S, |I|=2} implies that
 	\begin{displaymath}
 		\beta=\begin{pmatrix}
 			i_1&i_2&x_1&x_2&x_3&x_4\\
 			i_1&i_2&x_1&x_2&x_3&x_4
 		\end{pmatrix}=\id{X}.
 	\end{displaymath}
 	Thus $\abs{A_1}=1$.
 	
 	Let $\beta\in A_2$. Then $i_1\beta=i_2$ and, consequently, part 2 of Lemma~\ref{T(X): lemma upper bound S, |I|=2} implies that
 	\begin{displaymath}
 		\beta=\begin{pmatrix}
 			i_1&i_2&x_1&x_2&x_3&x_4\\
 			i_2&i_1&x_1&x_2\alpha&x_3\alpha&x_4\alpha
 		\end{pmatrix}.
 	\end{displaymath}
 	Thus $\abs{A_2}=1$.
 	
 	Let $j\in\set{1,2,3,4}$ and $\beta\in B_j$. Then $i_1\beta=x_j$ and, consequently, part 3 of Lemma~\ref{T(X): lemma upper bound S, |I|=2} implies that
 	\begin{displaymath}
 		\beta=\begin{pmatrix}
 			i_1&i_2&x_1&x_2&x_3&x_4\\
 			x_j&x_j\alpha&x_1&x_j\beta_2&x_j\beta_3&x_j\beta_4
 		\end{pmatrix}.
 	\end{displaymath}
 	Thus $\abs{B_1}\leqslant 1$ and $\abs{B_j}=1$ for all $j\in\set{2,3,4}$ (notice that we only know that $B_j\neq\emptyset$ for all $j\in\set{2,3,4}$).
 	
 	Therefore
 	\begin{displaymath}
 		8\leqslant\sum_{j=1}^2 \abs{A_j}+\sum_{j=1}^4\abs{B_j}\leqslant 6
 	\end{displaymath}
 	which is a contradiction. Thus $S'\neq\commidemp{X\setminus I}{x}$ for all $x\in X\setminus I$.
 \end{proof}
 
 Assume that $\abs{X}\in\set{3,4,5,6}$ and let $S$ be a maximum-order commutative subsemigroup of $\tr{X}$. Just as the previous lemma is a tool to show (in Theorem~\ref{T(X): maximum size comm smg}) that the minimum size of a set of $\setclass{S}{X}$ is distinct from $2$, Lemma~\ref{T(X): upper bound S, |I|>2} is a tool to show that the minimum size of a set of $\setclass{S}{X}$ cannot be larger than $2$.

 \begin{lemma}\label{T(X): upper bound S, |I|>2}
 	Suppose that $\abs{X}\leqslant 6$. Let $S$ be a commutative subsemigroup of $\tr{X}$ be such that $\setclass{S}{X}\neq\emptyset$ and let $I\in\setclass{S}{X}$ be of minimum size. If $\abs{I}\geqslant 3$, then $\abs{S}\leqslant \abs{S'}\cdot\abs{X}$, where $S'=\gset{\beta|_{X\setminus I}}{\beta\in S}$. Moreover, if we also have $\abs{S'}\leqslant 2^{\abs{X}-\abs{I}-1}$, then $\abs{S}<2^{\abs{X}-1}$.
 \end{lemma}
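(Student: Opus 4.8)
The plan is to estimate $\abs S$ by separating the behaviour of the maps in $S$ on the two blocks of the partition $\set{I,X\setminus I}$ of $X$. Writing $\bar S=\gset{\beta|_I}{\beta\in S}$ for the set of restrictions to $I$, the assignment $\beta\mapsto(\beta|_{X\setminus I},\beta|_I)$ is injective, because a transformation of $X$ is determined by its values on $I$ and on $X\setminus I$. Hence $\abs S\le\abs{S'}\cdot\abs{\bar S}$, and both assertions of the lemma will follow once I show that $\abs{\bar S}\le\abs X$.

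To bound $\abs{\bar S}$, I would fix an arbitrary $i\in I$ and prove that the single image $i\beta$ already determines the whole restriction $\beta|_I$; this produces an injection $\bar S\to X$, $\beta|_I\mapsto i\beta$. The crucial fact is that $i$ reaches every element of $I$ within $S$: consider $O=\set{i}\cup\gset{i\delta}{\delta\in S}$. Then $O$ is $S$-invariant (for $w\in O$ and $\beta\in S$ one has $w\beta\in\gset{i\delta}{\delta\in S}\subseteq O$), and $X\setminus I$ is $S$-invariant because $I\in\setclass S X$. Therefore $O\cup(X\setminus I)$ is an $S$-invariant set that strictly contains $X\setminus I$, since it contains $i\in I$. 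Because $I$ has minimum size in $\setclass S X$, the set $X\setminus I$ is a largest proper non-empty $S$-invariant subset of $X$; so $O\cup(X\setminus I)$ cannot be a proper subset, forcing $O\cup(X\setminus I)=X$ and hence $I\subseteq O$. I expect this reachability step to be the main obstacle, as it is where the minimality of $I$ is genuinely used.

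With $I\subseteq O$ in hand, for each $z\in I$ I can fix $\delta_z\in S$ with $i\delta_z=z$ (taking $\delta_z$ to be the identity map when $z=i$). Commutativity of $S$ then yields $z\beta=(i\delta_z)\beta=(i\beta)\delta_z$ for every $\beta\in S$, so $\beta|_I$ is indeed a function of $i\beta$ alone. This gives $\abs{\bar S}\le\abs X$ and therefore $\abs S\le\abs{S'}\cdot\abs X$, proving the first assertion.

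For the \emph{moreover} part, suppose in addition that $\abs{S'}\le 2^{\abs X-\abs I-1}$. Then $\abs S\le 2^{\abs X-\abs I-1}\abs X$, and it suffices to verify $\abs X<2^{\abs I}$, for this gives $2^{\abs X-\abs I-1}\abs X<2^{\abs X-\abs I-1}\cdot 2^{\abs I}=2^{\abs X-1}$. Since $3\le\abs I<\abs X\le 6$, we have $2^{\abs I}\ge 8>6\ge\abs X$, so the inequality holds and $\abs S<2^{\abs X-1}$, as required. I note that this argument never invokes the cycle structure of Lemma~\ref{T(X): lemma alpha|_I cycle}; an alternative, closer to the proof of the $\abs I=2$ case, would use that lemma to reduce to single-cycle subcases, leaving only the two-$2$-cycle configuration $\abs I=4$ to be handled by hand.
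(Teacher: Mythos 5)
Your proposal is correct, and its core is genuinely different from the paper's. The paper proves $\abs{\bar S}\leqslant\abs{X}$ by invoking Lemma~\ref{T(X): lemma alpha|_I cycle} to get $\alpha\in S$ with $\alpha|_I\in\sym{I}$ a product of equal-length cycles, and then splits into cases: if $\alpha|_I$ is a single cycle, commutativity shows $i_1\beta$ determines $\beta|_I$; otherwise it uses $\abs{X}\leqslant 6$ and primality considerations to force $\abs{I}=4$ with two $2$-cycles, and extracts (again from minimality of $I$) an extra element $\gamma$ linking the two cycles. You instead bypass the cycle lemma entirely: the set $O=\set{i}\cup\gset{i\delta}{\delta\in S}$ is $S$-invariant, $X\setminus I$ is $S$-invariant by definition of $\setclass{S}{X}$, and if $O\cup(X\setminus I)$ were proper its complement $I\setminus O$ would be a strictly smaller member of $\setclass{S}{X}$, contradicting minimality; hence $I\subseteq O$, every $z\in I$ is $i\delta_z$ for some $\delta_z\in S$ (the case $z=i$ being trivial, with no need for an identity in $S$), and commutativity gives $z\beta=(i\beta)\delta_z$, so $i\beta$ determines $\beta|_I$ and $\abs{\bar S}\leqslant\abs{X}$. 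This is both more elementary and more general: your first assertion needs neither $\abs{X}\leqslant 6$ nor $\abs{I}\geqslant 3$, whereas the paper's case analysis genuinely uses $\abs{X}\leqslant 6$ to rule out configurations like three $2$-cycles or two $3$-cycles. What the paper's route buys is explicit structural information about $\bar S$ (each $\beta|_I$ written in terms of a single value together with the fixed maps $\alpha,\gamma$), in the same style as the machinery it reuses for the $\abs{I}=2$ case. For the \emph{moreover} part the two arguments are the same arithmetic, with your inequality $\abs{X}\leqslant 6<8\leqslant 2^{\abs{I}}$ replacing the paper's table.
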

 
 We observe that it follows from the definition of $\setclass{S}{X}$ and Lemma~\ref{T(X): lemma induction} that $S'$ is a commutative subsemigroup of $\tr{X}$.
 
 \begin{proof}
 	Suppose that $\abs{I}\geqslant 3$. Let $m=\abs{I}$ and assume that $I=\set{i_1,\ldots,i_m}$.
 	
 	Let $S'=\gset{\beta|_{X\setminus I}}{\beta\in S}$ and $\overline{S}=\gset{\beta|_I}{\beta\in S}$. It is clear that $\abs{S}\leqslant \abs{S'}\cdot\abs{\overline{S}}$. In what follows we determine an upper bound for $\abs{\overline{S}}$.
 	
 	Due to the fact that $I$ is of minimum size and $\abs{I}\geqslant 2$, we have, by Lemma~\ref{T(X): lemma alpha|_I cycle}, that there exists $\alpha\in S$ such that $\alpha|_I\in\sym{I}$ is a product of (disjoint) cycles of the same length, which must be at least $2$. This motivates the division of the proof into two cases.
 	
 	\smallskip
 	
 	\textit{Case 1:} Suppose that $\alpha|_I$ is a cycle (of length $m=\abs{I}$). Assume that
 	\begin{displaymath}
 		\alpha|_I=\begin{pmatrix}
 			i_1&i_2&\cdots&i_{m-1}&i_m\\
 			i_2&i_3&\cdots&i_m&i_1
 		\end{pmatrix}.
 	\end{displaymath}
 	
 	Let $\beta\in S$ and let $y\in X$ be such that $i_1\beta=y$. For all $k\in\set{2,\ldots,m}$ we have
 	\begin{align*}
 		i_k\beta&=\parens{i_1\alpha^{k-1}}\beta &\bracks{\text{since } i_1\alpha^{k-1}=i_k}\\
 		&=\parens{i_1\beta}\alpha^{k-1} &\bracks{\text{since } \alpha,\beta\in S, \text{ which is commutative}}\\
 		&=y\alpha^{k-1}. &\bracks{\text{since } i_1\beta=y}
 	\end{align*}
 	Since $\beta$ is an arbitrary element of $S$, then $\beta|_I$ is an arbitrary element of $\overline{S}$ and, consequently, we can conclude that
 	\begin{displaymath}
 		\overline{S}\subseteq\gset*{\begin{pmatrix}
 				i_1&i_2&i_3&\cdots&i_m\\
 				y&y\alpha&y\alpha^2&\cdots&y\alpha^{m-1}
 		\end{pmatrix}}{y\in X}.
 	\end{displaymath}
 	Therefore $\abs{\overline{S}}\leqslant \abs{X}$ and, thus, $\abs{S}\leqslant \abs{S'}\cdot\abs{\overline{S}}\leqslant \abs{S'}\cdot\abs{X}$.
 	
 	\smallskip
 	
 	\textit{Case 2:} Suppose that $\alpha|_I$ is not a cycle. Then $\alpha|_I$ is a product of at least two (disjoint) cycles, all of which have the same length, which is greater than $1$. Hence the number of cycles in $\alpha|_I$ divides $\abs{I}$ and, consequently, $\abs{I}$ cannot be a prime number. Furthermore, we have that $2\leqslant \abs{I}<\abs{X}\leqslant 6$ and $2$, $3$ and $5$ are prime numbers. Thus $\abs{I}=4$. Assume that
 	\begin{displaymath}
 		\alpha|_I=\begin{pmatrix}
 			i_1&i_2&i_3&i_4\\
 			i_2&i_1&i_4&i_3
 		\end{pmatrix}.
 	\end{displaymath}
 	Moreover, it follows from the fact that $I$ is of minimum size that $\set{i_3,i_4}\notin\setclass{S}{X}$, which implies that there exists $\gamma\in S$ such that $\parens{X\setminus\set{i_3,i_4}}\gamma\nsubseteq X\setminus\set{i_3,i_4}$. Since we also have $\parens{X\setminus\set{i_1,i_2,i_3,i_4}}\gamma=\parens{X\setminus I}\gamma\subseteq X\setminus I\subseteq X\setminus\set{i_3,i_4}$, then we can conclude that $\set{i_1,i_2}\gamma\nsubseteq X\setminus\set{i_3,i_4}$; that is, $\set{i_1,i_2}\gamma\cap\set{i_3,i_4}\neq\emptyset$. Assume, without loss of generality, that $i_1\gamma=i_3$.
 	
 	Let $\beta\in S$. Let $y\in X$ be such that $i_1\beta=y$. We have
 	\begin{align*}
 		i_2\beta &=\parens{i_1\alpha}\beta &\bracks{\text{since } i_1\alpha=i_2}\\
 		&=\parens{i_1\beta}\alpha &\bracks{\text{since } \alpha,\beta\in S, \text{ which is commutative}}\\
 		&=y\alpha &\bracks{\text{since } i_1\beta=y}
 	\end{align*}
 	and
 	\begin{align*}
 		i_3\beta &=\parens{i_1\gamma}\beta &\bracks{\text{since } i_1\gamma=i_3}\\
 		&=\parens{i_1\beta}\gamma &\bracks{\text{since } \gamma,\beta\in S, \text{ which is commutative}}\\
 		&=y\gamma &\bracks{\text{since } i_1\beta=y}
 	\end{align*}
 	and
 	\begin{align*}
 		i_4\beta &=\parens{i_3\alpha}\beta &\bracks{\text{since }  i_3\alpha=i_4}\\
 		&=\parens{i_3\beta}\alpha &\bracks{\text{since } \alpha,\beta\in S, \text{ which is commutative}}\\
 		&=\parens{y\gamma}\alpha. &\bracks{\text{since } i_3\beta=y\gamma}
 	\end{align*}
 	It follows from the fact that $\beta$ is an arbitrary element of $S$ that $\beta|_I$ is an arbitrary element of $\overline{S}$. This means we just proved that
 	\begin{displaymath}
 		\overline{S}\subseteq\gset*{\begin{pmatrix}
 				i_1&i_2&i_3&i_4\\
 				y&y\alpha&y\gamma&y\gamma\alpha
 		\end{pmatrix}}{y\in X}.
 	\end{displaymath}
 	Therefore $\abs{S}\leqslant \abs{S'}\cdot\abs{\overline{S}}\leqslant \abs{S'}\cdot\abs{X}$.
 	
 	\smallskip
 	
 	In both cases we concluded that $\abs{S}\leqslant \abs{S'}\cdot\abs{X}$. Now we will see that, if $\abs{S'}\leqslant 2^{\abs{X}-\abs{I}-1}$, then $\abs{S}<2^{\abs{X}-1}$. Suppose that $\abs{S'}\leqslant 2^{\abs{X}-\abs{I}-1}$. Since $\abs{I}\geqslant 3$, then we have
 	\begin{displaymath}
 		\abs{S}\leqslant \abs{S'}\cdot\abs{X}\leqslant 2^{\abs{X}-\abs{I}-1}\abs{X} \leqslant 2^{\abs{X}-3-1}\abs{X} =2^{\abs{X}-4}\abs{X}.
 	\end{displaymath}
 	Consequently, in order to prove that $\abs{S}<2^{\abs{X}-1}$, we just need to establish that $2^{\abs{X}-4}\abs{X}<2^{\abs{X}-1}$.
 	
 	We observe that we have $3\leqslant\abs{I}<\abs{X}\leqslant 6$, which implies that $\abs{X}\in\set{4,5,6}$. In Table~\ref{T(X): table |I|>2} we can verify that, when $\abs{X}$ is one of these three integers, then $2^{\abs{X}-4}\abs{X}<2^{\abs{X}-1}$.
 	\begin{table}[hbt]
 		\centering
 		\begin{tabular}{ccc}
 			\toprule
 			$|X|$ & $2^{|X|-4}\abs{X}$ & $2^{|X|-1}$ \\
 			\midrule
 			$4$   & $4$         & $8$         \\
 			$5$   & $10$        & $16$        \\
 			$6$   & $24$        & $32$        \\
 			\bottomrule
 		\end{tabular}
 		\caption{Comparison between $2^{\abs{X}-4}\abs{X}$ and $2^{\abs{X}-1}$ when $\abs{X}\in\set{4,5,6}$.}
 		\label{T(X): table |I|>2}
 	\end{table}
 	
 	This concludes the proof that $\abs{S}<2^{\abs{X}-1}$.
 \end{proof}
 
 Our next goal is to demonstrate that, when $\abs{X}\in\set{4,5,6}$ and $S$ is a commutative subsemigroup of $\tr{X}$ such that $\set{i}\in\setclass{S}{X}$ (where $i\in X$), then the existence of at least three `copies' of an element of $\gset{\beta|_{X\setminus\set{i}}}{\beta\in S}$ in $S$ --- that is, the existence of at least three distinct elements of $S$ that are equal in $X\setminus\set{i}$ --- imply that $S$ is not one of the largest commutative subsemigroup of $\tr{X}$. This will be proved in Lemmata~\ref{T(X): |X|=4, upper bound S when 3 copies of alpha}, \ref{T(X): |X|=5, upper bound S when 3 copies of alpha} and \ref{T(X): |X|=6, upper bound S when 3 copies of alpha} (for $\abs{X}=4$, $\abs{X}=5$ and $\abs{X}=6$, respectively). Before we do this we need another two results --- Lemmata~\ref{T(X): lemma copies of alphas's} and \ref{T(X): lemma upper bound S when 3 copies of alpha} --- which show how the existence of `copies' of an element of $\gset{\beta|_{X\setminus\set{i}}}{\beta\in S}$ in $S$ restricts the transformations of $S$.
 
 \begin{lemma}\label{T(X): lemma copies of alphas's}
 	Let $S$ be a commutative subsemigroup of $\tr{X}$ such that $\setclass{S}{X}\neq\emptyset$. Suppose that there exists $i\in X$ such that $\set{i}\in\setclass{S}{X}$. If there exist distinct $\alpha_1,\alpha_2\in S$ such that $\alpha_1|_{X\setminus\set{i}}=\alpha_2|_{X\setminus\set{i}}$ and $i\alpha_1=i$, then for all distinct $\beta_1,\beta_2\in S$ such that $\beta_1|_{X\setminus\set{i}}=\beta_2|_{X\setminus\set{i}}$ we have $i\beta_1=i$ or $i\beta_2=i$.
 \end{lemma}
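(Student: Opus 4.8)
The plan is to argue by contradiction, using commutativity in the sharpest possible way: by evaluating both products $\alpha\beta$ and $\beta\alpha$ at the single point $i$ and comparing. First I would unpack the hypothesis $\set{i}\in\setclass{S}{X}$. By the definition of the class, it means $\beta|_{X\setminus\set{i}}\in\tr{X\setminus\set{i}}$ for every $\beta\in S$, i.e.\ $z\beta\neq i$ for all $z\in X\setminus\set{i}$ and all $\beta\in S$; so $i$ can only arise as an image of $i$ itself. I would also record the elementary fact that two distinct transformations of $S$ agreeing on $X\setminus\set{i}$ differ precisely at $i$. Applied to $\alpha_1,\alpha_2$, the assumption $i\alpha_1=i$ then forces $i\alpha_2=:a\neq i$, and since images of points other than $i$ avoid $i$, we have $a\in X\setminus\set{i}$.

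For the contradiction I would suppose there exist distinct $\beta_1,\beta_2\in S$ with $\beta_1|_{X\setminus\set{i}}=\beta_2|_{X\setminus\set{i}}$ yet $i\beta_1\neq i$ and $i\beta_2\neq i$. Writing $b=i\beta_1$ and $b'=i\beta_2$, both lie in $X\setminus\set{i}$, and $b\neq b'$ because $\beta_1,\beta_2$ differ only at $i$. The engine is that $\alpha_1$ and $\alpha_2$ serve as two complementary ``probes''. Using $\alpha_1\beta_m=\beta_m\alpha_1$ and evaluating at $i$ (where $i\alpha_1=i$) gives $i\beta_m=(i\beta_m)\alpha_1$ for $m=1,2$; that is, $b\alpha_1=b$ and $b'\alpha_1=b'$, so both $b$ and $b'$ are fixed by $\alpha_1$.

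Next I would feed these fixed-point identities into the relations coming from $\alpha_2$. Evaluating $\alpha_2\beta_m=\beta_m\alpha_2$ at $i$, and using $i\alpha_2=a$ together with the fact that $\alpha_1,\alpha_2$ agree on $X\setminus\set{i}$, yields $a\beta_1=b\alpha_2=b\alpha_1=b$ and similarly $a\beta_2=b'\alpha_1=b'$. But $a\in X\setminus\set{i}$ and $\beta_1,\beta_2$ coincide on $X\setminus\set{i}$, so $a\beta_1=a\beta_2$, forcing $b=b'$ and contradicting $b\neq b'$. This closes the argument, proving that for any distinct copies $\beta_1,\beta_2$ at least one must fix $i$.

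I expect no serious obstacle here; the proof is essentially bookkeeping with the identity $x(\alpha\beta)=(x\alpha)\beta$. The only point requiring care is selecting the right four products to compare and tracking which map is used where: the element $\alpha_1$ (fixing $i$) is what pins $b$ and $b'$ down as fixed points, while $\alpha_2$ (moving $i$ to $a$) is what collapses them to the common value $a\beta_1=a\beta_2$.
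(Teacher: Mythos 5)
Your proof is correct and is essentially the paper's own argument: both use commutativity of $\beta_1,\beta_2$ with $\alpha_1$ (which fixes $i$) and $\alpha_2$ (which moves $i$), evaluated at $i$, together with the agreement of restrictions on $X\setminus\set{i}$, to force the two values at $i$ to coincide. The only difference is organizational — you argue by contradiction treating $\beta_1,\beta_2$ symmetrically, while the paper assumes $i\beta_1\neq i$ without loss of generality and derives $i\beta_2=i$ from the single chain $\parens{i\beta_2}\alpha_1=i\beta_2\neq i\beta_1=\parens{i\beta_2}\alpha_2$ — which is immaterial.
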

 
 \begin{proof}
 	Suppose that there exist distinct $\alpha_1,\alpha_2\in S$ such that $\alpha_1|_{X\setminus\set{i}}=\alpha_2|_{X\setminus\set{i}}$ and $i\alpha_1=i$. Then $i\alpha_2\neq i\alpha_1=i$. Let $\beta_1,\beta_2\in S$ be such that $\beta_1\neq\beta_2$ and $\beta_1|_{X\setminus\set{i}}=\beta_2|_{X\setminus\set{i}}$. Hence $i\beta_1\neq i\beta_2$ and, consequently, we have $i\beta_1\neq i$ or $i\beta_2 \neq i$. Assume, without loss of generality, that $i\beta_1\neq i$. We have that
 	\begin{align*}
 		\parens{i\beta_2}\alpha_1 &=\parens{i\alpha_1}\beta_2 &\bracks{\text{since } \beta_2,\alpha_1\in S, \text{ which is commutative}}\\
 		&=i\beta_2 &\bracks{\text{since } i\alpha_1=i}\\
 		&\neq i\beta_1\\
 		&=\parens{i\alpha_1}\beta_1 &\bracks{\text{since } i\alpha_1=i}\\
 		&=\parens{i\beta_1}\alpha_1 &\bracks{\text{since } \alpha_1,\beta_1\in S, \text{ which is commutative}}\\
 		&=\parens{i\beta_1}\alpha_1|_{X\setminus\set{i}} &\bracks{\text{since } i\beta_1\neq i}\\
 		&=\parens{i\beta_1}\alpha_2|_{X\setminus\set{i}} &\bracks{\text{since } \alpha_1|_{X\setminus\set{i}}=\alpha_2|_{X\setminus\set{i}}}\\
 		&=\parens{i\beta_1}\alpha_2\\
 		&=\parens{i\alpha_2}\beta_1 &\bracks{\text{since } \beta_1,\alpha_2\in S \text{ which is commutative}}\\
 		&=\parens{i\alpha_2}\beta_1|_{X\setminus\set{i}} &\bracks{\text{since } i\alpha_2\neq i}\\
 		&=\parens{i\alpha_2}\beta_2|_{X\setminus\set{i}} &\bracks{\text{since } \beta_1|_{X\setminus\set{i}}=\beta_2|_{X\setminus\set{i}}}\\
 		&=\parens{i\alpha_2}\beta_2\\
 		&=\parens{i\beta_2}\alpha_2. &\bracks{\text{since } \alpha_2,\beta_2\in S \text{ which is commutative}}
 	\end{align*}
 	As a consequence of the fact that $\alpha_1|_{X\setminus\set{i}}=\alpha_2|_{X\setminus\set{i}}$, we must have $i\beta_2=i$, which concludes the proof.
 \end{proof}
 
 \begin{lemma}\label{T(X): lemma upper bound S when 3 copies of alpha}
 	Let $S$ be a commutative subsemigroup of $\tr{X}$ such that $\setclass{S}{X}\neq\emptyset$. Suppose that there exists $i\in X$ such that $\set{i}\in\setclass{S}{X}$ and that there exist pairwise distinct $\alpha_1,\alpha_2,\alpha_3\in S$ such that $\alpha_1|_{X\setminus\set{i}}=\alpha_2|_{X\setminus\set{i}}=\alpha_3|_{X\setminus\set{i}}$. For each $j\in\set{1,2,3}$ let $x_j=i\alpha_j$ and let $x=x_1\alpha_1$. Then
 	\begin{enumerate}
 		\item We have that $x_1,x_2,x_3\in X\setminus\set{i}$ and are pairwise distinct.
 		
 		\item For all $\beta\in S$, if $i\beta\in\set{x_1,x_2,x_3}$, then $x_1\beta=x_2\beta=x_3\beta=x$.
 		
 		\item For all $\beta\in S$, if $i\beta=i$, then $x_1\beta=x_1$, $x_2\beta=x_2$ and $x_3\beta=x_3$.
 		
 		\item For all $\beta\in S$, if $i\beta\in X\setminus\set{x_1,x_2,x_3,i}$, then $x_1\beta=x_2\beta=x_3\beta=\parens{i\beta}\alpha_1$.
 	\end{enumerate}
 \end{lemma}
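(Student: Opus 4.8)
The plan is to reduce the entire lemma to a single commutativity identity. For any $\beta\in S$ and any $j\in\set{1,2,3}$, since $x_j=i\alpha_j$ and $S$ is commutative,
\begin{equation*}
	x_j\beta=\parens{i\alpha_j}\beta=\parens{i\beta}\alpha_j.
\end{equation*}
This is the engine for the whole statement: the image $x_j\beta$ depends on $\beta$ only through $i\beta$, and then through $\alpha_j$. The decisive observation is that $\alpha_1,\alpha_2,\alpha_3$ coincide on $X\setminus\set{i}$, so whenever $i\beta\neq i$ one gets $x_1\beta=x_2\beta=x_3\beta=\parens{i\beta}\alpha_1$ immediately.

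First I would settle Part 1. Since $\alpha_1,\alpha_2,\alpha_3$ are pairwise distinct but agree on $X\setminus\set{i}$, they must take pairwise distinct values at $i$, so $x_1,x_2,x_3$ are pairwise distinct. To rule out $x_j=i$, I would suppose $i\alpha_j=i$ for some $j$ and let $k,l$ be the remaining indices. Then Lemma~\ref{T(X): lemma copies of alphas's} applies, with its distinct pair taken to be $\alpha_j,\alpha_k$ and $i\alpha_j=i$; applying its conclusion to the distinct pair $\alpha_k,\alpha_l$ (which agree on $X\setminus\set{i}$) forces $i\alpha_k=i$ or $i\alpha_l=i$. In either case two of the $\alpha$'s fix $i$ and agree off $i$, hence coincide, contradicting distinctness. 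Thus $x_1,x_2,x_3\in X\setminus\set{i}$.

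With Part 1 available, the remaining parts follow from the identity above. For Part 3, if $i\beta=i$ then $x_j\beta=\parens{i\beta}\alpha_j=i\alpha_j=x_j$ for every $j$. For Part 4 the hypothesis gives $i\beta\neq i$, so agreement of the $\alpha_j$ on $X\setminus\set{i}$ yields $x_1\beta=x_2\beta=x_3\beta=\parens{i\beta}\alpha_1$ at once. For Part 2, again $i\beta\in\set{x_1,x_2,x_3}\subseteq X\setminus\set{i}$ by Part 1, so the common value is $\parens{i\beta}\alpha_1$; it then remains to identify this with $x$. Writing $i\beta=x_j=i\alpha_j$ and using commutativity once more,
\begin{equation*}
	\parens{i\beta}\alpha_1=\parens{i\alpha_j}\alpha_1=\parens{i\alpha_1}\alpha_j=x_1\alpha_j=x_1\alpha_1=x,
\end{equation*}
where the last step uses $x_1\in X\setminus\set{i}$ together with $\alpha_j|_{X\setminus\set{i}}=\alpha_1|_{X\setminus\set{i}}$.

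The only genuine obstacle is Part 1: excluding $x_j=i$ is the single place where an extra input is required, namely Lemma~\ref{T(X): lemma copies of alphas's}, since minimality of $\set{i}$ in $\setclass{S}{X}$ is \emph{not} assumed in this statement. Everything else is a direct consequence of the identity $x_j\beta=\parens{i\beta}\alpha_j$ combined with the fact that $\alpha_1,\alpha_2,\alpha_3$ agree away from $i$, so I expect the write-up to be short once Part 1 is secured.
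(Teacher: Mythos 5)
Your proposal is correct and follows essentially the same route as the paper: the identity $x_j\beta=\parens{i\alpha_j}\beta=\parens{i\beta}\alpha_j$ together with the agreement of $\alpha_1,\alpha_2,\alpha_3$ on $X\setminus\set{i}$ drives Parts 2--4, and Part 1 rests on Lemma~\ref{T(X): lemma copies of alphas's} exactly as in the paper (the paper invokes its contrapositive, you derive a contradiction, which is the same argument).
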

 
 \begin{proof}
 	\textbf{Part 1.} Due to the fact that $\alpha_1,\alpha_2,\alpha_3$ are pairwise distinct and $\alpha_1|_{X\setminus\set{i}}=\alpha_2|_{X\setminus\set{i}}=\alpha_3|_{X\setminus\set{i}}$, we have that $i\alpha_1,i\alpha_2,i\alpha_3$ are pairwise distinct (that is, $x_1,x_2,x_3$ are pairwise distinct). This implies that at least two of them are not equal to $i$ and, consequently, (the contrapositive of) Lemma~\ref{T(X): lemma copies of alphas's} guarantees that for all distinct $\beta_1,\beta_2\in S$ such that $\beta_1|_{X\setminus\set{i}}=\beta_2|_{X\setminus\set{i}}$ we have $i\beta_1,i\beta_2\in X\setminus\set{i}$. In particular, we have $i\alpha_1,i\alpha_2,i\alpha_3\in X\setminus\set{i}$; that is, $x_1,x_2,x_3\in X\setminus\set{i}$.
 	
 	\medskip
 	
 	\textbf{Part 2.} Let $\beta\in S$ and assume that $i\beta\in\set{x_1,x_2,x_3}$. Let $k\in\set{1,2,3}$ be such that $i\beta=x_k$. For all $j\in\set{1,2,3}$ we have
 	\begin{align*}
 		x_j\beta &=\parens{i\alpha_j}\beta &\bracks{\text{since } x_j=i\alpha_j}\\
 		&=\parens{i\beta}\alpha_j &\bracks{\text{since } \alpha_j,\beta\in S, \text{ which is commutative}}\\
 		&=\parens{i\beta}\alpha_j|_{X\setminus\set{i}} &\bracks{\text{since, by part 1, } i\beta=x_k\in X\setminus\set{i}}\\
 		&=\parens{i\beta}\alpha_1|_{X\setminus\set{i}} &\bracks{\text{since } \alpha_1|_{X\setminus\set{i}}=\alpha_j|_{X\setminus\set{i}}}\\
 		&=\parens{i\beta}\alpha_1\\
 		&=\parens{i\alpha_k}\alpha_1 &\bracks{\text{since } i\beta=x_k=i\alpha_k}\\
 		&=\parens{i\alpha_1}\alpha_k &\bracks{\text{since } \alpha_k,\alpha_1\in S, \text{ which is commutative}}\\
 		&=x_1\alpha_k &\bracks{\text{since } x_1=i\alpha_1}\\
 		&=x_1\alpha_k|_{X\setminus\set{i}} &\bracks{\text{since, by part 1, } x_1\in X\setminus\set{i}}\\
 		&=x_1\alpha_1|_{X\setminus\set{i}} &\bracks{\text{since } \alpha_1|_{X\setminus\set{i}}=\alpha_k|_{X\setminus\set{i}}}\\
 		&=x_1\alpha_1\\
 		&=x.
 	\end{align*}
 	
 	\medskip

 	\textbf{Part 3.} Let $\beta\in S$ and assume that $i\beta=i$. Then, for all $j\in\set{1,2,3}$, we have
 	\begin{align*}
 		x_j\beta &=\parens{i\alpha_j}\beta &\bracks{\text{since } x_j=i\alpha_j}\\
 		&=\parens{i\beta}\alpha_j &\bracks{\text{since } \alpha_j,\beta\in S, \text{ which is commutative}}\\
 		&=i\alpha_j &\bracks{\text{since } i\beta=i}\\
 		&=x_j.
 	\end{align*}
 	
 	\medskip
 	
 	\textbf{Part 4.} Let $\beta\in S$ and assume that $i\beta\in X\setminus\set{x_1,x_2,x_3,i}$. Then, for all $j\in\set{1,2,3}$, we have
 	\begin{align*}
 		x_j\beta &=\parens{i\alpha_j}\beta &\bracks{\text{since } x_j=i\alpha_j}\\
 		&=\parens{i\beta}\alpha_j &\bracks{\text{since } \alpha_j,\beta\in S, \text{ which is commutative}}\\
 		&=\parens{i\beta}\alpha_j|_{X\setminus\set{i}} &\bracks{\text{since } i\beta\in X\setminus\set{x_1,x_2,x_3,i}\subseteq X\setminus\set{i}}\\
 		&=\parens{i\beta}\alpha_1|_{X\setminus\set{i}} &\bracks{\text{since } \alpha_1|_{X\setminus\set{i}}=\alpha_j|_{X\setminus\set{i}}}\\
 		&=\parens{i\beta}\alpha_1. & & \qedhere
 	\end{align*}
 \end{proof}
 
 Now we are ready to prove that, when $\abs{X}=4$, the existence of three `copies' of an element of $\gset{\beta|_{X\setminus\set{i}}}{\beta\in S}$ in $S$ ensures that $S$ cannot be a maximum-order commutative subsemigroup of $\tr{X}$. (Recall that in Proposition~\ref{T(X): Gamma^X_i comm smg idemp} we saw that there exist commutative subsemigroups of $\tr{X}$ of size $2^{\abs{X}-1}$.)
 
 \begin{lemma}\label{T(X): |X|=4, upper bound S when 3 copies of alpha}
 	Suppose that $\abs{X}=4$. Let $S$ be a commutative subsemigroup of $\tr{X}$ such that $\setclass{S}{X}\neq\emptyset$. Suppose that there exists $i\in X$ such that $\set{i}\in\setclass{S}{X}$. If there exist pairwise distinct $\alpha_1,\alpha_2,\alpha_3\in S$ such that $\alpha_1|_{X\setminus\set{i}}=\alpha_2|_{X\setminus\set{i}}=\alpha_3|_{X\setminus\set{i}}$, then $\abs{S}<2^{\abs{X}-1}$.
 \end{lemma}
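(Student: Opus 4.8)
The plan is to apply Lemma~\ref{T(X): lemma upper bound S when 3 copies of alpha} and to exploit the fact that $\abs{X}=4$ forces $X\setminus\set{i}$ to be exhausted by the three images $i\alpha_1,i\alpha_2,i\alpha_3$. Adopt the notation of that lemma: set $x_j=i\alpha_j$ for $j\in\set{1,2,3}$ and $x=x_1\alpha_1$. By part 1 of Lemma~\ref{T(X): lemma upper bound S when 3 copies of alpha}, the elements $x_1,x_2,x_3$ are pairwise distinct and all lie in $X\setminus\set{i}$; since $\abs{X\setminus\set{i}}=\abs{X}-1=3$, this already gives $X\setminus\set{i}=\set{x_1,x_2,x_3}$, and hence $X=\set{i}\cup\set{x_1,x_2,x_3}$.

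The key observation is that this collapses part 4 of the lemma to an empty case: for every $\beta\in S$ we have $i\beta\in X=\set{i}\cup\set{x_1,x_2,x_3}$, so the hypothesis $i\beta\in X\setminus\set{x_1,x_2,x_3,i}$ of part 4 is never satisfied. I would therefore partition $S$ according to the value of $i\beta$ and count in the two remaining cases. If $i\beta=i$, then part 3 yields $x_1\beta=x_1$, $x_2\beta=x_2$ and $x_3\beta=x_3$, so $\beta$ fixes every element of $X$ and thus $\beta=\id{X}$; hence at most one transformation of $S$ satisfies $i\beta=i$. If instead $i\beta\in\set{x_1,x_2,x_3}$, then part 2 gives $x_1\beta=x_2\beta=x_3\beta=x$, so $\beta$ is completely determined by the single value $i\beta$; as there are only three possibilities for $i\beta$, at most three transformations of $S$ fall into this case.

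Combining the two cases yields $\abs{S}\leqslant 1+3=4<8=2^{\abs{X}-1}$, which is the desired bound (in fact the argument shows $S\subseteq\set{\id{X},\alpha_1,\alpha_2,\alpha_3}$). I expect no real obstacle here: the entire content is the $\abs{X}=4$ bookkeeping, and the only point requiring care is recording that part 1 makes $x_1,x_2,x_3$ fill up all of $X\setminus\set{i}$, since this is precisely what renders part 4 vacuous and forces every $\beta\in S$ to collapse either to $\id{X}$ or to a map determined by $i\beta$ alone.
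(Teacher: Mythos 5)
Your proposal is correct and follows essentially the same route as the paper's proof: both use part 1 of Lemma~\ref{T(X): lemma upper bound S when 3 copies of alpha} to conclude $X=\set{x_1,x_2,x_3,i}$, then split on whether $i\beta=i$ (part 3, forcing $\beta=\id{X}$) or $i\beta\in\set{x_1,x_2,x_3}$ (part 2, forcing $\beta$ to be determined by $i\beta$), yielding $\abs{S}\leqslant 4<2^{\abs{X}-1}$. The only cosmetic difference is that the paper identifies the case-2 transformations explicitly as $\alpha_1,\alpha_2,\alpha_3$, whereas you count them as at most three via the determination argument --- the same observation in slightly different packaging.
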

 
 \begin{proof}
 	Suppose that there exist pairwise distinct $\alpha_1,\alpha_2,\alpha_3\in S$ such that $\alpha_1|_{X\setminus\set{i}}=\alpha_2|_{X\setminus\set{i}}=\alpha_3|_{X\setminus\set{i}}$. For each $j\in\set{1,2,3}$ let $x_j=i\alpha_j$ and let $x=x_1\alpha_1$.
 	
 	It follows from part 1 of Lemma~\ref{T(X): lemma upper bound S when 3 copies of alpha} that $x_1,x_2,x_3\in X\setminus\set{i}$ and are pairwise distinct. Then, since $\abs{X}=4$, we have $X=\set{x_1,x_2,x_3,i}$.
 	
 	Let $\beta\in S$. We analyse two cases.
 	
 	\smallskip
 	
 	\textit{Case 1:} Assume that $i\beta\in X\setminus\set{i}=\set{x_1,x_2,x_3}$. Then, by part 2 of Lemma~\ref{T(X): lemma upper bound S when 3 copies of alpha}, we have that $x_1\beta=x_2\beta=x_3\beta=x$. Furthermore, since for all $j\in\set{1,2,3}$ we have $i\alpha_j=x_j\in X\setminus\set{i}$, then part 2 of Lemma~\ref{T(X): lemma upper bound S when 3 copies of alpha} also implies that for all $j\in\set{1,2,3}$ we have $x_1\alpha_j=x_2\alpha_j=x_3\alpha_j=x$. This implies that $\beta|_{X\setminus\set{i}}=\alpha_1|_{X\setminus\set{i}}=\alpha_2|_{X\setminus\set{i}}=\alpha_3|_{X\setminus\set{i}}$. Moreover, $i\beta\in\set{x_1,x_2,x_3}=\set{i\alpha_1,i\alpha_2,i\alpha_3}$ and, consequently, $\beta\in\set{\alpha_1,\alpha_2,\alpha_3}$.
 	
 	\smallskip
 	
 	\textit{Case 2:} Assume that $i\beta=i$. Then part 3 of Lemma~\ref{T(X): lemma upper bound S when 3 copies of alpha} ensures that $x_1\beta=x_1$, $x_2\beta=x_2$ and $x_3\beta=x_3$. Thus $\beta=\id{X}$.
 	
 	\smallskip
 	
 	Since $\beta$ is an arbitrary element of $S$, then we can conclude that $S\subseteq\set{\alpha_1,\alpha_2,\alpha_3,\id{X}}$ and, consequently,
 	\begin{displaymath}
 		\abs{S}\leqslant 4<2^{4-1}=2^{\abs{X}-1}. \qedhere
 	\end{displaymath}
 \end{proof}
 
 In the next lemma we establish that, when $\abs{X}=5$, the existence of three `copies' of an element of $\gset{\beta|_{X\setminus\set{i}}}{\beta\in S}$ in $S$ implies that $S$ is not a maximum-order commutative subsemigroup of $\tr{X}$.
 
 \begin{lemma}\label{T(X): |X|=5, upper bound S when 3 copies of alpha}
 	Suppose that $\abs{X}=5$. Let $S$ be a commutative subsemigroup of $\tr{X}$ such that $\setclass{S}{X}\neq\emptyset$. Suppose that there exists $i\in X$ such that $\set{i}\in\setclass{S}{X}$. If there exist pairwise distinct $\alpha_1,\alpha_2,\alpha_3\in S$ such that $\alpha_1|_{X\setminus\set{i}}=\alpha_2|_{X\setminus\set{i}}=\alpha_3|_{X\setminus\set{i}}$, then $\abs{S}<2^{\abs{X}-1}$.
 \end{lemma}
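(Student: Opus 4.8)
The plan is to follow the strategy of the preceding $\abs{X}=4$ case (Lemma~\ref{T(X): |X|=4, upper bound S when 3 copies of alpha}), but since $\abs{X}=5$ leaves one ``extra'' point I will need to partition $S$ and bound each part rather than simply enumerate its elements. First I would set $x_j=i\alpha_j$ for $j\in\set{1,2,3}$ and $x=x_1\alpha_1$, and invoke part~1 of Lemma~\ref{T(X): lemma upper bound S when 3 copies of alpha} to obtain that $x_1,x_2,x_3$ are pairwise distinct elements of $X\setminus\set{i}$. As $\abs{X}=5$, there is then a unique remaining element $w$, so that $X=\set{i,x_1,x_2,x_3,w}$ and $X\setminus\set{i}=\set{x_1,x_2,x_3,w}$; this is the step where $\abs{X}=5$ genuinely enters.

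The crucial structural observation is that parts~2--4 of Lemma~\ref{T(X): lemma upper bound S when 3 copies of alpha} pin down $\beta$ on $\set{x_1,x_2,x_3}$ as a function of $i\beta$ alone: it is the identity there when $i\beta=i$, the constant $x$ when $i\beta\in\set{x_1,x_2,x_3}$, and the constant $w\alpha_1$ when $i\beta=w$. Consequently each $\beta\in S$ is completely determined by the pair $(i\beta,w\beta)$, where $i\beta\in X$ and, since $\set{i}\in\setclass{S}{X}$ forces $\parens{X\setminus\set{i}}\beta\subseteq X\setminus\set{i}$, we have $w\beta\in\set{x_1,x_2,x_3,w}$. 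I would then partition $S$ into three classes according to whether $i\beta=i$, $i\beta\in\set{x_1,x_2,x_3}$, or $i\beta=w$, which is exhaustive since these cover all five possible values of $i\beta$.

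Bounding the middle class is immediate: there $i\beta$ has three possible values and $w\beta$ has four, so injectivity of $\beta\mapsto(i\beta,w\beta)$ gives at most $12$ elements. The real work lies in the other two classes, where the naive count $1\cdot 4$ each is too weak (it would only give $12+4+4=20>16$). For the class $i\beta=w$, I would take two members $\beta,\beta'$ and compute $i(\beta\beta')=w\beta'$ and $i(\beta'\beta)=w\beta$; commutativity forces $w\beta=w\beta'$, hence $\beta=\beta'$, so this class has at most one element. For the class $i\beta=i$, every member fixes $\set{i,x_1,x_2,x_3}$ pointwise (part~3), so it is determined by $w\beta$; comparing two such members on the point $w$ shows that two distinct values of $w\beta$ lying in $\set{x_1,x_2,x_3}$ would fail to commute, so at most one member sends $w$ into $\set{x_1,x_2,x_3}$, leaving at most that one together with $\id{X}$, i.e.\ at most two elements.

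Summing the three bounds yields $\abs{S}\leqslant 12+2+1=15<16=2^{\abs{X}-1}$, as required. I expect the main obstacle to be precisely these two commutativity computations, which sharpen the counts of the classes $i\beta=w$ and $i\beta=i$ from the useless bound $4$ down to $1$ and $2$, respectively; everything else is bookkeeping built on the facts already recorded in Lemma~\ref{T(X): lemma upper bound S when 3 copies of alpha}.
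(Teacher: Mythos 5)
Your proof is correct, and in its key step it takes a genuinely different route from the paper. Both arguments start identically: the partition of $S$ by the value of $i\beta$ into the classes $i\beta\in\set{x_1,x_2,x_3}$, $i\beta=w$, and $i\beta=i$ matches the paper's sets $A_1$, $A_2$, $A_3$, and both bound the first class by $12$ via the observation (from parts 2--4 of Lemma~\ref{T(X): lemma upper bound S when 3 copies of alpha}) that $\beta\mapsto(i\beta,w\beta)$ is injective on $S$. The divergence is in how the deficit below $20$ is recovered. The paper keeps the crude bounds $\abs{A_2},\abs{A_3}\leqslant 4$ and instead constructs nine explicit transformations, grouped into three families $B_1,B_2,B_3$, chosen around an auxiliary point $y\in X\setminus\set{i,x,x_4,x_4\alpha_1}$; it verifies that members of distinct families never commute, so a commutative $S$ meets at most one family, and then subtracts $\min\set{\abs{B_j}+\abs{B_k}}=5$ to reach $15$. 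You avoid this construction entirely by sharpening the class bounds internally: evaluating commutativity at $i$ shows any two elements with $i\beta=i\beta'=w$ satisfy $w\beta=w\beta'$, hence coincide (class of size at most $1$), and evaluating commutativity at $w$ shows that in the class $i\beta=i$ at most one element can send $w$ into $\set{x_1,x_2,x_3}$, leaving only that element and $\id{X}$ (class of size at most $2$). Your computations are valid (they use only part 3 of the auxiliary lemma and the injectivity already established), and $12+1+2=15<16$ closes the proof. What your approach buys is economy and self-containedness --- no witness transformations, no choice of $y$, and no exclusion argument; incidentally it is closer in spirit to the intersection-counting style the paper itself adopts for the harder $\abs{X}=6$ case. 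What the paper's approach buys is that the witness families $B_1,B_2,B_3$ pinpoint exactly which transformations obstruct commutativity, information that could be reused if one wanted to classify the extremal semigroups rather than merely bound their size.
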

 
 \begin{proof}
 	Suppose that there exist pairwise distinct $\alpha_1,\alpha_2,\alpha_3\in S$ such that $\alpha_1|_{X\setminus\set{i}}=\alpha_2|_{X\setminus\set{i}}=\alpha_3|_{X\setminus\set{i}}$. For each $j\in\set{1,2,3}$ let $x_j=i\alpha_j$ and let $x=x_1\alpha_1$.
 	
 	Part 1 of Lemma~\ref{T(X): lemma upper bound S when 3 copies of alpha} guarantees that $x_1,x_2,x_3\in X\setminus\set{i}$ and are pairwise distinct. Moreover, $\abs{X}=5$. Hence there exists $x_4\in X$ such that $X=\set{x_1,x_2,x_3,x_4,i}$. Let
 	\begin{align*}
 		A_1&=\gset[\Bigg]{\begin{pmatrix}
 				x_1 & x_2 & x_3 & x_4 & i \\
 				x & x & x & x_k & x_j
 		\end{pmatrix}}{k\in \set{1,2,3,4} \text{ and } j\in\set{1,2,3}};\\[2mm]
 		A_2&=\gset[\Bigg]{\begin{pmatrix}
 				x_1 & x_2 & x_3 & x_4 & i \\
 				x_4\alpha_1 & x_4\alpha_1 & x_4\alpha_1 & x_k & x_4
 		\end{pmatrix}}{k\in \set{1,2,3,4}};\\[2mm]
 		A_3&=\gset[\Bigg]{\begin{pmatrix}
 				x_1 & x_2 & x_3 & x_4 & i \\
 				x_1 & x_2 & x_3 & x_k & i
 		\end{pmatrix}}{k\in \set{1,2,3,4}}.
 	\end{align*}
 	
 	First we are going to check that $S\subseteq A_1\cup A_2 \cup A_3$. Let $\beta\in S$. We have that $\set{i}\in\setclass{S}{X}$, which implies that $\beta|_{X\setminus\set{i}}\in\tr{X\setminus\set{i}}$. Consequently, $x_4\beta\in X\setminus\set{i}=\set{x_1,x_2,x_3,x_4}$. We consider the three cases below.
 	
 	
 	
 	
 	\smallskip
 	
 	\textit{Case 1:} Assume that $i\beta\in\set{x_1,x_2,x_3}$. It follows from part 2 of Lemma~\ref{T(X): lemma upper bound S when 3 copies of alpha} that $x_1\beta=x_2\beta=x_3\beta=x$. Thus $\beta\in A_1$.
 	
 	\smallskip
 	
 	\textit{Case 2:} Assume that $i\beta=i$. It follows from part 3 of Lemma~\ref{T(X): lemma upper bound S when 3 copies of alpha} that $x_1\beta=x_1$, $x_2\beta=x_2$ and $x_3\beta=x_3$. Thus $\beta\in A_3$.
 	
 	\smallskip
 	
 	\textit{Case 3:} Assume that $i\beta\in X\setminus\set{x_1,x_2,x_3,i}=\set{x_4}$. It follows from part 4 of Lemma~\ref{T(X): lemma upper bound S when 3 copies of alpha} that $x_1\beta=x_2\beta=x_3\beta=\parens{i\beta}\alpha_1=x_4\alpha_1$. Thus $\beta\in A_2$.
 	
 	\smallskip
 	
 	The previous three cases allow us to conclude that $S\subseteq A_1\cup A_2\cup A_3$.
 	
 	It follows from the fact that $\abs{X}=5$ that $X\setminus\set{i,x,x_4,x_4\alpha_1}\neq \emptyset$. Let $y\in X\setminus\set{i,x,x_4,x_4\alpha_1}$. For each $j\in\set{1,2,3,4}$ we define
 	\begin{gather*}
 		\beta_j=\begin{pmatrix}
 			x_1 & x_2 & x_3 & x_4 & i \\
 			x_j\alpha_1 & x_j\alpha_1 & x_j\alpha_1 & x_4 & x_j
 		\end{pmatrix}\\
 		\shortintertext{and}
 		\gamma_j=\begin{pmatrix}
 			x_1 & x_2 & x_3 & x_4 & i \\
 			x_j\alpha_1 & x_j\alpha_1 & x_j\alpha_1 & y & x_j
 		\end{pmatrix}
 		\shortintertext{and we define}
 		\delta=\begin{pmatrix}
 			x_1 & x_2 & x_3 & x_4 & i \\
 			x_1 & x_2 & x_3 & y & i
 		\end{pmatrix}.
 	\end{gather*}
 	Let $B_1=\set{\beta_1,\beta_2,\beta_3,\beta_4}$ and $B_2=\set{\gamma_1,\gamma_2,\gamma_3,\gamma_4}$ and $B_3=\set{\delta}$. It is straightforward to verify that these transformations are pairwise distinct. Furthermore, the fact that $x_1\alpha_1=x_2\alpha_1=x_3\alpha_1=x$ and $y\in X\setminus\set{i}$ implies that $\beta_1,\beta_2,\beta_3,\gamma_1,\gamma_2,\gamma_3\in A_1$, $\beta_4,\gamma_4\in A_2$ and $\delta\in A_3$.
 	
 	Since $x=x_1\alpha_1=x_2\alpha_1=x_3\alpha_1$ and $y\in X\setminus\set{x,x_4\alpha_1}$, then we have that $y\neq x_j\alpha_1$ for all $j\in\set{1,2,3,4}$. In addition, we have $y\in X\setminus\set{i,x_4}$, which implies that $y\in\set{x_1,x_2,x_3}$. Consequently, for all $j,k\in\set{1,2,3,4}$, we have
 	\begin{gather*}
 		x_4\beta_j\gamma_k =x_4\gamma_k =y\neq x_j\alpha_1 =y\beta_j =x_4\gamma_k\beta_j\\
 		\shortintertext{and}
 		x_4\beta_j\delta =x_4\delta =y \neq x_j\alpha_1 =y\beta_j =x_4\delta\beta_j\\
 		\shortintertext{and}
 		x_4\gamma_k\delta =y\delta =y \neq x_j\alpha_1 =y\gamma_k =x_4\delta\gamma_k,
 	\end{gather*}
 	which implies that $\beta_j\gamma_k\neq\gamma_k\beta_j$, $\beta_j\delta\neq\delta\beta_j$ and $\gamma_k\delta\neq\delta\gamma_k$ for all $j,k\in\set{1,2,3,4}$. Since $S$ is commutative, then this means that among the sets $S\cap B_1$, $S\cap B_2$ and $S\cap B_3$ there is at most one that is non-empty. Hence there exist distinct $j,k\in\set{1,2,3}$ such that $S\cap\parens{B_j\cup B_k}=\emptyset$ and, consequently, we have $S\subseteq \parens{A_1\cup A_2\cup A_3}\setminus\parens{B_j\cup B_k}$.
 	
 	Therefore, noting that $B_j\cup B_k\subseteq A_1\cup A_2\cup A_3$,
 	\begin{align*}
 		\abs{S} &\leqslant \abs{A_1}+\abs{A_2}+\abs{A_3}-\parens{\abs{B_j}+\abs{B_k}}\\
 		&\leqslant \abs{A_1}+\abs{A_2}+\abs{A_3}-\min\set{\abs{B_1}+\abs{B_2},\abs{B_1}+\abs{B_3},\abs{B_2}+\abs{B_3}}\\
 		&= 12+4+4-\min\set{4+4,4+1,4+1}\\
 		&= 15\\
 		&< 2^{5-1}\\
 		&= 2^{\abs{X}-1}. \qedhere
 	\end{align*}
 \end{proof}
 
 Finally, we will demonstrate that, if $\abs{X}=6$ and $S$ contains three `copies' of an element of $\gset{\beta|_{X\setminus\set{i}}}{\beta\in S}$, then $S$ is not a commutative subsemigroup of $\tr{X}$ of maximum size.
 
 \begin{lemma}\label{T(X): |X|=6, upper bound S when 3 copies of alpha}
 	Suppose that $\abs{X}=6$. Let $S$ be a commutative subsemigroup of $\tr{X}$ such that $\setclass{S}{X}\neq\emptyset$. Suppose that there exists $i\in X$ such that $\set{i}\in\setclass{S}{X}$. If there exist pairwise distinct $\alpha_1,\alpha_2,\alpha_3\in S$ such that $\alpha_1|_{X\setminus\set{i}}=\alpha_2|_{X\setminus\set{i}}=\alpha_3|_{X\setminus\set{i}}$, then $\abs{S}<2^{\abs{X}-1}$.
 \end{lemma}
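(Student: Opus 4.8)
The plan is to follow the template of Lemma~\ref{T(X): |X|=5, upper bound S when 3 copies of alpha}, adapted to one extra point. Set $x_j=i\alpha_j$ for $j\in\set{1,2,3}$ and $x=x_1\alpha_1$. By part 1 of Lemma~\ref{T(X): lemma upper bound S when 3 copies of alpha}, the elements $x_1,x_2,x_3$ are pairwise distinct and lie in $X\setminus\set{i}$; since $\abs{X}=6$, I would write $X=\set{x_1,x_2,x_3,x_4,x_5,i}$ for the two remaining elements $x_4,x_5$. Because $\set{i}\in\setclass{S}{X}$, every $\beta\in S$ satisfies $\parens{X\setminus\set{i}}\beta\subseteq X\setminus\set{i}$, so $x_4\beta,x_5\beta\in\set{x_1,\ldots,x_5}$. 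I would then partition $S$ according to the value of $i\beta$ into $A_1=\gset{\beta\in S}{i\beta\in\set{x_1,x_2,x_3}}$, $A_2=\gset{\beta\in S}{i\beta\in\set{x_4,x_5}}$ and $A_3=\gset{\beta\in S}{i\beta=i}$, using parts 2, 3 and 4 of Lemma~\ref{T(X): lemma upper bound S when 3 copies of alpha} to pin down the common value of $x_1\beta,x_2\beta,x_3\beta$ in each class (namely $x$ on $A_1$, $\parens{i\beta}\alpha_1$ on $A_2$, and $x_1,x_2,x_3$ fixed on $A_3$).

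The new difficulty, compared with the cases $\abs{X}\leqslant 5$, is that there are now two ``free'' coordinates $x_4,x_5$ instead of one, so the crude count of triples $\parens{i\beta,x_4\beta,x_5\beta}$ is far larger than $2^{\abs{X}-1}=32$ and must be cut down. The main tool will be commutativity with $\alpha_1$: evaluating $\alpha_1\beta=\beta\alpha_1$ at $x_4$ and $x_5$ gives $\parens{x_4\beta}\alpha_1=\parens{x_4\alpha_1}\beta$ and $\parens{x_5\beta}\alpha_1=\parens{x_5\alpha_1}\beta$, so each of $x_4\beta$ and $x_5\beta$ is forced to lie in a fibre of $\alpha_1$. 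Since part 2 applied to $\beta=\alpha_1$ gives $x_1\alpha_1=x_2\alpha_1=x_3\alpha_1=x$, the map $\alpha_1$ collapses $x_1,x_2,x_3$ to a single point, so $\im\alpha_1$ has at most four elements; Lemma~\ref{T(X): ab=ba => xb€Im a for all x€Im a} then makes $\im\alpha_1$ invariant under every $\beta\in S$. Both facts severely restrict the admissible pairs $\parens{x_4\beta,x_5\beta}$ and should yield explicit bounds on $\abs{A_1}$, $\abs{A_2}$ and $\abs{A_3}$.

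Having obtained these bounds, I would finish exactly as in Lemma~\ref{T(X): |X|=5, upper bound S when 3 copies of alpha}: construct explicit families of transformations that pairwise fail to commute (the analogues of the $\beta_j,\gamma_j,\delta$ there, now also varying the value at $x_5$), observe that $S$ can meet at most one such family, and subtract the resulting deficit from the total. Summing $\abs{A_1}+\abs{A_2}+\abs{A_3}$ and subtracting this deficit should land strictly below $2^{\abs{X}-1}=32$.

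The step I expect to be the real obstacle is the middle one: squeezing the two free coordinates. A single commutativity relation only confines $x_4\beta$ and $x_5\beta$ to fibres of $\alpha_1$, and the sizes of those fibres---together with the many possible coincidences among $x_1,\ldots,x_5$, $x$, $x_4\alpha_1$ and $x_5\alpha_1$---force a branching case analysis. Keeping the bookkeeping tight enough that the counts in every branch stay below $32$ (rather than merely below a crude product bound) is where the work concentrates, and is presumably why this case warrants its own lemma rather than being folded into the $\abs{X}=5$ argument.
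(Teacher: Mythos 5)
Your setup coincides with the paper's opening moves: the same $x_j=i\alpha_j$, $x=x_1\alpha_1$, the same appeal to parts 1--4 of Lemma~\ref{T(X): lemma upper bound S when 3 copies of alpha}, and a partition of $S$ by the value of $i\beta$ (your three classes are exactly the paper's $A_1\cup A_2$, $A_3$, and $A_4\cup\cdots\cup A_7$). But everything after that --- which is essentially the entire content of the lemma --- is missing, and you say so yourself: the ``middle step'' of squeezing the two free coordinates $x_4\beta,x_5\beta$ is left as an acknowledged obstacle. The tool you propose for it (commutation with $\alpha_1$ forces $x_4\beta,x_5\beta$ into fibres of $\alpha_1$, plus invariance of $\im\alpha_1$) is never carried out, and its strength depends on uncontrolled data: if $x_4\alpha_1\in\set{x_1,x_2,x_3}$ the relation only places $x_4\beta$ in $x\alpha_1^{-1}$, a fibre that already contains $x_1,x_2,x_3$, so nothing close to a count below $32$ falls out without the branching analysis you defer.

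There is also a structural reason the proposed endgame cannot work as stated. In the $\abs{X}=5$ lemma the three classes total $12+4+4=20$ and one subtracts a deficit of $5$ from mutually non-commuting families to reach $15<16$. Here the analogous coarse counts are roughly $75+50+25=150$ against a target of $31$, so no ``$S$ meets at most one of a few families'' subtraction can close the gap. What the paper actually does is qualitatively different: it refines the partition into seven classes and proves \emph{conditional} bounds --- e.g.\ if $S\cap A_2\neq\emptyset$ or $S\cap A_3\neq\emptyset$ then $\abs{S\cap A_1}\leqslant 9$, and if $S\cap A_4\neq\emptyset$ then $\abs{S\cap A_1}\leqslant 3$, while if all three of those intersections are empty then $\abs{S\cap A_1}$ may be as large as $27$ but the surviving classes contribute at most $1+1+2=4$. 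The bound $31<32$ is then reached by different bookkeeping in different cases; the size of one class is controlled by the mere non-emptiness of another. This interplay is the key idea of the proof, and it does not appear in your proposal, so the argument as sketched has a genuine gap rather than being an alternative route to the same result.
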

 
 \begin{proof}
 	Suppose that there exist pairwise distinct $\alpha_1,\alpha_2,\alpha_3\in S$ such that $\alpha_1|_{X\setminus\set{i}}=\alpha_2|_{X\setminus\set{i}}=\alpha_3|_{X\setminus\set{i}}$. For each $j\in\set{1,2,3}$ let $x_j=i\alpha_j$ and let $x=x_1\alpha_1$.
 	
 	
 	
 	We have that $x_1,x_2,x_3\in X\setminus\set{i}$ and are pairwise distinct (by part 1 of Lemma~\ref{T(X): lemma upper bound S when 3 copies of alpha}). Then, since $\abs{X}=6$, we have that $X=\set{x_1,x_2,x_3,x_4,x_5,i}$ for some $x_4,x_5\in X$. Let
 	\begin{align*}
 		A_1&=\gset[\Bigg]{\begin{pmatrix}
 				x_1 & x_2 & x_3 & x_4 & x_5 & i \\
 				x & x & x & x_k & x_m & x_j
 		\end{pmatrix}}{k,m,j\in\set{1,2,3}};\\[2mm]
 		A_2&=\gsetsplit[\Bigg]{\begin{pmatrix}
 				x_1 & x_2 & x_3 & x_4 & x_5 & i \\
 				x & x & x & x_k & x_m & x_j
 		\end{pmatrix}}{k,m\in\set{1,2,3,4,5}\\[-10pt]
 			&\qquad\qquad\qquad\qquad\qquad\qquad\qquad\qquad\qquad\quad \text{ and } \set{k,m}\cap\set{4,5}\neq\emptyset\\[-10pt]
 			&\qquad\qquad\qquad\qquad\qquad\qquad\qquad\qquad\qquad\qquad\qquad\qquad \text{ and } j\in\set{1,2,3}};\\[2mm]
 		A_3&=\gset[\Bigg]{\begin{pmatrix}
 				x_1 & x_2 & x_3 & x_4 & x_5 & i \\
 				x_j\alpha_1 & x_j\alpha_1 & x_j\alpha_1 & x_k & x_m & x_j
 		\end{pmatrix}}{k,m\in\set{1,2,3,4,5} \text{ and } j\in\set{4,5}};\\[2mm]
 		A_4&=\gset[\Bigg]{\begin{pmatrix}
 				x_1 & x_2 & x_3 & x_4 & x_5 & i \\
 				x_1 & x_2 & x_3 & x_k & x_m & i
 		\end{pmatrix}}{k,m\in\set{1,2,3}};\\[2mm]
 		A_5&=\gset[\Bigg]{\begin{pmatrix}
 				x_1 & x_2 & x_3 & x_4 & x_5 & i \\
 				x_1 & x_2 & x_3 & x_k & x_m & i
 		\end{pmatrix}}{k\in\set{1,2,3} \text{ and } m\in\set{4,5}};\\[2mm]
 		A_6&=\gset[\Bigg]{\begin{pmatrix}
 				x_1 & x_2 & x_3 & x_4 & x_5 & i \\
 				x_1 & x_2 & x_3 & x_k & x_m & i
 		\end{pmatrix}}{k\in\set{4,5} \text{ and } m\in\set{1,2,3}};\\[2mm]
 		A_7&=\gset[\Bigg]{\begin{pmatrix}
 				x_1 & x_2 & x_3 & x_4 & x_5 & i \\
 				x_1 & x_2 & x_3 & x_k & x_m & i
 		\end{pmatrix}}{k,m\in\set{4,5}}.
 	\end{align*}
 	
 	We divide the remainder of the proof into several parts: in the first part we will see that determining $\abs{S}$ can be accomplished by determining $\abs{S\cap A_j}$ for all $j\in\set{1,2,3,4,5,6,7}$; parts 2--10 concern the size of the sets $\abs{S\cap A_j}$ for all $j\in\set{1,2,3,4,5,6,7}$; and in the last part we prove the desired result, that is, we prove that $\abs{S}<2^{\abs{X}-1}$. In summary, the eleven parts of the proof establish the following:
 	\begin{enumerate}
 		\item $\abs{S}=\sum_{j=1}^7 \abs{S\cap A_j}$.
 		\item If $S\cap A_2\neq\emptyset$, then $\abs{S\cap A_1}\leqslant 9$.
 		\item If $S\cap A_3\neq\emptyset$, then $\abs{S\cap A_1}\leqslant 9$.
 		\item If $S\cap A_4\neq\emptyset$, then $\abs{S\cap A_1}\leqslant 3$.
 		\item $\abs{S\cap A_2}\leqslant 12$.
 		\item $\abs{S\cap A_3}\leqslant 5$.
 		\item $\abs{S\cap A_4}\leqslant 1$.
 		\item $\abs{S\cap A_5}\leqslant 1$.
 		\item $\abs{S\cap A_6}\leqslant 1$.
 		\item $\abs{S\cap A_7}\leqslant 2$.
 		\item $\abs{S}<2^{\abs{X}-1}$.
 	\end{enumerate}
 	
 	\medskip
 	
 	\textbf{Part 1.} In what follows we establish that $\abs{S}=\sum_{j=1}^7 \abs{S\cap A_j}$. In order to do this we first demonstrate that $S\subseteq \bigcup_{j=1}^7 A_j$. Let $\beta\in S$. As a consequence of the fact that $\set{i}\in\setclass{S}{X}$, we have that $\beta|_{X\setminus\set{i}}\in\tr{X\setminus\set{i}}$, which implies that $x_4\beta,x_5\beta\in X\setminus\set{i}=\set{x_1,x_2,x_3,x_4,x_5}$.
 	
 	\smallskip
 	
 	\textit{Case 1:} Assume that $i\beta\in\set{x_1,x_2,x_3}$. Then, by part 2 of Lemma~\ref{T(X): lemma upper bound S when 3 copies of alpha}, we have that $x_1\beta=x_2\beta=x_3\beta=x$.  Hence $\beta\in A_1\cup A_2$.
 	
 	\smallskip
 	
 	\textit{Case 2:} Assume that $i\beta=i$. Then part 3 of Lemma~\ref{T(X): lemma upper bound S when 3 copies of alpha} implies that $x_1\beta=x_1$, $x_2\beta=x_2$ and $x_3\beta=x_3$. Consequently, $\beta\in A_4\cup A_5 \cup A_6 \cup A_7$.
 	
 	\smallskip
 	
 	\textit{Case 3:} Assume that $i\beta\in X\setminus\set{x_1,x_2,x_3,i}=\set{x_4,x_5}$. Let $j\in\set{x_4,x_5}$ be such that $i\beta=x_j$. It follows from part 4 of Lemma~\ref{T(X): lemma upper bound S when 3 copies of alpha} that $x_1\beta=x_2\beta=x_3\beta=\parens{i\beta}\alpha_1=x_j\alpha_1$. Hence $\beta\in A_3$.
 	
 	\smallskip
 	
 	It follows from the three cases above that $S\subseteq\bigcup_{j=1}^7 A_j$. Therefore
 	\begin{displaymath}
 		\abs{S}=\abs[\bigg]{S\cap\parens[\bigg]{\,\bigcup_{j=1}^7 A_j}}=\abs[\bigg]{\,\bigcup_{j=1}^7 \parens{S\cap A_j}}=\sum_{j=1}^7 \abs{S\cap A_j}.
 	\end{displaymath}
 	
 	\medskip
 	
 	\textbf{Part 2.} The aim of this part is to prove that if $S\cap A_2\neq\emptyset$, then $\abs{S\cap A_1}\leqslant 9$. Suppose that $S\cap A_2\neq\emptyset$. Let $\beta\in S\cap A_2$. We have that $x_4\beta\in\set{x_4,x_5}$ or $x_5\beta=\set{x_4,x_5}$. Interchanging $x_4$ and $x_5$ if necessary, assume that $x_4\beta\in\set{x_4,x_5}$. Let $j\in\set{4,5}$ be such that $x_4\beta=x_j$ and let $k\in\set{4,5}\setminus\set{j}$. For all $\gamma\in S\cap A_1$ we have that
 	\begin{align*}
 		x_j\gamma &=\parens{x_4\beta}\gamma &\bracks{\text{since } x_4\beta=x_j}\\
 		&=\parens{x_4\gamma}\beta &\bracks{\text{since }\beta,\gamma\in S, \text{ which is commutative}}\\
 		&=x. &\bracks{\text{since } x_4\gamma\in\set{x_1,x_2,x_3} \text{ and } x_1\beta=x_2\beta=x_3\beta=x}
 	\end{align*}
 	Since we also have $x_1\gamma=x_2\gamma=x_3\gamma=x$ and $x_k\gamma,i\gamma\in\set{x_1,x_2,x_3}$ for all $\gamma\in S\cap A_1$, then we can conclude that $\abs{S\cap A_1}\leqslant 3\cdot 3=9$.
 	
 	\medskip
 	
 	\textbf{Part 3.} We are going to see that if $S\cap A_3\neq\emptyset$, then $\abs{S\cap A_1}\leqslant 9$. Suppose that $S\cap A_3\neq\emptyset$. Let $\beta\in S\cap A_3$. Let $j\in\set{4,5}$ be such that $i\beta=x_j$ and let $k\in\set{4,5}\setminus\set{j}$. For all $\gamma\in S\cap  A_1$ we have that
 	\begin{align*}
 		x_j\gamma &=\parens{i\beta}\gamma &\bracks{\text{since } i\beta=x_j}\\
 		&=\parens{i\gamma}\beta &\bracks{\text{since }\beta,\gamma\in S, \text{ which is commutative}}\\
 		&=x_j\alpha_1. &\bracks{\text{since } i\gamma\in\set{x_1,x_2,x_3} \text{ and } x_1\beta=x_2\beta=x_3\beta=x_j\alpha_1}
 	\end{align*}
 	It follows from the fact that $x_1\gamma=x_2\gamma=x_3\gamma=x$ and $x_k\gamma,i\gamma\in\set{x_1,x_2,x_3}$ for all $\gamma\in S\cap A_1$ that $\abs{S\cap A_1}\leqslant 3\cdot 3=9$.
 	
 	\medskip
 	
 	\textbf{Part 4.} The objective of this part is to see that, if $S\cap A_4\neq\emptyset$, then $\abs{S\cap A_1}\leqslant 3$. Suppose that $S\cap A_4\neq\emptyset$. Let $\beta\in S\cap A_4$ and $\gamma\in S\cap A_1$. We have $x_1\gamma=x_2\gamma=x_3\gamma=x$. In addition, for all $j\in\set{4,5}$ we must have
 	\begin{align*}
 		x_j\gamma &=\parens{x_j\gamma}\beta &\bracks{\text{since } x_j\gamma\in\set{x_1,x_2,x_3} \text{ and } x_k\beta=x_k, \text{ } k\in\set{1,2,3}}\\
 		&=\parens{x_j\beta}\gamma &\bracks{\text{since } \beta,\gamma\in S, \text{ which is commutative}}\\
 		&= x. &\bracks{\text{since } x_j\beta\in\set{x_1,x_2,x_3} \text{ and } x_1\gamma=x_2\gamma=x_3\gamma=x}
 	\end{align*}
 	Due to the fact that $i\gamma\in\set{x_1,x_2,x_3}$, then we can conclude that $\abs{S\cap A_1}\leqslant 3$.
 	
 	\medskip
 	
 	\textbf{Part 5.} Now we are going to demonstrate that $\abs{S\cap A_2}\leqslant 12$. We begin by partitioning $A_2$ into four sets. Let
 	\begin{align*}
 		B_1&=\gset{\beta\in A_2}{x_4\beta\in\set{x_4,x_5} \text{ and } x_5\beta\in\set{x_1,x_2,x_3}};\\
 		B_2&=\gset{\beta\in A_2}{x_4\beta\in\set{x_1,x_2,x_3} \text{ and } x_5\beta\in\set{x_4,x_5}};\\
 		B_3&=\gset{\beta\in A_2}{x_4\beta,x_5\beta\in\set{x_4,x_5} \text{ and either } x_4\beta\neq x_4 \text{ or } x_5\beta\neq x_5};\\
 		B_4&=\gset{\beta\in A_2}{x_4\beta=x_4 \text{ and } x_5\beta=x_5}.
 	\end{align*}
 	Then it is clear that
 	\begin{displaymath}
 		\abs{S\cap A_2}=\abs[\bigg]{S\cap\parens[\bigg]{\,\bigcup_{j=1}^4 B_j}}=\abs[\bigg]{\,\bigcup_{j=1}^4 \parens{S\cap B_j}}=\sum_{j=1}^4 \abs{S\cap B_j}.
 	\end{displaymath}
 	
 	In order to determine an upper bound for $\abs{S\cap A_2}$ we consider several cases. We have that among the sets $S\cap B_1$, $S\cap B_2$ and $S\cap B_3$ there are at least two that are empty or there is at most one that is empty (that is, there are at least two that are not empty). The former situation is analysed in case 1, 2, 3 and the latter is analysed in cases 4, 5, 6.
 	
 	Before we start the case analysis, we are going to determine upper bounds for $\abs{S\cap B_j}$, for all $j\in\set{1,2,3,4}$. More specifically, we will see that $\abs{S\cap B_1}\leqslant 9$, $\abs{S\cap B_2}\leqslant 9$, $\abs{S\cap B_3}\leqslant 3$ and $\abs{S\cap B_4}\leqslant 3$. These four bounds are essential in cases 1--6 below.
 	
 	First, we are going to establish that $\abs{S\cap B_1}\leqslant 9$ (we can verify in a similar way that $\abs{S\cap B_2}\leqslant 9$). Let $\beta,\gamma\in B_1$ be such that $x_4\beta=x_4$ and $x_4\gamma=x_5$. We have that $x_4\beta\gamma=x_4\gamma=x_5$ and $x_4\gamma\beta=x_5\beta\in\set{x_1,x_2,x_3}$, which implies that $x_4\beta\gamma\neq x_4\gamma\beta$. Hence $\beta\gamma\neq\gamma\beta$ and, consequently, at most one of $\beta$ and $\gamma$ are in $S\cap B_1$ (since $S$ is commutative). This proves that we either have $x_4\beta=x_4$ for all $\beta\in S\cap B_1$, or $x_4\beta=x_5$ for all $\beta\in S\cap B_1$. Furthermore, for all $\beta\in S\cap B_1$ we have that $x_1\beta=x_2\beta=x_3\beta=x$ and $x_5\beta,i\beta\in\set{x_1,x_2,x_3}$. Therefore $\abs{S\cap B_1}\leqslant 3\cdot 3=9$.
 	
 	Now we are going to establish that $\abs{S\cap B_3}\leqslant 3$. For all $\beta\in B_3$ we have that $x_4\beta=x_5\beta=x_4$, or $x_4\beta=x_5\beta=x_5$, or $x_4\beta=x_5$ and $x_5\beta=x_4$. Let $\beta_1,\beta_2,\beta_3\in B_3$ be such that $x_4\beta_1=x_5\beta_1=x_4$, $x_4\beta_2=x_5\beta_2=x_5$, $x_4\beta_3=x_5$ and $x_5\beta_3=x_4$. We have that
 	\begin{gather*}
 		x_4\beta_1\beta_2=x_4\beta_2=x_5\neq x_4=x_5\beta_1=x_4\beta_2\beta_1\\
 		\shortintertext{and}
 		x_4\beta_1\beta_3=x_4\beta_3=x_5\neq x_4=x_5\beta_1=x_4\beta_3\beta_1\\
 		\shortintertext{and}
 		x_4\beta_2\beta_3=x_5\beta_3=x_4\neq x_5=x_5\beta_2=x_4\beta_3\beta_2,
 	\end{gather*}
 	which implies that among the transformations $\beta_1,\beta_2,\beta_3$ there is at most one that lies in $S\cap B_3$. This proves that all transformations of $S\cap B_3$ must be equal in $\set{x_4,x_5}$. Furthermore we have that $x_1\beta=x_2\beta=x_3\beta=x$ and $i\beta\in\set{x_1,x_2,x_3}$ for all $\beta\in S\cap B_3$, which allow us to conclude that $\abs{S\cap B_3}\leqslant 3$.
 	
 	Finally, we are going to establish that $\abs{S\cap B_4}\leqslant 3$. For all $\beta\in S\cap B_4$ we have that $x_1\beta=x_2\beta=x_3\beta=x$, $x_4\beta=x_4$, $x_5\beta=x_5$ and $i\beta\in\set{x_1,x_2,x_3}$. Thus it is clear that $\abs{S\cap B_4}\leqslant 3$.
 	
 	At last, we can start the case analysis.

 	
 	\smallskip
 	
 	\textit{Case 1:} Assume that $S\cap B_2=S\cap B_3=\emptyset$. We established earlier that $\abs{S\cap B_1}\leqslant 9$ and $\abs{S\cap B_4}\leqslant 3$. Then we have
 	\begin{displaymath}
 		\abs{S\cap A_2}=\abs{S\cap B_1}+\abs{S\cap B_4}\leqslant 9+3=12. 
 	\end{displaymath}
 	
 	\smallskip
 	
 	\textit{Case 2:} Assume that $S\cap B_1=S\cap B_3=\emptyset$. We established earlier that $\abs{S\cap B_2}\leqslant 9$ and $\abs{S\cap B_4}\leqslant 3$. Then we have
 	\begin{displaymath}
 		\abs{S\cap A_2}=\abs{S\cap B_2}+\abs{S\cap B_4}\leqslant 9+3=12. 
 	\end{displaymath}
 	
 	\smallskip
 	
 	\textit{Case 3:} Assume that $S\cap B_1=S\cap B_2=\emptyset$. We showed earlier that $\abs{S\cap B_3}\leqslant 3$ and $\abs{S\cap B_4}\leqslant 3$. Hence we have
 	\begin{displaymath}
 		\abs{S\cap A_2}=\abs{S\cap B_3}+\abs{S\cap B_4}\leqslant 3+3=6\leqslant 12. 
 	\end{displaymath}
 	
 	
 	\smallskip
 	
 	\textit{Case 4:} Assume that $S\cap B_1\neq\emptyset$ and $S\cap B_2\neq\emptyset$. Let $\beta\in S\cap B_1$ and $\gamma\in S\cap B_2$. We have that
 	\begin{align*}
 		\parens{x_4\beta}\gamma &=\parens{x_4\gamma}\beta &\bracks{\text{since } \beta,\gamma\in S, \text{ which is commutative}}\\
 		&=x &\bracks{\text{since } x_4\gamma\in\set{x_1,x_2,x_3} \text{ and } x_1\beta=x_2\beta=x_3\beta=x}\\
 		&=\parens{x_5\beta}\gamma &\bracks{\text{since } x_5\beta\in\set{x_1,x_2,x_3} \text{ and } x_1\gamma=x_2\gamma=x_3\gamma=x}\\
 		&=\parens{x_5\gamma}\beta. &\bracks{\text{since } \beta,\gamma\in S, \text{ which is commutative}}
 	\end{align*}
 	
 	Assume, with the aim of obtaining a contradiction, that $x\in\set{x_4,x_5}$. This implies that $\parens{x_4\beta}\gamma,\parens{x_5\gamma}\beta\in\set{x_4,x_5}$. Moreover, we have that $x_4\gamma, x_5\beta\in\set{x_1,x_2,x_3}$ and, consequently, we have that $x_4\beta\neq x_4$ and $x_5\gamma\neq x_5$.  Since $x_4\beta,x_5\gamma\in\set{x_4,x_5}$, then we can conclude that $x_4\beta=x_5$ and $x_5\gamma=x_4$. Hence $x_4=x_5\gamma=\parens{x_4\beta}\gamma=\parens{x_5\gamma}\beta=x_4\beta=x_5$, which is a contradiction. 
 	
 	Therefore $x\in X\setminus\set{x_4,x_5}$. In addition, we have that $x=x_1\alpha_1\in X\setminus\set{i}$ (because $\set{i}\in\setclass{S}{X}$), which implies that $x\in\set{x_1,x_2,x_3}$. Consequently, $\parens{x_4\beta}\gamma, \parens{x_5\gamma}\beta\in\set{x_1,x_2,x_3}$ and, since $x_5\gamma,x_4\beta\in\set{x_4,x_5}$, we must have $x_4\beta\neq x_5$ and $x_5\gamma\neq x_4$. Thus $x_4\beta=x_4$ and $x_5\gamma=x_5$ (because $x_4\beta,x_5\gamma\in\set{x_4,x_5}$). Consequently, we have $x_4\gamma=\parens{x_4\beta}\gamma=x=\parens{x_5\gamma}\beta=x_5\beta$ and, thus,
 	\begin{displaymath}
 		\beta=\begin{pmatrix}
 			x_1&x_2&x_3&x_4&x_5&i\\
 			x&x&x&x_4&x&x_k
 		\end{pmatrix} \quad \text{and} \quad
 		\gamma=\begin{pmatrix}
 			x_1&x_2&x_3&x_4&x_5&i\\
 			x&x&x&x&x_5&x_m
 		\end{pmatrix}
 	\end{displaymath}
 	for some $k,m\in\set{1,2,3}$, which implies that there are $3$ possibilities for $\beta$ and $3$ possibilities for $\gamma$. Therefore $\abs{S\cap B_1}\leqslant 3$ and $\abs{S\cap B_2}\leqslant 3$. Moreover, we established before that $\abs{S\cap B_3}\leqslant 3$ and $\abs{S\cap B_4}\leqslant 3$. Thus
 	\begin{displaymath}
 		\abs{S\cap A_2}=\abs{S\cap B_1}+\abs{S\cap B_2}+\abs{S\cap B_3}+\abs{S\cap B_4}\leqslant 3+3+3+3=12.
 	\end{displaymath}
 	
 	\smallskip
 	
 	\textit{Case 5:} Assume that $S\cap B_1\neq\emptyset$ and $S\cap B_3\neq\emptyset$. Let $\beta\in S\cap B_1$ and $\gamma\in S\cap B_3$. We have that
 	\begin{align*}
 		\parens{x_4\gamma}\beta &=\parens{x_4\beta}\gamma &\bracks{\text{since } \beta,\gamma\in S, \text{ which is commutative}}\\
 		&\in\set{x_4,x_5}, &\bracks{\text{since } x_4\beta\in\set{x_4,x_5} \text{ and } x_4\gamma,x_5\gamma\in\set{x_4,x_5}}
 	\end{align*}
 	which implies that $x_4\gamma\neq x_5$ (because $x_5\beta\in\set{x_1,x_2,x_3}$). Hence $x_4\gamma=x_4$ (because $x_4\gamma\in\set{x_4,x_5}$) and, consequently, $x_5\gamma\neq x_5$ (because we must have $x_4\gamma\neq x_4$ or $x_5\gamma\neq x_5$), which implies that $x_5\gamma=x_4$ (because $x_5\gamma\in\set{x_4,x_5}$). Moreover, we have
 	\begin{align*}
 		x &=\parens{x_5\beta}\gamma &\bracks{\text{since } x_5\beta\in\set{x_1,x_2,x_3} \text{ and } x_1\gamma=x_2\gamma=x_3\gamma=x}\\
 		&=\parens{x_5\gamma}\beta &\bracks{\text{since } \beta,\gamma\in S, \text{ which is commutative}}\\
 		&=x_4\beta &\bracks{\text{since } x_5\gamma=x_4}\\
 		&\in\set{x_4,x_5}.
 	\end{align*}
 	Thus for all $\alpha\in B_4$ we have
 	\begin{align*}
 		x_5\alpha\beta &=x_5\beta &\bracks{\text{since } x_5\alpha=x_5}\\
 		&\neq x &\bracks{\text{since } x_5\beta\in\set{x_1,x_2,x_3} \text{ and } x\in\set{x_4,x_5}}\\
 		&=x_5\beta\alpha &\bracks{\text{since } x_5\beta\in\set{x_1,x_2,x_3} \text{ and } x_1\alpha=x_2\alpha=x_3\alpha=x}
 	\end{align*}
 	and, consequently, we can conclude that there is no $\alpha$ in $B_4$ that commutes with $\beta$. Since $\beta\in S$ and $S$ is commutative, then this implies that $S\cap B_4=\emptyset$.
 	
 	If $S\cap B_2\neq\emptyset$, then, by case 4, we have that $\abs{S\cap A_2}\leqslant 12$. If $S\cap B_2=\emptyset$, then
 	\begin{displaymath}
 		\abs{S\cap A_2}=\abs{S\cap B_1}+\abs{S\cap B_3}\leqslant 9+3=12.
 	\end{displaymath}
 	(We recall that we established earlier that $\abs{S\cap B_1}\leqslant 9$ and $\abs{S\cap B_3}\leqslant 3$.)
 	
 	\smallskip
 	
 	\textit{Case 6:} Assume that $S\cap B_2\neq\emptyset$ and $S\cap B_3\neq\emptyset$. We can prove in a similar way to case 5 that $\abs{S\cap A_2}\leqslant 12$.
 	
 	\medskip
 	
 	\textbf{Part 6.} The aim of this part is to show that $\abs{S\cap A_3}\leqslant 5$. We begin by partitioning $A_3$. Let
 	\begin{align*}
 		B_1&=\gset{\beta\in A_3}{i\beta=x_4};\\
 		B_2&=\gset{\beta\in A_3}{i\beta=x_5}.
 	\end{align*}
 	We divide the proof into three cases.
 	
 	\smallskip
 	
 	\textit{Case 1:} Assume that $S\cap B_1=\emptyset$. Then $S\cap A_3=S\cap B_2$. For all $\beta,\gamma\in B_2$ such that $x_5\beta\neq x_5\gamma$ we have that $i\beta\gamma=x_5\gamma\neq x_5\beta=i\gamma\beta$. Since $S$ is commutative, then this implies that all transformations of $S\cap B_2$ must be equal in $\set{x_5}$. Furthermore, all transformations of $S\cap B_2$ are equal in $\set{x_1,x_2,x_3,i}$ and we have $x_4\beta\in\set{x_1,x_2,x_3,x_4,x_5}$ for all $\beta\in S\cap B_2$ and, consequently, $S\cap B_2$ contains at most $5$ transformations; that is, $\abs{S\cap A_3}=\abs{S\cap B_2}\leqslant 5$.
 	
 	\smallskip
 	
 	\textit{Case 2:} Assume that $S\cap B_2=\emptyset$. We can prove, as in case 1, that $\abs{S\cap A_3}\leqslant 5$.

 	\smallskip
 	
 	\textit{Case 3:} Assume that $S\cap B_1\neq\emptyset$ and $S\cap B_2\neq\emptyset$. Let $\beta\in S\cap B_1$ and $\gamma\in S\cap B_2$. Let $\alpha\in S\cap A_3$. If $\alpha\in B_1$, then $i\alpha=x_4=i\beta$, $x_1\alpha=x_2\alpha=x_3\alpha=x_4\alpha_1=x_1\beta=x_2\beta=x_3\beta$ and
 	\begin{align*}
 		x_4\alpha &=\parens{i\beta}\alpha &\bracks{\text{since } i\beta=x_4}\\
 		&=\parens{i\alpha}\beta &\bracks{\text{since } \beta,\alpha\in S, \text{ which is commutative}}\\
 		&=x_4\beta &\bracks{\text{since } i\alpha=x_4}
 	\end{align*}
 	and
 	\begin{align*}
 		x_5\alpha &=\parens{i\gamma}\alpha &\bracks{\text{since } i\gamma=x_5}\\
 		&=\parens{i\alpha}\gamma &\bracks{\text{since } \gamma,\alpha\in S, \text{ which is commutative}}\\
 		&=\parens{i\beta}\gamma &\bracks{\text{since } i\alpha=x_4=i\beta}\\
 		&=\parens{i\gamma}\beta &\bracks{\text{since } \beta,\gamma\in S, \text{ which is commutative}}\\
 		&=x_5\beta, &\bracks{\text{since } i\gamma=x_5}
 	\end{align*}
 	which implies that $\alpha=\beta$. If $\alpha\in B_2$, then we can prove in a similar way that $\alpha=\gamma$. Thus $\abs{S\cap A_3}\leqslant 2\leqslant 5$.

 	\medskip
 	
 	\textbf{Part 7.} We are going to establish that $\abs{S\cap A_4}\leqslant 1$. Let $\beta,\gamma\in S\cap A_4$. We want to prove that $\beta=\gamma$. We have $y\beta=y=y\gamma$ for all $y\in\set{x_1,x_2,x_3,i}=X\setminus\set{x_4,x_5}$. Furthermore, for all $j\in\set{4,5}$ we have
 	\begin{align*}
 		x_j\beta &=\parens{x_j\beta}\gamma &\bracks{\text{since } x_j\beta\in\set{x_1,x_2,x_3}}\\
 		&=\parens{x_j\gamma}\beta &\bracks{\text{since } \beta,\gamma\in S, \text{ which is commutative}}\\
 		&=x_j\gamma. &\bracks{\text{since } x_j\gamma\in\set{x_1,x_2,x_3}}
 	\end{align*}
 	Thus $\beta=\gamma$ and, consequently, we can conclude that $\abs{S\cap A_4}\leqslant 1$.
 	
 	\medskip
 	
 	\textbf{Part 8.} Now we prove that $\abs{S\cap A_5}\leqslant 1$. Let $\beta,\gamma\in S\cap A_5$. We want to prove that $\beta=\gamma$. We have that $y\beta=y=y\gamma$ for all $y\in\set{x_1,x_2,x_3,i}=X\setminus\set{x_4,x_5}$. Moreover,
 	\begin{align*}
 		x_4\beta &=\parens{x_4\beta}\gamma &\bracks{\text{since } x_4\beta\in\set{x_1,x_2,x_3}}\\
 		&=\parens{x_4\gamma}\beta &\bracks{\text{since } \beta,\gamma\in S, \text{ which is commutative}}\\
 		&=x_4\gamma. &\bracks{\text{since } x_4\gamma\in\set{x_1,x_2,x_3}}
 	\end{align*}
 	Finally, we are going to verify that $x_5\beta=x_5\gamma$. We have that $x_5\beta,x_5\gamma\in\set{x_4,x_5}$. If $x_5\gamma=x_4$, then we have $\parens{x_5\beta}\gamma=\parens{x_5\gamma}\beta=x_4\beta\in\set{x_1,x_2,x_3}$, which implies that $x_5\beta\in\set{x_1,x_2,x_3,x_4}\cap\set{x_4,x_5}$ and, consequently, $x_5\beta=x_4=x_5\gamma$. If $x_5\gamma=x_5$, then we have $\parens{x_5\beta}\gamma=\parens{x_5\gamma}\beta=x_5\beta\in\set{x_4,x_5}$, which implies that $x_5\beta=x_5=x_5\gamma$. Therefore $\beta=\gamma$ and, consequently, we must have $\abs{S\cap A_5}\leqslant 1$.
 	
 	\medskip
 	
 	\textbf{Part 9.} Proving that $\abs{S\cap A_6}\leqslant 1$ is analogous to proving that $\abs{S\cap A_5}\leqslant 1$ (which was established in part 8).
 	
 	\medskip
 	
 	\textbf{Part 10.} We are going to see that $\abs{S\cap A_7}\leqslant 2$. Let $\beta_1,\beta_2,\beta_3\in A_7$ be such that $x_4\beta_1=x_5\beta_1=x_4$, $x_4\beta_2=x_5\beta_2=x_5$, $x_4\beta_3=x_5$ and $x_5\beta_3=x_4$. Then $A_7=\set{\id{X},\beta_1,\beta_2,\beta_3}$. We have that $\id{X}$ commutes with $\beta_1,\beta_2,\beta_3$ and we have that $\beta_1,\beta_2,\beta_3$ do not commute with each other (because
 	\begin{gather*}
 		x_4\beta_1\beta_2=x_4\beta_2=x_5\neq x_4=x_5\beta_1=x_4\beta_2\beta_1\\
 		\shortintertext{and}
 		x_4\beta_1\beta_3=x_4\beta_3=x_5\neq x_4=x_5\beta_1=x_4\beta_3\beta_1\\
 		\shortintertext{and}
 		x_4\beta_2\beta_3=x_5\beta_3=x_4\neq x_5=x_5\beta_2=x_4\beta_3\beta_2). 
 	\end{gather*}
 	Therefore we must have $\abs{S\cap A_7}\leqslant 2$.
 	
 	\medskip
 	
 	\textbf{Part 11.} At last, we can proceed with demonstrating that $\abs{S}<2^{\abs{X}-1}$. We divide this proof into four cases, which we present below.
 	
 	\smallskip
 	
 	\textit{Case 1:} Suppose that $S\cap A_2\neq\emptyset$. Then
 	\begin{align*}
 		\abs{S} &=\sum_{j=1}^7 \abs{S\cap A_j} &\bracks{\text{by part 1}}\\
 		&\leqslant 9+12+5+1+1+1+2 &\bracks{\text{by parts 2, 5, 6, 7, 8, 9, 10}}\\
 		&=31\\
 		&<2^{\abs{X}-1}. &\bracks{\text{since } \abs{X}=6}
 	\end{align*}
 	
 	\smallskip
 	
 	\textit{Case 2:} Suppose that $S\cap A_2=\emptyset$ and $S\cap A_3\neq\emptyset$. Then
 	\begin{align*}
 		\abs{S} &=\sum_{j=1}^7 \abs{S\cap A_j} &\bracks{\text{by part 1}}\\
 		&=\abs{S\cap A_1}+\sum_{j=3}^7 \abs{S\cap A_j} &\bracks{\text{since } S\cap A_2=\emptyset}\\
 		&\leqslant 9+5+1+1+1+2 &\bracks{\text{by parts 3, 6, 7, 8, 9, 10}}\\
 		&=19\\
 		&<2^{\abs{X}-1}. &\bracks{\text{since } \abs{X}=6}
 	\end{align*}
 	
 	\smallskip
 	
 	\textit{Case 3:} Suppose that $S\cap A_2=S\cap A_3=\emptyset$ and $S\cap A_4\neq\emptyset$. Then
 	\begin{align*}
 		\abs{S} &=\sum_{j=1}^7 \abs{S\cap A_j} &\bracks{\text{by part 1}}\\
 		&=\abs{S\cap A_1}+\sum_{j=4}^7 \abs{S\cap A_j} &\bracks{\text{since } S\cap A_2=S\cap A_3=\emptyset}\\
 		&\leqslant 3+1+1+1+2 &\bracks{\text{by parts 4, 7, 8, 9, 10}}\\
 		&=8\\
 		&<2^{\abs{X}-1}. &\bracks{\text{since } \abs{X}=6}
 	\end{align*}
 	
 	\smallskip
 	
 	\textit{Case 4:} Suppose that $S\cap A_2=S\cap A_3=S\cap A_4=\emptyset$. We observe that $\abs{S\cap A_1}\leqslant \abs{A_1}\leqslant 3\cdot 3\cdot 3=27$. Then
 	\begin{align*}
 		\abs{S} &=\sum_{j=1}^7 \abs{S\cap A_j} &\bracks{\text{by part 1}}\\
 		&=\abs{S\cap A_1}+\sum_{j=5}^7 \abs{S\cap A_j} &\bracks{\text{since } S\cap A_2=S\cap A_3=S\cap A_4=\emptyset}\\
 		&\leqslant 27+1+1+2 &\bracks{\text{by parts 8, 9, 10}}\\
 		&=31\\
 		&<2^{\abs{X}-1}. &\bracks{\text{since } \abs{X}=6} &\qedhere
 	\end{align*}
 \end{proof}
 
 At last we characterize the maximum-order commutative subsemigroups of $\tr{X}$ when $\abs{X}\leqslant 6$. Moreover, we give a lower bound for the maximum size of a commutative subsemigroups of $\tr{X}$ when $\abs{X}\geqslant 7$.
 
 \begin{theorem}\label{T(X): maximum size comm smg}
 	\begin{enumerate}
 		\item Suppose that $\abs{X}\leqslant 6$. Then the maximum size of a commutative subsemigroup of $\tr{X}$ is $2^{\abs{X}-1}$. Moreover,
 		\begin{enumerate}
 			\item If $\abs{X}\neq 2$, then the maximum-order commutative subsemigroups of $\tr{X}$ are precisely the semigroups of idempotents $\commidemp{X}{x}$, where $x\in X$.
 			
 			\item If $\abs{X}=2$, then the maximum-order commutative subsemigroups of $\tr{X}$ are the semigroups of idempotents $\commidemp{X}{x}$, where $x\in X$, and the subgroup of $\sym{X}$ isomorphic to $C_2$.
 		\end{enumerate}
 		
 		\item Suppose that $\abs{X}\geqslant 7$. Then the maximum size of a commutative subsemigroup of $\tr{X}$ is at least $\maxnull{\abs{X}}+1$.
 	\end{enumerate}
 \end{theorem}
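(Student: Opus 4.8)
The plan is to prove the two parts by quite different means: part~(2) is a one-line explicit construction, while part~(1) is a strong induction on $\abs{X}$ that assembles all the lemmata of this section.

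For part~(2) I would adjoin the identity to a largest null semigroup. By Theorems~\ref{maximum size null semigroup} and~\ref{null semigroups of maximum size} the set $N=\nulltr{X}{x_1}{x_{\alphanull{\abs{X}}}}$ is a null (hence commutative) subsemigroup of $\tr{X}$ of size $\maxnull{\abs{X}}$ all of whose elements have image contained in $\set{x_1,\ldots,x_{\alphanull{\abs{X}}}}$. Since $\alphanull{\abs{X}}<\abs{X}$ we have $\id{X}\notin N$, and because $\id{X}$ commutes with every transformation while $\id{X}\beta=\beta\id{X}=\beta\in N$ for $\beta\in N$, the set $N\cup\set{\id{X}}$ is a commutative subsemigroup of $\tr{X}$ of size $\maxnull{\abs{X}}+1$. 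Hence the maximum size is at least $\maxnull{\abs{X}}+1$.

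For part~(1) I would argue by strong induction on $\abs{X}$, checking $\abs{X}\in\set{1,2}$ by hand (the case $\abs{X}=2$ producing the exceptional $C_2$). So fix $3\leqslant\abs{X}\leqslant 6$, let $S$ be a largest commutative subsemigroup, and note $\abs{S}\geqslant 2^{\abs{X}-1}$ by Proposition~\ref{T(X): Gamma^X_i comm smg idemp}. Lemma~\ref{T(X): upper bound comm subsmg S(X)} forces $S\nsubseteq\sym{X}$, so $\setclass{S}{X}\neq\emptyset$ by Lemma~\ref{T(X): existence of I, b|_(X/I)}; take $I\in\setclass{S}{X}$ of minimum size and put $S'=\gset{\beta|_{X\setminus I}}{\beta\in S}$, a commutative subsemigroup of $\tr{X\setminus I}$ of size at most $2^{\abs{X}-\abs{I}-1}$ by the inductive hypothesis. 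Lemma~\ref{T(X): upper bound S, |I|>2} rules out $\abs{I}\geqslant 3$, and Lemma~\ref{T(X): upper bound S, |I|=2} rules out $\abs{I}=2$: for $\abs{X}\leqslant 5$ directly from part~2, and for $\abs{X}=6$ by combining part~2 (when $\abs{S'}<2^{\abs{X}-3}$) with part~3 and the inductive characterization (a maximal $S'$ on a four-element set would have to be some $\commidemp{X\setminus I}{x}$, contradicting part~3). Hence $\abs{I}=1$, say $I=\set{i}$.

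Writing $S_\gamma=\gset{\beta\in S}{\beta|_{X\setminus\set{i}}=\gamma}$ for $\gamma\in S'$, these sets partition $S$, and no $S_\gamma$ can contain three distinct elements: for $\abs{X}\in\set{4,5,6}$ this is exactly Lemmata~\ref{T(X): |X|=4, upper bound S when 3 copies of alpha}, \ref{T(X): |X|=5, upper bound S when 3 copies of alpha} and~\ref{T(X): |X|=6, upper bound S when 3 copies of alpha}, while for $\abs{X}=3$ it follows from part~1 of Lemma~\ref{T(X): lemma upper bound S when 3 copies of alpha}, three copies requiring three distinct points of the two-element set $X\setminus\set{i}$. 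Thus $\abs{S_\gamma}\leqslant 2$ and
\[
2^{\abs{X}-1}\leqslant\abs{S}=\sum_{\gamma\in S'}\abs{S_\gamma}\leqslant 2\abs{S'}\leqslant 2\cdot 2^{\abs{X}-2}=2^{\abs{X}-1},
\]
forcing $\abs{S}=2^{\abs{X}-1}$, $\abs{S'}=2^{\abs{X\setminus\set{i}}-1}$ and $\abs{S_\gamma}=2$ for every $\gamma$. In particular $S'$ is a largest commutative subsemigroup of $\tr{X\setminus\set{i}}$, so by induction $S'=\commidemp{X\setminus\set{i}}{x}$ for some $x$, the possibility $S'\cong C_2$ when $\abs{X}=3$ being excluded by a short direct check that the candidate lifts over $\set{\id,(p\,q)}$ cannot give four pairwise commuting maps.

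It remains to lift $S'=\commidemp{X\setminus\set{i}}{x}$ to $S=\commidemp{X}{x}$, and this is the step I expect to be the main obstacle, since $S$ need not consist of idempotents, so the clean counting argument of Theorem~\ref{T(X): maximum size comm smg idemp} is unavailable. From $S'$ one already has $x\beta=x$ and $y\beta\in\set{x,y}$ for $y\in X\setminus\set{x,i}$, so everything reduces to proving $i\beta\in\set{x,i}$ for all $\beta$. The plan is: (i) on the fibre of $\id{X\setminus\set{i}}$, observe both preimages are idempotent and, if neither were $\id{X}$, they would fail to commute; hence $\id{X}\in S$ and the other preimage $\eta$ fixes $X\setminus\set{i}$ and sends $i$ to some $w\in X\setminus\set{i}$; (ii) on the fibre of the constant-to-$x$ map, squaring shows one preimage is the total constant $c_x$, and for the other, $\beta'$, the value $i\beta'\in X\setminus\set{i,x}$ is ruled out by the computation $\beta'\eta=\beta'\neq c_x=\eta\beta'$, contradicting commutativity, so $\im\beta'=\set{x,i}$; (iii) apply Lemma~\ref{T(X): ab=ba => xb€Im a for all x€Im a} with this $\beta'$ to conclude $i\beta\in\im\beta'=\set{x,i}$ for all $\beta$, whence $S\subseteq\commidemp{X}{x}$ and then $S=\commidemp{X}{x}$ by equality of sizes. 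Verifying the non-commuting identity in step~(ii), together with the two exceptional small configurations ($\abs{X}=2$ and the $C_2$ possibility for $S'$), is where the care is concentrated.
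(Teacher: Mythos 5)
Your proposal is correct and follows the paper's strategy almost step for step: part (2) is the same adjoin-the-identity construction, and part (1) uses the same induction with the same chain of reductions (Lemma~\ref{T(X): upper bound comm subsmg S(X)} to get $S\nsubseteq\sym{X}$, Lemmata~\ref{T(X): upper bound S, |I|>2} and \ref{T(X): upper bound S, |I|=2} to force $\abs{I}=1$, the three ``three copies'' lemmata plus part~1 of Lemma~\ref{T(X): lemma upper bound S when 3 copies of alpha} to get $\abs{S_\gamma}\leqslant 2$, and the same counting chain forcing all equalities). Two local differences are worth recording. First, you induct on the full statement of the theorem, whereas the paper inducts on the narrower class $\mathcal{C}^Y$ of commutative subsemigroups not contained in $\sym{Y}$; your version makes the exclusion of $\abs{I}=2$ when $\abs{X}=6$ marginally cleaner (a maximal $S'$ on a four-element set would have to be some $\commidemp{X\setminus I}{x}$, contradicting part~3 of Lemma~\ref{T(X): upper bound S, |I|=2}), but it obliges you to exclude the $C_2$ possibility for $S'$ when $\abs{X}=3$ --- your sketched check is exactly the computation the paper performs when proving $S'\in\mathcal{C}^{X\setminus\set{i}}$, so nothing is lost. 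Second, in the final lifting step the paper invokes Lemma~\ref{T(X): lemma copies of alphas's} (once one fibre contains an element fixing $i$, every fibre does) to produce $\gamma_1\in S$ with $\im\gamma_1=\set{x,i}$, whereas you reach the same transformation by a self-contained argument: both elements of the fibre over $\id{X\setminus\set{i}}$ are idempotent and two non-identity lifts would fail to commute, so that fibre is $\set{\id{X},\eta}$ with $i\eta\in X\setminus\set{i}$; then in the fibre over the constant map one element is the full constant $c_x$ (its square lies in the same fibre), and for the other element $\beta'$ the possibility $i\beta'\in X\setminus\set{i,x}$ is killed by $\beta'\eta=\beta'\neq c_x=\eta\beta'$ (where $c_x$ denotes the constant map onto $\set{x}$; the identity $\eta\beta'=c_x$ holds because $i\eta\in X\setminus\set{i}$ and $\beta'$ sends $X\setminus\set{i}$ to $x$). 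These computations are valid, so your variant is a legitimate replacement for Lemma~\ref{T(X): lemma copies of alphas's}; both routes then finish identically by applying Lemma~\ref{T(X): ab=ba => xb€Im a for all x€Im a} to conclude $i\beta\in\set{x,i}$ for all $\beta\in S$ and hence $S=\commidemp{X}{x}$ by the size count.
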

 
 For the definition of the semigroups $\commidemp{X}{x}$, where $x\in X$, see \eqref{T(X): Gamma^X_i definition}.
 
 \begin{proof}
 	\textbf{Part 1.} Suppose that $\abs{X}\leqslant 6$. Let $\mathcal{C}^X$ be the class formed by the commutative subsemigroups of $\tr{X}$ that are not contained in $\sym{X}$. 
 	
 	Suppose that $\abs{X}=1$, then $S=\set{\id{X}}=\commidemp{X}{\id{X}}=\tr{X}$ and $\abs{S}=1=2^{\abs{X}-1}$.
 	
 	Suppose that $\abs{X}=2$ and $X=\set{x_1,x_2}$. Then we have
 	\begin{displaymath}
 		\tr{X}=\set[\bigg]{\begin{pmatrix}
 				x_1 & x_2\\
 				x_1 & x_1
 			\end{pmatrix}, \begin{pmatrix}
 				x_1 & x_2\\
 				x_2 & x_2
 			\end{pmatrix}, \begin{pmatrix}
 				x_1 & x_2\\
 				x_2 & x_1
 			\end{pmatrix}, \id{X}}
 	\end{displaymath}
 	and it is easy to see that there are no distinct transformations in $\tr{X}\setminus\set{\id{X}}$ that commute. Consequently, $\abs{S}\leqslant 2=2^{\abs{X}-1}$ and the largest commutative subsemigroups of $\tr{X}$ are
 	\begin{displaymath}
 		\commidemp{X}{x_1}=\set[\bigg]{\begin{pmatrix}
 				x_1 & x_2\\
 				x_1 & x_1
 			\end{pmatrix}, \id{X}} \; \text{ and } \;
 		\commidemp{X}{x_2}=\set[\bigg]{\begin{pmatrix}
 				x_1 & x_2\\
 				x_2 & x_2
 			\end{pmatrix}, \id{X}} \; \text{ and } \;
 		C_2\simeq\set[\bigg]{\begin{pmatrix}
 				x_1 & x_2\\
 				x_2 & x_1
 			\end{pmatrix}, \id{X}}.
 	\end{displaymath}
 	
 	Now suppose that $3\leqslant \abs{X}\leqslant 6$. It follows from
 	Proposition~\ref{T(X): Gamma^X_i comm smg idemp} that there are commutative subsemigroups of $\tr{X}$ of size $2^{\abs{X}-1}$. Moreover, in Lemma~\ref{T(X): upper bound comm subsmg S(X)} we saw that any commutative subsemigroup of $\tr{X}$ contained in $\sym{X}$ has size at most $2^{\abs{X}-1}-1$. Hence the maximum-order commutative subsemigroups of $\tr{X}$ are not contained in $\sym{X}$ and, consequently, finding the maximum-order commutative subsemigroups of $\tr{X}$ is equivalent to finding the maximum-order semigroups in $\mathcal{C}^X$, which is what we will do below.
 	
 	
 	
 	We are going to prove that, when $2\leqslant \abs{X}\leqslant 6$, the maximum-order semigroups in $\mathcal{C}^X$ are precisely the semigroups $\commidemp{X}{x}$, where $x\in X$, which have size $2^{\abs{X}-1}$ (see Proposition~\ref{T(X): Gamma^X_i comm smg idemp}). We will prove this result by induction on the size of $\abs{X}$.
 	
 	Assume that $\abs{X}=2$. We saw earlier that the largest commutative subsemigroups of $\tr{X}$ that are not contained in $\sym{X}$ are precisely the semigroups of idempotents $\commidemp{X}{x}$, where $x\in X$; that is, the largest semigroups in $\mathcal{C}^X$ are precisely the semigroups $\commidemp{X}{x}$, where $x\in X$.
 	
 	Now assume that $3\leqslant\abs{X}\leqslant 6$ and assume that, for all set $Y$ such that $2\leqslant\abs{Y}<\abs{X}$, the maximum-order semigroups in $\mathcal{C}^Y$ are the semigroups $\commidemp{Y}{x}$ of size $2^{\abs{Y}-1}$, where $x\in Y$.
 	
 	Let $S$ be a maximum-order semigroup in $\mathcal{C}^X$. We have that $S\nsubseteq\sym{X}$ and, consequently, Lemma~\ref{T(X): existence of I, b|_(X/I)} ensures that $\setclass{S}{X}\neq\emptyset$. Let $I\in\setclass{S}{X}$ be of minimum size and let $S'=\gset{\beta|_{X\setminus I}}{\beta\in S}$. We have that $I$ is a non-empty proper subset of $X$ and that $\beta|_{X\setminus I}\in\tr{X\setminus I}$ for all $\beta\in S$. Furthermore, $S$ is a commutative subsemigroup of $\tr{X}$. Hence Lemma~\ref{T(X): lemma induction} implies that $S'$ is a commutative subsemigroup of $\tr{X\setminus I}$.
 	
 	Before continuing with the proof of Theorem~\ref{T(X): maximum size comm smg}, we establish the following lemma, which states that $2^{\abs{X}-\abs{I}-1}$ is an upper bound for the size of $\abs{S'}$, and that $2^{\abs{X}-1}$ is a lower bound for the size of $\abs{S}$.
 	
 	\begin{lemma}\label{T(X): largest cmm smg, lemma |S'|<=}
 		We have that $\abs{S'}\leqslant 2^{\abs{X}-\abs{I}-1}$ and $\abs{S}\geqslant 2^{\abs{X}-1}$.
 	\end{lemma}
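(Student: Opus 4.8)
The plan is to prove the two inequalities separately. The lower bound $\abs{S}\geqslant 2^{\abs{X}-1}$ is essentially free: I would fix any $x\in X$ and note that, by Proposition~\ref{T(X): Gamma^X_i comm smg idemp}, $\commidemp{X}{x}$ is a commutative subsemigroup of $\tr{X}$ of size $2^{\abs{X}-1}$. Since $\abs{X}\geqslant 3$ in the inductive step, $\commidemp{X}{x}$ contains non-bijective transformations and so is not contained in $\sym{X}$; hence $\commidemp{X}{x}\in\mathcal{C}^X$. As $S$ is a maximum-order semigroup in $\mathcal{C}^X$, this forces $\abs{S}\geqslant\abs{\commidemp{X}{x}}=2^{\abs{X}-1}$.

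For the upper bound $\abs{S'}\leqslant 2^{\abs{X}-\abs{I}-1}$, I would set $Y=X\setminus I$ and $m=\abs{Y}=\abs{X}-\abs{I}$, so that $1\leqslant m<\abs{X}$ (as $I$ is a non-empty proper subset of $X$) and, as already observed, $S'$ is a commutative subsemigroup of $\tr{Y}$; the target inequality then reads $\abs{S'}\leqslant 2^{m-1}$. The idea is to reduce to the smaller set $Y$ and invoke the induction hypothesis, but the latter only controls semigroups that are \emph{not} contained in $\sym{Y}$, so I would split into cases. If $m=1$, then $\tr{Y}=\set{\id{Y}}$ and $\abs{S'}\leqslant 1=2^{m-1}$. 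If $m\geqslant 2$ and $S'\nsubseteq\sym{Y}$, then $S'\in\mathcal{C}^Y$ and, because $2\leqslant m<\abs{X}$, the induction hypothesis gives $\abs{S'}\leqslant 2^{m-1}$.

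The remaining case, $S'\subseteq\sym{Y}$ with $m\geqslant 2$, is where the induction hypothesis does not apply and is the main (indeed the only) obstacle. Here I would handle $m\geqslant 3$ by Lemma~\ref{T(X): upper bound comm subsmg S(X)}, which gives the strict bound $\abs{S'}<2^{m-1}$ for any commutative subsemigroup of $\sym{Y}$; and I would dispose of $m=2$ by the elementary count $\abs{S'}\leqslant\abs{\sym{Y}}=2=2^{m-1}$. Collecting all cases yields $\abs{S'}\leqslant 2^{m-1}=2^{\abs{X}-\abs{I}-1}$, which together with the lower bound completes the lemma.
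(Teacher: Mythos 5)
Your proposal is correct and follows essentially the same route as the paper: the lower bound via $\commidemp{X}{x}\in\mathcal{C}^X$ and maximality of $S$, and the upper bound by splitting on whether $S'\subseteq\sym{X\setminus I}$, using Lemma~\ref{T(X): upper bound comm subsmg S(X)} in the symmetric case and the induction hypothesis otherwise. The only cosmetic difference is that you dispose of the small cases $\abs{X\setminus I}\in\set{1,2}$ by direct counting, whereas the paper cites its earlier explicit treatment of $\abs{X}\in\set{1,2}$; the substance is identical.
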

 	
 	\begin{proof}
 		
 		First, we will show that $\abs{S'}\leqslant 2^{\abs{X}-\abs{I}-1}$.
 		
 		\smallskip
 		
 		\textit{Case 1:} Assume that $S'\subseteq\sym{X\setminus I}$. If $\abs{X\setminus I}\in\set{1,2}$, then it follows from what we proved earlier that $\abs{S'}\leqslant 2^{\abs{X\setminus I}-1}=2^{\abs{X}-\abs{I}-1}$. If $\abs{X\setminus I}\geqslant 3$, then Lemma~\ref{T(X): upper bound comm subsmg S(X)} implies that $\abs{S'}\leqslant 2^{\abs{X\setminus I}-1}=2^{\abs{X}-\abs{I}-1}$.
 		
 		\smallskip
 		
 		\textit{Case 2:} Assume that $S'\nsubseteq\sym{X\setminus I}$. Then $S'\in \mathcal{C}^{X\setminus I}$ and $\abs{X\setminus I}\geqslant 2$ (we note that $\mathcal{C}^Y$ is empty if $\abs{Y}=1$). By the induction hypothesis we have that $\abs{S'}\leqslant 2^{\abs{X\setminus I}-1}=2^{\abs{X}-\abs{I}-1}$.
 		
 		\smallskip
 		
 		In both cases we established that $\abs{S'}\leqslant 2^{\abs{X}-\abs{I}-1}$.

 		Now we will show that $\abs{S}\geqslant 2^{\abs{X}-1}$. Let $x\in X$. It follows from Proposition~\ref{T(X): Gamma^X_i comm smg idemp} that $\commidemp{X}{x}$ is a commutative subsemigroup of $\tr{X}$ of size $2^{\abs{X}-1}$. Additionally, $\commidemp{X}{x}\nsubseteq\sym{X}$ because $\commidemp{X}{x}$ contains a transformation of rank $1$. Hence $\commidemp{X}{j}\in\mathcal{C}^X$ and, consequently, the maximality of the size of $S$ implies that $\abs{S}\geqslant\abs{\commidemp{X}{j}}= 2^{\abs{X}-1}$.
 	\end{proof}
 	
 	Our next goal is to ascertain that $\abs{I}=1$, which is proved in the next lemma.
 	
 	\begin{lemma}\label{T(X): largest cmm smg, lemma |I|=1}
 		We have that $\abs{I}=1$.
 	\end{lemma}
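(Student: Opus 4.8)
The plan is to argue by contradiction, ruling out every value $\abs{I}\geqslant 2$ (recall that $1\leqslant\abs{I}\leqslant\abs{X}-1$, since $I$ is a non-empty proper subset of $X$). The engine driving each contradiction is the lower bound $\abs{S}\geqslant 2^{\abs{X}-1}$ supplied by Lemma~\ref{T(X): largest cmm smg, lemma |S'|<=}, together with the matching upper bound $\abs{S'}\leqslant 2^{\abs{X}-\abs{I}-1}$ from the same lemma. In each case the goal is to combine the bound on $\abs{S'}$ with the appropriate size estimate for $\abs{S}$ to deduce $\abs{S}<2^{\abs{X}-1}$, contradicting the lower bound.

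First I would dispose of the case $\abs{I}\geqslant 3$. Since $\abs{S'}\leqslant 2^{\abs{X}-\abs{I}-1}$ by Lemma~\ref{T(X): largest cmm smg, lemma |S'|<=}, the hypothesis of the second assertion of Lemma~\ref{T(X): upper bound S, |I|>2} is met, and that lemma immediately yields $\abs{S}<2^{\abs{X}-1}$, the desired contradiction. The remaining case $\abs{I}=2$ is where the real work lies, and I would split it according to $\abs{X}$. When $\abs{X}\leqslant 5$, the bound $\abs{S'}\leqslant 2^{\abs{X}-3}=2^{\abs{X}-\abs{I}-1}$ together with part 2 of Lemma~\ref{T(X): upper bound S, |I|=2} gives $\abs{S}<2^{\abs{X}-1}$; and when $\abs{X}=6$ with the inequality $\abs{S'}\leqslant 2^{\abs{X}-3}=8$ strict, the same part 2 applies.

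The one genuinely delicate situation, and the main obstacle, is $\abs{X}=6$ with $\abs{S'}=8=2^{\abs{X\setminus I}-1}$, since here part 2 of Lemma~\ref{T(X): upper bound S, |I|=2} requires a strict inequality and fails to apply. To handle it I would first note that $S'$ cannot lie inside $\sym{X\setminus I}$: otherwise, as $\abs{X\setminus I}=4\geqslant 3$, Lemma~\ref{T(X): upper bound comm subsmg S(X)} would force $\abs{S'}<2^{\abs{X\setminus I}-1}=8$. Thus $S'\in\mathcal{C}^{X\setminus I}$, and since $\abs{S'}=2^{\abs{X\setminus I}-1}$ is the maximum order attained in $\mathcal{C}^{X\setminus I}$, the induction hypothesis of Theorem~\ref{T(X): maximum size comm smg} identifies $S'=\commidemp{X\setminus I}{x}$ for some $x\in X\setminus I$. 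This directly contradicts part 3 of Lemma~\ref{T(X): upper bound S, |I|=2}, which asserts precisely that $S'\neq\commidemp{X\setminus I}{x}$ for every $x\in X\setminus I$ when $\abs{X}=6$. Having reached a contradiction for every $\abs{I}\geqslant 2$, I conclude that $\abs{I}=1$.
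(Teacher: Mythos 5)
Your proof is correct and follows essentially the same route as the paper: both use Lemma~\ref{T(X): largest cmm smg, lemma |S'|<=} to get $\abs{S}\geqslant 2^{\abs{X}-1}$ and $\abs{S'}\leqslant 2^{\abs{X}-\abs{I}-1}$, eliminate $\abs{I}\geqslant 3$ via Lemma~\ref{T(X): upper bound S, |I|>2}, reduce the case $\abs{I}=2$ to $\abs{X}=6$ via part 2 of Lemma~\ref{T(X): upper bound S, |I|=2}, and then dispose of that case by showing $S'\nsubseteq\sym{X\setminus I}$ (Lemma~\ref{T(X): upper bound comm subsmg S(X)}), invoking the induction hypothesis to identify $S'$ with some $\commidemp{X\setminus I}{x}$, and contradicting part 3 of Lemma~\ref{T(X): upper bound S, |I|=2}. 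The only difference is cosmetic: the paper packages the $\abs{X}=6$ argument as a proof that $\abs{S'}<2^{\abs{X}-3}$ and then applies part 2 once more, whereas you reach the contradiction directly from part 3.
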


 	\begin{proof}
 		Assume, with the aim of obtaining a contradiction, that $\abs{I}>1$. We have that $\abs{X}\leqslant 6$ and $\abs{S'}\leqslant 2^{\abs{X}-\abs{I}-1}$ and $\abs{S}\geqslant 2^{\abs{X}-1}$ (by Lemma~\ref{T(X): largest cmm smg, lemma |S'|<=}). Hence, by Lemma~\ref{T(X): upper bound S, |I|>2}, we must have $\abs{I}<3$ and, consequently, $\abs{I}=2$. Moreover, the fact that $\abs{S'}\leqslant 2^{\abs{X}-\abs{I}-1}=2^{\abs{X}-3}$ and $\abs{S}\geqslant 2^{\abs{X}-1}$, together with part 2 of Lemma~\ref{T(X): upper bound S, |I|=2}, implies that $\abs{X}>5$. Therefore $\abs{X}=6$. We are going to see that $\abs{S'}<2^{\abs{X}-3}$.
 		
 		\smallskip
 		
 		\textit{Case 1:} Assume that $S'\subseteq\sym{X\setminus I}$. Since $\abs{X}=6$ and $\abs{I}=2$, then we have that $\abs{X\setminus I}=4\geqslant 3$. Hence, by Lemma~\ref{T(X): upper bound comm subsmg S(X)}, we have $\abs{S'}<2^{\abs{X\setminus I}-1}=2^{\abs{X}-3}$.
 		
 		\smallskip
 		
 		\textit{Case 2:} Assume that $S'\nsubseteq\sym{X\setminus I}$. Then $S'\in\mathcal{C}^{X\setminus I}$. Due to the fact that $\abs{X\setminus I}=4\geqslant 2$, we can  use the induction hypothesis to conclude that the semigroups $\commidemp{X\setminus I}{x}$, where $x\in X\setminus I$, are precisely the largest semigroups in $\mathcal{C}^{X\setminus I}$ (which have size $2^{\abs{X\setminus I}-1}=2^{\abs{X}-3}$). In addition, by part 3 of Lemma~\ref{T(X): upper bound S, |I|=2}, we have that $S'\neq\commidemp{X\setminus I}{x}$ for all $x\in X\setminus I$. Consequently, $S'$ is not any of the commutative semigroups in $\mathcal{C}^{X\setminus I}$ of size $2^{\abs{X}-3}$; that is, $\abs{S'}<2^{\abs{X}-3}$.
 		
 		\smallskip
 		
 		In both cases we established that $\abs{S'}<2^{\abs{X}-3}$. Hence part 2 of Lemma~\ref{T(X): upper bound S, |I|=2} implies that $\abs{S}<2^{\abs{X}-1}$, which is a contradiction (because $\abs{S}\geqslant 2^{\abs{X}-1}$). Thus $\abs{I}=1$.
 	\end{proof}
 	
 	
 	By the previous lemma we have that $I=\set{i}$ for some $i\in X$.
 	
 	For each $\gamma\in S'$ we define $S_\gamma=\gset{\beta\in S}{\beta|_{X\setminus \set{i}}=\gamma}$. It is clear that $\set{S_\gamma}_{\gamma\in S'}$ is a partition of $S$. Let $\gamma'\in S'$ be such that $\abs{S_{\gamma'}}=\max\gset{\abs{S_{\gamma}}}{\gamma\in S'}$. In the next lemma we will see that for all $\gamma\in S'$ we have $\abs{S_{\gamma}}\leqslant 2$.
 	
 	\begin{lemma}\label{T(X): largest cmm smg, lemma |S gamma|<=2}
 		For each $\gamma\in S'$ we have that $\abs{S_{\gamma}}\leqslant 2$.
 	\end{lemma}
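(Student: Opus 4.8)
The plan is to argue by contradiction, assuming that $\abs{S_\gamma}\geqslant 3$ for some $\gamma\in S'$, and to reduce everything to the three ``copies'' lemmata proved above. First I would pick three pairwise distinct transformations $\alpha_1,\alpha_2,\alpha_3\in S_\gamma$; by the very definition of $S_\gamma$ these all restrict to $\gamma$ on $X\setminus\set{i}$, so $\alpha_1|_{X\setminus\set{i}}=\alpha_2|_{X\setminus\set{i}}=\alpha_3|_{X\setminus\set{i}}$. Recall that at this stage of the proof of Theorem~\ref{T(X): maximum size comm smg} we have $I=\set{i}\in\setclass{S}{X}$ and $3\leqslant\abs{X}\leqslant 6$, so the hypotheses of Lemmata~\ref{T(X): |X|=4, upper bound S when 3 copies of alpha}, \ref{T(X): |X|=5, upper bound S when 3 copies of alpha} and \ref{T(X): |X|=6, upper bound S when 3 copies of alpha} (as well as those of Lemma~\ref{T(X): lemma upper bound S when 3 copies of alpha}) are in force.

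Since the bulk of the work has already been done in those lemmata, the argument then splits according to the value of $\abs{X}$. When $\abs{X}\in\set{4,5,6}$ I would invoke directly Lemma~\ref{T(X): |X|=4, upper bound S when 3 copies of alpha}, Lemma~\ref{T(X): |X|=5, upper bound S when 3 copies of alpha} or Lemma~\ref{T(X): |X|=6, upper bound S when 3 copies of alpha}, respectively: each of these deduces $\abs{S}<2^{\abs{X}-1}$ from the existence of three pairwise distinct copies $\alpha_1,\alpha_2,\alpha_3$. This contradicts the lower bound $\abs{S}\geqslant 2^{\abs{X}-1}$ supplied by Lemma~\ref{T(X): largest cmm smg, lemma |S'|<=}, so no such $\gamma$ can exist and $\abs{S_\gamma}\leqslant 2$.

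The one case not covered by those three lemmata is $\abs{X}=3$, and this is where a small separate observation is needed. Here $\abs{X\setminus\set{i}}=2$, and I would apply part~1 of Lemma~\ref{T(X): lemma upper bound S when 3 copies of alpha}, which guarantees that $i\alpha_1,i\alpha_2,i\alpha_3$ are three pairwise distinct elements of $X\setminus\set{i}$. Since $X\setminus\set{i}$ has only two elements, this is impossible, again contradicting $\abs{S_\gamma}\geqslant 3$. As $3\leqslant\abs{X}\leqslant 6$, these four cases are exhaustive, so in every case the assumption $\abs{S_\gamma}\geqslant 3$ leads to a contradiction; hence $\abs{S_\gamma}\leqslant 2$ for every $\gamma\in S'$.

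This final step is essentially an assembly step, with the genuine difficulty concentrated in the preceding lemmata. The only point requiring care is to notice that the tailored size estimates cover only $\abs{X}\in\set{4,5,6}$, so that the degenerate case $\abs{X}=3$ must be dispatched separately by the dimension count in part~1 of Lemma~\ref{T(X): lemma upper bound S when 3 copies of alpha} rather than by one of the cardinality bounds.
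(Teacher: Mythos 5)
Your proposal is correct and follows essentially the same route as the paper: the case $\abs{X}\in\set{4,5,6}$ is handled by the three tailored lemmata together with the lower bound $\abs{S}\geqslant 2^{\abs{X}-1}$, and the case $\abs{X}=3$ is dispatched by the pigeonhole argument via part 1 of Lemma~\ref{T(X): lemma upper bound S when 3 copies of alpha}. The only cosmetic difference is that the paper argues for the $\gamma'$ of maximal $\abs{S_{\gamma'}}$ and then deduces the bound for all $\gamma$, whereas you argue directly for an arbitrary $\gamma$; both are equally valid.
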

 	
 	\begin{proof}
 		\textit{Case 1:} Assume that $\abs{X}=3$. Assume, with the aim of obtaining a contradiction, that $\abs{S_{\gamma'}}\geqslant 3$. Let $\beta_1,\beta_2,\beta_3\in S_{\gamma'}$ be pairwise distinct. We have that $\beta_1|_{X\setminus\set{i}}=\beta_2|_{X\setminus\set{i}}=\beta_3|_{X\setminus\set{i}}=\gamma'$, which implies, by part 1 of Lemma~\ref{T(X): lemma upper bound S when 3 copies of alpha}, that $i\beta_1,i\beta_2,i\beta_3\in X\setminus\set{i}$ and are pairwise distinct. Since $\abs{X\setminus\set{i}}=2$, then we have reached a contradiction. Therefore $\abs{S_{\gamma'}}\leqslant 2$.
 		
 		\smallskip
 		
 		\textit{Case 2:} Assume that $\abs{X}\in\set{4,5,6}$. It follows from the fact that $\abs{S}\geqslant 2^{\abs{X}-1}$ and Lemmata~\ref{T(X): |X|=4, upper bound S when 3 copies of alpha}, \ref{T(X): |X|=5, upper bound S when 3 copies of alpha} and \ref{T(X): |X|=6, upper bound S when 3 copies of alpha} that there are no pairwise distinct $\beta_1,\beta_2,\beta_3\in S$ such that $\beta_1|_{X\setminus\set{i}}=\beta_2|_{X\setminus\set{i}}=\beta_3|_{X\setminus\set{i}}$. Hence there are no pairwise distinct $\beta_1,\beta_2,\beta_3\in S$ such that $\beta_1|_{X\setminus\set{i}}=\beta_2|_{X\setminus\set{i}}=\beta_3|_{X\setminus\set{i}}=\gamma'$; that is, such that $\beta_1,\beta_2,\beta_3\in S_{\gamma'}$. Therefore $\abs{S'_{\gamma'}}\leqslant 2$.
 		
 		\smallskip
 		
 		In the previous two cases we showed that $\abs{S_{\gamma'}}\leqslant 2$. Then for all $\gamma\in S'$ we have $\abs{S_\gamma}\leqslant\abs{S_{\gamma'}}\leqslant 2$. 
 	\end{proof}
 	
 	We can now resume the proof of Lemma~\ref{T(X): maximum size comm smg}. By Lemmata~\ref{T(X): largest cmm smg, lemma |S'|<=} and \ref{T(X): largest cmm smg, lemma |S gamma|<=2} we have
 	\begin{displaymath}
 		2^{\abs{X}-1}\leqslant \abs{S}=\sum_{\gamma\in S'} \abs{S_\gamma}\leqslant \sum_{\gamma\in S'} 2=\abs{S'}\cdot 2\leqslant 2^{\abs{X}-\abs{\set{i}}-1}\cdot 2=2^{\abs{X}-1}.
 	\end{displaymath}
 	Hence $\abs{S}=2^{\abs{X}-1}$ and $\abs{S'}=2^{\abs{X}-2}=2^{\abs{X\setminus\set{i}}-1}$ and $\sum_{\gamma\in S'} \abs{S_\gamma}=2^{\abs{X}-1}$. Since we also have $\abs{S_\gamma}\leqslant 2$ for all $\gamma\in S$, then we must have $\abs{S_\gamma}=2$ for all $\gamma\in S$.

 	We have that $S'$ is a commutative subsemigroup of $\tr{X\setminus\set{i}}$ of size $2^{\abs{X\setminus\set{i}}-1}$. Assume, with the aim of obtaining a contradiction, that $S'\notin\mathcal{C}^{X\setminus\set{i}}$ (that is, $S'\subseteq\sym{X\setminus\set{i}}$). We have that $3\leqslant\abs{X}\leqslant 6$, which implies that $2\leqslant\abs{X\setminus\set{i}}\leqslant 5$. Furthermore, it follows from Lemma~\ref{T(X): upper bound comm subsmg S(X)}, and the fact that $\abs{S'}=2^{\abs{X\setminus\set{i}}-1}$, that $\abs{X\setminus\set{i}}\leqslant 2$. Hence $\abs{X\setminus\set{i}}=2$ and $\abs{X}=3$. Let $x_1,x_2\in X$ be such that $X=\set{x_1,x_2,i}$. Since $S'\subseteq\sym{\set{x_1,x_2}}$ and $\abs{S'}=2^{2-1}=2$, then we have $S'=\set{\gamma,\id{X\setminus\set{i}}}$, where $\gamma=\begin{pmatrix}
 		x_1&x_2\\
 		x_2&x_1
 	\end{pmatrix}$. Let $\alpha_1,\alpha_2\in S$ be such that $S_{\gamma}=\set{\alpha_1,\alpha_2}$ (we recall that we proved earlier that $\abs{S_\gamma}=2$). We have $\alpha_1|_{X\setminus\set{i}}=\gamma=\alpha_2|_{X\setminus\set{i}}$ and, consequently, $i\alpha_1\neq i\alpha_2$. Hence $i\alpha_1\in\set{x_1,x_2}$ or $i\alpha_2\in\set{x_1,x_2}$. We can assume, without loss of generality, that $i\alpha_1=x_1$. Then $i\alpha_2\in\set{x_2,i}$ and 
 	\begin{align*}
 		x_2 &=x_1\gamma\\
 		&=x_1\alpha_2 &\bracks{\text{since } x_1\in X\setminus\set{i} \text{ and } \alpha_2|_{X\setminus\set{i}}=\gamma}\\
 		&=\parens{i\alpha_1}\alpha_2 &\bracks{\text{since } i\alpha_1=x_1}\\
 		&= \parens{i\alpha_2}\alpha_1 &\bracks{\text{since } \alpha_1,\alpha_2\in S, \text{ which is commutative}}\\
 		&=\begin{cases}
 			x_2\alpha_1 &\text{if } i\alpha_2=x_2,\\
 			i\alpha_1 &\text{if } i\alpha_2=i
 		\end{cases} \\
 		&=\begin{cases}
 			x_2\gamma &\text{if } i\alpha_2=x_2,\\
 			i\alpha_1 &\text{if } i\alpha_2=i
 		\end{cases} &\bracks{\text{since } x_2\in X\setminus\set{i} \text{ and } \alpha_1|_{X\setminus\set{i}}=\gamma}\\
 		&=x_1,
 	\end{align*}
 	which is a contradiction.
 	
 	Thus $S'\in\mathcal{C}^{X\setminus\set{i}}$ and, by the induction hypothesis, we have $S'=\commidemp{X\setminus\set{i}}{x}$ for some $x\in X\setminus\set{i}$. We are going to see that $S=\commidemp{X}{x}$. We note that it is enough to establish that $S\subseteq\commidemp{X}{x}$ because $\abs{S}=2^{\abs{X}-1}=\abs{\commidemp{X}{x}}$.
 	
 	It follows from the fact that $S'=\commidemp{X\setminus\set{i}}{x}$ that $\id{X\setminus\set{i}}\in S'$ and that there exists $\gamma\in S'$ such that $\im\gamma=\set{x}$. Moreover, $\abs{S_{\id{X\setminus\set{i}}}}=\abs{S_\gamma}=2$ and, consequently, there exist pairwise distinct $\beta_1,\beta_2,\gamma_1,\gamma_2\in S$ such that $S_{\id{X\setminus\set{i}}}=\set{\beta_1,\beta_2}$ and $S_\gamma=\set{\gamma_1,\gamma_2}$. We have that $\beta_1|_{X\setminus\set{i}}=\id{X\setminus\set{i}}=\beta_2|_{X\setminus\set{i}}$ and $\gamma_1|_{X\setminus\set{i}}=\gamma=\gamma_2|_{X\setminus\set{i}}$. The former implies that $i\beta_1\neq i\beta_2$. Hence $i\beta_1\neq i$ or $i\beta_2\neq i$. Assume, without loss of generality, that $i\beta_1\neq i$. We have that
 	\begin{align*}
 		\parens{i\beta_2}\beta_1 &=\parens{i\beta_1}\beta_2 &\bracks{\text{since } \beta_2,\beta_1\in S, \text{ which is commutative}}\\
 		&=\parens{i\beta_1}\id{X\setminus\set{i}} &\bracks{\text{since } i\beta_1\in X\setminus\set{i} \text{ and } \beta_2|_{X\setminus\set{i}}=\id{X\setminus\set{i}}}\\
 		&=i\beta_1.
 	\end{align*}
 	Then, since $i\beta_2\neq i\beta_1$, we must have $i\beta_2\neq \parens{i\beta_2}\beta_1$ and, since $y\beta_1=y\mskip 1.5mu \id{X\setminus\set{i}}=y$ for all $y\in X\setminus\set{i}$, then we must have $i\beta_2=i$. As a consequence of the fact that $\beta_1\neq\beta_2$ and $\beta_1|_{X\setminus\set{i}}=\beta_2|_{X\setminus\set{i}}$, and by Lemma~\ref{T(X): lemma copies of alphas's}, we have that for all distinct $\alpha_1,\alpha_2\in S$ such that $\alpha_1|_{X\setminus\set{i}}=\alpha_2|_{X\setminus\set{i}}$ we have $i\alpha_1=i$ or $i\alpha_2=i$. Then, since $\gamma_1\neq\gamma_2$ and $\gamma_1|_{X\setminus\set{i}}=\gamma_2|_{X\setminus\set{i}}$, we can conclude that $i\gamma_1=i$ or $i\gamma_2=i$. Assume, without loss of generality, that $i\gamma_1=i$. This implies that $\im\gamma_1=\im\gamma\cup\set{i\gamma_1}=\set{x,i}$. It follows from Lemma~\ref{T(X): ab=ba => xb€Im a for all x€Im a} that for all $\beta\in S$ we have $i\beta\in\im\gamma_1=\set{x,i}$. In addition, the fact that $S'=\commidemp{X\setminus\set{i}}{x}$ implies that $y\beta=y\beta|_{X\setminus\set{i}}\in\set{x,y}$ for all $y\in X\setminus\set{x,i}$ and $\beta\in S$, and that $x\beta=x\beta|_{X\setminus\set{i}}=x$ for all $\beta\in S$. Therefore $S\subseteq \commidemp{X}{x}$ and, consequently, $S=\commidemp{X}{x}$.
 	
 	\medskip

 	\textbf{Part 2.} Suppose that $\abs{X}\geqslant 7$. It follows from Theorem~\ref{null semigroups of maximum size} that there exists a null subemigroup $N$ of $\tr{X}$ such that $\abs{N}=\maxnull{\abs{X}}$ and the zero of $N$ has rank $1$. We have that $N$ is a commutative semigroup and $\id{X}\notin N$ (because null semigroups have a unique idempotent). Thus $N\cup\set{\id{X}}$ is a commutative subsemigroup of $\tr{X}$ of size $\maxnull{\abs{X}}+1$ and, consequently, the maximum size of a commutative subsemigroup of $\tr{X}$ is at least $\maxnull{\abs{X}}+1$.
 \end{proof}
 
 We note that, if $n\geqslant 7$, then
 \begin{equation}\label{T(X): 2^(n-1)<xi(n)+1}
 	2^{n-1}=2^6\cdot 2^{n-7}\leqslant 2^6\cdot 3^{n-7}<3^4\cdot 3^{n-7}=3^{n-3}\leqslant \maxnull{n}<\maxnull{n}+1.
 \end{equation}
 This implies that, when $\abs{X}\geqslant 7$, the maximum size of a commutative subsemigroup of $\tr{X}$ is no longer given by $2^{\abs{X}-1}$ and the semigroups of idempotents $\commidemp{X}{x}$ (where $x\in X$) are no longer the maximum-order commutative subsemigroup of $\tr{X}$.

 The last result of this section concerns the largest commutative subsemigroups of $\ptr{X}$.
 
 

 \begin{corollary}\label{P(X): largest comm smg}
 	\begin{enumerate}
 		\item Suppose that $\abs{X}\leqslant 5$. Then the maximum size of a commutative subsemigroup of $\ptr{X}$ is $2^{\abs{X}}$. Moreover, the unique maximum-order commutative subsemigroup of $\ptr{X}$ is $\idemp{\psym{X}}$.
 		
 		\item Suppose that $\abs{X}\geqslant 6$. Then the maximum size of a commutative subsemigroup of $\ptr{X}$ is at least $\maxnull{\abs{X}+1}+1$ and at most the maximum size of a commutative subsemigroup of $\tr{\newtr{X}}$.
 	\end{enumerate}
 \end{corollary}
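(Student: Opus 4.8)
The plan is to transfer the problem from $\ptr{X}$ to the full transformation semigroup on $\newtr{X}=X\cup\set{\new}$, using the isomorphism $S\cong\newtr{S}$ of Proposition~\ref{P(X): proposition P(X) --> T(Y)}, and then invoke the results already established for $\tr{\newtr{X}}$. Since $\abs{\newtr{X}}=\abs{X}+1$, the hypothesis $\abs{X}\leqslant 5$ (respectively $\abs{X}\geqslant 6$) corresponds to $\abs{\newtr{X}}\leqslant 6$ (respectively $\abs{\newtr{X}}\geqslant 7$), which is precisely the dividing line in Theorem~\ref{T(X): maximum size comm smg}. The upper bound in part 2 is then immediate: if $S$ is any commutative subsemigroup of $\ptr{X}$, then $\newtr{S}$ is a commutative subsemigroup of $\tr{\newtr{X}}$ with $\abs{\newtr{S}}=\abs{S}$, so $\abs{S}$ is at most the maximum size of a commutative subsemigroup of $\tr{\newtr{X}}$.

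For part 1 I would first bound the size. Let $S$ be a largest commutative subsemigroup of $\ptr{X}$. Then $\newtr{S}$ is a commutative subsemigroup of $\tr{\newtr{X}}$ and, since $\abs{\newtr{X}}\leqslant 6$, part 1 of Theorem~\ref{T(X): maximum size comm smg} gives $\abs{S}=\abs{\newtr{S}}\leqslant 2^{\abs{\newtr{X}}-1}=2^{\abs{X}}$; as $\idemp{\psym{X}}$ already attains $2^{\abs{X}}$ (Corollary~\ref{P(X): largest comm smg idemp}), the maximum is exactly $2^{\abs{X}}$. For uniqueness I would argue exactly as in the proof of Corollary~\ref{P(X): largest comm smg idemp}: now $\abs{\newtr{S}}=2^{\abs{\newtr{X}}-1}$ is maximal, so Theorem~\ref{T(X): maximum size comm smg} forces $\newtr{S}$ to be one of the semigroups $\commidemp{\newtr{X}}{y}$ (the possible $C_2$ occurring when $\abs{\newtr{X}}=2$ is excluded, because every element of $\newtr{S}$ fixes $\new$, whereas the nontrivial element of that $C_2$ does not). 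Since $\new$ is fixed by every element of $\newtr{S}$ and $y$ is the unique element of $\newtr{X}$ with that property inside $\commidemp{\newtr{X}}{y}$, we get $y=\new$; and then $\newtr{S}=\commidemp{\newtr{X}}{\new}$ forces each $\beta\in S$ to equal $\id{\dom\beta}$, whence $S=\idemp{\psym{X}}$.

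For the lower bound in part 2 I would adjoin the global identity to the null semigroup constructed in Section~\ref{sec: largest comm smg 1 idemp T(X)}. Fix $B\subseteq X$ with $\abs{B}=\alphanull{\abs{X}+1}-1$; by Proposition~\ref{P(X): Omega^X_B}, $\nullptr{X}{B}$ is a null subsemigroup of $\ptr{X}$ of size $\maxnull{\abs{X}+1}$. Because $\abs{X}\geqslant 6$ gives $\abs{B}\geqslant 1$, every element of $\nullptr{X}{B}$ has domain contained in $X\setminus B\subsetneq X$, so $\id{X}\notin\nullptr{X}{B}$; moreover $\id{X}$ is the identity of the monoid $\ptr{X}$ and hence commutes with everything. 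Therefore $\nullptr{X}{B}\cup\set{\id{X}}$ is a commutative subsemigroup of $\ptr{X}$ of size $\maxnull{\abs{X}+1}+1$, giving the claimed lower bound.

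The only genuinely delicate point is the exceptional value $\abs{\newtr{X}}=2$ (i.e.\ $\abs{X}=1$) in the uniqueness argument of part 1, where Theorem~\ref{T(X): maximum size comm smg} admits the extra cyclic semigroup $C_2$; this is disposed of by the observation that all transformations in $\newtr{S}$ fix $\new$, which the transposition generating $C_2$ does not. Everything else is a routine application of the already-proved structural theorems, with the slight care needed to verify that the adjoined identity $\id{X}$ is both new and central.
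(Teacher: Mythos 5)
Your proof is correct and follows essentially the same route as the paper: transfer to $\tr{\newtr{X}}$ via Proposition~\ref{P(X): proposition P(X) --> T(Y)}, apply Theorem~\ref{T(X): maximum size comm smg}, and adjoin $\id{X}$ to $\nullptr{X}{B}$ for the lower bound in part 2. The only cosmetic differences are that the paper treats $\abs{X}=1$ separately and directly (so it never needs your exclusion of the exceptional $C_2$ case of Theorem~\ref{T(X): maximum size comm smg}), and that for uniqueness the paper merely deduces that $S$ consists of idempotents and then cites Corollary~\ref{P(X): largest comm smg idemp} as a black box, whereas you re-run that corollary's internal argument, identifying $\newtr{S}=\commidemp{\newtr{X}}{\new}$ and concluding $\beta=\id{\dom\beta}$ for each $\beta\in S$.
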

 
 \begin{proof}
 	\textbf{Part 1.} Suppose that $\abs{X}=1$. Then $\ptr{X}=\set{\emptyset,\id{X}}=\idemp{\psym{X}}$, which is a commutative semigroup. Then the maximum size of a commutative subsemigroup of $\ptr{X}$ is $2$ and the unique semigroup that achieves that size is $\idemp{\psym{X}}$.
 	
 	Now suppose that $2\leqslant\abs{X}\leqslant 5$. Let $S$ be a largest commutative subsemigroup of $\ptr{X}$. Due to the fact that $\idemp{\psym{X}}$ is a commutative subsemigroup of $\ptr{X}$ of size $2^{\abs{X}}$, we have that $\abs{S}\geqslant 2^{\abs{X}}$. Moreover, Proposition~\ref{P(X): proposition P(X) --> T(Y)} implies that $\newtr{S}$ is a subsemigroup of $\tr{\newtr{X}}$ isomorphic to $S$. Hence $\newtr{S}$ is also commutative. In addition, we have that $3\leqslant\abs{\newtr{X}}\leqslant 6$ (because $\abs{X}\leqslant 5$). Consequently, by Theorem~\ref{T(X): maximum size comm smg}, we have that
 	\begin{displaymath}
 		2^{\abs{\newtr{X}}-1}=2^{\abs{X}}\leqslant\abs{S}=\abs{\newtr{S}}\leqslant 2^{\abs{\newtr{X}}-1},
 	\end{displaymath}
 	which implies that $\abs{\newtr{S}}=2^{\abs{\newtr{X}}-1}$. Therefore, by Theorem~\ref{T(X): maximum size comm smg}, we have that $\newtr{S}$ is a commutative semigroup of idempotents. Since $S\simeq\newtr{S}$, then $S$ is also a commutative semigroup of idempotents and it has size $2^{\abs{X}}$. Thus Corollary~\ref{P(X): largest comm smg idemp} implies that $S=\idemp{\psym{X}}$.
 	
 	\medskip
 	
 	\textbf{Part 2.} Suppose that $\abs{X}\geqslant 6$. Let $m$ and $n$ be the maximum sizes of commutative subsemigroups of $\ptr{X}$ and $\tr{\newtr{X}}$, respectively.
 	
 	First we will show that $m\geqslant \maxnull{\abs{X}+1}+1$. Let $B\subseteq X$ be such that $\abs{B}=\alphanull{\abs{X}+1}-1$. By Proposition~\ref{P(X): Omega^X_B} we have that $\nullptr{X}{B}$ is a null subsemigroup of $\ptr{X}$, which implies that $\nullptr{X}{B}$ is a commutative subsemigroup of $\ptr{X}$. Therefore $\nullptr{X}{B}\cup\set{\id{X}}$ is also a commutative subsemigroup of $\ptr{X}$ and, consequently, we must have $m\geqslant \abs{\nullptr{X}{B}\cup\set{\id{X}}}$. Proposition~\ref{P(X): Omega^X_B} establishes that $\abs{\nullptr{X}{B}}=\maxnull{\abs{X}+1}$. Additionally, we can easily check that the zero of $\nullptr{X}{B}$ (which is the unique idempotent of $\nullptr{X}{B}$) is $\emptyset$, which implies that $\id{X}\notin\nullptr{X}{B}$. Thus $m\geqslant\maxnull{\abs{X}+1}+1$.
 	
 	Now we will see that $m\leqslant n$. Let $S$ be a commutative subsemigroup of $\ptr{X}$ such that $\abs{S}=m$. By Proposition~\ref{P(X): proposition P(X) --> T(Y)} we have that $\newtr{S}$ is a subsemigroup of $\tr{\newtr{X}}$ such that $\newtr{S}\simeq S$. Furthermore, $\newtr{S}$ is commutative (because $S$ is commutative). Consequently, $m=\abs{S}=\abs{\newtr{S}}\leqslant n$.
 \end{proof}
 
 It follows from \eqref{T(X): 2^(n-1)<xi(n)+1} that $2^{n}<\maxnull{n+1}+1$ for all $n\geqslant 6$. This implies that, when $\abs{X}\geqslant 6$, $2^{\abs{X}}$ is not the maximum size of a commutative subsemigroup of $\ptr{X}$ and, consequently, $\idemp{\psym{X}}$ is not the maximum-order commutative subsemigroup of $\ptr{X}$.

 \section{Commuting graphs of (full and partial) transformation semigroups}\label{sec: properties comm graph T(X)}
 
 Recall that $X$ denotes a finite set. This section is dedicated to the study of some properties of the commuting graphs of $\tr{X}$ and $\ptr{X}$. We recall that Araújo, Kinyon and Konieczny \cite{Commuting_graph_T_X} already investigated some properties concerning $\commgraph{\tr{X}}$: they determined the diameter of $\commgraph{\tr{X}}$, as well as the diameter of $\commgraph{I}$, where $I$ is an ideal of $\tr{X}$. In this section we determine other properties of $\commgraph{\tr{X}}$ and we initiate the study of $\commgraph{\ptr{X}}$. We will show that the clique number of $\commgraph{\tr{X}}$ is equal to $2^{\abs{X}-1}-1$, when $2\leqslant \abs{X}\leqslant 6$, and at least $\maxnull{\abs{X}}$, when $\abs{X}\geqslant 7$. We will also investigate the clique number of $\commgraph{\ptr{X}}$: we will show that, when $2\leqslant \abs{X}\leqslant 5$, the clique number of $\commgraph{\ptr{X}}$ is $2^{\abs{X}}-2$; and, when $\abs{X}\geqslant 6$, the clique number of $\commgraph{\ptr{X}}$ is at least $\maxnull{\abs{X}+1}-1$ and at most $\cliquenumber{\commgraph{\tr{Y}}}-1$, where $Y$ is a set such that $\abs{Y}=\abs{X}+1$. This upper bound shows that, finding the clique number of $\commgraph{\tr{X}}$ when $\abs{X}\geqslant 7$, plays an important role in the determination of the clique number of $\commgraph{\ptr{X}}$ when $\abs{X}\geqslant 6$. Additionally, we will see that $\commgraph{\tr{X}}$ and $\commgraph{\ptr{X}}$ contain cycles if and only if $\abs{X}\geqslant 3$, in which case $\girth{\commgraph{\tr{X}}}=\girth{\commgraph{\ptr{X}}}=3$; and that $\commgraph{\tr{X}}$ and $\commgraph{\ptr{X}}$ contain left paths if and only if $\abs{X}\geqslant 3$, in which case $\knitdegree{\tr{X}}=\knitdegree{\ptr{X}}=1$.

 We can easily verify that $\centre{\tr{X}}=\set{\id{X}}$ and $\centre{\ptr{X}}=\set{\emptyset,\id{X}}$. This implies that $\tr{X}$ and $\ptr{X}$ are non-commutative if and only if $\abs{X}\geqslant 2$. Thus both $\commgraph{\tr{X}}$ and $\commgraph{\ptr{X}}$ are only defined when $\abs{X}\geqslant 2$. Moreover, this also implies that $\id{X}$ is the unique transformation that is not a vertex of $\commgraph{\tr{X}}$, and that $\emptyset$ and $\id{X}$ are precisely the partial transformations that are not vertices of $\commgraph{\ptr{X}}$.

 \begin{corollary}\label{T(X): clique number}
 	Suppose that $\abs{X}\geqslant 2$. Then
 	\begin{enumerate}
 		\item If $2\leqslant \abs{X}\leqslant 6$, then $\cliquenumber{\commgraph{\tr{X}}}=2^{\abs{X}-1}-1$.
 		
 		\item If $\abs{X}\geqslant 7$, then $\cliquenumber{\commgraph{\tr{X}}}\geqslant \parens{\abs{X}}\xi$.
 	\end{enumerate}
 \end{corollary}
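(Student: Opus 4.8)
The plan is to read off both parts from the results on maximum-order commutative subsemigroups of $\tr{X}$ (Theorem~\ref{T(X): maximum size comm smg}) using the dictionary between commutative subsemigroups and cliques supplied by Lemma~\ref{preli: largest cliques, commutative subsemigroups}. The one structural fact I would use repeatedly is that $\centre{\tr{X}}=\set{\id{X}}$ (noted just above the statement), so $\abs{\centre{\tr{X}}}=1$; thus whenever a commutative subsemigroup $T$ satisfies $\id{X}\in T$, the set $T\setminus\set{\id{X}}$ consists of non-central vertices of $\commgraph{\tr{X}}$ that pairwise commute, i.e.\ it is a clique of size $\abs{T}-1$.

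For part~1, first I would note that every maximum-order commutative subsemigroup of $\tr{X}$ contains $\id{X}$: since $\id{X}$ is central, adjoining it to any commutative subsemigroup keeps it a commutative subsemigroup, so a largest one cannot omit it. Concretely, by part~1 of Theorem~\ref{T(X): maximum size comm smg} the maximum size is $2^{\abs{X}-1}$, and it is attained by $\commidemp{X}{x}$, which contains $\id{X}$ (take $\alpha=\id{X}$ in the defining condition~\eqref{T(X): Gamma^X_i definition}). Hence $\centre{\tr{X}}\subseteq\commidemp{X}{x}$ and $\commidemp{X}{x}$ is a commutative subsemigroup of maximum size, so Lemma~\ref{preli: largest cliques, commutative subsemigroups} gives $\cliquenumber{\commgraph{\tr{X}}}=\abs{\commidemp{X}{x}}-\abs{\centre{\tr{X}}}=2^{\abs{X}-1}-1$.

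For part~2 I would only need the lower-bound (easy) half of the dictionary. When $\abs{X}\geqslant 7$, Theorem~\ref{null semigroups of maximum size} furnishes a null subsemigroup $N$ of $\tr{X}$ of size $\maxnull{\abs{X}}$ whose zero has rank~$1$; as null semigroups have a unique idempotent and that idempotent is not $\id{X}$, we have $\id{X}\notin N$. Since $N$ is commutative and all its elements are non-central vertices, $N$ is a clique of $\commgraph{\tr{X}}$, whence $\cliquenumber{\commgraph{\tr{X}}}\geqslant\maxnull{\abs{X}}=\parens{\abs{X}}\xi$. (Equivalently, $N\cup\set{\id{X}}$ is a commutative subsemigroup containing the centre, and Lemma~\ref{preli: largest cliques, commutative subsemigroups} applied to it yields the same inequality.)

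The argument is short because Theorem~\ref{T(X): maximum size comm smg} already does the heavy lifting; the only delicate point is verifying that the chosen maximum-order (respectively, large) commutative subsemigroups contain the centre $\set{\id{X}}$, since this is precisely what controls the additive correction $-1$ relating clique size to subsemigroup size. I expect no genuine obstacle beyond this bookkeeping, and the strict inequality $2^{\abs{X}-1}<\maxnull{\abs{X}}$ recorded in~\eqref{T(X): 2^(n-1)<xi(n)+1} confirms that the bound in part~2 really does overtake the part~1 value for $\abs{X}\geqslant 7$.
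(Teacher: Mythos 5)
Your proof is correct and follows essentially the same route as the paper: both reduce the corollary to Lemma~\ref{preli: largest cliques, commutative subsemigroups} together with Theorem~\ref{T(X): maximum size comm smg} (whose part~2 lower bound is itself obtained from exactly the null-semigroup construction you invoke via Theorem~\ref{null semigroups of maximum size}), using $\centre{\tr{X}}=\set{\id{X}}$ to translate maximum subsemigroup size into clique number. Your explicit verification that the relevant maximum-order commutative subsemigroups contain $\id{X}$ (so that the hypothesis $\centre{\tr{X}}\subseteq T$ of the lemma holds) is a detail the paper leaves implicit, but it changes nothing essential.
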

 
 \begin{proof}
 	Let $m$ be the maximum size of a commutative subsemigroup of $\tr{X}$. It follows from Lemma~\ref{preli: largest cliques, commutative subsemigroups} that $\cliquenumber{\commgraph{\tr{X}}}=m-\abs{\centre{\tr{X}}}$. Since the unique element of $\centre{\tr{X}}$ is $\id{X}$, then we have $\cliquenumber{\commgraph{\tr{X}}}=m-1$. Furthermore, Theorem~\ref{T(X): maximum size comm smg} states that, if $2\leqslant \abs{X}\leqslant 6$, then $m=2^{\abs{X}-1}$ and, if $\abs{X}\geqslant 7$, then $m\geqslant\maxnull{\abs{X}}+1$. Thus, if $2\leqslant \abs{X}\leqslant 6$, then $\cliquenumber{\commgraph{\tr{X}}}=2^{\abs{X}-1}-1$ and, if $\abs{X}\geqslant 7$, then $\cliquenumber{\commgraph{\tr{X}}}\geqslant\maxnull{\abs{X}}$.
 \end{proof}
 
 \begin{corollary}\label{P(X): clique number}
 	Suppose that $\abs{X}\geqslant 2$. Then
 	\begin{enumerate}
 		\item If $2\leqslant \abs{X}\leqslant 5$, then $\cliquenumber{\commgraph{\ptr{X}}}=2^{\abs{X}}-2$.
 		
 		\item If $\abs{X}\geqslant 6$, then $\maxnull{\abs{X}+1}-1\leqslant\cliquenumber{\commgraph{\ptr{X}}}\leqslant \cliquenumber{\commgraph{\tr{\newtr{X}}}}-1$.
 	\end{enumerate}
 \end{corollary}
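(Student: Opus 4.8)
The plan is to mimic the proof of Corollary~\ref{T(X): clique number}, deriving everything from Lemma~\ref{preli: largest cliques, commutative subsemigroups} together with the characterization of the largest commutative subsemigroups of $\ptr{X}$ given in Corollary~\ref{P(X): largest comm smg}. The central observation is that the clique number of a commuting graph differs from the maximum size of a commutative subsemigroup only by the size of the centre. Since we computed $\centre{\ptr{X}}=\set{\emptyset,\id{X}}$ just before the corollary, I would let $m$ denote the maximum size of a commutative subsemigroup of $\ptr{X}$ and aim to show $\cliquenumber{\commgraph{\ptr{X}}}=m-2$.

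To apply Lemma~\ref{preli: largest cliques, commutative subsemigroups} I first need to know that some maximum-order commutative subsemigroup $T$ of $\ptr{X}$ satisfies $\centre{\ptr{X}}\subseteq T$. This is the one point requiring an argument rather than mere bookkeeping: given any commutative subsemigroup $T$, the set $T\cup\set{\emptyset,\id{X}}$ is again a commutative subsemigroup, because $\id{X}$ is the identity and $\emptyset$ is the zero of $\ptr{X}$, so these two elements commute with everything and all their products land back in $\set{\emptyset,\id{X}}$ or in $T$. Hence a maximum-order commutative subsemigroup may be assumed to contain the centre, and Lemma~\ref{preli: largest cliques, commutative subsemigroups} then yields $\cliquenumber{\commgraph{\ptr{X}}}=m-\abs{\centre{\ptr{X}}}=m-2$.

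With this in hand the two parts are immediate. When $2\leqslant\abs{X}\leqslant 5$, Corollary~\ref{P(X): largest comm smg} gives $m=2^{\abs{X}}$, so $\cliquenumber{\commgraph{\ptr{X}}}=2^{\abs{X}}-2$, proving part 1. When $\abs{X}\geqslant 6$ it gives $\maxnull{\abs{X}+1}+1\leqslant m\leqslant m'$, where $m'$ is the maximum size of a commutative subsemigroup of $\tr{\newtr{X}}$ (recall $\abs{\newtr{X}}=\abs{X}+1$). The lower bound then reads $\cliquenumber{\commgraph{\ptr{X}}}=m-2\geqslant\maxnull{\abs{X}+1}-1$. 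For the upper bound I would translate $m'$ back into a clique number: since $\centre{\tr{\newtr{X}}}=\set{\id{\newtr{X}}}$ has size $1$, the argument of Corollary~\ref{T(X): clique number} gives $\cliquenumber{\commgraph{\tr{\newtr{X}}}}=m'-1$, i.e.\ $m'=\cliquenumber{\commgraph{\tr{\newtr{X}}}}+1$. Substituting, $\cliquenumber{\commgraph{\ptr{X}}}=m-2\leqslant m'-2=\cliquenumber{\commgraph{\tr{\newtr{X}}}}-1$, completing part 2.

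As for difficulty, there is no genuine obstacle here, since the heavy lifting was done in Corollary~\ref{P(X): largest comm smg}. The only place demanding care is the reduction step: confirming that a largest commutative subsemigroup contains $\set{\emptyset,\id{X}}$ so that Lemma~\ref{preli: largest cliques, commutative subsemigroups} applies with the full centre, and keeping the bookkeeping straight, since it uses $\abs{\centre{\ptr{X}}}=2$ on the $\ptr{X}$ side but $\abs{\centre{\tr{\newtr{X}}}}=1$ on the $\tr{\newtr{X}}$ side. This asymmetry is exactly what produces the ``$-2$'' and the ``$-1$'' in the final inequality.
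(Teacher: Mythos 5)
Your proof is correct and follows essentially the same route as the paper: both apply Lemma~\ref{preli: largest cliques, commutative subsemigroups} together with Corollary~\ref{P(X): largest comm smg}, using $\abs{\centre{\ptr{X}}}=2$ and $\abs{\centre{\tr{\newtr{X}}}}=1$ to produce the $-2$ and $-1$ shifts in exactly the same way. Your additional check that a maximum-order commutative subsemigroup of $\ptr{X}$ may be assumed to contain $\set{\emptyset,\id{X}}$ is a small rigor point that the paper leaves implicit, not a genuinely different approach.
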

 
 \begin{proof}
 	Let $m$ be the maximum size of a commutative subsemigroup of $\ptr{X}$. It follows from Lemma~\ref{preli: largest cliques, commutative subsemigroups} that $\cliquenumber{\commgraph{\ptr{X}}}=m-\abs{\centre{\ptr{X}}}$. Furthermore, we have that $\centre{\ptr{X}}=\set{\emptyset,\id{X}}$, which implies that $\cliquenumber{\commgraph{\ptr{X}}}=m-2$.
 	
 	\smallskip
 	
 	\textit{Case 1:} Assume that $\abs{X}\leqslant 5$. In Corollary~\ref{P(X): largest comm smg} we established that $m=2^{\abs{X}}$. Thus we have $\cliquenumber{\commgraph{\ptr{X}}}=2^{\abs{X}}-2$.
 	
 	\smallskip
 	
 	\textit{Case 2:} Assume that $\abs{X}\geqslant 6$. By Corollary~\ref{P(X): largest comm smg} we have that $\maxnull{\abs{X}+1}+1\leqslant m\leqslant n$, where $n$ is the maximum size of a commutative subsemigroup of $\tr{\newtr{X}}$. Additionally, Lemma~\ref{preli: largest cliques, commutative subsemigroups} ensures that $\cliquenumber{\commgraph{\tr{\newtr{X}}}}=n-\abs{\centre{\tr{\newtr{X}}}}=n-\abs{\set{\id{\newtr{X}}}}=n-1$. Hence $\maxnull{\abs{X}+1}+1\leqslant m\leqslant \cliquenumber{\commgraph{\tr{\newtr{X}}}}+1$ and, consequently, $\maxnull{\abs{X}+1}-1\leqslant \cliquenumber{\commgraph{\ptr{X}}}\leqslant \cliquenumber{\commgraph{\tr{\newtr{X}}}}-1$.
 \end{proof}
 
 \begin{corollary}\label{T(X): girth}
 	Suppose that $\abs{X}\geqslant 2$. We have that
 	\begin{enumerate}
 		\item $\commgraph{\tr{X}}$ contains cycles if and only if $\abs{X}\geqslant 3$, in which case $\girth{\commgraph{\tr{X}}}=3$.
 		
 		\item $\commgraph{\ptr{X}}$ contains cycles if and only if $\abs{X}\geqslant 3$, in which case $\girth{\commgraph{\ptr{X}}}=3$.
 	\end{enumerate} 
 \end{corollary}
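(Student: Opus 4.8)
The plan is to reduce both parts to two observations. First, in a simple graph the girth is always at least $3$, so once a graph is shown to contain a triangle its girth is automatically $3$; consequently the clause ``contains cycles if and only if $\abs{X}\geqslant 3$, in which case the girth is $3$'' splits into exhibiting a triangle when $\abs{X}\geqslant 3$ and ruling out every cycle when $\abs{X}=2$. Second, any commutative subsemigroup containing the centre produces, through Lemma~\ref{preli: largest cliques, commutative subsemigroups}, a clique in the commuting graph, and a clique of size at least $3$ is precisely a triangle.

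For the direction $\abs{X}\geqslant 3$ I would exhibit explicit cliques of size at least $3$. For $\tr{X}$, fix $x\in X$ and take $T=\commidemp{X}{x}$ (see \eqref{T(X): Gamma^X_i definition}); this is a commutative subsemigroup containing $\centre{\tr{X}}=\set{\id{X}}$, so by Lemma~\ref{preli: largest cliques, commutative subsemigroups} the set $T\setminus\set{\id{X}}$ is a clique of $\commgraph{\tr{X}}$ of size $2^{\abs{X}-1}-1\geqslant 3$. For $\ptr{X}$, take $T=\idemp{\psym{X}}=\gset{\id{Y}}{Y\subseteq X}$, which is commutative and contains $\centre{\ptr{X}}=\set{\emptyset,\id{X}}$, so $T\setminus\set{\emptyset,\id{X}}$ is a clique of $\commgraph{\ptr{X}}$ of size $2^{\abs{X}}-2\geqslant 6$. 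In each case any three vertices of the clique are pairwise adjacent and so form a triangle; hence both graphs contain cycles and have girth $3$. (Equivalently, one may simply invoke the clique numbers computed in Corollaries~\ref{T(X): clique number} and \ref{P(X): clique number}.)

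For the direction $\abs{X}=2$ I must show that both commuting graphs are acyclic. For $\tr{X}$ this is immediate: as noted in the proof of Theorem~\ref{T(X): maximum size comm smg}, no two distinct elements of $\tr{X}\setminus\set{\id{X}}$ commute, so $\commgraph{\tr{X}}$ has no edges and therefore no cycles (this also follows from $\cliquenumber{\commgraph{\tr{X}}}=1$ in Corollary~\ref{T(X): clique number}). The partial case is the main obstacle, since Corollary~\ref{P(X): clique number} yields only $\cliquenumber{\commgraph{\ptr{X}}}=2$, which forbids triangles but not longer cycles. I would therefore determine the adjacency structure of $\commgraph{\ptr{X}}$ directly: its vertex set $\ptr{X}\setminus\set{\emptyset,\id{X}}$ has seven elements when $\abs{X}=2$, and a short check of the commuting pairs shows that the only edge is $\set{\id{\set{x_1}},\id{\set{x_2}}}$, these commuting because $\id{\set{x_1}}\,\id{\set{x_2}}=\emptyset=\id{\set{x_2}}\,\id{\set{x_1}}$. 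A graph consisting of a single edge together with isolated vertices is acyclic, which settles the case $\abs{X}=2$ and completes the proof.
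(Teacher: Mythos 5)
Your proposal is correct and follows essentially the same route as the paper: for $\abs{X}\geqslant 3$ a clique of size at least $3$ (the paper cites Corollary~\ref{T(X): clique number} and the subgraph relation $\commgraph{\tr{X}}\subseteq\commgraph{\ptr{X}}$, while you exhibit the explicit cliques $\commidemp{X}{x}\setminus\set{\id{X}}$ and $\idemp{\psym{X}}\setminus\centre{\ptr{X}}$) yields a triangle and hence girth $3$, and for $\abs{X}=2$ both arguments amount to the same direct inspection showing $\commgraph{\tr{X}}$ has no edges and $\commgraph{\ptr{X}}$ has the single edge $\set{\id{\set{x_1}},\id{\set{x_2}}}$, hence no cycles. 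The only cosmetic caveat is that Lemma~\ref{preli: largest cliques, commutative subsemigroups} is stated for maximum-order commutative subsemigroups, so for your explicit cliques you should instead appeal to the trivial fact that non-central elements of any commutative subsemigroup pairwise commute and thus form a clique.
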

 
 \begin{proof}
 	Let $n=\abs{X}$ and assume that $X=\set{x_1,\ldots,x_n}$.
 	
 	\smallskip
 	
 	\textit{Case 1:} Suppose that $n=\abs{X}=2$. In Figure~\ref{P(X), Figure: commuting graph of P_2 and T_2} we have the commuting graph of $\ptr{X}$ and, distinguished in blue, we have the commuting graph of $\tr{X}$ (which is a subgraph of the commuting graph of $\ptr{X}$). By observation, we can easily verify that both graphs have no cycles.
 	
 	\begin{figure}[hbt]
 		\begin{center}
 			\begin{tikzpicture}
 				
 				\node[vertex, RoyalBlue] (11) at (0,0) {};
 				\node[vertex, RoyalBlue] (22) at (0,-1) {};
 				\node[vertex, RoyalBlue] (21) at (2,-0.5) {};
 				\node[vertex] (1) at (6,0) {};
 				\node[vertex] (2) at (6,-1) {};
 				\node[vertex] (a) at (4,0) {};
 				\node[vertex] (b) at (4,-1) {};
 				
 				\node[anchor=south, RoyalBlue] at (11) {$\begin{pmatrix}x_1&x_2\\ x_1&x_1\end{pmatrix}$};
 				\node[anchor=north, RoyalBlue] at (22) {$\begin{pmatrix}x_1&x_2 \\ x_2&x_2\end{pmatrix}$};
 				\node[anchor=south, RoyalBlue] at (21) {$\begin{pmatrix}x_1&x_2 \\ x_2&x_1\end{pmatrix}$};
 				\node[anchor=south] at (1) {$\begin{pmatrix}x_1 \\ x_1\end{pmatrix}$};
 				\node[anchor=north] at (2) {$\begin{pmatrix}x_2 \\ x_2\end{pmatrix}$};
 				\node[anchor=south] at (a) {$\begin{pmatrix}x_1 \\ x_2\end{pmatrix}$};
 				\node[anchor=north] at (b) {$\begin{pmatrix}x_2 \\ x_1\end{pmatrix}$};
 				
 				\draw[edge] (1) -- (2);
 				
 			\end{tikzpicture}
 		\end{center}
 		\caption{Commuting graph of $\ptr{\set{x_1,x_2}}$ and commuting graph of $\tr{\set{x_1,x_2}}$ (in blue).}
 		\label{P(X), Figure: commuting graph of P_2 and T_2}
 	\end{figure}
 	
%
%
%
 	
 	\smallskip
 	
 	\textit{Case 2:} Suppose that $n=\abs{X}\geqslant 3$. By Corollary~\ref{T(X): clique number} we have that
 	\begin{displaymath}
 		\cliquenumber{\commgraph{\tr{X}}}\geqslant \braces*{
 			\begin{array}{@{}l@{}l@{}ll@{}}
 				2^{\abs{X}-1}-1 &{}\geqslant 2^{3-1}-1 &{}=3 & \text{(if $3\leqslant \abs{X}\leqslant 6$)} \\[1mm]
 				\maxnull{\abs{X}} &{}\geqslant\maxnull{7} &{}=81 & \text{(if $\abs{X}\geqslant 7$)} \\
 			\end{array}
 		}\geqslant 3,
 	\end{displaymath}
 	which implies that $\commgraph{\tr{X}}$ contains three vertices that are adjacent to each other; that is, $\commgraph{\tr{X}}$ contains a cycle of length $3$. Since $\commgraph{\tr{X}}$ is a subgraph of $\commgraph{\ptr{X}}$, then $\commgraph{\ptr{X}}$ also contains a cycle of length $3$. Thus $\girth{\commgraph{\tr{X}}}=\girth{\commgraph{\ptr{X}}}=3$.
 \end{proof}

 \begin{proposition}\label{T(X): knit degree}
 	Suppose that $\abs{X}\geqslant 2$. We have that
 	\begin{enumerate}
 		\item $\commgraph{\tr{X}}$ contains left paths if and only if $\abs{X}\geqslant 3$, in which case $\knitdegree{\tr{X}}=1$.
 		
 		\item $\commgraph{\ptr{X}}$ contains left paths if and only if $\abs{X}\geqslant 3$, in which case $\knitdegree{\ptr{X}}=1$.
 	\end{enumerate} 
 \end{proposition}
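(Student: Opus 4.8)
The plan is to treat the two regimes $\abs{X}=2$ and $\abs{X}\geqslant 3$ separately, mirroring the structure of the girth computation in Corollary~\ref{T(X): girth}, and to exploit that $\commgraph{\tr{X}}$ is an induced subgraph of $\commgraph{\ptr{X}}$ so that a single explicit construction settles both semigroups at once. The first step is a reduction. A left path $\alpha_1,\ldots,\alpha_m$ must satisfy $\alpha_1\neq\alpha_m$, so it has at least two (distinct) vertices and therefore length at least $1$; consequently, whenever a left path exists, the knit degree is at least $1$, and it suffices to exhibit a left path of length exactly $1$. Such a path is a single edge $\alpha_1-\alpha_2$ of the commuting graph, so $\alpha_1,\alpha_2$ are distinct non-central transformations with $\alpha_1\alpha_2=\alpha_2\alpha_1$, together with the left-path conditions $\alpha_1\alpha_i=\alpha_2\alpha_i$ for $i\in\set{1,2}$, i.e.\ $\alpha_1^2=\alpha_2\alpha_1$ and $\alpha_1\alpha_2=\alpha_2^2$. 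Combined with commutativity, these collapse to the single requirement $\alpha_1^2=\alpha_1\alpha_2=\alpha_2\alpha_1=\alpha_2^2$, so I will look for two distinct non-central transformations all of whose pairwise and self-products agree.

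For $\abs{X}\geqslant 3$ I would write $X=\set{x_1,\ldots,x_n}$ and take $\alpha_1$ to be the constant map with image $\set{x_1}$ and $\alpha_2$ the map defined by $x_3\alpha_2=x_2$ and $x_j\alpha_2=x_1$ for every $j\neq 3$. Since $x_1\alpha_2=x_1$, a routine check gives $\alpha_1^2=\alpha_1\alpha_2=\alpha_2\alpha_1=\alpha_2^2=\alpha_1$ (every product is the constant map onto $\set{x_1}$), while $\alpha_1\neq\alpha_2$ (they differ at $x_3$) and neither equals $\id{X}$. Hence $\alpha_1,\alpha_2$ is a left path of length $1$ in $\commgraph{\tr{X}}$, giving $\knitdegree{\tr{X}}=1$. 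Because $\alpha_1,\alpha_2\in\tr{X}\subseteq\ptr{X}$ and all the relevant products are computed inside the subsemigroup $\tr{X}$ (so they agree whether read in $\tr{X}$ or in $\ptr{X}$), and because $\id{X}$ is central and $\emptyset\notin\tr{X}$, the very same pair is a left path in $\commgraph{\ptr{X}}$; thus $\knitdegree{\ptr{X}}=1$ as well.

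For $\abs{X}=2$ I would invoke Figure~\ref{P(X), Figure: commuting graph of P_2 and T_2}, which displays $\commgraph{\tr{X}}$ as an edgeless graph and $\commgraph{\ptr{X}}$ as a graph with a single edge. Since a left path requires at least one edge, $\commgraph{\tr{X}}$ contains no left path. In $\commgraph{\ptr{X}}$ the only edge joins $\id{\set{x_1}}$ and $\id{\set{x_2}}$; here $\id{\set{x_1}}\,\id{\set{x_2}}=\id{\set{x_2}}\,\id{\set{x_1}}=\emptyset$ whereas $\id{\set{x_1}}^2=\id{\set{x_1}}\neq\emptyset$, so neither orientation of this edge meets the left-path condition, and as there are no longer paths, $\commgraph{\ptr{X}}$ has no left path either. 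Combining the two regimes yields both statements. The only genuinely delicate point is the bookkeeping around the definitions in the first paragraph—recognising that the length of a left path is the number of edges, that the shortest conceivable left path has length $1$, and that commutativity reduces the four left-path equations to a single symmetric condition—after which the $\abs{X}\geqslant 3$ construction and the reading-off of the $\abs{X}=2$ graph are entirely routine.
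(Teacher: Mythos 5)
Your proposal is correct and follows essentially the same route as the paper: for $\abs{X}=2$ it reads off Figure~\ref{P(X), Figure: commuting graph of P_2 and T_2} and checks that the unique edge of $\commgraph{\ptr{X}}$ violates the left-path condition, and for $\abs{X}\geqslant 3$ it exhibits the constant map onto $\set{x_1}$ together with a map of rank $2$ whose square is that constant (the paper sends $x_n\mapsto x_2$, you send $x_3\mapsto x_2$ — an immaterial difference), verifying the same identities and reusing the same pair inside $\ptr{X}$. The preliminary reduction collapsing the left-path equations to $\alpha_1^2=\alpha_1\alpha_2=\alpha_2\alpha_1=\alpha_2^2$ is a tidy organizational touch but does not change the argument.
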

 
 \begin{proof}
 	Let $n=\abs{X}$ and assume that $X=\set{x_1,\ldots,x_n}$.
 	
 	\smallskip
 	
 	\textit{Case 1:} Suppose that $n=\abs{X}=2$. We can easily verify in Figure~\ref{P(X), Figure: commuting graph of P_2 and T_2} that $\commgraph{\tr{X}}$ is a null graph. Therefore all paths of $\commgraph{\tr{X}}$ have length $0$, which implies that $\commgraph{\tr{X}}$ contains no left paths. Moreover, by observation of Figure~\ref{P(X), Figure: commuting graph of P_2 and T_2} we immediately conclude that the unique non-trivial path in $\commgraph{\ptr{X}}$ is
 	\begin{displaymath}
 		\begin{pmatrix}
 			x_1\\x_1
 		\end{pmatrix}-\begin{pmatrix}
 			x_2\\x_2
 		\end{pmatrix}.
 	\end{displaymath}
 	However
 	\begin{displaymath}
 		\begin{pmatrix}
 			x_1\\x_1
 		\end{pmatrix}\begin{pmatrix}
 			x_1\\x_1
 		\end{pmatrix}
 		=\begin{pmatrix}
 			x_1\\x_1
 		\end{pmatrix}\neq\emptyset
 		=\begin{pmatrix}
 			x_2\\x_2
 		\end{pmatrix}\begin{pmatrix}
 			x_1\\x_1
 		\end{pmatrix},
 	\end{displaymath}
 	which implies that the path in question is not a left path in $\commgraph{\ptr{X}}$. Since $\commgraph{\ptr{X}}$ has no other non-trivial paths, we can conclude that $\commgraph{\ptr{X}}$ contains no left paths.
 	
 	\smallskip
 	
 	\textit{Case 2:} Suppose that $n=\abs{X}\geqslant 3$. We consider the following transformations:
 	\begin{displaymath}
 		\alpha_1=\begin{pmatrix}
 			x_1 & \cdots & x_n\\
 			x_1 & \cdots & x_1
 		\end{pmatrix}\quad \text{and} \quad
 		\alpha_2=\begin{pmatrix}
 			x_1 & x_2 & \cdots & x_{n-1} & x_n\\
 			x_1 & x_1 & \cdots & x_1 & x_2
 		\end{pmatrix}.
 	\end{displaymath}
 	We have that $\alpha_1\alpha_2=\alpha_1=\alpha_2\alpha_1$, which implies that $\alpha_1-\alpha_2$ is a path in $\commgraph{\tr{X}}$ and in $\commgraph{\ptr{X}}$. Additionally, we have that $\alpha_1\alpha_1=\alpha_1=\alpha_2\alpha_1$ and $\alpha_1\alpha_2=\alpha_1=\alpha_2\alpha_2$. Thus $\alpha_1-\alpha_2$ is a left path in $\commgraph{\tr{X}}$ and in $\commgraph{\ptr{X}}$. Thus $\knitdegree{\tr{X}}=\knitdegree{\ptr{X}}=1$.
 \end{proof}

 \section{Open problems} \label{Open problems}
 
 In this section we discuss four open problems that we approached in the previous sections. Below we list those problems.
 
 \begin{problem}\label{problem T(X)}
 	Suppose that $\abs{X}\geqslant 7$. Determine the maximum size of a commutative subsemigroup of $\tr{X}$ and characterize the maximum-order commutative subsemigroups of $\tr{X}$.
 \end{problem}
 
 \begin{problem}\label{problem P(X)}
 	Suppose that $\abs{X}\geqslant 6$. Determine the maximum size of a commutative subsemigroup of $\ptr{X}$ and characterize the maximum-order commutative subsemigroups of $\ptr{X}$.
 \end{problem}
 
 \begin{problem}\label{problem graph T(X)}
 	Suppose that $\abs{X}\geqslant 7$. Determine the clique number of $\commgraph{\tr{X}}$.
 \end{problem}
 
 \begin{problem}\label{problem graph P(X)}
 	Suppose that $\abs{X}\geqslant 6$. Determine the clique number of $\commgraph{\tr{X}}$.
 \end{problem}
 
 It follows from Lemma~\ref{preli: largest cliques, commutative subsemigroups} that obtaining the maximum size of a commutative subsemigroup of $\tr{X}$ and obtaining the clique number of $\commgraph{\tr{X}}$ are equivalent problems. Hence finding answers for Problems~\ref{problem T(X)} and \ref{problem P(X)} leads to answers for Problem~\ref{problem graph T(X)} and \ref{problem graph P(X)}, respectively. Moreover, it follows from Theorem~\ref{P(X): largest comm smg} that, when $\abs{X}\geqslant 6$, the maximum size of a commutative subsemigroup of $\tr{Y}$, where $Y$ is a set such that $\abs{Y}=\abs{X}+1$, is an upper bound for the maximum size of a commutative subsemigroup of $\ptr{X}$. So solving Problem~\ref{problem T(X)} also has implications for finding the solution of Problem~\ref{problem P(X)}. Therefore solving Problems~\ref{problem T(X)}--\ref{problem graph P(X)} can be reduced to solving just Problem~\ref{problem T(X)}. Our conjecture for Problem~\ref{problem T(X)} is the following:
 
 \begin{conjecture}\label{T(X): conjecture}
 	Suppose that $\abs{X}\geqslant 7$. Then the maximum size of a commutative subsemigroup of $\tr{X}$ is $(|X|)\xi+1$. Moreover, the maximum-order commutative subsemigroup of $\tr{X}$ are precisely the semigroups $\nulltr{X}{x_1}{x_t}\cup\set{\id{X}}$, where $t=\alphanull{\abs{X}}$ and $x_1,\ldots,x_t\in X$ are pairwise distinct.
 \end{conjecture}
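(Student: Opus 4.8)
The plan is to establish the conjectured bound using the same inductive scaffolding that proved Theorem~\ref{T(X): maximum size comm smg} for $\abs{X}\leqslant 6$, but with the combinatorial lemmas on ``copies'' replaced by estimates that are uniform in $\abs{X}$. The lower bound is already in hand: Theorem~\ref{T(X): maximum size comm smg} exhibits the commutative subsemigroup $\nulltr{X}{x_1}{x_t}\cup\set{\id{X}}$ of size $\maxnull{\abs{X}}+1$, so only the upper bound $\abs{S}\leqslant\maxnull{\abs{X}}+1$ and the characterization remain. I would first dispose of the case $S\subseteq\sym{X}$: here $S$ is an abelian subgroup of $\sym{X}$ by Proposition~\ref{T(X): unique idemp is id_X => group in S(X)}, and Lemma~\ref{T(X): unique idemp, upper bound abelian subgroup S(X)} gives $\abs{S}<\maxnull{\abs{X}}$ for $\abs{X}\geqslant 5$, strictly below the target. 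The substance is therefore the case $S\nsubseteq\sym{X}$, where Lemma~\ref{T(X): existence of I, b|_(X/I)} supplies a minimal $I\in\setclass{S}{X}$ and the restriction $S'=\gset{\beta|_{X\setminus I}}{\beta\in S}$ is a commutative subsemigroup of $\tr{X\setminus I}$ to which the inductive hypothesis on $\abs{X}-\abs{I}$ applies.

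The conceptual backbone I would use to organise the induction is the decomposition of the finite commutative semigroup $S$ into its archimedean components $S=\bigsqcup_{e\in\idemp{S}}S_e$, each $S_e$ being a commutative subsemigroup with the single idempotent $e$. The component of the identity (when $\id{X}\in S$) is exactly $S\cap\sym{X}$, a subgroup of $\sym{X}$ by Proposition~\ref{T(X): unique idemp is id_X => group in S(X)}, hence of size $<\maxnull{\abs{X}}$; every other component has a unique idempotent distinct from $\id{X}$, so by Theorem~\ref{|S|=|N|, idemp distinct id_X} together with Theorem~\ref{maximum size null semigroup} its size is at most $\maxnull{\abs{X}}$. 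The difficulty is that a naive sum over components vastly exceeds $\maxnull{\abs{X}}+1$, so the real work is to show that the components cannot be large simultaneously. The mechanism I would exploit is that a nontrivial permutation $\sigma\in S$ forces every $\beta\in S$ to have $\sigma$-invariant image (Lemma~\ref{T(X): ab=ba => xb€Im a for all x€Im a}) and to respect the cycle structure of $\sigma$; this should confine all non-permutation elements to a null-like family and collapse the surplus components, driving the extremal configuration toward one large null component plus the trivial identity component.

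The technical heart is a uniform multiplicity lemma generalising Lemmata~\ref{T(X): lemma copies of alphas's}, \ref{T(X): lemma upper bound S when 3 copies of alpha} and \ref{T(X): |X|=4, upper bound S when 3 copies of alpha}--\ref{T(X): |X|=6, upper bound S when 3 copies of alpha}. For $I=\set{i}$ of size one and $\gamma\in S'$, writing $S_\gamma=\gset{\beta\in S}{\beta|_{X\setminus\set{i}}=\gamma}$, I would prove that the distinct values $\set{i\beta:\beta\in S_\gamma}$ behave like an orbit of a permutation action dictated by commutativity, so that either (i) at most one of them differs from $i$, whence $\abs{S_\gamma}\leqslant 2$ and the induction closes exactly as in Theorem~\ref{T(X): maximum size comm smg idemp}, or (ii) at least two differ from $i$, whence all of $S$ is forced into a group-and-null structure whose cardinality I would bound below $\maxnull{\abs{X}}+1$ via Theorem~\ref{T(X): maximum size abelian subgroup of S(X)}. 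The same dichotomy, applied to a minimal $I$ with $\abs{I}\geqslant 2$ through the cycle description of $\alpha|_I$ from Lemma~\ref{T(X): lemma alpha|_I cycle}, should bound $\abs[\big]{\gset{\beta|_I}{\beta\in S}}$ by a quantity that, multiplied by the inductive bound on $\abs{S'}$, stays under $\maxnull{\abs{X}}+1$.

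The main obstacle, and the reason this remains a conjecture, is precisely making these multiplicity and interaction estimates uniform in $\abs{X}$: the arguments for $\abs{X}\in\set{4,5,6}$ are genuinely ad hoc (the case $\abs{X}=6$ already demands an eleven-part analysis), and the factor $\abs{X}$ lost in the crude inequality $\abs{S}\leqslant\abs[\big]{\gset{\beta|_I}{\beta\in S}}\cdot\abs{S'}$ dwarfs the growth ratio $\maxnull{\abs{X}}/\maxnull{\abs{X}-\abs{I}}$, so the induction only closes once the multiplicities are controlled to within an additive constant rather than a multiplicative factor. I expect the characterization to fall out of the equality analysis: tracing which inequalities must be tight should force $\abs{\im e}=1$ for the non-identity idempotent and recover $S=\nulltr{X}{x_1}{x_t}\cup\set{\id{X}}$, mirroring the tightness arguments already carried out in the proofs of Theorem~\ref{|S|=|N|, idemp distinct id_X} and Theorem~\ref{T(X): largest comm smg 1 idemp}.
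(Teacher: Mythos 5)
There is a genuine gap, and it is the whole of the statement: what you have written is a research plan, not a proof. The decisive step of your argument is the ``uniform multiplicity lemma'' of the third paragraph, and you never prove it --- you only assert that the dichotomy (i)/(ii) ``should'' close the induction and that the equality analysis ``should'' force $\abs{\im e}=1$. You yourself concede in the final paragraph that making these estimates uniform in $\abs{X}$ is ``the reason this remains a conjecture.'' A proof proposal that ends by identifying its own central lemma as an open obstacle has not proved the statement. Note also that the paper does not prove this statement either: it is stated there as Conjecture~\ref{T(X): conjecture}, an open problem, supported only by (a) the lower bound $\maxnull{\abs{X}}+1$ from Theorem~\ref{T(X): maximum size comm smg}, (b) the bound $\maxnull{\abs{X}}$ for subsemigroups with a unique idempotent (Theorem~\ref{T(X): largest comm smg 1 idemp}), (c) the bound $2^{\abs{X}-1}$ for subsemigroups of idempotents (Theorem~\ref{T(X): maximum size comm smg idemp}), and (d) computational evidence for $\abs{X}=7$. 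So there is no proof in the paper to match yours against; the correct assessment is that neither text contains a proof.

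That said, the parts of your skeleton that can be checked are sound, and they essentially reproduce the paper's own reduction. The lower bound, the disposal of $S\subseteq\sym{X}$ via Proposition~\ref{T(X): unique idemp is id_X => group in S(X)} and Lemma~\ref{T(X): unique idemp, upper bound abelian subgroup S(X)}, and the archimedean decomposition $S=\bigsqcup_{e\in\idemp{S}}S_e$ (with the identity component equal to $S\cap\sym{X}$ and every other component bounded by $\maxnull{\abs{X}}$ via Theorem~\ref{|S|=|N|, idemp distinct id_X} and Theorem~\ref{maximum size null semigroup}) are all correct; together they recover exactly the paper's closing remark that one need only analyse commutative subsemigroups containing at least two idempotents and a non-idempotent element. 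But that remaining case is precisely where the conjecture lives: the naive sum over archimedean components can exceed $\maxnull{\abs{X}}+1$, and the crude inequality $\abs{S}\leqslant\abs[\big]{\gset{\beta|_I}{\beta\in S}}\cdot\abs{S'}$ loses a factor of order $\abs{X}$ while the ratio $\maxnull{\abs{X}}/\maxnull{\abs{X}-1}$ is only of order $\abs{X}/\log\abs{X}$, so the induction does not close without the multiplicity control you postulate. Until that lemma is stated precisely and proved, the proposal establishes nothing beyond what Theorems~\ref{T(X): maximum size comm smg}, \ref{T(X): largest comm smg 1 idemp} and \ref{T(X): maximum size comm smg idemp} already give.
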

 
 If this conjecture is true, then we can easily prove that the solutions for Problems~\ref{problem P(X)}, \ref{problem graph T(X)} and \ref{problem graph P(X)} are:
 \begin{enumerate}
 	\item If $\abs{X}\geqslant 6$, then the maximum size of a commutative subsemigroup of $\ptr{X}$ is $\maxnull{\abs{X}+1}+1$ and the maximum-order commutative subsemigroups of $\ptr{X}$ are precisely the null semigroups $\nullptr{X}{B}$, where $B\subseteq X$ is such that $\abs{B}=\alphanull{\abs{X}+1}-1$.
 	
 	\item If $\abs{X}\geqslant 7$, then $\cliquenumber{\commgraph{\tr{X}}}=\maxnull{\abs{X}}$.
 	
 	\item If $\abs{X}\geqslant 6$, then $\cliquenumber{\commgraph{\ptr{X}}}=\maxnull{\abs{X}+1}-1$.
 \end{enumerate}

 Several results support Conjecture~\ref{T(X): conjecture}:
 
 \begin{enumerate}
 	\item In Theorem~\ref{T(X): maximum size comm smg} we proved that, when $\abs{X}\geqslant 7$, the maximum size of a commutative subsemigroup of $\tr{X}$ is at least $\maxnull{\abs{X}}+1$. Moreover, we know that the semigroups $\nulltr{X}{x_1}{x_t}\cup\set{\id{X}}$, where $t=\alphanull{\abs{X}}$ and $x_1,\ldots,x_t\in X$ are pairwise distinct, have size $\maxnull{\abs{X}}+1$ (see Theorem~\ref{null semigroups of maximum size}).
 	
 	\item It follows from Theorem~\ref{T(X): largest comm smg 1 idemp} that, when $\abs{X}\geqslant 7$, the commutative subsemigroups of $\tr{X}$ with a unique idempotent have size smaller than $\maxnull{\abs{X}}+1$. Moreover, when $\abs{X}\geqslant 7$, the largest commutative subsemigroups of $\tr{X}$ with a unique idempotent are precisely the null semigroups $\nulltr{X}{x_1}{x_t}$, where $t=\alphanull{\abs{X}}$ and $x_1,\ldots,x_t\in X$ are pairwise distinct.
 	
 	\item It follows from Theorem~\ref{T(X): maximum size comm smg idemp} that, when $\abs{X}\geqslant 7$, the commutative subsemigroups of $\tr{X}$ formed exclusively by idempotents have size at most $2^{\abs{X}-1}$, which is smaller than $\maxnull{\abs{X}}+1$.
 	
 	\item Computational experimental evidence suggests that, when $|X|=7$, the maximum size of a commutative subsemigroup of $\tr{X}$ is $\maxnull{\abs{X}}+1=\maxnull{7}+1=82$.
 \end{enumerate}
 
 As a consequence of 2 and 3 of the list above we have that, in order to prove Conjecture~\ref{T(X): conjecture}, it is enough to analyse the size of the commutative subsemigroups of $\tr{X}$ that contain at least two idempotents and a non-idempotent transformation.

    \bibliography{Bibliography} 
\bibliographystyle{alphaurl}

\end{document}